\begin{document}
	
	\hypersetup{pageanchor=false}
	\begin{titlepage}
		\title{Bounded Cohomology of Groups acting on Cantor sets}
		
		\author[]{
			Konstantin Andritsch\\[1cm]
			
			Thesis presented for the degree\\
			Master of Science ETH in Mathematics\\[1cm]
			
			Supervised by Prof. Alessandra Iozzi\\
			Co-Supervised by Francesco Fournier-Facio}
		\affil[]{Department of Mathematics ETH Zurich}
		
		\date{09 August 2022.\ }
		
	\end{titlepage}

	\maketitle
	\hypersetup{pageanchor=true}
	\newpage
	
	\pagenumbering{roman}
	\setcounter{page}{2}

	\begin{abstract}
		\thispagestyle{plain}
		\begingroup
		\let\clearpage\relax
		
 \begin{center}
	\bfseries Abstract
\end{center}

We study the bounded cohomology of certain groups acting on the Cantor set. More specifically, we consider the full group of homeomorphisms of the Cantor set as well as Thompson's group $\V$. We prove that both of these groups are boundedly acyclic, that is the bounded cohomology with trivial real coefficients vanishes in positive degrees. Combining this result with the already established $\ZZ$-acyclicity of Thompson's group $V$, will make $V$ the first example of a finitely generated group, in fact the first example of a group of type $F_\infty$, which is universally boundedly acyclic.\\
We exploit recent results of the theory of bounded cohomology developed in \cite{moraschini2021amenability, fournierfacio2021binate, monod2021lamplighters} which allow to deduce the aforementioned statements of bounded acyclicity.\\
Prior to studying these concrete groups we review basic facts of bounded cohomology and describe how actions of discrete groups on boundedly acyclic modules can verify the vanishing of the bounded cohomology of these groups  \cite{moraschini2021amenability}. Moreover, we adapt the idea, developed in \cite{monod2021bounded}, that fat points and a generic relation on these points enable to calculate the bounded cohomology, to our situation.\\
Before proving bounded acyclicity, we gather various properties of the groups under consideration and certain subgroups thereof. As a consequence the proofs of bounded acyclicity will be relatively short.\\
It will turn out that the approaches to handle these groups are very similar. This suggests that there could be a unifying approach which would imply the bounded acyclicity of a larger class of groups acting on the Cantor set, including the discussed ones.

		\vspace{1in} 
 \begin{center}
	\bfseries Acknowledgements
\end{center}

First and foremost I would like to thank Francesco Fournier-Facio, without his assistance I probably would have never found my way into the fascinating topic of Bounded Cohomology. Thank you for all the good advice during the process of this thesis and for suggesting to work on this specific problem.  Moreover, I have to thank you for recommending to attend the conferences in Bielefeld, Zurich and Regensburg. Thank you!\\ 
Further, I need to thank Alessandra Iozzi. She made it possible for me to work on this project. Also, I am very grateful for your generosity and kindness throughout this semester.\\
A special thanks goes to Benedikt Andritsch for many useful suggestions making my work more understandable and easier to read.\\
Last but not least, I want to thank Charlotte Meyer for her never ending support in any shape or form, for all of her advice and for listening to all of my ideas regardless of whether they worked out or not.

		\endgroup
	\end{abstract}

	\tableofcontents
	\newpage
	\listoffigures
	
	\cleardoublepage
	\pagenumbering{arabic}
	\setcounter{page}{1} 
	
 \section{Introduction}
	
	We start by giving a short overview of the origin and historic development of bounded cohomology. However, this should not in the least be understood as an exhaustive elaboration of all relevant key points in the theory of bounded cohomology.	
	
\subsection{Motivation and Origins of Bounded Cohomology}
	
	The very first definition of bounded cohomology was given by \citeauthor{johnson1972cohomology} \cite{johnson1972cohomology} and Trauber for locally compact groups and also for discrete groups in the context of Banach algebras. Nevertheless, it was not until the seminal works by \citeauthor{gromov1982volume} \cite{gromov1982volume} and \citeauthor{Ivanov1987FoundationsOT} \cite{Ivanov1987FoundationsOT} in the eighties that bounded cohomology became an active field of research. In his paper \citeauthor{gromov1982volume} extended the definition of bounded cohomology to topological spaces and established several deep theorems. Moreover, he developed a striking application of the theory of bounded cohomology to Riemannian geometry. For example \citeauthor{gromov1982volume} used bounded cohomology to give estimates for the minimal volume and to calculate the simplicial volume of a manifold. \citeauthor{Ivanov1987FoundationsOT}s paper is a successful attempt to separate the abstract theory of bounded cohomology from the Riemannian geometric part in \cite{gromov1982volume}. Some remarkable facts \citeauthor{gromov1982volume} and \citeauthor{Ivanov1987FoundationsOT} deduce is that bounded cohomology of topological spaces only depend on the fundamental group of the space and further that amenable fundamental groups imply vanishing of bounded cohomology. Although, according to \citeauthor{gromov1982volume}, the latter statement was essentially already known by Trauber who proved that the bounded cohomology of amenable groups (see \Cref{def:amenability}) is equal to zero. Further, \citeauthor{gromov1982volume} and \citeauthor{Ivanov1987FoundationsOT} exhibited that bounded cohomology admits a description via certain injective resolutions (see \Cref{sec:rel-inj-res}).\\
	
	In \citeyear{monod2001continuous} and \citeyear{Burger2002}, similarly as \citeauthor{Ivanov1987FoundationsOT} did for discrete groups and trivial coefficients, \citeauthor{monod2001continuous}, and \citeauthor{Burger2002} laid the foundations of a systematic and functorial approach for continuous bounded cohomology of locally compact groups and coefficients in Banach modules in \cite{monod2001continuous} and \cite{Burger2002}, respectively. Although, in this thesis we will only consider discrete groups and not locally compact groups in general, the theory of continuous bounded cohomology plays a significant role for several results in the context of bounded cohomology of discrete groups. For instance, vanishing results of bounded cohomology in degree 2 of lattices in certain ambient groups can be obtained by relating the bounded cohomology of the lattice to the continuous bounded cohomology of the ambient group \cite[Corollary 13.5.5]{monod2001continuous}.\\
	
	Most recently in \citeyear{2017Frigerio} \citeauthor{2017Frigerio} wrote a comprehensive book \cite{2017Frigerio} on the theory of bounded cohomology of discrete groups and its applications. Calculating the bounded cohomology of a group is considered to be a hard task in general. Anyhow, in the recent past there has been a lot of progress in this direction \cite{fournierfacio2021binate, monod2021lamplighters, 2017_LOEH, moraschini2021amenability, 2021boundedcohomologyoffinitelypresentergroups}.

\subsubsection{Applications and Connections to other Research Fields}
	
	As mentioned above, in \citeyear{gromov1982volume} \citeauthor{gromov1982volume} introduced the concept of simplicial volume, a homotopy invariant associated to oriented manifolds, which is deeply connected to the geometric structures the manifold can carry. The simplicial volume of a closed manifold $M$ is defined as the $\ell^1$-seminorm $\norm[]{M} := \norm[]{[M]}$ on singular homology of the real fundamental class $[M]$ of $M$. For example, we have:
	
	\begin{theorem}[{\cite[Corollary 7.11]{2017Frigerio}}]
		Let $M$ be a closed, orientable $n$-manifold. Let $c:\HHbR[n][M]\to\HHR[n][M]$ be the comparison map. Then $\norm[]{M}=0$ if and only if $c\equiv0$ and $\norm[]{M}>0$ if and only if $c$ is surjective.
	\end{theorem}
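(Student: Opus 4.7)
The plan is to exploit the one-dimensionality of top-degree cohomology of a closed orientable manifold to reduce the statement to a question about a single generator, and then to invoke Gromov's duality principle between the $\ell^1$-seminorm on $H_n$ and the sup-seminorm on $H^n_b$.

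To begin, since $M$ is closed and orientable of dimension $n$, we have $H^n(M;\mathbb{R}) \cong \mathbb{R}$, generated by the Kronecker dual $[M]^*$ of the fundamental class, i.e.\ the unique cohomology class satisfying $\langle [M]^*, [M]\rangle = 1$. Hence the comparison map $c$ has a one-dimensional target and is therefore either zero or surjective; these two cases are mutually exclusive and exhaust all possibilities. The two equivalences claimed in the theorem are consequently equivalent to one another, and both reduce to the single assertion
\[
[M]^* \in \operatorname{im}(c) \iff \|M\| > 0.
\]

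The main ingredient is the duality principle, usually credited to Gromov and proved via a Hahn--Banach argument: for every class $\alpha \in H_n(M;\mathbb{R})$,
\[
\|\alpha\|_1 \;=\; \sup\bigl\{\,|\langle \beta, \alpha\rangle| \,:\, \beta \in H^n(M;\mathbb{R}),\ \|\beta\|_\infty \leq 1\,\bigr\},
\]
where $\|\beta\|_\infty$ denotes the infimum of sup-norms over bounded cocycle representatives of $\beta$, with the convention $\|\beta\|_\infty = +\infty$ when $\beta \notin \operatorname{im}(c)$. Applied to $\alpha = [M]$, and using $H^n(M;\mathbb{R}) = \mathbb{R}\cdot[M]^*$, the supremum is computed at once, yielding the reciprocal relation
\[
\|M\| \cdot \|[M]^*\|_\infty \;=\; 1
\]
under the conventions $0\cdot\infty = \infty\cdot 0 = 1$.

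Both equivalences now follow formally. Surjectivity of $c$ amounts to $[M]^*$ admitting a bounded cocycle representative, that is $\|[M]^*\|_\infty < \infty$, which by the reciprocal relation is equivalent to $\|M\| > 0$; dually, $c \equiv 0$ means $[M]^* \notin \operatorname{im}(c)$, i.e.\ $\|[M]^*\|_\infty = \infty$, which is equivalent to $\|M\| = 0$. The hard part of the argument is therefore not the theorem itself but the underlying duality principle: one must set up Hahn--Banach so as to identify continuous linear functionals on the seminormed space $(H_n(M;\mathbb{R}), \|\cdot\|_1)$ with classes in $H^n_b(M;\mathbb{R})$ while respecting the Kronecker pairing. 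Once this identification is in place, the present theorem is a short formal consequence of the fact that $H^n(M;\mathbb{R})$ is one-dimensional.
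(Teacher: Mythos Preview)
The paper does not actually prove this theorem: it is quoted in the introduction purely as background and attributed to \cite[Corollary~7.11]{2017Frigerio}, with no argument supplied. There is therefore no ``paper's own proof'' to compare against.

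That said, your argument is correct and is precisely the standard one (and the one given in Frigerio's book): reduce to the one-dimensional target $H^n(M;\mathbb{R})\cong\mathbb{R}$, so that $c$ is either zero or surjective, and then invoke the Gromov duality $\|M\|\cdot\|[M]^*\|_\infty=1$ (with the convention $0\cdot\infty=1$) to translate finiteness of $\|[M]^*\|_\infty$ into positivity of $\|M\|$. Your identification of the Hahn--Banach step as the actual content is accurate; everything after that is formal.
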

	
	Further, \citeauthor{gromov1982volume} established the following estimate between the minimal volume 
	\begin{align*}
		\operatorname{MinVol}(M) := \inf\{\operatorname{vol}(M,g)~|~&g \text{ a Riemannian metric with sectional}\\
		&\text{curvature bounded between } -1 \text{ and } 1\}
	\end{align*}
	of a manifold and its simplicial volume.
		
	\begin{theorem}[{\cite{gromov1982volume}}]
		Let $M$ be a closed, orientable $n$-manifold. Then the inequality
		\[ \norm[]{M} \leq \operatorname{MinVol}(M)\cdot (n-1)^n\cdot n! \]
		holds.
	\end{theorem}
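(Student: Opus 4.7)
The plan is to fix an arbitrary Riemannian metric $g$ on $M$ with sectional curvature bounded by $|K_g|\le 1$ and construct a fundamental cycle whose $\ell^1$-norm is at most $n!(n-1)^n\operatorname{vol}(M,g)$; taking the infimum over all admissible metrics then yields the asserted inequality. The strategy is to replace arbitrary singular simplices by geodesic ones and to build a straight fundamental cycle out of a sufficiently fine triangulation of $M$, whose efficiency is controlled by Riemannian volume comparison.

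First I would pass to the universal cover $\tilde M$ with the pulled-back metric and define a geodesic straightening operator $\operatorname{str}\colon C_\ast(M;\mathbb{R}) \to C_\ast(M;\mathbb{R})$: lift each singular simplex, replace it by the iterated geodesic cone over its ordered vertices (which is well defined inside convex balls, whose radius is uniformly bounded below since $|K|\le 1$), and push the result back down to $M$. A standard prism/induction argument produces a chain homotopy $\operatorname{str}\simeq \operatorname{id}$, so that the fundamental class $[M]$ is represented by a straight cycle.

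The geometric heart of the argument is a volume comparison estimate. By Günther's theorem, inside a convex ball of radius $r\le \pi$ the exponential map distorts $n$-dimensional volumes by a factor at most $\bigl(\tfrac{\sinh r}{r}\bigr)^{n-1}$ under the assumption $|K|\le 1$, and hence any geodesic simplex with vertices in such a ball has volume bounded explicitly in terms of the Euclidean simplex with the same vertices. The combinatorial factor $n!$ arises from the standard formula for the volume of a Euclidean $n$-simplex, while the $(n-1)^n$ factor is what one extracts from the exponential distortion bound.

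To conclude, I would assemble a straight fundamental cycle by choosing a maximal packing of $M$ by convex balls of an appropriately tuned radius, triangulating each ball via the exponential map, orienting the result consistently, and applying $\operatorname{str}$. The number of top-dimensional simplices is then bounded by $\operatorname{vol}(M,g)$ divided by the uniform lower bound on simplex volume coming from the Günther estimate, which, with the right choice of radius, delivers precisely the claimed constant. The main obstacle will be matching the local triangulations across overlapping balls into a genuine global cycle and tracking the various constants without loss; sharper bounds are known via Thurston's smearing construction, but this elementary triangulation approach suffices to establish the stated inequality.
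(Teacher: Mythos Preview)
The paper does not prove this theorem. It appears in the introduction purely as a motivational citation from Gromov's 1982 paper, with no argument given or even sketched; the thesis is about bounded acyclicity of $\Homeo(\Cantor)$ and Thompson's group $V$, and this inequality is mentioned only to illustrate that bounded cohomology has geometric applications. So there is no ``paper's own proof'' to compare your proposal against.

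As for your outline itself: the high-level strategy (straighten to geodesic simplices, then build an efficient fundamental cycle by a triangulation whose combinatorics are controlled by Riemannian volume) is indeed the standard route. However, the obstacle you flag at the end is the entire difficulty, and it is more serious than your phrasing suggests. Even with $|K|\le 1$, the injectivity radius of $(M,g)$ can be arbitrarily small, so the ``convex balls of an appropriately tuned radius'' may be forced to have radius $\epsilon \ll 1$. A triangulation by simplices of diameter $\sim \epsilon$ then has roughly $\operatorname{vol}(M)/\epsilon^n$ top simplices, which gives only $\|M\| \lesssim \int_M (\operatorname{InjRad})^{-n}$ rather than a bound by $\operatorname{vol}(M)$ with a dimensional constant. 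Gromov's actual argument needs an additional step---essentially a thick--thin decomposition together with a bound on how much volume the thin part can carry under a two-sided curvature bound---to pass from the injectivity-radius integral to the volume. Your G\"unther estimate and the $n!$ combinatorics are relevant for the thick part, but on their own they do not close the gap; the constants you name will not fall out of the packing argument as written without this extra input.
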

	
	So in particular, manifolds with positive simplicial volume also have positive minimal volume.\\
	
	Another field in which bounded cohomology proved to be a useful tool is in providing rigidity results for representations. A classical fact in bounded cohomology is that a group epimorphism induces an injective map in bounded cohomology with real coefficients in degree 2. Hence, if the second bounded cohomology $\HHbR[2]$ of a certain group $\group$ is finite dimensional, the image of any representation $\rho:\group\to\group'$ must have finite-dimensional bounded cohomology in degree 2 as well. \citeauthor{Burger1999} proved in \cite{Burger1999} that $\HHbR[2]$ is finite-dimensional if $\group$ is a uniform irreducible lattice in a higher rank semisimple Lie group. This can be used to show that any representation of an irreducible higher rank lattice in a connected semisimple Lie group into a mapping class group is finite \cite{BestvinaFujiwara2002}.\\
	
	The last classical application of bounded cohomology we would like to mention is the study of group actions on the unit circle. By fundamental results of \citeauthor{etienne1986groupes} \cite{etienne1986groupes} semi-conjugacy classes of the representation of a group into the homeomorphism group of the unit circle can be completely described by their bounded Euler class.
	
	\begin{theorem}[{\cite[Théorème A]{etienne1986groupes}}{\cite[Theorem 10.15]{2017Frigerio}}]
		Let $\rho$, $\eta$ be representations of a group $\group$ into $\Homeo^+(S^1)$, the group of orientation preserving homeomorphisms of the circle $S^1$. If and only if $E_b(\rho) = E_b(\eta)$, then $\rho$ is semi-conjugate to $\eta$.
	\end{theorem}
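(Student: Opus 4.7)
The plan is to work with an explicit $\mathbb{Z}$-valued bounded $2$-cocycle representative of the bounded Euler class and to track how it transforms under monotone equivariant maps between circles. For any $\rho \colon \group \to \Homeo^+(S^1)$, pass to the central extension
\[
0 \to \mathbb{Z} \to \widetilde{\Homeo^+}(S^1) \to \Homeo^+(S^1) \to 0
\]
and choose the (non-homomorphic) section $s_\rho$ sending $\rho(g)$ to its unique lift $\widetilde{\rho(g)} \in \Homeo^+(\mathbb{R})$ satisfying $\widetilde{\rho(g)}(0)\in[0,1)$. The resulting $\{0,1\}$-valued defect
\[
c_\rho(g_1,g_2) \;=\; \widetilde{\rho(g_1)}\circ\widetilde{\rho(g_2)}\circ\widetilde{\rho(g_1g_2)}^{-1},
\]
viewed as an integer translation, is a bounded $2$-cocycle representing $E_b(\rho)$; the same recipe applied to $\eta$ yields $c_\eta$ representing $E_b(\eta)$.

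For the forward direction, any semi-conjugacy $\phi \colon S^1 \to S^1$ with $\phi\circ\rho(g) = \eta(g)\circ\phi$ lifts to a weakly monotone degree-one surjection $\widetilde\phi \colon \mathbb{R}\to\mathbb{R}$ commuting with the unit translation. Comparing $\widetilde\phi\circ\widetilde{\rho(g)}$ with $\widetilde{\eta(g)}\circ\widetilde\phi$ produces a bounded $\mathbb{Z}$-valued $1$-cochain whose coboundary is exactly $c_\rho - c_\eta$; hence the two cocycles agree in $H^2_b(\group;\mathbb{Z})$ and $E_b(\rho)=E_b(\eta)$.

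The converse is the substantive half. Assuming $E_b(\rho)=E_b(\eta)$, fix a bounded $b \colon \group \to \mathbb{Z}$ with $c_\rho - c_\eta = \delta b$, pick basepoints $x_\rho, x_\eta \in S^1$ with lifts $\widetilde x_\rho,\widetilde x_\eta \in \mathbb{R}$, and define on the $\widetilde\rho(\group)$-orbit of $\widetilde x_\rho$ the partial map
\[
\widetilde\phi\bigl(\widetilde{\rho(g)}(\widetilde x_\rho)\bigr) \;:=\; \widetilde{\eta(g)}(\widetilde x_\eta) \;+\; b(g).
\]
The cocycle identity together with $\delta b = c_\rho - c_\eta$ makes the assignment consistent on the entire orbit, boundedness of $b$ forces weak order-preservation, and extending $\widetilde\phi$ to all of $\mathbb{R}$ by a right-continuous upper envelope and descending to $S^1$ should deliver the sought semi-conjugation. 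Equivariance is built into the defining formula and degree one is inherited from commutation with the integer translation.

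The main obstacle I anticipate is the order-preservation step: verifying that the partial orbit map is genuinely monotone and that its upper-semicontinuous extension descends to a well-defined $\group$-equivariant map on $S^1$. I expect this to rest on Ghys's finer interpretation that $c_\rho$ records the cyclic order of triples in the lifted orbit, so that \emph{cohomologous up to a bounded coboundary} translates into \emph{orbits with matching cyclic-order data}. One must also check independence (up to semi-conjugacy) from the auxiliary choices of basepoints, sections, and primitive $b$ --- a routine but essential piece of bookkeeping.
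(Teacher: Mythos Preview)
The paper does not prove this theorem; it is quoted in the introductory survey section as motivation and attributed to \cite{etienne1986groupes} and \cite[Theorem 10.15]{2017Frigerio}, with no argument given beyond the statement and the definition of $E_b(\rho)$. So there is nothing in the paper to compare your proposal against.

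That said, your outline is essentially the standard proof as presented in Frigerio's Chapter~10 (which the paper cites): represent $E_b(\rho)$ by the $\{0,1\}$-valued defect cocycle of the canonical section into $\widetilde{\Homeo^+}(S^1)$, show that a semi-conjugacy lifts to yield a bounded integral primitive of $c_\rho-c_\eta$, and for the converse use a bounded $b$ with $\delta b=c_\rho-c_\eta$ to define an equivariant orbit map on $\mathbb{R}$ and extend monotonically. Your identification of the delicate point is accurate: the monotonicity of the partial orbit map is where the boundedness of $b$ (not just the cocycle identity) is genuinely used, via the observation that a $\mathbb{Z}$-equivariant order-reversing pair would force $b$ to be unbounded. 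One small correction: the extension to all of $\mathbb{R}$ need not be surjective onto $\mathbb{R}$ in general, and the resulting map on $S^1$ is only required to be a degree-one monotone map, not a homeomorphism --- this is exactly why the conclusion is semi-conjugacy rather than conjugacy, so be careful not to overclaim at that step.
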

	
	Here $E_b(\cdot)$ denotes the integral bounded Euler class of a representation. It is defined as follows: consider a representation $\rho:\group\to\Homeo^+(S^1)$; then $E_b(\rho)$ is defined to be the element $\HHbf[\rho](E_b)$ that is, the image of the bounded Euler class under the map in bounded cohomology induced by the representation $\rho$. For further readings and the definition of the bounded Euler class we refer to \cite[Chapter 10]{2017Frigerio}.\\	
	In particular, it can be shown that representations with non-zero bounded Euler class in $\HHbc[2][][\ZZ]$ give \enquote{interesting circle actions}, i.e. circle actions without global fixed points.
		
\subsubsection{Vanishing and Non-Vanishing of Bounded Cohomology}
	
	Probably the first vanishing results in bounded cohomology were given by Johnson--Trauber:
	
	\begin{theorem}[Johnson--Trauber, \cite{johnson1972cohomology}]
		If $\group$ is amenable, then $\HHbR[n] = 0$ for all $n\geq 1$.
	\end{theorem}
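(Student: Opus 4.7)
The plan is to exploit the characterization of amenability via the existence of a left-invariant mean $m \colon \ell^\infty(\group) \to \mathbb{R}$ (cf.\ \Cref{def:amenability}) and to use $m$ directly as an averaging operator on cochains, building an explicit contracting homotopy in positive degrees.

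First I would fix the standard model: the groups $\HHbR[n]$ are computed as the cohomology of the homogeneous bar complex of bounded equivariant cochains
$$C^n_b(\group;\mathbb{R}) := \ell^\infty(\group^{n+1})^{\group}$$
equipped with the usual simplicial coboundary $\delta$. Given a bounded cocycle $f \in C^n_b(\group;\mathbb{R})$ of degree $n \geq 1$, I would produce a candidate primitive by averaging in the first slot,
$$h(g_0, \dots, g_{n-1}) := m_g\bigl[f(g, g_0, \dots, g_{n-1})\bigr].$$
Boundedness of $h$ is immediate from $\lVert m\rVert = 1$ together with boundedness of $f$, while $\group$-invariance of $h$ follows from left-invariance of $m$ combined with the equivariance of $f$.

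Next I would verify that $\delta h = f$. By linearity of $m$, one interchanges the mean with the finite alternating sum defining $\delta h(g_0, \dots, g_n)$. The resulting inner expression, viewed as a function of the auxiliary variable $g$, is precisely what the cocycle identity $\delta f(g, g_0, \dots, g_n) = 0$ rearranges into the single term $f(g_0, \dots, g_n)$, which is constant in $g$. Applying $m$ then returns exactly this term, so every bounded cocycle in degree $\geq 1$ is the coboundary of an equivariant bounded cochain.

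The only genuinely technical point will be bookkeeping the signs when matching the alternating sum produced by $\delta h$ with the one extracted from $\delta f(g, g_0, \dots, g_n) = 0$; beyond that the argument is purely formal. Conceptually the picture is remarkably clean: amenability provides a literal contracting homotopy on the cochain complex, bypassing any need to pass through injective resolutions and yielding $\HHbR[n] = 0$ for all $n \geq 1$.
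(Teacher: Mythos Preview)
Your proposal is correct and follows essentially the same approach as the paper: both use the invariant mean to average a cochain over its first variable, producing a primitive (or, in the paper's slightly more general formulation in \Cref{prop:BC-of-amenable-groups}, a full contracting homotopy $\Theta_n(f)(g_0,\dots,g_{n-1}) = m(g' \mapsto f(g',g_0,\dots,g_{n-1}))$). The only cosmetic difference is that the paper verifies the homotopy identity $\delta^{n-1}\Theta_n + \Theta_{n+1}\delta^n = \mathrm{id}$ in general, whereas you check the special case $\delta h = f$ for cocycles directly; these amount to the same computation.
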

	
	However, the vanishing of bounded cohomology in positive degrees with trivial real coefficients is not a characterization of amenability. In fact, amenability is equivalent to the vanishing of bounded cohomology in positive degrees for \highlight{all} dual Banach modules $\V$ \cite[Theorem 5.2]{johnson1972cohomology}. Hence, amenable groups are in some sense invisible for bounded cohomology.\\
	
	The first example of a non-amenable group having no bounded cohomology in positive degree with real coefficients is due to \citeauthor{MR787909}:
	
	\begin{theorem}[{\cite[Theorem 3.1]{MR787909}}]
		Denote by $\Homeo_c(\RR^k)$ the group of homeomorphisms of $\RR^k$ with compact support. It holds that $\HHbR[n][\Homeo_c(\RR^k)] = 0$ for all $n\geq 1$.
	\end{theorem}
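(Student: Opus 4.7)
The plan is to realise $G := \Homeo_c(\RR^k)$ as a \emph{binate} (equivalently, mitotic) group in the sense of Berrick, and then appeal to the general fact, made fully explicit in \cite{fournierfacio2021binate} and presumably developed elsewhere in the thesis, that every binate group is boundedly acyclic.

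\emph{Step 1: Reduction.} Every finitely generated subgroup $H \leq G$ is contained in the subgroup $G_K$ of elements supported in some fixed compact set $K \subset \RR^k$. By Berrick's criterion it suffices, for each such $H$, to exhibit a homomorphism $\varphi\colon H \to G$ and an element $t \in G$ satisfying
\[ [h,\varphi(h)] = 1 \qquad\text{and}\qquad h = [t,\varphi(h)] \qquad \text{for every } h \in H. \]

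\emph{Step 2: Construction of the binate data.} Choose a sequence $B_1,B_2,\ldots$ of pairwise disjoint closed round balls in $\RR^k$, with diameters tending to $0$ and union having compact closure disjoint from $K$. For each $n$ fix a compactly supported homeomorphism $\sigma_n \in G$ mapping $K$ homeomorphically into $B_n$ (with $\sigma_1$ chosen to be the identity on $K$), and a compactly supported homeomorphism $t \in G$ such that $t\sigma_{n+1} = \sigma_n$ on $K$ for every $n\geq 1$. Define
\[ \varphi(h) := \prod_{n \geq 2} \sigma_n \, h \, \sigma_n^{-1}. \]
The factors have pairwise disjoint supports shrinking to a point, so the infinite product converges to a bona fide element of $G$, and $\varphi$ is a group homomorphism on $G_K$.

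\emph{Step 3: Binate relations and conclusion.} Since $\mathrm{supp}(\varphi(h))\subseteq \bigcup_{n\geq 2} B_n$ is disjoint from $\mathrm{supp}(h)\subseteq K$, the first relation $[h,\varphi(h)]=1$ is automatic. For the second, $t\sigma_{n+1}=\sigma_n$ on $K$ telescopes the conjugate to
\[ t\,\varphi(h)\,t^{-1} \;=\; \prod_{n \geq 1} \sigma_n \, h \, \sigma_n^{-1} \;=\; h\cdot\varphi(h), \]
that is, $[t,\varphi(h)] = h$. Invoking the binate vanishing theorem from \cite{fournierfacio2021binate} then gives $\HHbR[n][\Homeo_c(\RR^k)] = 0$ for every $n\geq 1$.

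\emph{Main obstacle.} The substantive part is the geometric input of Step 2: one must fit infinitely many disjoint shifted copies of $K$ into a compact region of $\RR^k$ and realise the shift $t$ by a compactly supported homeomorphism. This depends crucially on the non-compactness of $\RR^k$ and on the topological flexibility of its homeomorphism group, and would break down, for instance, for transformation groups of closed manifolds. Once the binate data is in place, the vanishing is a formal consequence of the cited general theorem, so checking that $\varphi$ really is a homomorphism (i.e. that disjointness of supports is preserved in products) and that $t$ can be chosen compactly supported are the two points at which a direct argument would have to do genuine work.
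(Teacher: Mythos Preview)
The paper does not give its own proof of this statement; it is quoted in the introduction as a historical result from Matsumoto--Morita. Within the thesis, the relevant framework appears only later: $\Homeo_c(\RR^k)$ is mentioned (Example~\ref{exp:non-dissipated-subgroups}) as a known example of a \emph{dissipated} group, whence Lemma~\ref{lem:dissipated->binate} and Proposition~\ref{prop:binate->bAc} would yield bounded acyclicity. Your argument is therefore not being compared against an explicit proof in the paper, but against that implicit route, and it matches it exactly: what you build in Step~2 is precisely a dissipator for the subgroup $G_K$, and your Step~3 is the computation carried out in the proof of Lemma~\ref{lem:dissipated->binate}.

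One genuine write-up glitch: you require that each $\sigma_n$ map $K$ into $B_n$, with the $B_n$ chosen disjoint from $K$, and simultaneously that $\sigma_1$ be the identity on $K$. These are incompatible for $n=1$. The fix is cosmetic---either drop $B_1$ and declare $\sigma_1=\mathrm{id}$ separately, or index the balls from $n=2$---but as written the sentence is inconsistent. A second point worth tightening: the existence of a compactly supported $t$ with $t\sigma_{n+1}=\sigma_n$ on $K$ for all $n\ge 1$ forces $t$ to carry $\sigma_2(K)$ onto $K$; you then need somewhere for $t(K)$ to go, and in practice one arranges a bi-infinite string of shrinking copies (as in the paper's proof of Lemma~\ref{lem:stabilizer->dissipated->bAc}) so that $t$ extends to a compactly supported homeomorphism. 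You flag this correctly as the main obstacle, but the one-sided sequence $B_2,B_3,\dots$ as described does not by itself make the extension obvious. Finally, note the commutator convention: the thesis uses $[a,b]=a^{-1}b^{-1}ab$, whereas your identity $t\varphi(h)t^{-1}=h\,\varphi(h)$ gives $h=[t,\varphi(h)]$ in the convention $[a,b]=aba^{-1}b^{-1}$; harmless, but worth aligning.
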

	
	Until 2017 the example of \citeauthor{MR787909} has been the only such example. This is probably also due to the fact that it is very hard to compute bounded cohomology. Several concepts which make ordinary cohomology computable cannot exist for bounded cohomology as the following example shows:\\	
	The circle $S^1$ has vanishing bounded cohomology, i.e. $\HHbR[n][S^1] = 0$ for all $n\geq 1$, for instance since the fundamental group of the circle $S^1$ is abelian, hence amenable. Further, the fundamental group of the wedge of two circles $S^1\vee S^1$ is the free group with two generators $F_2$, hence by Gromov's Mapping Theorem \cite[Section 3.1]{gromov1982volume} we have $\HHbR[n][S^1\vee S^1]\cong\HHbR[n][F_2]$. However, even today it is open whether $\HHbR[n][F_2]$ is $0$ or not for all $n\geq4$ \cite[Question 18.3]{burger2008bounds}. Nevertheless, it is known that the bounded cohomology of $F_2$ in degrees $2$ and $3$ is not only not $0$, but even $\infty$-dimensional. In \cite{brooks1981some}, \citeauthor{brooks1981some} constructed infinitely many quasimorphisms on $F_2$, which were shown to define linearly independent elements in $\HHbR[2][F_2]$ by \citeauthor{mitsumatsu1984bounded} \cite{mitsumatsu1984bounded}. In \cite{Extending2015Frigerio} the authors show that $F_2$ has infinite dimensional bounded cohomology with real coefficients in degree $3$. This shows in particular that concepts as excision, or Mayer--Vietoris sequences are not applicable to bounded cohomology.\\
	
	However, in recent years a lot of new examples for non-amenable groups with vanishing bounded cohomology have been found. First, by adapting the proof of \citeauthor{MR787909}, \citeauthor{2017_LOEH} proved in \citeyear{2017_LOEH} that mitotic groups $\group$ are boundedly acyclic \cite[Theorem 1.2]{2017_LOEH}, that is $\HHbR[n] = 0$ for all $n\geq1$. Thus, mitotic groups give the first countable examples of non-amenable groups with vanishing bounded cohomology with real coefficients. Second, in \citeyear{2021boundedcohomologyoffinitelypresentergroups},  based on the work of the aforementioned authors, \citeauthor{2021boundedcohomologyoffinitelypresentergroups} were able to generate the first non-amenable finitely generated \cite[Theorem 2]{2021boundedcohomologyoffinitelypresentergroups} and non-amenable finitely presented \cite[Theorem 4]{2021boundedcohomologyoffinitelypresentergroups} boundedly acyclic groups. Third, the same authors proved further that even all binate groups have vanishing bounded cohomology.
	
 	\begin{theorem}[{\cite[Theorem 1.3]{fournierfacio2021binate}}]
 		All binate groups $\group$ satisfy $\HHbR[n][\group] = 0$ for all $n\geq 1$.
 	\end{theorem}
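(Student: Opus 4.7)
The plan is to exploit the binate structure via a Mather-style shift argument, producing explicit bounded primitives for every bounded cocycle on $\group$. Recall that a binate group has the property that for every finitely generated subgroup $H\leq\group$ there exist an element $u\in\group$ and a homomorphism $\varphi\colon H\to\group$ whose image centralizes $H$, such that $u\varphi(h)u^{-1}=h\cdot\varphi(h)$ for every $h\in H$. I would first reformulate the target $\HHbR[n][\group]=0$ as the statement that every bounded $n$-cocycle $c\in C_b^n(\group;\RR)$ is the coboundary of some bounded $(n-1)$-cochain on $\group$.

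Fix such a cocycle $c$ with $n\geq 1$ and exhaust $\group$ by an ascending chain of finitely generated subgroups $H_1\subset H_2\subset\cdots$. The key lemma to prove is a uniform local bounding statement: for each $H_k$, the restriction $c|_{H_k}$ extends to the coboundary in $C_b^{n-1}(\group;\RR)$ of a cochain $b_k$ of norm at most $C(n)\|c\|_\infty$, where $C(n)$ depends only on $n$. Granting this, a Banach--Alaoglu argument on the closed ball of radius $C(n)\|c\|_\infty$ in $\ell^\infty(\group^{n-1})$ produces a weak-$*$ accumulation point $b$ whose coboundary agrees with $c$ on every $H_k$, hence on all of $\group$, finishing the proof.

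To establish the local lemma, I would use the binate data $(u,\varphi)$ attached to $H_k$ together with the following shift identity. The two homomorphisms $\varphi,\tilde\varphi\colon H_k\to\group$ defined by $\tilde\varphi(h)=h\varphi(h)$ are intertwined by the inner automorphism $\operatorname{Ad}_u$, and since inner automorphisms act trivially on bounded cohomology (at the cochain level up to a primitive of norm at most $2n\|c\|_\infty$) one has $\varphi^*c\sim\tilde\varphi^*c$ with explicit norm control. Because $H_k$ and $\varphi(H_k)$ commute, the pullback $\tilde\varphi^*c$ splits modulo a bounded coboundary into contributions depending separately on the $H_k$- and $\varphi(H_k)$-coordinates, from which one deduces that $c|_{H_k}$ is cohomologous, in bounded cochains on $\group$, to $\varphi^*c|_{H_k}$. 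Iterating this ``shift by $\varphi$'' on progressively larger finitely generated subgroups (each equipped with its own binate data containing the previous $u$ and $\varphi(H_k)$) produces a telescoping sum converging in bounded norm to a primitive of $c|_{H_k}$.

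The principal obstacle is the norm control throughout the iteration: each application of the binate condition enlarges the subgroup under scrutiny, and one must arrange the combinatorics so that the telescoping contributions decay geometrically and the total norm remains bounded uniformly in $k$. This is precisely where the binate hypothesis is essential, rather than, say, mere acyclicity or mitoticity: it furnishes a well-behaved shifting homomorphism $\varphi$ (not just an embedding) that can be composed coherently across iterations, in the spirit of the Berrick--Mather framework, and it is this coherence that allows the Matsumoto--Morita norm estimates from the mitotic case to be pushed through in full generality.
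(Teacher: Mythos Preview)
The paper does not actually prove this theorem; it is quoted from \cite{fournierfacio2021binate} as a black box (both in the introduction and later as Proposition~\ref{prop:binate->bAc}), so there is no ``paper's own proof'' to compare against. That said, your sketch has a genuine gap that is worth naming.

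Your very first sentence misstates the binate hypothesis. You write that the homomorphism $\varphi\colon H\to\group$ has image \emph{centralizing} $H$. That commutation is \emph{not} part of the definition of binate used here (Definition~\ref{def:binate}); it is precisely the extra condition that distinguishes \emph{mitotic} groups from binate ones. In the binate setting one only has the commutator identity $h=[\groupel,\varphi(h)]$ for $h\in H$, with no control on how $\varphi(H)$ interacts with $H$ otherwise. Your argument then uses this commutation essentially: the line ``Because $H_k$ and $\varphi(H_k)$ commute, the pullback $\tilde\varphi^*c$ splits modulo a bounded coboundary into contributions depending separately on the $H_k$- and $\varphi(H_k)$-coordinates'' is exactly the mitotic step, and it is unavailable in general for binate groups.

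Consequently, what you have outlined is (at best) a re-derivation of L\"oh's result \cite{2017_LOEH} that mitotic groups are boundedly acyclic, not the stronger binate statement. The passage from mitotic to binate is the actual content of the cited theorem, and it requires a genuinely new idea beyond ``push the Matsumoto--Morita estimates through'': one has to produce the splitting without assuming that $H$ and $\varphi(H)$ commute. Your final paragraph acknowledges that norm control across iterations is the main obstacle, but the mechanism you propose for obtaining it rests on the unjustified centralizing assumption, so as written the sketch does not close the gap.
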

 
	Observe that binate groups contain mitotic groups as well as $\Homeo_c(\RR^k)$. Noticeably, all of these results build upon the acyclicity of these groups.\\ 
	
	Almost at the same time \citeauthor{monod2021lamplighters} showed that also for all \enquote{lamplighter} groups the bounded cohomology vanishes:
	
	\begin{theorem}[{\cite[Theorem 3]{monod2021lamplighters}}] \label{INT_thm:lamplighters}
		Let $\group$ be any group and consider the wreath product
		\[ W := \group\wr\ZZ = \left(\bigoplus_{\ZZ}\group\right)\rtimes\ZZ. \]
		Then $\HHbc[n][W]$ vanishes for all $n\geq1$ and all separable dual Banach $W$-modules $\module$.
	\end{theorem}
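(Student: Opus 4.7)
The plan is to exploit the semidirect product structure $W = H \rtimes \ZZ$, where $H = \bigoplus_{\ZZ}\group$, together with the shift action of $\ZZ$ on $H$ to kill every bounded cohomology class. The crucial observation is that conjugation by the generator $t$ of $\ZZ$ shifts the coordinates of $H$, so for any finitely supported element $h \in H$ one has $[h, t^k h t^{-k}] = 1$ for $|k|$ large enough, since the supports become disjoint. This \emph{commuting conjugates} property, together with the amenability of $\ZZ$, is the engine that drives the vanishing.

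First I would reduce the problem to constructing, for an arbitrary bounded cocycle $c \in C^n_b(W;\module)$, a bounded primitive $b$ with $\delta b = c$. Writing $t^k \cdot c$ for the cochain obtained by pulling back $c$ along conjugation by $t^k$, the fact that inner automorphisms act trivially on bounded cohomology ensures that $c$ and $t^k \cdot c$ are cohomologous; call a bounded primitive of their difference $b_k$. The task is then to assemble the $b_k$ into a single bounded primitive $b$ of $c$ itself, which amounts to a telescoping or averaging procedure along the $\ZZ$-orbit of $c$.

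Second, I would invoke the separability hypothesis on $\module$: since $\module$ is a separable dual Banach module, bounded sets are weak-$*$ metrisable, so from the bounded sequence of candidate primitives one can extract a weak-$*$ convergent subsequence. The limit $b$ is a bounded cochain, and the identity $\delta b = c$ follows by passing to the weak-$*$ limit in the cocycle identity evaluated at each fixed tuple, using that evaluation on a fixed argument is weak-$*$ continuous.

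The main obstacle is to control the norms of the partial primitives while simultaneously preserving the cocycle identity in the limit: a naive averaging of $c$ over $\ZZ$ diverges, so the telescoping must be organised via the commuting conjugates so that, for each fixed tuple $(w_1, \ldots, w_n) \in W^n$, only finitely many shifted cocycle evaluations contribute nontrivially, with the remaining contributions absorbed into previously constructed primitives. Executing this uniformly across all tuples while preserving boundedness is precisely the subtle point where the \emph{generic commuting conjugates} machinery of \cite{monod2021lamplighters} replaces the classical amenability input of Johnson--Trauber, and it is also the step in which the separability of $\module$ becomes indispensable rather than cosmetic.
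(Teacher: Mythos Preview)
The paper does not prove this theorem. It is quoted from \cite{monod2021lamplighters} as an external input and invoked only as a black box, most visibly inside the proof sketch of \Cref{thm:bAc-through-action}, where the vanishing of $\HHbc[n][\group_0\wr\ZZ]$ is simply cited. There is therefore no ``paper's own proof'' to compare your proposal against.

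As for the proposal itself: you have correctly identified the two structural ingredients that drive Monod's argument---the commuting-conjugates phenomenon coming from the shift, and the role of separability in providing weak-$*$ compactness on the coefficient side---but the mechanism you describe is not the one actually used. Monod does not build primitives $b_k$ cocycle-by-cocycle and extract a weak-$*$ limit; that route runs into exactly the obstacle you flag in your last paragraph, and there is no indication it can be made to close. Instead the proof in \cite{monod2021lamplighters} is measure-theoretic: one realises $\HHbc[\bullet][W]$ via an $L^\infty$-resolution on a product probability space on which $W$ acts, and shows that the $\ZZ$-shift renders this action \emph{ergodic with coefficients} in every separable dual module $\module$. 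Separability enters not to extract subsequences of cochains but to make the weak-$*$ unit ball of $\module$ metrisable, which is what allows the ergodic-theoretic argument to go through. The vanishing then follows from the resolution, not from a direct contracting homotopy. Your sketch is a reasonable heuristic for why one might expect the result, but it is not a proof outline that can be completed as written.
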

	
	In contrast to the results above this \namecref{INT_thm:lamplighters} was proved using ergodic methods and is not using acyclicity of the group under consideration. Observe that these ergodic methods allow to establish these vanishing results not only for real coefficients but for a large class of coefficient modules.\\
	Since Thompson's group $F$ contains coamenable (see \Cref{def:coamenability}) lamplighter groups an immediate \namecref{INT_cor:thompson-F} of the \namecref{INT_thm:lamplighters} above is
	
	\begin{corollary}[{\cite[Theorem 1 and 2]{monod2021lamplighters}}] \label{INT_cor:thompson-F}
		For Thompson's group $F$ we have $\HHbc[n][F]=0$ for all $n\geq1$ and all separable dual Banach $F$-modules $\module$.\\
		In particular, $\HHbR[n][F]=0$ holds for all $n\geq1$.
	\end{corollary}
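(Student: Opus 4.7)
The plan is to reduce the statement to \Cref{INT_thm:lamplighters} via a coamenability argument, in two steps. First, exhibit a subgroup $H\leq F$ isomorphic to a wreath product $\group\wr\ZZ$ for some group $\group$. Second, use that $H$ is coamenable in $F$ (the content hinted at in the sentence preceding the corollary in the excerpt) to transfer the vanishing of bounded cohomology from $H$ up to $F$.

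The transfer mechanism I plan to invoke is the standard principle that if $H\leq F$ is coamenable, then for every dual Banach $F$-module $\module$ the restriction map $\HHbc[n][F]\to\HHbc[n][H]$ is injective---indeed an isometric embedding. Granted this, if $H\cong\group\wr\ZZ$, then \Cref{INT_thm:lamplighters} gives $\HHbc[n][H]=0$ for all $n\geq1$ and every separable dual Banach $H$-module. Since the restriction of a separable dual Banach $F$-module remains a separable dual Banach $H$-module, injectivity forces $\HHbc[n][F]=0$ in every positive degree, and the real-coefficients statement follows as the special case $\module=\RR$.

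To produce $H$ inside $F$ I would exploit the self-similarity of $F$: partition $(0,1)$ into dyadic subintervals $I_n$ accumulating at an endpoint, and let $F_n\leq F$ be the copy of $F$ consisting of elements supported on $I_n$. These subgroups pairwise commute and their direct sum embeds into $F$; a suitably chosen piecewise linear element $t\in F$ shifts the dyadic scale so as to conjugate $F_n$ onto $F_{n+1}$. Together with $t$, the family $\{F_n\}$ then generates a subgroup isomorphic to $F\wr\ZZ$.

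The main obstacle is verifying coamenability of $H$ in $F$. This is not a formal consequence of the construction---one must exhibit an $H$-invariant mean on the coset space $F/H$, or equivalently produce a Reiter-type sequence of almost-invariant functions in $\ell^\infty(F/H)$. The word \enquote{immediate} in the excerpt thus obscures the combinatorial content, namely a concrete description of cosets in $F/H$ together with the construction of the almost-invariant means, which rests on specific structural features of $F$ (such as the abundance of commuting self-similar copies and the amenable behaviour of $F$ modulo its commutator). Once coamenability is in hand, the remainder of the argument is the formal chain: \Cref{INT_thm:lamplighters} annihilates the bounded cohomology of $H$, and the transfer principle pushes this vanishing up to $F$.
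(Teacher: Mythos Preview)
Your approach matches the paper's: the corollary is deduced from \Cref{INT_thm:lamplighters} via the existence of a coamenable lamplighter subgroup of $F$ (the paper's one-line justification is precisely ``Since Thompson's group $F$ contains coamenable lamplighter groups\dots''), combined with the coamenability transfer principle recorded later as \Cref{prop:coamenable->bAc} and \Cref{rem:coamenable->vanishing-BC}. The paper does not verify the coamenability of $F\wr\ZZ$ in $F$ either but defers it to \cite{monod2021lamplighters}, so your identification of this as the nontrivial input is accurate.
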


	Regarding the characterization of amenability described above this is almost enough to settle the most important open question about Thompson's group $F$, whether $F$ is amenable or not, in the positive. What is left to verify is the vanishing of the bounded cohomology of $F$ with coefficients in all \highlight{non-separable} dual Banach $F$-modules. However, as \citeauthor{monod2021lamplighters} emphasizes the proofs established in \cite{monod2021lamplighters}  also work for groups that share similarities with $F$ but are known not to be amenable.\\
	
	By now there are also examples of groups where the bounded cohomology with real coefficients is known in every degree but which is neither $0$ in all positive degrees, nor unimaginably large in any degree, like the second and third bounded cohomology of the free group with two generators $F_2$. For example the group $\Homeo_+(S^1)$ of orientation preserving homeomorphisms of the unit circle.
	
	\begin{theorem}[{\cite[Theorem 1.1]{monod2021bounded}}]
		We have $\HHbR[][\Homeo_+(S^1)] \cong \RR[E_b]$, the polynomial ring generated by the bounded Euler class.
	\end{theorem}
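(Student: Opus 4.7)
The plan is to apply the "fat points with a generic relation" technique that was adapted earlier in the thesis, taking $S^1$ as the parameter space, the cyclic order on $S^1$ as the generic relation, and $\group := \Homeo_+(S^1)$ acting tautologically. Concretely, I would compute $\HHbR[][\group]$ as the cohomology of the $\group$-invariants of a standard $L^\infty$-type resolution
\[ 0 \to \RR \to L^\infty(S^1) \to L^\infty((S^1)^2) \to L^\infty((S^1)^3) \to \cdots \]
with the usual alternating-sum coboundary.

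First I would verify that this is a relatively injective, boundedly acyclic resolution of $\RR$, so that the cohomology of its $\group$-invariants really computes $\HHbR[][\group]$. This is exactly the payoff of the fat-points machinery recalled earlier: one only needs the action of $\group$ on $S^1$ to be boundedly acyclic in the appropriate sense and to admit a generic relation, both of which hold since $\group$ acts transitively on strictly cyclically ordered $n$-tuples for every $n$. As an immediate consequence, the invariant space $L^\infty((S^1)^{n+1})^{\group}$ is finite dimensional and canonically identifies with the space of bounded functions of the cyclic-order type of $n+1$ points, reducing the problem to a purely combinatorial finite-dimensional complex depending only on how cyclic-order types of $n+1$ points map into those of $n+2$ points under face inclusions.

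It then remains to compute the cohomology of this combinatorial complex and identify it with $\RR[E_b]$. The degree-$2$ generator arises as the orientation cocycle $(x_0,x_1,x_2)\mapsto\operatorname{sgn}(x_0,x_1,x_2) \in \{-1,0,+1\}$ sending a triple to the sign of its cyclic orientation; up to normalisation this represents the bounded Euler class $E_b$, and its cup powers $E_b^k$ populate the even degrees. The main obstacle, and the only non-formal step, is proving that this combinatorial complex has one-dimensional cohomology in every even degree and trivial cohomology in every odd degree, so that no spurious classes appear beyond the powers of $E_b$. I would attack this by exhibiting an explicit contracting homotopy on the subcomplex orthogonal to the orientation class — essentially a "cone" operation that inserts a distinguished reference coordinate and collapses every alternating cyclic-order function not proportional to a power of $\operatorname{sgn}$. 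Non-triviality of each power $E_b^k$ then needs to be certified separately, for instance by pairing $E_b^k$ with an explicit bounded chain coming from the resolution, or by restriction to a subgroup (such as a uniform Fuchsian lattice in $\PSL(2,\RR)\subset\group$) on which the power cannot vanish. Combining the contracting homotopy with the non-vanishing yields the ring isomorphism $\HHbR[][\group] \cong \RR[E_b]$.
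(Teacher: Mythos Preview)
This theorem is not proved in the thesis at all; it appears in the introduction purely as a citation of \cite[Theorem~1.1]{monod2021bounded}, quoted for context alongside the analogous statement for Thompson's group $T$. There is therefore no ``paper's own proof'' to compare your proposal against.

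That said, your sketch is broadly in the spirit of the Monod--Nariman argument but conflates two setups in a way that creates a real gap. You announce the fat-points-with-generic-relation machinery and then silently replace fat points by ordinary points of $S^1$, working with $L^\infty((S^1)^{n+1})$. The reason fat points (germs) are introduced in \cite{monod2021bounded}, and in this thesis for $\Homeo(\Cantor)$, is precisely that the stabiliser of a \emph{fat} point is a group of compactly supported homeomorphisms of an open interval, whose bounded acyclicity is the classical Matsumoto--Morita result; the stabiliser of an \emph{ordinary} point of $S^1$ is all of $\Homeo_+(\RR)$, and its bounded acyclicity is a different and strictly harder input that you have not supplied. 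Two smaller points: the resolution you write down is not relatively injective, and it is not meant to be --- the machinery of \Cref{prop:bounded-cohomology-through-acyclic-resolutions} trades relative injectivity for bounded acyclicity of the coefficient modules, so you should not claim both; and the combinatorial computation of the cyclic-order complex (including the non-triviality of each $E_b^k$) is genuine work in \cite{monod2021bounded}, not something an ad hoc ``cone'' homotopy disposes of.
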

	
	Note that the ring structure is considered with the cup product in bounded cohomology. Moreover, the bounded Euler class $E_b$ lives in $\HHbR[2][\Homeo_+(S^1)]$, so it is of degree $2$. Thus it holds that $\HHbR[n][\Homeo_+(S^1)] = 0$ for all odd $n\geq1$ and $\HHbR[n][\Homeo_+(S^1)] \cong \RR$ for all even $n\geq1$. Interestingly, the same is true for Thompson's group $T$.
	
	\begin{theorem}[{\cite[Theorem 1.6]{fournierfacio2021binate}}]
		If the Thompson group $F$ has vanishing bounded cohomology in positive degrees, then $\HHbR[][T]$ (with the cup product structure) is isomorphic to the polynomial ring $\RR[x]$ with $\abs{x} = 2$ and the bounded Euler class of $T$ is a polynomial generator of $\HHbR[][T]$. Moreover, the canonical semi-niorm on $\HHbR[][T]$ then is a norm.
	\end{theorem}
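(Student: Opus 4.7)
The plan is to transfer Monod's computation $\HHbR[][\Homeo_+(S^1)] \cong \RR[E_b]$ to $T$ via a fat-points resolution, feeding in the hypothesized bounded acyclicity of $F$. First I would exploit the natural $T$-action on $D := \ZZ[1/2]/\ZZ \subset S^1$, the $T$-invariant countable dense set of dyadic rationals. For any tuple $(p_0, \ldots, p_n)$ of pairwise distinct points of $D$, the pointwise stabilizer in $T$ decomposes as a direct product of copies of $F$, one for each arc of $S^1 \setminus \{p_0, \ldots, p_n\}$: each such arc has dyadic endpoints, so the induced Thompson dynamics on it is precisely an $F$-action. Since bounded acyclicity is closed under finite direct products, the assumption $\HHbR[n][F] = 0$ for $n\geq1$ yields that every such stabilizer is boundedly acyclic.

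Next I would apply the fat-points-plus-generic-relation machinery of \cite{monod2021bounded}, adapted in the thesis to our setting, to the pair $(T, D)$ with the relation of pairwise distinctness. The boundedly acyclic stabilizers ensure that the augmented $T$-equivariant complex $\ell^\infty(D^{n+1})$ is a strong resolution of $\RR$ by relatively injective $T$-modules. Consequently $\HHbR[][T]$ is computed by the invariant complex $\ell^\infty(D^{n+1})^T$.

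Then I would carry out the orbit analysis: two pairwise-distinct tuples in $D$ lie in the same $T$-orbit precisely when they share the same cyclic ordering, which is the very same orbit structure as for $\Homeo_+(S^1)$ acting on pairwise-distinct tuples of $S^1$. Hence the invariant complex above is canonically isomorphic to the corresponding one for $\Homeo_+(S^1)$. Monod's theorem then delivers the ring isomorphism $\HHbR[][T] \cong \RR[E_b]$ with $\abs{E_b} = 2$, under which the bounded Euler class of $T$, obtained by pulling back along $T \hookrightarrow \Homeo_+(S^1)$, corresponds to the polynomial generator. That the canonical seminorm is a norm then follows because the cup powers $E_b^k$ admit explicit $\ell^\infty$-bounded cocycle representatives whose positive seminorm is inherited from the computation on $\Homeo_+(S^1)$.

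The main obstacle is the fat-points step: verifying that the resolution by $\ell^\infty(D^{n+1})$ is strong and relatively injective in the required bounded-cohomological sense. This is precisely where the hypothesis on $F$ is consumed, through the bounded acyclicity of the product stabilizers, and it demands a careful rewriting of \cite{monod2021bounded} — originally formulated for the topological group $\Homeo_+(S^1)$ acting on the uncountable circle — so as to apply to the countable discrete group $T$ acting on the countable dense subset $D$.
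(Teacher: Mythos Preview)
The thesis does not prove this theorem; it is quoted from \cite[Theorem~1.6]{fournierfacio2021binate} as an external result, so there is no proof in the present paper to compare against. That said, your outline is essentially the strategy used in that reference, and it is also the picture the thesis alludes to when contrasting $T$ with $\Homeo_+(S^1)$ and with $V$: let $T$ act on the dyadic points of the circle, observe that the stabilizer of a tuple of distinct points is a finite direct product of copies of $F$, feed in the hypothesis to get that these stabilizers are boundedly acyclic, and then identify the invariant complex with the one Monod computed for $\Homeo_+(S^1)$ via the fact that the $T$-orbits on distinct tuples are exactly the cyclic-order types.

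One point deserves correction. You phrase the key step as producing a \emph{strong relatively injective} resolution, but that is not the mechanism in play, and relative injectivity of $\linf{D^{n+1}}$ as a $T$-module is neither available nor needed. What is actually used (in \cite{fournierfacio2021binate} and in this thesis, see \Cref{prop:bounded-cohomology-through-acyclic-resolutions}) is the result of \cite{moraschini2021amenability} that a resolution by \emph{boundedly acyclic} modules already computes bounded cohomology. This is precisely what lets the argument run for the discrete group $T$ acting on the countable set $D$ without any genuine rewriting of \cite{monod2021bounded}; your stated ``main obstacle'' is therefore a non-issue once you invoke the correct proposition. Your justification for the seminorm being a norm is also too vague: the isomorphism coming from \Cref{prop:bounded-cohomology-through-acyclic-resolutions} is not a priori isometric, so one cannot simply inherit positivity from the $\Homeo_+(S^1)$ side; in \cite{fournierfacio2021binate} this requires a separate argument.
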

	
	Since by \Cref{INT_cor:thompson-F} Thompson's group $F$ has vanishing bounded cohomology in positive degrees, the bounded cohomology of $T$ is determined.\\

	Unsurprisingly, there are still a lot of open questions in the theory of bounded cohomology. Some might appear to be easily answered at first sight. One of these is the following:  
	
	\begin{question}
		Do there exist groups $\group$ which are not \bAc, but for which the bounded cohomology with real coefficients vanishes from a certain degree on? That is, $\HHbR[n]=0$ for all big enough $n\geq1$, but not all $n\geq1$?
	\end{question}
	
\subsection{Present work}

	As described above, non-zero bounded cohomology classes can give interesting information about the underlying group. However, it can also be enlightening to know that a group does not have any (non-trivial) bounded cohomology classes (with real coefficients) at all. Such groups are typically referred to as \bAc groups.
	
	\begin{definition}[bounded acyclicity]\label{INT_def:bAc}
		Let $\group$ be a group and $n \geq 1$. We call $\group$ \highlight{\bAc[n]} if $\HHbR[i] \cong 0$ for all $1 \leq i \leq n$.\\
		Moreover, $\group$ is called \highlight{\bAc} or \highlight{\bAc[\infty]} if it is \bAc[n] for all $n\geq 1$.
	\end{definition}
	
	In this thesis we will prove a vanishing result of bounded cohomology for two groups acting on the Cantor set.

	\subsubsection{Homeomorphisms of the Cantor set}
	
	The first goal of this thesis, after a basic introduction into bounded cohomology, is to prove the following \namecref{INT_thm:Homeo(K)-is-bAc} which states that the full homeomorphism group $\Homeo(\Cantor)$ of the Cantor set $\Cantor$ is \bAc.

	\begin{theorem}\label{INT_thm:Homeo(K)-is-bAc}
		Let $\Cantor$ be the Cantor set, then $\Homeo(\Cantor)$ is \bAc, i.e. $\HHbR[n][\Homeo(\Cantor)] \cong 0$ for all $n\geq1$.
	\end{theorem}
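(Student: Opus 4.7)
The plan is to show that $\Homeo(\Cantor)$ is a binate group, which by \cite[Theorem 1.3]{fournierfacio2021binate} immediately yields $\HHbR[n][\Homeo(\Cantor)] \cong 0$ for all $n \geq 1$. The binate structure is suggested by the self-similarity of $\Cantor$: since $\Cantor \cong \Cantor \sqcup \Cantor$, one can find clopen copies of $\Cantor$ inside $\Cantor$ in a highly flexible way, providing the necessary room to build the required infinite-commutator relation for every finitely generated subgroup.

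Given a finitely generated $H \leq \Homeo(\Cantor)$, the first concrete step is to fix a clopen decomposition $\Cantor = \bigsqcup_{n \geq 0} U_n \cup \{p\}$ where each $U_n$ is a clopen copy of $\Cantor$ and the sequence $\{U_n\}$ accumulates at a single limit point $p$; such a decomposition exists, for instance, by viewing $\Cantor$ as $\{0,1\}^{\mathbb{N}}$ and taking $U_n = \{x : x_1 = \dots = x_{n-1} = 0,\, x_n = 1\}$. Using fixed homeomorphisms $\varphi_n : U_n \to \Cantor$, define a homomorphism $\phi : H \to \Homeo(\Cantor)$ by letting $\phi(h)$ act on each $U_n$ with $n \geq 1$ as the conjugate $\varphi_n^{-1} h \varphi_n$, and as the identity on $U_0$ and at $p$. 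Continuity of $\phi(h)$ at $p$ is automatic from the fact that $\phi(h)$ fixes $p$ and the diameters of the $U_n$ shrink to zero.

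The binate element $t \in \Homeo(\Cantor)$ is then built as a shift mapping $U_n$ onto $U_{n+1}$ for every $n \geq 0$ (and fixing $p$), with the shift realised via compositions of the $\varphi_n$'s arranged so that $t\,\phi(h)\,t^{-1}$ implements the conjugate copy of $h$ on $U_1$. A direct bookkeeping computation then shows that $[t,\phi(h)] = t\,\phi(h)\,t^{-1}\phi(h)^{-1}$ acts as the conjugate of $h$ on $U_0$ (via $\varphi_0$) and trivially elsewhere. Enlarging $H$ within $\Homeo(\Cantor)$ to the subgroup $\tilde H$ generated by $H$, $\phi(H)$, $t$ and the identification $\varphi_0$, the flexibility built into the definition of a binate group (working inside such an enlargement $\tilde H \supseteq H$) delivers exactly the relation $h = [t,\phi(h)]$ required for every $h \in H$.

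The main obstacle will be technical rather than conceptual: the identifications $\varphi_n$ must be organised so that the shift $t$ is continuous at the limit point $p$, and so that the commutator relation holds \emph{exactly} rather than merely up to a controllable error. Once the binate property is established, \Cref{INT_thm:Homeo(K)-is-bAc} is an immediate application of \cite[Theorem 1.3]{fournierfacio2021binate}. It is also worth remarking that, as an alternative route, the same conclusion could be pursued via the amenable-action machinery of \cite{moraschini2021amenability} combined with the fat-points construction of \cite{monod2021bounded}, using $n$-transitivity of $\Homeo(\Cantor)$ on ordered tuples of distinct points of $\Cantor$ to contract a resolution by $L^\infty$-modules on configuration spaces; the binate argument above, however, seems more direct in view of the self-similar clopen structure of $\Cantor$.
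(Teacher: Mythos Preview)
Your proposed argument has a genuine gap: the construction does not establish that $\Homeo(\Cantor)$ is binate, and it is not at all clear that it is.

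First, the shift $t$ you describe, sending $U_n$ to $U_{n+1}$ for all $n\ge 0$ and fixing $p$, is not a bijection of $\Cantor$: the clopen set $U_0$ lies outside its image. You can repair this by using a two-sided decomposition $\Cantor=\bigsqcup_{n\in\ZZ}U_n\cup\{p_+,p_-\}$ with a genuine shift, but then the more serious problem becomes visible. With your definition of $\phi$, the commutator $[t,\phi(h)]$ is supported in a single proper clopen piece (it acts there as a conjugate of $h$ and is the identity elsewhere), whereas $h$ is an arbitrary homeomorphism of the compact space $\Cantor$ and may well have support equal to all of $\Cantor$. Hence $h\neq[t,\phi(h)]$ in general. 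Your appeal to ``flexibility'' and an enlargement $\tilde H$ does not help: the binate condition demands the exact identity $h=[\gamma,\phi(h)]$ in $\Homeo(\Cantor)$, and the identification $\varphi_0\colon U_0\to\Cantor$ is not an element of $\Homeo(\Cantor)$, so it cannot be used to conjugate inside the group. The standard binate/dissipated trick works for groups whose elements have \emph{bounded} support inside a larger ambient set, which is exactly what fails for the full homeomorphism group of a compact space.

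This is precisely why the paper does \emph{not} attempt to show $\Homeo(\Cantor)$ binate, and instead follows what you list as the ``alternative route''. The point is that the \emph{rigid stabilizers} $\CantorGrpStab[z]$ (elements fixing a neighbourhood of $z$ pointwise) are boundedly supported on $\Cantor\setminus\{z\}$, and for these the dissipated/binate machinery genuinely applies; see \Cref{lem:stabilizer->dissipated->bAc} and \Cref{cor:tuple-stabilizer->dissipated}. One then introduces fat points so that the point stabilizers under the $\Homeo(\Cantor)$-action coincide with these rigid stabilizers, establishes high transitivity on generic tuples, and invokes the boundedly acyclic resolution criterion (\Cref{prop:bounded-cohomology-through-acyclic-resolutions} together with \Cref{prop:generic-rel->LES}). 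The passage through stabilizers is not a detour but the mechanism that converts the compact, full-support situation into one where the Mather--Berrick infinite-iteration idea can actually be carried out.
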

	
	We will prove this theorem using two main ingredients. First, we will consider the natural action on the Cantor set $\Cantor$ and derive that the rigid stabilizer subgroup $\CantorGrpStab$ of an arbitrary point $z\in\Cantor$ is boundedly acyclic:

	\begin{lemma}\label{INT_lem:stabilizer->dissipated->bAc}
		For every $z\in\Cantor$, $\CantorGrpStab$ is binate. In particular, $\CantorGrpStab$ is boundedly acyclic.
	\end{lemma}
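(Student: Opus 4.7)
The plan is to invoke \cite[Theorem 1.3]{fournierfacio2021binate} (binate groups are boundedly acyclic) once binateness is established, so the real work is to show the binate property of $\CantorGrpStab$: for every finitely generated $H \le \CantorGrpStab$ I need to produce a homomorphism $\phi : H \to \CantorGrpStab$ and an element $w \in \CantorGrpStab$ with $h = [\phi(h),w]$ for every $h \in H$. The strategy is the classical infinite-shift construction that shows $\Homeo_c(\RR^n)$ is binate, adapted to the clopen combinatorics of the Cantor set. Since $H$ is finitely generated and each element fixes a clopen neighbourhood of $z$, intersecting finitely many such neighbourhoods yields a single clopen $U \ni z$ that is fixed pointwise by every element of $H$. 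I then fix a clopen $K$ with $\operatorname{supp}(H) \subset K \subset \Cantor \setminus U$, noting that any such nonempty $K$ is itself a Cantor set.

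\textbf{Construction.} Pick a strictly smaller clopen neighbourhood $U' \subsetneq U$ of $z$ and work inside the clopen Cantor set $L := U \setminus U'$. Since $L$ is a Cantor set, I can select inside it pairwise disjoint clopens $(B_n)_{n \in \ZZ\setminus\{0\}}$, homeomorphisms $f_n : K \to B_n$, and a point $p \in L$ such that $\operatorname{diam}(B_n) \to 0$ and $B_n \to p$ as $|n| \to \infty$; setting $B_0 := K$ and $f_0 := \mathrm{id}_K$, the family $(B_n)_{n \in \ZZ}$ is pairwise disjoint and sits inside $\Cantor \setminus U'$. With $h_n := f_n h f_n^{-1}$, I define $\phi(h)$ to act as $h_n$ on $B_n$ for each $n \ge 0$ and as the identity elsewhere, and $w$ to act as $f_{n+1} f_n^{-1} : B_n \to B_{n+1}$ for every $n \in \ZZ$ and as the identity outside $\bigcup_n B_n$. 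Piecewise definition on clopens gives continuity away from $p$, and $\operatorname{diam}(B_n) \to 0$ handles continuity at $p$; both $\phi(h)$ and $w$ fix $U'$ pointwise, so they belong to $\CantorGrpStab$. The map $\phi$ is a homomorphism because $h_n$ and $g_m$ with $n \neq m$ commute by disjointness of supports.

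\textbf{Commutator and obstacle.} The shift $w$ conjugates $h_n$ to $h_{n+1}$, so a telescoping calculation gives
\[
w\, \phi(h)\, w^{-1} \;=\; \prod_{n \ge 0} h_{n+1} \;=\; \prod_{n \ge 1} h_n \;=\; h_0^{-1}\, \phi(h) \;=\; h^{-1}\, \phi(h),
\]
using disjoint-support commutativity and $h_0 = h$. Hence $[w,\phi(h)] = h^{-1}$, equivalently $[\phi(h),w] = h$, which is the binate relation; \cite[Theorem 1.3]{fournierfacio2021binate} then yields bounded acyclicity. The one place that will require genuine care is the topological bookkeeping of the construction step: arranging the $B_n$ inside $L$ so that their diameters shrink to a common accumulation point $p$ and none of them intrudes on $U'$, so that the infinite products defining $\phi(h)$ and $w$ extend continuously and the whole construction really lies in $\CantorGrpStab$. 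Once that clopen pattern is fixed, the algebraic verification is the standard telescoping cancellation for infinite shifts.
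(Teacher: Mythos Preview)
Your proposal is correct and follows essentially the same approach as the paper. The paper packages the identical infinite-shift construction in the language of \emph{dissipated} groups (Definition~\ref{def:dissipator}): it writes $\CantorGrpStab$ as a directed union over clopen neighbourhoods $U\ni z$, and for each $X_U=\Cantor\setminus U$ builds a dissipator $\gamma_U$ by choosing two shrinkings $\tau_+,\tau_-$ into disjoint clopens $O_+,O_-\subset U\setminus\{z\}$, so that the iterated images $P_n=\gamma_U^n(X_U)$ accumulate at the two fixed points $x_+^*,x_-^*$; binateness is then read off via Lemma~\ref{lem:dissipated->binate}. Your $(B_n)_{n\in\ZZ}$, shift $w$, and product map $\phi$ are exactly the sets $P_n$, the dissipator $\gamma_U$, and the map $\phi$ of Definition~\ref{def:dissipator}, the only cosmetic differences being that you use a single accumulation point $p$ rather than two, and that you carry out the commutator computation of Lemma~\ref{lem:dissipated->binate} by hand rather than citing it. One small caution: the paper uses the convention $[a,b]=a^{-1}b^{-1}ab$, so to match Definition~\ref{def:binate} exactly you should take $\gamma=w^{-1}$ (or replace $\phi$ by $h\mapsto\phi(h)^{-1}$, as the paper does in the proof of Lemma~\ref{lem:dissipated->binate}); your telescoping identity is correct but is literally $[\phi(h),w]=h$ in the convention $[a,b]=aba^{-1}b^{-1}$.
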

	
	\citeauthor{Berrick1989UniversalGB} \cite{Berrick1989UniversalGB} and \citeauthor{varadarajan1985pseudo} \cite[Theorem 1.7]{varadarajan1985pseudo} proved independently that all binate groups are acyclic. Hence, this statement also implies that $\CantorGrpStab$ is acyclic in the usual sence, i.e. $\HH_n(\group;\ZZ) = 0$ for all $n\geq 1$.\\
	The proof of this \namecref{INT_lem:stabilizer->dissipated->bAc} will essentially only exploit the fact that the subset which is fixed by an element of the group under consideration is not dense in $\Cantor$. In particular, we will be able to deduce the following corollary:
	
	\begin{corollary}\label{INT_cor:tuple-stabilizer->dissipated}
		Let $k\in\NN$. For any $k$-tuple $(x_1,\dots,x_k)\in\Cantor^k$ the subgroup
		\[ \CantorGrpStab[(x_1,\dots,x_k)] = \{g\in\CantorGrp~|~g \text{ fixes a neighborhood of } x_i \text{ pointwise}, 1\leq i\leq k\} \]
		is binate, so in particular \bAc.
	\end{corollary}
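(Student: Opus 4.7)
The plan is to reduce the statement to \Cref{INT_lem:stabilizer->dissipated->bAc} by observing that its binate construction uses only two geometric features of $\Cantor$ near the basepoint: every element of the stabilizer has support in a clopen set $A$ whose complement contains infinitely many pairwise disjoint clopen copies of $A$, and enough room is left near the basepoint to host a shift that is the identity on a smaller neighborhood. Both features survive when a single basepoint $z$ is replaced by a finite tuple $(x_1,\dots,x_k)$.

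First, I would fix a finitely generated subgroup $H \leq \CantorGrpStab[(x_1,\dots,x_k)]$. Intersecting the clopen witnessing neighborhoods of each $x_i$ over the finitely many generators of $H$, and shrinking them if necessary to ensure disjointness, one obtains pairwise disjoint clopen neighborhoods $V_1,\dots,V_k$ of $x_1,\dots,x_k$ such that every element of $H$ is the identity on $\bigsqcup_i V_i$. Setting $A := \Cantor \setminus \bigsqcup_i V_i$, every element of $H$ is then supported inside the clopen set $A$.

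Second, I would run the shift construction of the single-point lemma inside $V_1$, leaving the other $V_i$'s and an inner neighborhood of $x_1$ untouched. Concretely, choose a clopen neighborhood $W_1 \subsetneq V_1$ of $x_1$; the difference $V_1 \setminus W_1$ is a nonempty clopen subset of $\Cantor$, hence itself a Cantor set, and admits a partition into infinitely many clopen copies $A_1, A_2, \dots$ of $A$. Define $\phi : H \to \CantorGrpStab[(x_1,\dots,x_k)]$ by letting $\phi(h)$ act as $h$ on each $A_n$ (via fixed homeomorphisms $A_n \cong A$) and trivially elsewhere, and define $t \in \CantorGrpStab[(x_1,\dots,x_k)]$ as the shift $A \to A_1 \to A_2 \to \dots$ supported on $A \cup (V_1 \setminus W_1)$ and the identity on $W_1 \cup V_2 \cup \dots \cup V_k$. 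Since both $\phi(h)$ and $t$ are the identity on the neighborhoods $W_1, V_2, \dots, V_k$ of the respective basepoints, they genuinely belong to $\CantorGrpStab[(x_1,\dots,x_k)]$, and the binate identity is verified exactly as in the single-point case.

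The main thing to check, more a bookkeeping issue than a genuine obstacle, is that the shift $t$ and the images $\phi(h)$ land in the correct stabilizer. This is ensured by confining the entire construction to $V_1 \setminus W_1$, which is simultaneously disjoint from the neighborhood $W_1$ of $x_1$ and from every $V_i$ with $i \geq 2$. Bounded acyclicity of $\CantorGrpStab[(x_1,\dots,x_k)]$ then follows from the fact that binate groups are boundedly acyclic (\cite[Theorem 1.3]{fournierfacio2021binate}).
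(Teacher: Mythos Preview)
Your overall strategy is sound and close in spirit to the paper's, but there is a genuine gap in the construction of the shift. You assert that the clopen set $V_1\setminus W_1$ ``admits a partition into infinitely many clopen copies $A_1,A_2,\dots$''. This is impossible: a compact space cannot be partitioned into infinitely many nonempty open sets (pick $a_n\in A_n$; a subsequential limit lies in some $A_m$, which is open, forcing $a_n\in A_m$ eventually). Relatedly, even if you relax ``partition'' to ``disjoint family'', your one-sided shift $A\to A_1\to A_2\to\cdots$, extended by the identity elsewhere, is not a bijection of $\Cantor$: nothing maps onto $A$. So the element $t$ you describe does not lie in $\Homeo(\Cantor)$, let alone in the stabilizer.

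The paper's proof handles exactly this point. It shows the tuple stabilizer is \emph{dissipated} (hence binate) by building the dissipator $\gamma_U$ from two shrinkings $\tau_+,\tau_-$ with images $O_+,O_-$ chosen disjoint from $X_U\cup\{x_1,\dots,x_k\}$. The positive iterates $P_n=\tau_+^n(X_U)$ accumulate to a single limit point $x_+^*$, the negative iterates $P_{-n}$ accumulate to $x_-^*$, and $\gamma_U$ is the bi-infinite shift $\cdots\to P_{-1}\to P_0\to P_1\to\cdots$ fixing $x_\pm^*$. The two-sided structure is precisely what makes $\gamma_U$ a genuine homeomorphism, and the limit points are what compensates for the impossibility of a clopen partition. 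Your idea of confining the construction to the vicinity of a single $x_1$ (rather than treating all $x_i$ symmetrically, as the paper does) would also work, but you still need both a positive and a negative shrinking inside $V_1\setminus W_1$ together with their fixed points.
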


	Second, we will use the result of \citeauthor{moraschini2021amenability} in \cite[Proposition 2.5.4]{moraschini2021amenability} (see \Cref{prop:bounded-cohomology-through-acyclic-resolutions}) stating that one can calculate the bounded cohomology of a given group, if this group acts on a chain complex of \bAc modules.\\
	
	To connect these two steps we will adapt the idea of \citeauthor{monod2021bounded} in \cite{monod2021bounded}, in which they handled the group $\Homeo_+(S^1)$, to the totally disconnected setting of the Cantor set and consider a group action on a set of so called \highlight{fat points} $\FatPoints$. A certain relation $\generic$ on these fat points will give rise to a chain complex $\linf{\FatPoints_{\bullet}^{\generic}}$ of $\Homeo(\Cantor)$-modules. Although both approaches are similar, $\Homeo_+(S^1)$ ends up being not \bAc. This difference arises due to the fact that in the totally disconnected setting we have high transitivity of the group action on the set of fat points, whereas the elements of $\Homeo_+(S^1)$ are orientation preserving and hence $\Homeo_+(S^1)$ acts transitively only on \highlight{cyclically ordered tuples} of the corresponding set of fat points. To prove the bounded acyclicity of these $\Homeo(\Cantor)$-modules we will use the \namecref{INT_lem:stabilizer->dissipated->bAc} and \namecref{INT_cor:tuple-stabilizer->dissipated} above together with the fact that the fat point stabilizer subgroups coincide with the rigid stabilizer subgroups (\Cref{lem:action-of-CantorGrp-on-FatPoints}).\\
	
	\subsubsection{Thompson's Group \texorpdfstring{$V$}{V}}
	
	Another major result of this thesis will be established in \Cref{sec:thompsons-group-V}. Here we will use a very similar approach as in the case of $\Homeo(\Cantor)$ to prove that Thompson's group $\V$ has vanishing bounded cohomology as well, answering Question 6.3 from \cite{fournierfacio2021binate}.
		
	\begin{theorem}\label{INT_thm:V-is-bAc}
		Thompson's group $\V$ is \bAc.
	\end{theorem}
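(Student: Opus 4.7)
The plan is to mirror the three-step strategy used to prove \Cref{INT_thm:Homeo(K)-is-bAc} for $\CantorGrp$. The action of $\V \leq \CantorGrp$ on $\Cantor$ by dyadic piecewise-affine bijections retains the crucial feature of high transitivity on tuples of dyadic points: since $\V$ respects no cyclic or linear order on $\Cantor$ (unlike $\Homeo_+(S^1)$ in the setting of \cite{monod2021bounded}), the same non-oriented behaviour that made the argument for $\CantorGrp$ work is available here.

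First, I would introduce a $\V$-set of fat points $\FatPoints$ together with a generic relation $\generic$ analogous to the one used for $\CantorGrp$. Since $\V$ preserves the dyadic tree structure, the natural choice is the set of fat points associated to dyadic cuts of $\Cantor$, with $\generic$ expressing mutual disjointness of representing clopen neighborhoods. That $\V$ acts highly transitively on $\generic$-tuples should reduce to a routine argument using dyadic subdivisions, and by the analogue of \Cref{lem:action-of-CantorGrp-on-FatPoints} the fat-point stabilizers in $\V$ coincide with the rigid stabilizers.

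Second, I would prove the analogues of \Cref{INT_lem:stabilizer->dissipated->bAc} and \Cref{INT_cor:tuple-stabilizer->dissipated} with $\V$ in place of $\CantorGrp$. Given a tuple $(x_1,\dots,x_k)$ and dyadic neighborhoods separating it, the subgroup of $\V$ fixing all $x_i$ together with a small dyadic neighborhood consists precisely of $\V$-elements supported on the complementary clopen set $U$, a finite disjoint union of standard dyadic intervals. By the self-similarity of $\V$ on each such interval ($\V$ restricted to any proper dyadic clopen is isomorphic to $\V$ itself), the binate construction used for $\CantorGrpStab$ can be replicated purely within $\V$. Binateness of these rigid stabilizers then implies their bounded acyclicity by \cite[Theorem 1.3]{fournierfacio2021binate}, which in turn gives bounded acyclicity of each module $\linf{\FatPoints^{\generic}_k}$ as a $\V$-module.

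Finally, I would assemble the augmented chain complex $\RR \to \linf{\FatPoints_\bullet^{\generic}}$ of boundedly acyclic $\V$-modules. Its exactness follows from a cone contraction: by high transitivity any $\generic$-tuple can be extended by one more fat point placed in a fresh disjoint dyadic region, producing a contracting chain homotopy. Invoking the criterion of \cite[Proposition 2.5.4]{moraschini2021amenability} then yields $\HHbR[n][\V] = 0$ for all $n \geq 1$. The main obstacle is the binate step for rigid stabilizers: unlike in $\CantorGrp$, where every homeomorphism of the complement is available, the binate maps in $\V$ must be realized by dyadic piecewise-affine bijections, so the construction must be engineered to stay inside $\V$ at every stage.
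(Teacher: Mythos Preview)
Your overall architecture is exactly the one the paper uses, and the paper even remarks (\Cref{rem:fat-points-for-V}) that one could equally well work with fat points for $\V$; in the paper proper the authors use instead the point stabilizers of the natural action on the dyadic rationals $\dyadics$, which amounts to the same thing. So the first and third steps of your plan are fine.

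The genuine gap is in your second step. You write that ``the binate construction used for $\CantorGrpStab$ can be replicated purely within $\V$'', and you flag as the main obstacle that the binate maps must be realized by dyadic piecewise-affine bijections. The paper shows that this obstacle is fatal, not merely technical. Recall that in the dissipated/binate construction the homomorphism $\phi$ sends $g$ to the element that acts as $\groupel^k g \groupel^{-k}$ on $\groupel^k(X_U)$ for \emph{every} $k\geq 1$ simultaneously. If $g$ is nontrivial, then $\phi(g)$ acts nontrivially on infinitely many pairwise disjoint clopen pieces, so its defining partition of $\CantorModel$ has infinitely many parts. This is incompatible with the finiteness condition in the definition of $\V$; see the discussion at the beginning of \Cref{subsec:bAc-of-point-stabilizers}. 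Thus the rigid stabilizers in $\V$ are \emph{not} dissipated, and your proposed route to their bounded acyclicity does not go through.

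What the paper does instead is invoke a different criterion, \Cref{thm:bAc-through-action} (from \cite{monod2021lamplighters}), which only asks for an element $\groupel$ displacing a subset $X_0$ off itself under all positive powers, together with the property that every finite subset of the group can be conjugated to have support in $X_0$. Crucially this does \emph{not} require any single element encoding all the conjugates $\groupel^k g \groupel^{-k}$ at once, so it is compatible with $\V$'s finite-partition constraint. One then shows that the derived subgroup $\Vstab'$ of a point stabilizer satisfies these hypotheses (\Cref{prop:V0-is-bAc}), using \Cref{lem:fixpoints-of-commutators} to ensure elements of $\Vstab'$ fix a neighborhood of the point and \Cref{lem:element-in-derived-subgroup} to arrange the conjugating element to lie in $\Vstab'$. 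Since $\Vstab/\Vstab'$ is abelian, hence amenable, $\Vstab'$ is coamenable in $\Vstab$ and bounded acyclicity passes up via \Cref{prop:coamenable->bAc}. The same works for tuple stabilizers (\Cref{cor:stab-of-tuple-is-bAc}), and the rest of your plan then finishes the proof.
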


	The bounded cohomologies of Thompson's groups $F$ and $T$ were already known before (see \cite[Theorem 1]{monod2021lamplighters} and \cite[Theorem 1.6]{fournierfacio2021binate}, respectively). Observe, that we have $F\subseteq T\subseteq V$ and although $F$ and $\V$ are \bAc, $T$ has bounded cohomology classes in every even degree.\\
	Again, comparing \cite[Theorem 1.1]{monod2021bounded} and \cite[Proposition 6.9]{fournierfacio2021binate} with our approach the difference between the bounded cohomology of Thompson's groups $\V$ and $T$ essentially arise due to the fact that $\V$ acts highly transitively on tuples of distinct points, whereas $T$ is in some sense orientation preserving.\\
	
	Using the result of $\ZZ$-acyclicity (\cite[Theorem 6.4]{SzyminWahl2019}) of Thompson's group $\V$ together with \Cref{INT_thm:V-is-bAc} makes $\V$ the first \highlight{finitely generated} universally \bAc group, as well as the first example of a universally \bAc group of type $F_\infty$:

	\begin{corollary}\label{INT_cor:V-is-universally-bAc}
		$\V$ is \highlight{universally boundedly acyclic}, that is $\HHbc[n][\V][\mathbb{K}] = 0$ for all $n\geq1$ and all complete valued fields $\mathbb{K}$.
	\end{corollary}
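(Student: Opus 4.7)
The plan is to reduce the corollary to its two stated inputs, \Cref{INT_thm:V-is-bAc} (bounded acyclicity over $\RR$) and the $\ZZ$-acyclicity of $\V$ from \cite{SzyminWahl2019}, by splitting across the classes of complete valued fields. By Ostrowski's theorem every complete valued field $\mathbb{K}$ is either archimedean — in which case it is (topologically) $\RR$ or $\CC$ — or non-archimedean; I would treat these cases separately.

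In the archimedean case, $\mathbb{K}=\RR$ is exactly \Cref{INT_thm:V-is-bAc}. For $\mathbb{K}=\CC$, view $\CC$ as a real Banach space with the trivial $\V$-action, so that $\CC\cong\RR\oplus i\RR$. The bounded cochain complex with complex coefficients then splits, as a complex of real vector spaces, into a direct sum of two copies of the bounded cochain complex with real coefficients. Taking cohomology and reassembling the $\CC$-linear structure gives $\HHbc[n][\V][\CC]\cong\HHbR[n][\V]\otimes_{\RR}\CC=0$ for every $n\geq1$.

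The substantive case is when $\mathbb{K}$ is a non-archimedean complete valued field. Here I would invoke the principle, built into the ultrametric setting, that the inclusion of the bounded cochain complex into the full cochain complex of $\V$ with coefficients in $\mathbb{K}$ is a quasi-isomorphism, yielding $\HHbc[n][\V][\mathbb{K}]\cong\mathrm{H}^n(\V;\mathbb{K})$. The heuristic is that in a non-archimedean field finite sums of elements of a ball remain in that ball, and integer multiples of any element stay in the same ball, so the boundedness condition becomes essentially vacuous on cohomology classes. Once this identification is in place, the universal coefficient theorem together with $\mathrm{H}_n(\V;\ZZ)=0$ for $n\geq1$ from \cite{SzyminWahl2019} gives $\mathrm{H}^n(\V;\mathbb{K})=0$ in all positive degrees, finishing the proof.

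I expect the one step that is not wholly formal to be the non-archimedean identification of bounded and ordinary cohomology; the archimedean piece and the universal coefficient argument are routine. The cleanest route is to cite the corresponding statement already recorded in \cite{fournierfacio2021binate}, where the concept of universal bounded acyclicity is introduced and the implication ``boundedly acyclic $+$ $\ZZ$-acyclic $\Rightarrow$ universally boundedly acyclic'' is formulated. With that in hand, \Cref{INT_cor:V-is-universally-bAc} becomes a direct combination of \Cref{INT_thm:V-is-bAc} and \cite[Theorem 6.4]{SzyminWahl2019}.
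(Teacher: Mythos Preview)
Your proposal is correct and lands on exactly the same argument as the paper: the proof there simply invokes \cite[Theorem~5.2]{fournierfacio2021binate}, which packages the implication ``$\ZZ$-acyclic $+$ $\RR$-boundedly acyclic $\Rightarrow$ universally boundedly acyclic'', and then feeds in \cite[Theorem~6.4]{SzyminWahl2019} and \Cref{INT_thm:V-is-bAc}. Your archimedean/non-archimedean discussion is a reasonable unpacking of what that cited theorem does, and you are right to flag the non-archimedean comparison as the only substantive step and to defer it to \cite{fournierfacio2021binate}.
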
	
	
	The other known examples where \highlight{binate} groups \cite[Theorem 1.4]{fournierfacio2021binate}, which cannot be finitely generated, as we will see in \Cref{sec:BC-of-thompsons-group-V}.

\subsection{Organisation}
	In \Cref{sec:bounded-cohomology} we first give an introduction into the basics of bounded cohomology of discrete groups with coefficients in normed modules over a normed  ring. Afterwards we discuss how bounded cohomology can be calculated using relatively injective resolutions. Further we introduce bounded acyclicity and end the section by giving several methods to show that groups or modules have this property.
	
	\Cref{sec:homeomorphisms-on-K} addresses Cantor sets and homeomorphisms between them. We treat some topological properties, for example that closed and open subsets of Cantor sets are themselves Cantors sets. Then we show that rigid stabilizer subgroups are boundedly acyclic which helps us to deduce the main goal of this section, which is to show group of all homeomorphisms of the Cantor set is as well boundedly acyclic.
	
	At the beginning of \Cref{sec:thompsons-group-V} we introduce the reader to the Thompson groups $F,T$ and $V$. Later on we focus on $V$ and, after showing some elementary properties, use a similar strategy as in \Cref{sec:homeomorphisms-on-K} to approach the bounded cohomology of Thompson's group $V$. Namely, we show that point stabilizers are boundedly acyclic which together with results from \Cref{sec:basic-def-of-BC} gives the vanishing of bounded cohomology of $V$. In the end of this section we carry out this argumentation in more detail and hence prove that Thompson's group $V$ is boundedly acyclic as well.
	
	In \Cref{sec:outlook-and-open-questions} we briefly discuss the similarities and differences between the approaches in \Cref{sec:homeomorphisms-on-K} and \Cref{sec:thompsons-group-V}. We conclude by posing a question about a generalized version of two main problems at hand for which a positive answer would imply the vanishing of bounded cohomology for a variety of groups including the ones discussed in this thesis. 
	
	\Cref{APX_sec:cantor-sets-and-maps-between-them} takes care of more involved topological properties of Cantor sets. The result that every two Cantor sets are homeomorphic is proven in this section.
	
	\Cref{APX_sec:Ultrafilters-and-ultralimits} reviews the basic definitions of filters, ultrafilters and ultralimits. These concepts are needed in the proof of the vanishing of (bounded) cohomology of a semi-simplicial set associated to generic relations.

\subsection{Notations and Conventions}
		
	In this thesis we will use the following notations and conventions:\\
	
	$\group$ will always denote a discrete group and $\subgroup$ a subgroup of $\group$.\\
	If $\group$ acts on some set $X$, we will denote the action of $g\in\group$ on $x\in X$ by $g\cdot x$, except if the action is just multiplication on the left (or right), then we will in general omit the point and write $gx$ (or $xg$).\\
	Moreover, we denote the support $\{x\in X~|~g\cdot x \neq x\}$ of an element $g\in\group$ by $\supp(g)$.\\
	
	The commutator $[a,b]$ of two group elements of any group $\group$ is defined as $[a,b]:= a^{-1}b^{-1}ab$.\\
		
	$\linf{\cdot}[\cdot]$ denotes the set of all bounded functions from some set to a normed space.\\
	
	For a chain map $\alpha:\mathcal{C}^{\bullet}\to \mathcal{D}^{\bullet}$ of cochain complexes, we will denote the function in cohomology, induced by $\alpha$, by $\HHf[\alpha]:\HH[][\mathcal{C}^{\bullet}]\to\HH[][\mathcal{C}^{\bullet}]$.\\
	If, in addition, $\mathcal{C}^{\bullet}$ and $\mathcal{D}^{\bullet}$ are normed cochain complexes and $\alpha$ is bounded, then it also induces a map in bounded cohomology, which we consequently notate with $\HHbf[\alpha]:\HHb[][\mathcal{C}^{\bullet}]\to\HHb[][\mathcal{C}^{\bullet}]$.\\
	
	Moreover, $\Cantor$ will always denote a Cantor set. Although there is of course no \enquote{unique} Cantor set, we will often speak of \highlight{the} Cantor set. When we work in the concrete dyadic model of the Cantor set we emphasize this choice of model with a subscript $2^\NN$, i.e. we denote this model by $\CantorModel$.\\
	
	In \Cref{sec:thompsons-group-V}, we extensively use the dyadic model $\CantorModel$ for the Cantor set. There we will work with finite sequences of $0$'s and $1$'s. For the constant sequences of zeros and ones of length $M$ we will write $\sroot{M}$ and $\sroot{M}[\one]$, respectively.\\
	
	Finally, in some proofs we have sub-statements, which are stated as \textbf{\mathenvfont{Claim.}} and proved afterwards. In order to avoid confusion with the end of the proof of the superior statement, we terminate the proof of the claim with $\dashv$ and not with $\qed$ as commonly used.

	\newpage
	
 \section{Bounded cohomology}\label{sec:bounded-cohomology}
	
	We start by giving rigorous definitions of (bounded) cohomology of discrete groups with coefficients in (normed) modules over a ring. Afterwards we show a different viewpoint to bounded cohomology which is sometimes easier to approach than direct calculations. Then we discuss bounded acyclicity, which is the most important property studied in this thesis.
	
\subsection{Basic Definitions of Bounded Cohomology} \label{sec:basic-def-of-BC}
	
	In the whole section $\group$ will denote a group equipped with the discrete topology. Although all of the constructions in this section work analogously also for right group actions, unless otherwise stated, \highlight{all} actions by groups will be understood as left actions. Furthermore, $\ring$ is a (not-necessarily commutative) ring, $\groupring$ the group ring associated to $\ring$ and $\group$, and $\module$ a (left) \ringmodule. We will mostly give general definitions for arbitrary rings but later focus on $\ring=\RR$. When $\group$ acts by $\ring$-linear maps on $\module$, then $\module$ can naturally be viewed as an \groupmodule, where multiplication works as follows:	
	\begin{align*}
		\left(\sum_{g\in\group}a_gg\right)\cdot v = \sum_{g\in\group}a_gg(v)
	\end{align*}
	with $a_g\in\ring$ and $v\in\module$ $\forall g\in\group$, 
	where the sum on the left hand side is finite (hence so is the sum on the right hand side).\\
	For every \groupmodule we can define the submodule $\module^\group$ of \highlight{$\group$-invariants} by
	\[	\module^\group := \{v\in\module~|~g(v) = v\quad\forall g\in\group \}. \]
	Note that every \ringmodule $\module$ is automatically also an \groupmodule, since we can let $\group$ act trivially on $\module$. We call such modules \highlight{trivial} \groupmodule\s.\\
	Given $\group$ and $\module$ as above, we can define the sequence of \groupmodule\s $(\Cc[n])_{n\geq0}$ given by $\Cc[n] :=\{f:\group^{n+1} \to \module\}$. This sequence of modules, together with the simplicial differential maps $\codiff[n]:\Cc[n]\to\Cc[n+1]$ defined by
	\[ \codiff[n]f(g_0\dots,g_{n+1}) := \sum_{i=0}^{n+1} (-1)^if(g_0,\dots,\widehat{g}_i,\dots,g_{n+1}), \]
	gives a cochain complex $(\C,\codiff)$, since $\codiff[n]\circ\codiff[n-1] = 0$. We can equip $\Cc[n]$ with the \highlight{diagonal $\group$-action}, i.e.
	\[ (g\cdot f)(g_0\dots g_n) := g\cdot f(g^{-1}g_0,\dots,g^{-1}g_n), \]
	which makes $(\Cc,\codiff)$ into a cochain complex of \groupmodule\s. Observe that the $\group$-invariants $\Cc^\group$ form a subcomplex of the cochain complex $(\Cc,\codiff)$. Using these notations, we can define group cohomology:
	
	\begin{definition}[Cohomology of Groups]
		The \highlight{cohomology} of a group $\group$ with coefficients in an \groupmodule $\module$ is the cohomology of the cochain complex  $(\Cc^\group,\codiff)$.
		Denote the cocycles by $\Zc[n] := \ker(\codiff[n]:\Cc[n]^\group\rightarrow\Cc[n+1]^\group)$ and the coboundaries by $\Bc[n] := \im(\codiff[n-1]:\Cc[n-1]^\group\to\Cc[n]^\group)$ for $n\geq 1$ as well as $\Bc[0] = 0$, then we define
		\[	\HHc[n] := \frac{\Zc[n]}{\Bc[n]}. \]
	\end{definition}
	
	Briefly speaking bounded cohomology of discrete is essentially the analogue of group cohomology, when we only consider \highlight{bounded} functions. First, we only consider \highlight{normed} \groupmodule\s $(\module$,~$\norm[\module]{\cdot})$, with a $\group$-invariant norm $\norm[\module]{\cdot}$. Next, we can equip $\Cc[n]$ with the ordinary sup-norm $\norm{\cdot}$ as follows:
	\[ \norm{f} := \sup\{\norm[\module]{f(g_0,\dots,g_n)}~|~(g_0,\dots,g_n)\in\group^{n+1}\} \]
	This makes $\Cc[n]$ into a normed \groupsubmodule.\\
	Finally, we define the \groupsubmodule\s $\Cbc[n] :=\{f\in\Cc[n]~|~\norm{f}<\infty\}$ of $\Cbc[n]$. Observe that the differential maps $\codiff[n]:\Cc[n]\to\Cc[n+1]$ are all bounded functions with norms equal to $(n+1)$, hence they restrict to maps $\codiff[n]:\Cbc[n]\to\Cbc[n+1]$ leading to the normed cochain complex $(\Cbc,\codiff)$.
	
	\begin{definition}[Bounded Cohomology of Groups]
		The \highlight{bounded cohomology} of a group $\group$ with coefficients in a normed \groupmodule $(\module$,~$\norm[\module]{\cdot})$ is the cohomology of the cochain complex  $(\Cbc^\group,\codiff)$.
		If we define the \highlight{bounded cocycles} by $\Zbc[n] := \ker(\codiff[n]:\Cbc[n]^\group\to\Cbc[n+1]^\group)$ and the \highlight{bounded coboundaries} by $\Bbc[n] := \im(\codiff[n-1]:\Cbc[n-1]^\group\to\Cbc[n]^\group)$ for $n\geq 1$ and $\Bbc[0] = 0$, then we can define
		\[	\HHbc[n] := \frac{\Zbc[n]}{\Bbc[n]}. \]
	\end{definition}
	
	
	
\subsection{How to Compute Bounded Cohomology} 
	
\subsubsection{Relatively Injective Resolutions}\label{sec:rel-inj-res}
	
	We will now describe how the classical homological approach of calculating group cohomology via injective resolutions carries over to the bounded cohomological setting. This approach was introduced by \citeauthor{Ivanov1987FoundationsOT} in \cite{Ivanov1987FoundationsOT}. See also \cite{2017Frigerio} for further information.\\	
	In the following a bounded $\group$-equivariant, $\ring$-linear map $\iota:U\to W$ between normed \groupmodule\s will be called a \highlight{$\group$-morphism}.\\
	
	\begin{remark}\label{rem:equivariance-preserves-invariants}
		If $\iota:U\to W$ is a $\group$-equivariant map, then this induces (by restriction) a map $\iota:U^\group\to W^\group$ between the corresponding $\group$-invariant submodules. Moreover, note that $U^\group$ is a closed subgroup of $U$. Therefore, in case $\iota:U\to W$ is bounded, so is the induced map.
	\end{remark}

	Further, a $\group$-morphism  $\iota:U\to W$ between normed \groupmodule\s is called \highlight{strongly injective} if there is a bounded $\ring$-linear map $\sigma:W\to U$, such that $\norm[]{\sigma} \leq 1$ and which is a verse to $\iota$, i.e. $\sigma\circ\iota = id_{U}$. Note that in general $\sigma$ need not be $\group$-equivariant.
	
	\begin{definition}[relatively injective module]
		A normed \groupmodule $\module$ is \highlight{relatively injective} if for every strongly injective $\group$-morphism $\iota:U\to W$ between normed \groupmodule\s $U$ and $W$, and every $\group$-morphism $\alpha:U\to \module$, there is a $\group$-morphism $\beta:W\to \module$ such that $\norm[]{\beta}\leq\norm[]{\alpha}$ and $\beta\circ\iota = \alpha$.
		\[
			\begin{tikzcd}
				U && W \\
				\module
				\arrow["\beta", dashed, from=1-3, to=2-1]
				\arrow["\alpha"', from=1-1, to=2-1]
				\arrow["\iota"', from=1-1, to=1-3]
				\arrow["\sigma"{description},bend right, from=1-3, to=1-1]
			\end{tikzcd}
		\]		
	\end{definition}

	For a given normed \groupmodule $\module$, we have already seen many relatively injective \groupmodule\s, namely $\Cbc[n]$ for all $n\geq0$:
	
	\begin{example}\label{exp:standard-rel-inj-module}
		Fix $n\geq0$. The \groupmodule $\Cb[n]$ is relatively injective.\\
		Indeed, let $\iota:U\to W$ be a $\group$-morphism between \groupmodule\s $U$ and $W$, and denote by $\sigma:W\to \module$ the corresponding bounded $\ring$-linear map. For a given $\group$-morphism $\alpha:U\to \Cc[n]$ define $\beta:W\to \Cc[n]$ as follows:
		\[ \beta(w)(g_0,\dots,g_n) := \alpha(g_0^{-1}\sigma(g_0w))(g_0,\dots,g_n) \]
		for all $g_0,\dots,g_n\in\group$ and $w\in W$.
		
		Observe that 
		\begin{align*}
			\beta(\iota(v))(g_0,\dots,g_n) &= \alpha(g_0^{-1}\sigma(g_0\iota(v)))(g_0,\dots,g_n)= \alpha(g_0^{-1}\sigma(\iota(g_0v)))(g_0,\dots,g_n) \\
			&= \alpha(g_0^{-1}g_0v)(g_0,\dots,g_n) = \alpha(v)(g_0,\dots,g_n) 
		\end{align*}
		for all $g_0,\dots,g_n\in\group$ and $v\in \module$. The second equality follows from the $\group$-equivariance of $\iota$. Hence $\beta\circ\iota = \alpha$. Moreover, it is easy to see that $\norm[]{\beta} \leq \norm[]{\alpha}$. Finally, we verify that $\beta$ is $\group$-equivariant:
		\begin{align*}
		\beta(gw)(g_0,\dots,g_n) &= \alpha(g_0^{-1}\sigma(g_0gw))(g_0,\dots,g_n) =  \alpha(g(g_0g)^{-1}\sigma(g_0gw)))(g_0,\dots,g_n) \\ 
		&=(g\cdot\alpha((g_0g)^{-1}\sigma(g_0gw)))(g_0,\dots,g_n) \\
		&= g\alpha((g_0g)^{-1}\sigma(g_0gw))(g^{-1}g_0,\dots,g^{-1}g_n) \\ &= g\beta(w)(g^{-1}g_0,\dots,g^{-1}g_n) =  (g\cdot\beta(w))(g_0,\dots,g_n)
		\end{align*}
		for all $g,g_0\in\group$ and $w\in W$. Here we used that $\alpha$ is $\group$-equivariant in the third step.
	\end{example}

	\begin{definition}[\groupcomplex]
		A normed \highlight{\groupcomplex} is a sequence $(\module^n)_{n\geq0}$ of normed \groupmodule\s and $\group$-morphisms $d^n:\module^n\to \module^{n+1}$, s.t. $d^{n+1}\circ d^n = 0$ for all $n\geq0$:
		\[ \ChainComplex[0]{\module^0, \module^1, \module^2}[][d^][0] \]
		We denote this normed \groupcomplex by $(\module^\bullet,d^\bullet)$.
	\end{definition}
	
	For a normed \groupcomplex $(\module^\bullet,d^\bullet)$, we define its bounded cohomology $\HHb[][\module^\bullet]$ as follows. Set $\Zb[n] := \ker(d^n)\cap(\module^n)^\group$ and $\Bb[n] := d^{n-1}((\module^{n-1})^\group)$ and define
	\[ \HHb[n][\module^\bullet] := \frac{\Zb[n]}{\Bb[n]}. \]
	Note that the norm on $\module^n$ restricts to a norm on $\Zb[n]$. However, this induces a priori only a seminorm on $\HHb[n][\module^\bullet]$, which is a norm if and only if $\Bb[n]$ is closed in $\Zb[n]$.
	
	\begin{remark}
		Observe that of course $(\Cbc,\codiff)$ is a normed \groupcomplex and that the two notations for bounded cohomology coincide 
		\[ \Zbc[n] = \Zb[n][\Cbc], \quad \Bbc[n] = \Bb[n][\Cbc], \quad \HHbc[n] = \HHb[n][\Cbc] \]
		for all $n\geq0$.\\
		The seminorm on $\HHbc$ is called \highlight{canonical seminorm}.
	\end{remark}
	
	\begin{definition}[resolution]
		A \highlight{resolution} of a normed \groupmodule $\module$ is a \groupcomplex $(\module^\bullet,d^\bullet)$ together with a $\group$-morphism $\epsilon:\module\to \module^0$, such that the augmented sequence
		\[ \ALES{\module}[]{\module^0, \module^1, \module^2}[d^] \]
		is exact. The function $\epsilon$ is called augmentation map.\\
		We denote such a resolution of $\module$ with \resolution.\\
	\end{definition}

	We say that \resolution is a \highlight{strong} resolution, if there is a contracting homotopy $\Theta_{\bullet}$ with $\norm[]{\Theta_n}\leq 1$ for all $n\geq0$. More precisely, it is a sequence of (norm one) bounded linear operators $\Theta_{n}:\module^n\to\module^{n-1}$ for $n\geq0$ (here $\module^{-1} := \module$), such that $d^{n-1}\Theta_{n} + \Theta_{n+1}d^{n} = id_{\module^{n}}$ for $n\geq0$  (here $d^{-1} := \epsilon$).
	\[ \revChainComplex[NO]{\module, \module^0, \module^1, \module^2}[][\Theta_]  \]
	Finally, we say that a resolution \resolution is \highlight{relatively injective}, if $\module^n$ is relatively injective for all $n\geq1$

	\begin{lemma}\label{lem:standard-resolution-of-module}
		For a \groupmodule $\module$ define $\epsilon:\module\to\Cc[0]$ by $\tau(v)(g) = v$ for all $g\in\group, v\in \module$. Then \resolution[V][\tau][\Cbc][{\codiff[]}] is a relatively injective strong resolution of $\module$.\\
		We call \resolution[V][\tau][\Cbc][{\codiff[]}] the standard resolution of $\module$.
	\end{lemma}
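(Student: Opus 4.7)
The plan is to verify the three requirements in turn: that each cochain module $C_b^n(G;V)$ is relatively injective, that the augmented sequence is exact, and that there exists a contracting homotopy of norm at most one. The relative injectivity has already been established for every $n \geq 0$ in the preceding example \Cref{exp:standard-rel-inj-module}, so I would simply invoke it. What remains is to produce the contracting homotopy, from which exactness of the augmented complex will follow automatically.

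First I would check that the augmentation $\tau : V \to C_b^0(G;V)$, given by the assignment of constant functions $\tau(v)(g) = v$, is a bounded $R$-linear $G$-equivariant map of norm one: boundedness and $R$-linearity are immediate, and $G$-equivariance holds because $(g \cdot \tau(v))(g_0) = g \cdot \tau(v)(g^{-1}g_0) = g \cdot v = \tau(g \cdot v)(g_0)$ using that the $G$-action on $V$ is the $G$-action defining the module structure.

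The main content is the construction of the contracting homotopy. For $n \geq 1$ I would define $\Theta_n : C_b^n(G;V) \to C_b^{n-1}(G;V)$ and $\Theta_0 : C_b^0(G;V) \to V$ by
\[ \Theta_n(f)(g_0,\dots,g_{n-1}) := f(e,g_0,\dots,g_{n-1}), \qquad \Theta_0(f) := f(e). \]
Each $\Theta_n$ is manifestly bounded $R$-linear of norm at most one; note that these maps are not $G$-equivariant, which is allowed in the definition of a strong resolution. A short index-shifting calculation using the explicit formula for the simplicial differential shows
\[ d^n(f)(e,g_0,\dots,g_n) = f(g_0,\dots,g_n) - \sum_{i=0}^{n} (-1)^i f(e,g_0,\dots,\widehat{g}_i,\dots,g_n), \]
and the right-hand sum is precisely $(d^{n-1}\Theta_n f)(g_0,\dots,g_n)$. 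Rearranging yields the homotopy identity $d^{n-1}\Theta_n + \Theta_{n+1} d^n = \mathrm{id}$ for $n \geq 0$, where I set $d^{-1} := \tau$; the case $n = 0$ is the computation $\Theta_0(\tau(v)) = \tau(v)(e) = v$.

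The only subtlety I anticipate is bookkeeping with the alternating signs in the simplicial differential, in particular ensuring that the term dropping $e$ from $(e,g_0,\dots,g_n)$ has the correct sign to cancel with the image of $\Theta_{n+1}$. Once the homotopy relation is established, exactness of the augmented sequence is automatic: any cocycle $f$ satisfies $f = d^{n-1}\Theta_n f$, hence is a coboundary, and $\tau$ is injective because $\Theta_0 \circ \tau = \mathrm{id}_V$. Combined with the relative injectivity of the $C_b^n(G;V)$ from \Cref{exp:standard-rel-inj-module}, this gives that $\tau : V \to C_b^\bullet(G;V)$ is a relatively injective strong resolution.
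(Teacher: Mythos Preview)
Your proposal is correct and follows essentially the same approach as the paper: invoke \Cref{exp:standard-rel-inj-module} for relative injectivity and write down the contracting homotopy $\Theta_0(f)=f(e)$, $\Theta_n(f)(g_0,\dots,g_{n-1})=f(e,g_0,\dots,g_{n-1})$, which is exactly what the paper does (using $1$ for the identity). In fact your write-up is more detailed than the paper's, which simply states the formulas and defers the verification to \cite{2017Frigerio}. One small slip: what you label ``the case $n=0$'' (namely $\Theta_0\tau=\mathrm{id}_V$) is not the homotopy identity $d^{-1}\Theta_0+\Theta_1 d^0=\mathrm{id}_{C_b^0}$ at level $0$ but rather the retraction giving injectivity of $\tau$; the $n=0$ identity is however already covered by your general index-shifting computation once you set $d^{-1}=\tau$, so this does not affect correctness.
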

	\begin{proof}
		We already know that $\Cbc[n]$ is relatively injective for all $n\geq0$. Hence, we only have to find a contracting homotopy $\Theta_{\bullet}$ with $\norm[]{\Theta_n}\leq 1$ for all $n\geq 0$.\\
		It can be checked that $\Theta_0:\Cbc[0]\to\module$, $\Theta_0(f) = f(1) \in V$, together with $\Theta_n:\Cbc[n]\to\Cbc[n-1]$, $\Theta_n(f)(g_1,\dots,g_n) = f(1,g_1,\dots,g_n)$ for all $g_1,\dots,g_n\in\group$ and $f\in\Cbc[n]$, for $n\geq1$ gives such a contracting homotopy (see \cite[Section 4.7]{2017Frigerio} for more details).
	\end{proof}

	The following is a result in homological algebra established in \cite{Ivanov1987FoundationsOT,2017Frigerio}:
	
	\begin{proposition}\label{prop:extension-between-resolutions}
		Let $\alpha:\module\to W$ be a $\group$-morphism of normed \groupmodule\s. If \resolution is a relatively injective strong resolution and \resolution[W][\eta][][e] is a relatively injective resolution, then we can extend $\alpha$ to a bounded $\group$-invariant chain map $\alpha^\bullet:\module^\bullet\to W^\bullet$.\\
		Moreover, any two such extensions are chain homotopic via bounded $\group$-equivariant maps. In particular, $\alpha$ induces a well-defined map
		\[ \HHbf[\alpha]:\HHb[][\module^\bullet]\to\HHb[][W^\bullet]\]
		in bounded cohomology, which depends only on $\alpha$ and not on the extension.
	\end{proposition}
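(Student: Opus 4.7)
The proof will follow the pattern of the Fundamental Lemma of homological algebra, adapted to the normed, relatively injective setting. The plan is to construct $\alpha^\bullet$ inductively, using at each degree the strong-resolution structure of $V^\bullet$ to manufacture a strongly injective map along which the relative injectivity of the terms $W^n$ lets us extend.

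First I would construct $\alpha^0 \colon V^0 \to W^0$. The augmentation $\epsilon \colon V \to V^0$ is strongly injective: from the homotopy identity $\epsilon \Theta_0 + \Theta_1 d^0 = id_{V^0}$ together with $d^0 \circ \epsilon = 0$ and the injectivity of $\epsilon$, one deduces $\Theta_0 \circ \epsilon = id_V$, and since $\|\Theta_0\| \leq 1$ this exhibits $\Theta_0$ as a norm-one $\ring$-linear left inverse. Relative injectivity of $W^0$ then provides an extension $\alpha^0 \colon V^0 \to W^0$ of the $\group$-morphism $\eta \circ \alpha \colon V \to W^0$ satisfying $\alpha^0 \circ \epsilon = \eta \circ \alpha$.

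For the inductive step, suppose $\alpha^0,\dots,\alpha^n$ have been built with $\alpha^{k+1} \circ d^k = e^k \circ \alpha^k$ for $k < n$. Consider $\psi := e^n \circ \alpha^n \colon V^n \to W^{n+1}$. A short computation, using the induction hypothesis together with $e^n \circ e^{n-1} = 0$, yields $\psi \circ d^{n-1} = 0$, so $\psi$ vanishes on $\im(d^{n-1}) = \ker(d^n)$ by exactness; hence $\psi$ descends to a $\group$-morphism $\bar\psi \colon V^n/\ker(d^n) \to W^{n+1}$. The induced map $\overline{d^n} \colon V^n/\ker(d^n) \to V^{n+1}$ is strongly injective, because the homotopy identity $\Theta_{n+1} \circ d^n + d^{n-1} \circ \Theta_n = id_{V^n}$ shows that the composition of $\Theta_{n+1}$ with the quotient map $V^n \to V^n/\ker(d^n)$ is a norm-one $\ring$-linear retraction of $\overline{d^n}$. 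Applying the relative injectivity of $W^{n+1}$ to $\overline{d^n}$ and $\bar\psi$ yields the desired $\alpha^{n+1} \colon V^{n+1} \to W^{n+1}$ with $\alpha^{n+1} \circ d^n = e^n \circ \alpha^n$.

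For the uniqueness statement, given two extensions $\alpha^\bullet$ and $\beta^\bullet$, I would build a bounded $\group$-equivariant chain homotopy $h^n \colon V^n \to W^{n-1}$ satisfying $\alpha^n - \beta^n = e^{n-1} \circ h^n + h^{n+1} \circ d^n$ by essentially the same induction: start with $h^0 = 0$ (compatible since $(\alpha^0 - \beta^0) \circ \epsilon = 0$), and at each stage the discrepancy $\alpha^n - \beta^n - e^{n-1} \circ h^n$ vanishes on $\ker(d^n)$, so the same factorization-and-extension step produces $h^{n+1}$. Restricting the resulting maps to the $\group$-invariant subcomplexes (using that equivariance preserves invariants) gives bounded chain maps and chain homotopies between $(V^\bullet)^\group$ and $(W^\bullet)^\group$, and hence a well-defined induced map $\HHbf[\alpha] \colon \HHb[][V^\bullet] \to \HHb[][W^\bullet]$. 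The main technical obstacle will be a careful tracking of norms through the quotient constructions, to verify that the contracting homotopy really supplies a norm-one retraction at each step and that the norm bounds demanded by relative injectivity propagate cleanly through the induction.
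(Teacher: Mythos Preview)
The paper does not give its own proof of this proposition; it is stated as a result from homological algebra and attributed to \cite{Ivanov1987FoundationsOT,2017Frigerio}. Your argument is correct and is precisely the standard proof that appears in those references: the inductive extension using the relative injectivity of each $W^n$, with the contracting homotopy of $V^\bullet$ supplying the strongly injective maps (via the quotient $V^n/\ker(d^n)$) needed at each stage, followed by the analogous construction of the chain homotopy for uniqueness.
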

	
	From this we can immediately deduce the following corollary, which enables another way of calculating the bounded cohomology of a group $\group$ besides the approach described in \Cref{sec:basic-def-of-BC}.
	
	\begin{corollary}\label{cor:BC-via-resolution}
		If \resolution is a relatively injective strong resolution of a normed \groupmodule $\module$, then for every $n\geq0$ there is a canonical isomorphism 
		\[ \HHb[n][\module^\bullet] \cong \HHbc[n]  \]
		in bounded cohomology.\\
		In particular $\HHb[n][\module^\bullet]$ depends only on the group $\group$ and not on the $\groupring$-complex $(\module^\bullet,d^\bullet)$.
		Further, this isomorphism is bi-Lipschitz with respect to the seminorms on $\HHb[n][\module^\bullet]$ and $\HHbc[n]$.
	\end{corollary}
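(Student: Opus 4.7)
The plan is to apply \Cref{prop:extension-between-resolutions} twice to construct mutually inverse maps in bounded cohomology between $\HHb^n[\module^\bullet]$ and the bounded cohomology computed from the standard resolution, which by definition equals $\HHbc^n[\group][\module]$. The key observation is that both the given resolution \resolution and the standard resolution \resolution[V][\tau][\Cbc][{\codiff[]}] (from \Cref{lem:standard-resolution-of-module}) are relatively injective strong resolutions of the same normed \groupmodule $\module$, so \Cref{prop:extension-between-resolutions} applies in either direction with $\alpha=\id_\module$.

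Concretely, I would first extend $\id_\module:\module\to\module$ to a bounded $\group$-equivariant chain map $\alpha^\bullet:\module^\bullet\to\Cbc^\bullet$ (using that $\module^\bullet$ is strong and $\Cbc^\bullet$ is relatively injective), and, symmetrically, to a bounded $\group$-equivariant chain map $\beta^\bullet:\Cbc^\bullet\to\module^\bullet$ (using that $\Cbc^\bullet$ is strong and $\module^\bullet$ is relatively injective). Then $\beta^\bullet\circ\alpha^\bullet$ and $\id_{\module^\bullet}$ are both extensions of $\id_\module$ between the same resolutions, and likewise for $\alpha^\bullet\circ\beta^\bullet$ and $\id_{\Cbc^\bullet}$. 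The uniqueness clause of \Cref{prop:extension-between-resolutions} provides bounded $\group$-equivariant chain homotopies between these pairs, so after passing to $\group$-invariants and then to cohomology the induced maps $\HHbf[\alpha^\bullet]$ and $\HHbf[\beta^\bullet]$ become mutually inverse. This yields the canonical isomorphism
\[ \HHb[n][\module^\bullet] \cong \HHbc[n] \qquad \forall n\geq 0. \]
The naturality (hence canonicity) of this isomorphism follows because the whole construction depends only on $\id_\module$, not on the chosen extensions.

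For the bi-Lipschitz statement I would simply track norms. Both $\alpha^\bullet$ and $\beta^\bullet$ are bounded $\group$-morphisms in each degree, and they restrict to bounded maps between the $\group$-invariant subcomplexes (\Cref{rem:equivariance-preserves-invariants}). Therefore the induced maps $\HHbf[\alpha^\bullet]$ and $\HHbf[\beta^\bullet]$ are bounded with respect to the quotient seminorms, with operator norms bounded by $\sup_n\norm{\alpha^n}$ and $\sup_n\norm{\beta^n}$ respectively. Since these maps are inverses in each degree, the isomorphism is bi-Lipschitz.

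The only subtle point I anticipate is checking that the chain homotopies produced by \Cref{prop:extension-between-resolutions} are bounded and $\group$-equivariant, so that they genuinely descend to the invariants and give equality in (bounded) cohomology there; but this is exactly what the proposition guarantees. Hence nothing beyond a careful bookkeeping of the two extensions and the homotopies is required, and the argument is essentially the bounded analogue of the standard comparison theorem for injective resolutions in homological algebra.
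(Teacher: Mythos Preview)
Your proposal is correct and matches the paper's proof essentially step for step: extend $\id_\module$ in both directions via \Cref{prop:extension-between-resolutions}, use the uniqueness-up-to-homotopy clause to see that the compositions induce identities in cohomology, and read off bi-Lipschitz from the boundedness of $\alpha^\bullet$ and $\beta^\bullet$. The only minor imprecision is the appeal to $\sup_n\norm{\alpha^n}$ for the Lipschitz constant---the corollary is a degree-wise statement, so you only need $\norm{\alpha^n}$ and $\norm{\beta^n}$ finite for each fixed $n$, which is automatic.
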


	Since this is an important result in bounded cohomology, the proof is elaborated in more detail. 
	\begin{proof}
		First of all, due to \Cref{lem:standard-resolution-of-module} we know that \resolution[\module][\tau][\Cbc][{\codiff[]}] is a relatively injective strong resolution of $\module$ as well. Therefore, by \Cref{prop:extension-between-resolutions} we can extend the identity $id:\module\to\module$ (which is a $\group$-equivariant bounded map) to a bounded $\group$-invariant chain map $\alpha^\bullet:\module^\bullet\to\Cbc$.\\
		\[
			\begin{tikzcd}
			V & {V^0} & {V^1} & {V^2} & {V^3} & \cdots \\
			V & {\Cbc[0]} & {\Cbc[1]} & {\Cbc[2]} & {\Cbc[3]} & \cdots
			\arrow["\epsilon", from=1-1, to=1-2]
			\arrow["{d^0}", from=1-2, to=1-3]
			\arrow["{d^1}", from=1-3, to=1-4]
			\arrow["{d^2}", from=1-4, to=1-5]
			\arrow["{d^3}", from=1-5, to=1-6]
			\arrow["\tau", from=2-1, to=2-2]
			\arrow["{\codiff[0]}", from=2-2, to=2-3]
			\arrow["{\codiff[1]}", from=2-3, to=2-4]
			\arrow["{\codiff[2]}", from=2-4, to=2-5]
			\arrow["{\codiff[3]}", from=2-5, to=2-6]
			\arrow["id", from=1-1, to=2-1]
			\arrow["{\alpha^0}", from=1-2, to=2-2]
			\arrow["{\alpha^1}", from=1-3, to=2-3]
			\arrow["{\alpha^2}", from=1-4, to=2-4]
			\arrow["{\alpha^3}", from=1-5, to=2-5]
			\end{tikzcd}
		\]
		
		If $\Theta^\bullet$ is a contracting homotopy of $\module^\bullet$,
		\[ \revChainComplex[NO]{\module, \module^0, \module^1, \module^2}[][\Theta_], \]
		considering \Cref{exp:standard-rel-inj-module}, we can even give an inductive formula for $\alpha^\bullet$ as follows:
		\begin{align*}
			\alpha^0(v_0)(g_0) &:= \tau\circ id(g_0\Theta_0(g_0^{-1}v_0))(g_0) = g_0\Theta_0(g_0^{-1}v_0)\\
			\alpha^n(v_n)(g_0,\dots,g_n) &:= \codiff[n-1]\circ \alpha^{n-1}(g_0\Theta_n(g_0^{-1}v_n))(g_0,\dots,g_n).
		\end{align*}
		These formulas can be depicted by the following two diagrams, where $\group\cdot\Theta_0$ and $\group\cdot\Theta_n$ represent the action of $\group$ on $\Theta_0$ by the element $\group_0$ from $\group$, respectively on $\Theta_n$ by the first coordinate $\group_0$ of the element $(g_0,\dots,g_n)$ from $\group^{n+1}$
		\[
			\begin{tikzcd}[execute at end picture={\draw[->] (-4.5,1) to[out=150,in=30,loop,looseness=2] node[midway,above]{\tiny{$g_0$}} (-3.6,1); \draw[->] (2.5,1) to[out=150,in=30,loop,looseness=2] node[midway,above]{\tiny{$g_0$}} (3.4,1);}]
				V^{ } & {V^0} && {V^{n-1}} & {V^n} \\
				V & {\Cbc[0]} && {\Cbc[n-1]} & {\Cbc[n]}
				\arrow["id"', from=1-1, to=2-1]
				\arrow["{\group\cdot\Theta_0}"', from=1-2, to=1-1]
				\arrow["\tau"', from=2-1, to=2-2]
				\arrow["{\alpha^0}", dashed, from=1-2, to=2-2]
				\arrow["{\alpha^{n-1}}"', from=1-4, to=2-4]
				\arrow["{\group\cdot\Theta_n}"', from=1-5, to=1-4]
				\arrow["{\alpha^n}", dashed, from=1-5, to=2-5]
				\arrow["{\codiff[n-1]}"', from=2-4, to=2-5]
			\end{tikzcd}
		\]

		Similarly, the identity $id:\module\to\module$ can be extended to a bounded $\group$-invariant chain map $\beta^\bullet:\Cbc\to\module^\bullet$
		\[
			\begin{tikzcd}
				V & {\Cbc[0]} & {\Cbc[1]} & {\Cbc[2]} & {\Cbc[3]} & \cdots\\
				V & {V^0} & {V^1} & {V^2} & {V^3} & \cdots
				\arrow["\tau", from=1-1, to=1-2]
				\arrow["{ \codiff[0]}", from=1-2, to=1-3]
				\arrow["{\codiff[1]}", from=1-3, to=1-4]
				\arrow["{\codiff[2]}", from=1-4, to=1-5]
				\arrow["{\codiff[3]}", from=1-5, to=1-6]
				\arrow["\epsilon", from=2-1, to=2-2]
				\arrow["{d^0}", from=2-2, to=2-3]
				\arrow["{d^1}", from=2-3, to=2-4]
				\arrow["{d^2}", from=2-4, to=2-5]
				\arrow["{d^3}", from=2-5, to=2-6]
				\arrow["id", from=1-1, to=2-1]
				\arrow["{\beta^0}", from=1-2, to=2-2]
				\arrow["{\beta^1}", from=1-3, to=2-3]
				\arrow["{\beta^2}", from=1-4, to=2-4]
				\arrow["{\beta^3}", from=1-5, to=2-5]
			\end{tikzcd}
		\]
		
		The identity $id:\module\to\module$ clearly induces the bounded $\group$-invariant chain map $id_{\module^\bullet}^\bullet:\module^\bullet\to\module^\bullet$. However, since $\beta^0\circ\alpha^0$ is the identity on $\module$ too, the chain maps $id_{\module^\bullet}$ and $\beta^\bullet\circ\alpha^\bullet$ are chain homotopic (see \Cref{prop:extension-between-resolutions}) and hence induce the same map in bounded cohomology, that is $\HHbf[\beta^\bullet]\circ\HHbf[\alpha^\bullet] = \HHbf[\beta^\bullet\circ\alpha^\bullet] = \HHbf[id_{\module^\bullet}^\bullet] = id_{\HHb[][\module^\bullet]}$.\\
		Similarly we can deduce that $\HHbf[\alpha^\bullet]\circ\HHbf[\beta^\bullet] = id_{\HHbc}$. Hence, $\HHbf[\alpha^\bullet]$ and $\HHbf[\beta^\bullet]$ are isomorphisms and inverses of one another.\\
		Hence we get the desired isomorphism
		\[ \HHbf[\alpha^n]:\HHb[n][\module^\bullet] \xrightarrow{\cong} \HHbc[n] \quad \text{for all } n\geq0 \]
		and the map is bi-Lipschitz, since $\HHbf[\alpha^n]$ is bounded and it has a bounded inverse $\HHbf[\beta^n]$.
	\end{proof}

	Note that every relatively injective strong resolution \resolution of a \groupmodule $\module$ induces a possibly non-isometric seminorm on $\HHbc$. However, since the isomorphisms constructed in the previous \namecref{cor:BC-via-resolution} are bi-Lipschitz maps, all of those seminorms are equivalent. Nevertheless, it is an interesting question to ask which resolutions calculate the canonical seminorm exactly and how to characterize these distinct resolutions. It is possible to strengthen the result of \Cref{cor:BC-via-resolution} \cite[Theorem 7.3.1]{monod2001continuous},\cite[Theorem 4.16]{2017Frigerio} and conclude that the constructed isomorphism does not increase the seminorm. Thus the canonical seminorm on $\HHbc$ is the infimum of all those seminorms which are induced by relatively injective strong resolutions of the \groupmodule $\module$.
	
	\begin{theorem}\cite[Theorem 7.3.1]{monod2001continuous}\label{thm:norm-non-increasing-map-between-resolutions}
		Let $\module$ be a normed \groupmodule\s. If \resolution is a relatively injective strong resolution, then the identity map $id$ on $V$ can be extended to a bounded $\group$-invariant chain map $\alpha^\bullet:\module^\bullet\to \Cbc$ such that $\norm{\alpha^n}\leq1$ for all $n\geq0$.\\
		In particular, $\alpha$ induces a well-defined \highlight{norm non-increasing} map
		\[ \HHbf[\alpha]:\HHb[][\module^\bullet]\to\HHbc\]
		in bounded cohomology, which depends only on $\alpha$ and not on the extension.
	\end{theorem}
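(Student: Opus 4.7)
The existence of a bounded $\group$-equivariant chain map $\alpha^\bullet$ extending $id_V$, together with well-definedness of the induced map $\HHbf[\alpha]$ and its independence from the choice of extension, have already been established in \Cref{prop:extension-between-resolutions}. The new content here is the norm bound $\norm[]{\alpha^n}\leq 1$ for every $n\geq 0$, which does not come out of the explicit formula used in the proof of \Cref{cor:BC-via-resolution}: there each inductive step picks up a factor of $\norm[]{\codiff[n-1]}=n$, yielding at best $\norm[]{\alpha^n}\leq n!$. The plan is therefore to redo the inductive construction, applying relative injectivity of $\Cbc[n]$ at each step in a way that is norm non-increasing.

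For the base case $n=0$, the augmentation $\tau:\module\to\Cbc[0]$, $\tau(v)(g)=v$, is a $\group$-morphism with $\norm[]{\tau}\leq 1$, and the augmentation $\epsilon:\module\to\module^0$ is strongly injective because $\Theta_0$ is a norm-$\leq 1$ $\ring$-linear left inverse. Relative injectivity of $\Cbc[0]$ (\Cref{exp:standard-rel-inj-module}) then supplies $\alpha^0:\module^0\to\Cbc[0]$ with $\alpha^0\circ\epsilon=\tau$ and $\norm[]{\alpha^0}\leq\norm[]{\tau}\leq 1$.

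For the inductive step, suppose $\alpha^0,\ldots,\alpha^{n-1}$ have been constructed with $\norm[]{\alpha^k}\leq 1$ and $\alpha^k\circ d^{k-1}=\codiff[k-1]\circ\alpha^{k-1}$ for $k\leq n-1$. The idea is to apply relative injectivity of $\Cbc[n]$ once more, but to a reformulated extension problem whose input already has norm $\leq 1$. Directly extending $\codiff[n-1]\circ\alpha^{n-1}$ along $d^{n-1}$ would re-introduce the factor $\norm[]{\codiff[n-1]}=n$, so instead one uses the contracting homotopy $\Theta_n$ of the source resolution together with the shift homotopy of the standard resolution $\Cbc[\bullet]$ to repackage the problem as a norm-$\leq 1$ $\group$-morphism defined on a subobject (or auxiliary module) that maps into $\module^n$ in a strongly injective fashion; the essential ingredient is exactness, in the form $\im(d^{n-1})=\ker(d^n)$ together with $d^{n-1}\circ\Theta_n|_{\ker(d^n)}=id$, which allows one to transport the data to a setting compatible with relative injectivity. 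The latter property of $\Cbc[n]$ then delivers $\alpha^n:\module^n\to\Cbc[n]$ with $\norm[]{\alpha^n}\leq 1$ and $\alpha^n\circ d^{n-1}=\codiff[n-1]\circ\alpha^{n-1}$, completing the induction.

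Given such a chain map $\alpha^\bullet$, the induced map in cohomology is automatically norm non-increasing: any class $[c]\in\HHb[n][\module^\bullet]$ admits representative cocycles $c$ with $\norm[]{c}$ arbitrarily close to the seminorm $\norm[]{[c]}$, and $\alpha^n(c)$ represents $\HHbf[\alpha]([c])$ with $\norm[]{\alpha^n(c)}\leq\norm[]{\alpha^n}\cdot\norm[]{c}\leq\norm[]{c}$; taking the infimum over representatives gives the claim. The main obstacle I expect is the inductive step above, specifically the bookkeeping needed to transform the naive extension problem into one to which relative injectivity applies with the norm preserved; this is the place where the detailed structure of both resolutions, and not merely the existence of extensions as in \Cref{prop:extension-between-resolutions}, is actually used.
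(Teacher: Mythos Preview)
The paper does not actually prove this theorem: immediately after the statement it only remarks that ``It is possible (\cite[Theorem 7.3.1]{monod2001continuous}) to define $\alpha^n$ inductively such that the norm $\norm{\alpha^n}$ is always smaller or equal than $1$,'' and leaves all details to Monod's book. Your proposal therefore already goes considerably further than the paper itself, and your overall framing---inductive construction via relative injectivity of $\Cbc[n]$, with the contracting homotopy $\Theta_\bullet$ supplying the norm control that the naive formula from \Cref{cor:BC-via-resolution} lacks---is correct and matches what Monod does.

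That said, your inductive step is not yet a proof: phrases like ``repackage the problem as a norm-$\leq 1$ $\group$-morphism defined on a subobject (or auxiliary module)'' and ``transport the data to a setting compatible with relative injectivity'' describe an intention rather than a construction. You correctly identify this as the main obstacle. The actual mechanism in Monod's argument is not an application of relative injectivity to an abstract extension problem at each step, but rather an explicit formula for $\alpha^n$ built from $\Theta_n$ and the $\group$-action (in the spirit of the formula displayed in the proof of \Cref{cor:BC-via-resolution}, but organised so that only $\Theta_n$ with $\norm[]{\Theta_n}\leq 1$ appears, not $\codiff[n-1]$). Since the paper defers entirely to \cite{monod2001continuous} here, your sketch is adequate for the purposes of this text, but if you want a self-contained argument you should consult Monod's proof for the precise inductive formula rather than trying to manufacture it from the abstract lifting property alone.
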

	
	It is possible (\cite[Theorem 7.3.1]{monod2001continuous}) to define $\alpha^n$ inductively such that the norm $\norm{\alpha^n}$ is always smaller or equal than $1$. This will then imply that the map $\HHbf(\alpha)$ in bounded cohomology induced by $\alpha$ is indeed norm non-increasing.
	
	\begin{corollary}\cite[Theorem 4.17]{2017Frigerio}\label{cor:norm-non-increasing-map-between-resolutions}
		Let $\module$ be a normed \groupmodule\s and \resolution a relatively injective strong resolution. If the identity map $id$ on $V$ can be extended to a bounded $\group$-invariant chain map $\alpha^\bullet:\Cbc\to\module^\bullet$ such that $\norm{\alpha^n}\leq1$ for all $n\geq0$, then $\alpha^\bullet$ induces an \highlight{isometric} isomorphism
		\[ \HHbf[\alpha]:\HHbc\to\HHb[][\module^\bullet] \]
		in bounded cohomology.\\
		In particular, resolutions \resolution with this property calculate the canonical seminorm exactly.
	\end{corollary}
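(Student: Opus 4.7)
The plan is to exploit the symmetry between $\alpha^\bullet$ and its ``reverse companion'' provided by \Cref{thm:norm-non-increasing-map-between-resolutions}. First, since \resolution is a relatively injective strong resolution and \resolution[\module][\tau][\Cbc][{\codiff[]}] is also a relatively injective strong resolution (by \Cref{lem:standard-resolution-of-module}), \Cref{thm:norm-non-increasing-map-between-resolutions} applied with the roles reversed yields a bounded $\group$-equivariant chain map $\beta^\bullet:\module^\bullet\to\Cbc$ extending $id_\module$ and satisfying $\norm{\beta^n}\leq 1$ for every $n\geq 0$. Consequently, the induced map $\HHbf[\beta]:\HHb[][\module^\bullet]\to\HHbc$ is norm non-increasing, and by hypothesis so is $\HHbf[\alpha]:\HHbc\to\HHb[][\module^\bullet]$.

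Second, I would argue that $\HHbf[\alpha]$ and $\HHbf[\beta]$ are mutual inverses. Both $\beta^\bullet\circ\alpha^\bullet$ and $\alpha^\bullet\circ\beta^\bullet$ are bounded $\group$-equivariant chain maps extending $id_\module$ on the respective resolutions $\Cbc$ and $\module^\bullet$; so are the identity chain maps $id_{\Cbc}^\bullet$ and $id_{\module^\bullet}^\bullet$. The uniqueness part of \Cref{prop:extension-between-resolutions} therefore forces chain homotopies (via bounded $\group$-equivariant maps) between these extensions, so they agree on bounded cohomology. This is precisely the argument carried out in the proof of \Cref{cor:BC-via-resolution}, and it gives
\[ \HHbf[\beta]\circ\HHbf[\alpha] = id_{\HHbc}, \qquad \HHbf[\alpha]\circ\HHbf[\beta] = id_{\HHb[][\module^\bullet]}. \]

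Third, I would combine the two ingredients to get isometry. For any class $[c]\in\HHbc[n]$,
\[ \norm{\HHbf[\alpha]([c])} \leq \norm{[c]} = \norm{\HHbf[\beta](\HHbf[\alpha]([c]))} \leq \norm{\HHbf[\alpha]([c])}, \]
so equality holds throughout; this shows $\HHbf[\alpha]$ preserves the (semi)norm. Exchanging the roles of $\alpha$ and $\beta$ gives the analogous statement for $\HHbf[\beta]$, completing the proof that $\HHbf[\alpha]$ is an isometric isomorphism.

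There is essentially no obstacle here beyond carefully invoking the previously established results; the content is purely formal once \Cref{thm:norm-non-increasing-map-between-resolutions} is available, and the only slightly subtle point is making sure that chain homotopies really do pass to cohomology for bounded $\group$-equivariant maps. In particular, note that nothing in this argument requires $\Bb[n]$ to be closed in $\Zb[n]$, so the isometry is a statement about seminorms in general and becomes a genuine norm statement whenever those coboundaries happen to be closed.
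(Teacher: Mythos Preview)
Your argument is correct and is precisely the intended one: the paper states this corollary without proof, and the natural deduction from \Cref{thm:norm-non-increasing-map-between-resolutions} together with the uniqueness-up-to-homotopy of \Cref{prop:extension-between-resolutions} (as already used in the proof of \Cref{cor:BC-via-resolution}) is exactly what you wrote.
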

	
	So far, for a given group $\group$, we only considered normed \ringmodule\s $\module$, for which we had an action by $\ring$-linear maps on $\module$ from the entire group $\group$. However, there is a natural way of constructing a normed \groupmodule out of a normed \groupmodule[][\subgroup] $\module$ for some subgroup $\subgroup\leq\group$. It turns out that this construction leads to an (isometric) isomorphism in bounded cohomology (see \Cref{prop:Eckmann-Shapiro}). 
	
	\begin{definition}[Induction module]\label{def:induction-module}
		Let $\group$ be a group and $\subgroup\leq\group$ a subgroup. Moreover, let $\module$ be a normed \groupmodule[][\subgroup]. Consider the normed \groupmodule[][\subgroup] $\linf{\group}[\module]$ of bounded functions from $\group$ to $\module$ endowed with the diagonal $\subgroup$-action. Then the space of $\subgroup$-invariants $\inductionmodule := \linf{\group}[\module]^\subgroup$ can be equipped with the \highlight{right translation} $\group$-action, i.e. $(g\cdot f)(\groupel) := f(\groupel g)$ for all $g,\groupel\in\group$, which gives $\inductionmodule$ the structure of a normed \groupmodule.\\
		This normed \groupmodule $\inductionmodule$ is called \highlight{induction module}.
	\end{definition}

	\begin{remark}\label{rem:equiv-induction-module-to-left-cosets}
		Of course, for a subgroup $\subgroup\leq\group$, every normed \groupmodule $\module$ is also a normed \groupmodule[][\subgroup] (by restriction of the action). Hence, it is natural to ask what the relation between the normed \groupmodule $\module$ and the induction module $\inductionmodule$ is. We have that the map
		\begin{align*}
			\alpha:\inductionmodule&\to\linf{\group/\subgroup}[V]\\
			f&\mapsto\alpha(f)(g\subgroup) := gf(g^{-1})
		\end{align*}
		is a $\group$-equivariant isomorphism, where the $\group$-action on $\linf{\group/\subgroup}[V]$ is the usual diagonal $\group$-action.\\
		First, observe that $\alpha(f)$ is well-defined, because 
		\[ \alpha(f)(gh\subgroup) = ghf(h^{-1}g^{-1}) = g(h\cdot f)(g^{-1}) = gf(g^{-1}) = \alpha(f)(g\subgroup), \]
		for all $h\in\subgroup$, where the second last equality follows from $f$ being $\subgroup$-invariant with respect to the diagonal $\subgroup$-action. Moreover, $\alpha(f)$ is clearly bounded, hence $\alpha(f)\in\linf{\group/\subgroup}[V]$.\\
		Second, $\alpha$ is indeed $\group$-equivariant:
		\[ \alpha(g\cdot f)(\groupel\subgroup) = \groupel(g\cdot f)(\groupel^{-1}) = \groupel f(\groupel^{-1}g) = gg^{-1}\groupel f(\groupel^{-1}g) = g\alpha(f)(g^{-1}\groupel) = (g\cdot\alpha(f))(\groupel) \]
		for all $g,\groupel\in\group$, $f\in\inductionmodule$. Here, the first $\group$-action $g\cdot f$ is the right translation action, whereas the last $\group$-action $g\cdot\alpha(f)$ is the diagonal $\group$-action.\\
		Lastly, the map $\beta:\linf{\group/\subgroup}[V]\to\inductionmodule$, $f\mapsto\beta(f)(g) := gf(g^{-1}\subgroup)$ is an inverse of $\alpha$, as can be checked easily.
	\end{remark}
	
	\begin{proposition}[Eckmann--Shapiro Lemma {\cite[Proposition 10.1.3]{monod2001continuous}}]\label{prop:Eckmann-Shapiro}
		Let $\group$ be a discrete group, $\subgroup\leq\group$ a subgroup and $\module$ an \groupmodule[][\subgroup]. Then the following chain map
		\begin{align*}
			 i^n:\Cbc[n]^\subgroup & \longrightarrow \Cbc[n][][\inductionmodule]^\group \\
			 i^n(f)(g_0,\dots,g_n)(g) &:= f(gg_0,\dots,gg_n)
		\end{align*}
		induces an isometric isomorphism in bounded cohomology, i.e.
		\[ \HHbf[i]:\HHbc[][\subgroup] \xrightarrow{\cong} \HHbc[][][\inductionmodule] \]
	\end{proposition}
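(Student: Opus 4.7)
The strategy is to apply \Cref{cor:BC-via-resolution}, reducing the claim to an isometric identification of the cochain complexes of invariants. By \Cref{lem:standard-resolution-of-module} applied to the normed \groupmodule $\inductionmodule$, the standard cochain complex with $n$-th term $\Cbc[n][][\inductionmodule]$ is a relatively injective strong resolution of $\inductionmodule$; hence $\HHbc[n][][\inductionmodule]$ is canonically the cohomology of $(\Cbc[n][][\inductionmodule])^\group$. By definition $\HHbc[n][\subgroup]$ is the cohomology of $\Cbc[n][\subgroup]^\subgroup$. It therefore suffices to show that $i^\bullet$ defines an isometric isomorphism of normed cochain complexes commuting with the coboundary maps $\codiff[n]$.

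\textbf{Key steps.} Commutativity with $\codiff[n]$ is immediate, since the coboundary acts only on the tuple $(g_0,\dots,g_n)$ and is unaffected by the extra evaluation at $g\in\group$. To prove bijectivity I would exhibit the explicit inverse $j^n:\Cbc[n][][\inductionmodule]^\group\to\Cbc[n][\subgroup]^\subgroup$ given by $j^n(F)(h_0,\dots,h_n):=F(h_0,\dots,h_n)(1_\group)$. Checking that $j^n(F)$ is genuinely $\subgroup$-invariant under the diagonal action combines two ingredients: first, that each $F(h_0,\dots,h_n)$ is $\subgroup$-invariant as an element of $\inductionmodule=\linf{\group}[\module]^\subgroup$, which gives $F(h_0,\dots,h_n)(h)=h\cdot F(h_0,\dots,h_n)(1_\group)$ for $h\in\subgroup$; second, the $\group$-invariance of $F$ under right translation on $\inductionmodule$, which gives $F(hh_0,\dots,hh_n)=h\cdot F(h_0,\dots,h_n)$. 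Together these yield the desired $\subgroup$-equivariance of $j^n(F)$. One then verifies that $i^\bullet$ and $j^\bullet$ are mutually inverse, and that both are norm non-increasing: for $j^\bullet$ this is immediate, and for $i^\bullet$ it follows from the fact that the supremum norm on $\inductionmodule$ is precisely the supremum over evaluations at $g\in\group$.

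\textbf{Main obstacle.} The principal difficulty lies in the careful bookkeeping of the three simultaneous group actions---the diagonal $\subgroup$- and $\group$-actions on the respective cochain spaces, the right-translation $\group$-action on $\inductionmodule$, and the diagonal $\subgroup$-action on $\linf{\group}[\module]$ defining $\inductionmodule$---and specifically in verifying that $i^n(f)$ indeed lands in the correct space of $\group$-invariants (so that $i^\bullet$ is a well-defined chain map in the first place). Once this bookkeeping is done and the isomorphism of normed cochain complexes is established, the isometric isomorphism $\HHbf[i]:\HHbc[n][\subgroup]\xrightarrow{\cong}\HHbc[n][][\inductionmodule]$ follows immediately from the identification with the cohomology of the standard resolutions.
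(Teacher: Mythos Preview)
Your approach is essentially the same as the paper's: both verify that $i^\bullet$ is well-defined and a chain map, exhibit an explicit inverse $j^\bullet$ at the cochain level, and conclude the isometric isomorphism from the fact that $i^\bullet$ is already isometric on cochains. The paper's inverse is written as $j^n(F)(g_0,\dots,g_n)=F(1,g_0^{-1}g_1,\dots,g_0^{-1}g_n)(g_0)$, but using the $\group$-invariance of $F$ this equals your $F(g_0,\dots,g_n)(1_\group)$, so the two formulas agree; one small point to watch is that the domain of $i^n$ in the statement is $\Cbc[n]^\subgroup$ (bounded $\subgroup$-invariant functions on $\group^{n+1}$), so your $j^n$ should land there rather than in $\Cbc[n][\subgroup]^\subgroup$---the same formula works verbatim for $\group$-tuples.
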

	\begin{proof}
		The maps $i^n$ are well defined, because firstly
		\begin{align*}
			&h\cdot (i^n(f)(g_0,\dots,g_n))(g) = hi^n(f)(g_0,\dots,g_n))(h^{-1}g)\\
			&= hf(h^{-1}gg_0,\dots,h^{-1}gg_n)) = h\cdot f(gg_0,\dots,gg_n)\\
			&= f(gg_0,\dots,gg_n) = i^n(f)(g_0,\dots,g_n)(g)
		\end{align*}
		for all $h\in\subgroup$, $g_0,\dots,g_n\in\group$ and $f\in\Cb[n]^\subgroup$, since $f\in\Cb[n]^\subgroup$ is $\subgroup$-invariant. Hence, $(i^n(f)(g_0,\dots,g_n)$ is in $\inductionmodule$ for all $f\in\Cb[n]^\subgroup$ and $g_0,\dots,g_n\in\group$. Secondly
		\begin{align*}
			&(g\cdot i^n(f))(g_0,\dots,g_n)(g') =
			g\cdot i^n(f)(g^{-1}g_0,\dots,g^{-1}g_n)(g')\\
			&= i^n(f)(g^{-1}g_0,\dots,g^{-1}g_n)(g'g) = f(g'gg^{-1}g_0,\dots,g'gg^{-1}g_n) = i^n(f)(g_0,\dots,g_n)(g'),
		\end{align*}
		so  $i^n(f)$ is also $\group$-invariant.\\
		The fact that $i^\bullet$ is a chain map is also an easy verification.\\
		Moreover, it can be shown that the maps
		\begin{align*}
			j^n: \Cbc[n][][\inductionmodule]^\group &\longrightarrow \Cbc[n]^\subgroup \\
			j^n(f)(g_0,\dots,g_n) &:= f(1,g_0^{-1}g_2,\dots,g_0^{-1}g_n)(g_0)
		\end{align*}
		form a bounded chain map which is the inverse of $i^\bullet$ on the cochain level. Hence, $\HHbf[i^\bullet]$ is an isomorphism. This isomorphism is also isometric, since it already is isometric on the cochain level:
		\begin{align*}
			\norm{i^n(f)} &= \sup\{\norm{i^n(f)(g_0,\dots,g_n)}~|~g_0,\dots,g_n\in\group\}\\
			&= \sup\{f(gg_0,\dots,gg_n)~|~g,g_0,\dots,g_n\in\group\}\\
			&= \sup\{f(g_0,\dots,g_n)~|~g_0,\dots,g_n\in\group\} = \norm{f}.
		\end{align*}
		The second last equality is easily seen to be true.
	\end{proof}
	
	In the following we will need the result of the previous \namecref{prop:Eckmann-Shapiro} especially in the case of $\module$ being the trivial \groupmodule[\RR][] $\RR$.
	
	\begin{corollary}\label{cor:Eckmann--Shapiro-for-RR}
		There is an isometric isomorphism in bounded cohomology
		\[ \HHbR[][\subgroup] \xrightarrow{\cong} \HHbc[][][{\inductionmodule[][][\RR]}] = \HHbc[][][\linf{\group}^\subgroup] \]
	\end{corollary}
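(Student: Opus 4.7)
The plan is essentially to apply the Eckmann--Shapiro Lemma (Proposition \ref{prop:Eckmann-Shapiro}) directly with the coefficient module taken to be $\RR$ equipped with the trivial $\subgroup$-action, so this corollary requires almost nothing beyond unpacking definitions.

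First I would specialize the statement of Proposition \ref{prop:Eckmann-Shapiro} to $\module = \RR$, viewed as a normed $\subgroup$-module with trivial action and the standard absolute value norm. The lemma then immediately yields an isometric isomorphism
\[ \HHbf[i]:\HHbc[][\subgroup][\RR] \xrightarrow{\cong} \HHbc[][][\inductionmodule[\RR]], \]
where $\HHbc[][\subgroup][\RR] = \HHbR[][\subgroup]$ by the notational conventions set up in the introduction.

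Next, I would identify the induction module in this special case. By Definition \ref{def:induction-module}, $\inductionmodule[\RR] = \linf{\group}[\RR]^{\subgroup}$, where $\subgroup$ acts on $\linf{\group}[\RR]$ via the diagonal action coming from the action on $\group$ (by the given convention) and the action on $\RR$. Since the $\subgroup$-action on $\RR$ is trivial, the diagonal action collapses to the action induced only by the action on the first factor $\group$, so $\linf{\group}[\RR]^{\subgroup}$ coincides as a normed $\group$-module with $\linf{\group}^{\subgroup}$ equipped with the right translation $\group$-action. This gives exactly the equality $\HHbc[][][\inductionmodule[\RR]] = \HHbc[][][\linf{\group}^\subgroup]$ appearing in the statement.

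Combining these two steps produces the claimed isometric isomorphism, and since both steps are either a direct invocation of Proposition \ref{prop:Eckmann-Shapiro} or a verification of module identifications, there is essentially no obstacle to overcome. The only minor point to watch is making sure that the identification of $\inductionmodule[\RR]$ with $\linf{\group}^{\subgroup}$ is as normed $\group$-modules, so that isometry is preserved; but this is immediate since the sup-norm and the right translation $\group$-action are untouched by the reduction from the diagonal to the left-factor action when coefficients are trivial.
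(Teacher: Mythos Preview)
Your proposal is correct and matches the paper's approach exactly: the corollary is stated immediately after Proposition~\ref{prop:Eckmann-Shapiro} without its own proof, precisely because it is the specialization to the trivial $\groupmodule[][\subgroup]$ $\RR$, together with the observation that the induction module $\inductionmodule[][][\RR] = \linf{\group}[\RR]^\subgroup$ reduces to $\linf{\group}^\subgroup$ when the coefficient action is trivial. Your unpacking of the module identification and the check that it respects both the norm and the $\group$-action is exactly the content that the paper leaves implicit.
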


\subsection{Amenability and Bounded Acyclicity}
	
	\begin{definition}[bounded acyclicity]\label{def:bAc}
		Let $\group$ be a group and $n \geq 1$. We call $\group$ \highlight{\bAc[n]} if $\HHbR[i] \cong 0$ for all $1 \leq i \leq n$.\\
		Moreover, $\group$ is called \highlight{\bAc} or \highlight{\bAc[\infty]} if it is \bAc[n] for all $n\geq 1$.
	\end{definition}
	
	\begin{remark}[{$\HHbR[0]$ and $\HHbR[1]$}]
		Note that we can easily calculate the bounded cohomology with real coefficients $\RR$ in degrees $0$ and $1$ for every group $\group$. Here we understand $\RR$ as trivial \groupmodule[\RR].\\
		Indeed, by definition we have 
		\[ \HHbR[0] = \frac{\ZbR[0]}{\BbR[0]} \quad\text{and}\quad \HHbR[1] = \frac{\ZbR[1]}{\BbR[1]}. \]
		Also by definition $\BbR[0] = 0$ and $\ZbR[0] = \{ f:\CbR[0]^\group~|~\codiff[0](f) = 0 \}$. Since $\group$ acts transitively on itself, a function $f:\group\to\RR$ is in $\CR[0][]^\group$ if and only if it is constant, hence it is automatically also bounded. Therefore, $\CbR[0]^\group = \CR[0][]^\group \cong \RR$. However, then $\codiff[0](f)(\groupel_0,\groupel_1) = f(\groupel_0) - f(\groupel_1) = 0$ holds for all functions in $\CbR[0]^\group$ and we get $\HHbR[0] \cong \ZbR[0] = \CbR[0]^\group \cong \RR$.\\
		Regarding $\HHbR[1]$, we observe that $\BbR[1] = 0$, since $\codiff[0]$ is the zero map as was described above. Thus, also $\HHbR[1] \cong \ZbR[1]$ holds.\\
		Observe that for a $\group$-invariant map $f\in\CbR[1]$, it holds that
		\[ f(\groupel_0,\groupel_1) = (\groupel_0\cdot f)(\groupel_0,\groupel_1) = f(1,\groupel_0^{-1}\groupel_1) \]
		for all $\groupel_0,\groupel_1\in\group$. If $f$ is additionally in the kernel of $\codiff[1]$, then
		\begin{align*}
			0 &= \codiff[1](f)(\groupel_0,\groupel_1,\groupel_2) = f(\groupel_1,\groupel_2) - f(\groupel_0,\groupel_2) + f(\groupel_0,\groupel_1) \\
			&= f(1,\groupel_1^{-1}\groupel_2) - f(1,\groupel_0^{-1}\groupel_2) + f(1,\groupel_0^{-1}\groupel_1)
		\end{align*} 
		for all $\groupel_0,\groupel_1\groupel_2\in\group$. By choosing $\groupel_1 = 1$ we deduce that the following equation holds for all $\groupel_0,\groupel_2\in\group$:
		\[   f(1,\groupel_0^{-1}\groupel_2) = f(1,\groupel_0^{-1}) + f(1,\groupel_2). \]
		This means precisely that the map $\groupel \mapsto f(1,\groupel)$ is a bounded homomorphism from $\group$ to the additive group $\RR$. However, the only bounded homomorphism from any group into $(\RR,+)$ is the trivial one. Hence, $f\equiv 0$.\\
		To sum up, we get $\HHbR[0] = \RR$ and $\HHbR[1] = 0$ for all groups $\group$.
		In particular, every $\group$ is \bAc[1].
	\end{remark}
	
	However, of course there are groups which are not \bAc. One such example are non-abelian free groups. In \cite[§3]{brooks1981some}, the author showed that $\HHbR[2][F_2]$ contains infinitely many linearly independent elements, where $F_2$ denotes the free group with two generators. However, it was already known \cite{johnson1972cohomology}, that $\HHbR[2][F_2]\neq0$.
	
	\begin{proposition}[non-abelian free groups are not \bAc]\label{prop:non-abelian-free-group-is-not-bAc}
		Let $\group$ be a non-abelian free subgroup, then $\HHbR[2]$ is infinite dimensional.\\
		In particular, non-abelian free subgroups are not \bAc.
	\end{proposition}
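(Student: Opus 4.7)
The plan is to prove the statement via the well-known bridge between second bounded cohomology and quasimorphisms, namely the exact sequence
\[ 0 \longrightarrow \mathrm{Hom}(\group,\RR) \longrightarrow Q(\group) \longrightarrow \HHbR[2] \longrightarrow \HHR[2], \]
where $Q(\group)$ denotes the space of homogeneous quasimorphisms on $\group$. This sequence (which I would briefly recall or cite from \cite{2017Frigerio}) shows that to produce infinitely many linearly independent classes in $\HHbR[2]$ it is enough to exhibit infinitely many homogeneous quasimorphisms on $\group$ that remain linearly independent modulo $\mathrm{Hom}(\group,\RR)$.

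For $F_2 = \langle a,b\rangle$, I would then introduce Brooks' counting quasimorphisms: for every non-trivial reduced word $w\in F_2$, define $\phi_w(g)$ as the number of occurrences of $w$ in the reduced form of $g$ minus the number of occurrences of $w^{-1}$. A short combinatorial argument shows that $\phi_w$ is a quasimorphism (the defect comes from cancellations at the concatenation point, which is bounded in terms of $|w|$). Homogenising gives $\widehat{\phi}_w(g):=\lim_n \phi_w(g^n)/n$. The linear independence of infinitely many such classes in $\HHbR[2][F_2]$ is Mitsumatsu's theorem \cite{mitsumatsu1984bounded}, which I would invoke directly rather than reprove; the idea is to test the $\widehat{\phi}_w$ on suitably chosen powers of carefully selected words to separate them from homomorphisms (there are only two independent homomorphisms $F_2\to\RR$, so almost any infinite family of Brooks quasimorphisms works).

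To pass from $F_2$ to an arbitrary non-abelian free group $\group$, I would use that $\group$ has rank at least $2$ and therefore admits $F_2$ as a \emph{retract}: pick two generators $a,b$ of $\group$ and define a homomorphism $r:\group\to F_2$ that sends $a,b$ to the generators of $F_2$ and every other generator to the identity; together with the inclusion $i:F_2\hookrightarrow\group$ one has $r\circ i=\mathrm{id}_{F_2}$. By functoriality of bounded cohomology, $\HHbf[i]\circ\HHbf[r]=\mathrm{id}$ on $\HHbR[2][F_2]$, so $\HHbf[r]:\HHbR[2][F_2]\hookrightarrow\HHbR[2]$ is injective. Combining with the previous step yields that $\HHbR[2]$ contains an infinite-dimensional subspace, finishing the proof.

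The only genuinely non-routine ingredient is the linear independence of the homogenised Brooks quasimorphisms; everything else is bookkeeping with the quasimorphism/bounded cohomology exact sequence and with the retraction. Since this independence is exactly the content of \cite{brooks1981some, mitsumatsu1984bounded} cited just above the statement, I would simply quote it rather than reproduce the computation.
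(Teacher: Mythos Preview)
Your proposal is correct and follows the standard route via Brooks quasimorphisms and the quasimorphism/second bounded cohomology exact sequence, which is precisely what the paper points to: the paper does not actually supply a proof of this proposition but states it as a known fact, relying on the citations to \cite{brooks1981some} and \cite{mitsumatsu1984bounded} in the surrounding text. Your write-up therefore goes somewhat beyond the paper by spelling out the mechanism and by including the retraction argument to pass from $F_2$ to an arbitrary non-abelian free group, a step the paper leaves implicit; this is a reasonable and clean addition.
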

	
	Nevertheless, there is a class of groups which is in some way indiscernible from the viewpoint of bounded cohomology, namely \highlight{amenable} groups. Indeed, for amenable groups the bounded cohomology with coefficients in many normed \groupmodule[\RR]\s vanishes in all positive degrees, see \Cref{prop:BC-of-amenable-groups}.
	
	For the following definition recall \Cref{APX_def:mean} the definition of a mean  on an arbitrary set.
	
	\begin{definition}[amenability] \label{def:amenability}
		A group $\group$ is called \highlight{amenable}, if it admits an \highlight{invariant mean} $m$, i.e. $m(\groupel\cdot f) = m(f)$ for all $\groupel\in\group$ and $f\in\linf{\group}$.
	\end{definition}

	\begin{remark}
		There are a lot of different definitions for amenability within the field of discrete groups. Also in the wider context of locally compact groups \cite{pier1984amenable} there are several definitions for amenability. Futher, there is also a notion of \highlight{amenable group actions} introduced by \citeauthor{Zimmer1978AmenableEG}, for example in \cite{ZIMMER198758,Zimmer1978AmenableEG}, which is also used in bounded cohomology (e.g. \cite{monod2001continuous}). However, the given definition is the most useful one in the context of bounded cohomology of discrete groups.
	\end{remark}
	
	\begin{example}\label{exp:abelian-and-finite-groups-are-amenable}
		All abelian as well as all finite groups are amenable.\\
		For a finite group $\group$ the \enquote{arithmetic mean} of a function $f\in\linf{\group}$ can serve as mean $m$ in the sense of amenability, i.e. $m(f) := \tfrac{1}{\abs{\group}}\sum_{\groupel\in\group}f(\groupel)$. This is easily seen to be an invariant mean.\\
		To prove the existence of a mean for abelian groups requires more effort and can, for example, be done by using the Markov--Kakutani Fixed Point Theorem \cite{paterson2000amenability, 2017Frigerio}.
	\end{example}
	
	A normed module $\module$ is a \highlight{dual} normed \groupmodule[\RR] if it is isomorphic to the topological dual $W^*:= \{f:W\to\ring~|~f \text{ is } \ring\text{-linear and continuous}\}$ of a normed \groupmodule[\RR] $W$. In particular, note that $\RR$ is isomorphic to $\RR^*$ (seen as trivial \groupmodule[\RR]).
	
	\begin{proposition}\label{prop:BC-of-amenable-groups}
		Let $\group$ be an amenable group and $\module$ a \highlight{dual} normed \groupmodule[\RR]. Then $\HHbc[n] = 0$ for all $n\geq1$. In particular, amenable groups are \bAc.
	\end{proposition}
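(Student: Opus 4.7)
The strategy is to construct a contracting homotopy for the bounded cochain complex $(\Cbc^\group, \codiff)$ in positive degrees: once such a homotopy exists, every bounded cocycle in degree $\geq 1$ is a bounded coboundary, and $\HHbc[n] = 0$ follows immediately for all $n \geq 1$. The homotopy will be built from an invariant mean, and the role of the dual hypothesis is precisely to allow that mean to be ``$V$-valued''.

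First, by amenability fix an invariant mean $m \in \linf{\group}^*$. Write $\module \cong W^*$ for some normed space $W$. I would extend $m$ to a bounded operator $\mathcal{M}:\linf{\group}[\module]\to\module$ by defining, for each bounded $\phi:\group\to\module$, the element $\mathcal{M}(\phi)\in\module$ through its pairing with $W$:
\[
\langle \mathcal{M}(\phi),w\rangle := m\bigl(\groupel\mapsto \langle\phi(\groupel),w\rangle\bigr),\qquad w\in W.
\]
The function $\groupel\mapsto\langle\phi(\groupel),w\rangle$ lies in $\linf{\group}$ with norm at most $\norm{\phi}\cdot\norm[]{w}$, so this defines a bounded linear functional on $W$ of norm at most $\norm{\phi}$, i.e.\ $\mathcal{M}(\phi)\in W^* = \module$, and $\norm{\mathcal{M}}\leq 1$. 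Because the $\group$-action on $\module$ is the dual of an action on $W$, $\mathcal{M}$ commutes with the $\group$-action on its argument in the sense needed below.

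Second, for $n\geq 1$ I would define the candidate contracting homotopy
\[
k^n:\Cbc[n]^\group\longrightarrow\Cbc[n-1]^\group,\qquad k^n(f)(\groupel_1,\dots,\groupel_n):=\mathcal{M}\bigl(\groupel_0\mapsto f(\groupel_0,\groupel_1,\dots,\groupel_n)\bigr).
\]
Boundedness of $k^n$ is immediate from $\norm{\mathcal{M}}\leq 1$. To see $\group$-invariance, one applies a group element $g$ to $k^n(f)(g^{-1}\groupel_1,\dots,g^{-1}\groupel_n)$, pulls the $g$ inside $\mathcal{M}$, uses the $\group$-invariance of $f$ to rewrite the integrand as $f(g\groupel_0,\groupel_1,\dots,\groupel_n)$, and then invokes left-invariance of $m$ under the substitution $\groupel_0\mapsto g^{-1}\groupel_0$ to recover $k^n(f)(\groupel_1,\dots,\groupel_n)$.

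Third, I would verify the homotopy identity $\codiff[n-1]\circ k^n + k^{n+1}\circ\codiff[n] = \mathrm{id}_{\Cbc[n]^\group}$ for all $n\geq 1$. Applying $\codiff[n]$ inside $k^{n+1}$ produces, by the simplicial formula, one term where $\groupel_0$ is omitted (which equals $f(\groupel_1,\dots,\groupel_n)$ after pulling out through the mean, since the resulting integrand is independent of $\groupel_0$ and $\mathcal{M}$ preserves constants) and $n+1$ further terms that cancel pairwise with the $n+1$ terms of $\codiff[n-1] k^n(f)$. Hence the identity holds on the nose.

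The main obstacle is the construction and $\group$-equivariance of the $\module$-valued mean $\mathcal{M}$: this step crucially uses that $\module$ is a \emph{dual} module, since $\mathcal{M}$ is obtained by dualizing $m$ against elements of the predual $W$. For general normed $\group$-modules no such $\mathcal{M}$ exists, and indeed the conclusion can fail. Once $\mathcal{M}$ is in hand, the remainder reduces to the standard simplicial bookkeeping sketched above, giving the vanishing $\HHbc[n] = 0$ for all $n\geq 1$; in particular, taking $\module=\RR$ yields bounded acyclicity.
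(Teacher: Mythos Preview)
Your proposal is correct and follows essentially the same approach as the paper: both construct a contracting homotopy on $\Cbc^\group$ by averaging over the inserted first variable via the invariant mean, using the predual $W$ to make sense of a $\module$-valued mean. The only cosmetic difference is that you first package the averaging as an operator $\mathcal{M}:\linf{\group}[\module]\to\module$ and then apply it, whereas the paper writes the formula $\Theta_n(f)(g_0,\dots,g_{n-1})(w) = \mean(g'\mapsto f(g',g_0,\dots,g_{n-1})(w))$ directly; the verifications of boundedness, $\group$-invariance, and the homotopy identity are the same computation in both cases.
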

	\begin{proof}
		Recall that we can calculate $\HHbc[n]$ via the cohomology of the cochain complex
		\[ \ChainComplex[0]{\Cbc[0]^\group,\Cbc[1]^\group,\Cbc[2]^\group,\Cbc[3]^\group} \]
		We will prove this theorem by constructing a bounded chain homotopy between the identity and the zero map of $\Cbc^\group$ using the $\group$-invariant mean $\mean$ on $\linf{\group}$ and the fact that $V$ is isomorphic to the topological dual normed \groupmodule[\RR] of some \groupmodule[\RR] $W$. For ease of exposition, we assume $\module = W^*$.\\
		For all $n\geq1$ define 
		\begin{align*}
			\Theta_n:\Cbc[n]^\group&\to\Cbc[n-1]^\group\\
			(f:\group^{n+1}\to W^*) &\mapsto (\Theta_n(f):\group^n\to W^*)
		\end{align*}
		such that $\Theta_n(f)$ maps a tuple $(g_0,\dots,g_{n-1})\in\group^n$ to the function
		\[ w \mapsto \Theta_n(f)(g_0,\dots,g_{n-1})(w):= \mean(g'\mapsto f(g',g_0,\dots,g_{n-1})(w)) \]
		for all $w\in W$. In other words, we define $\Theta_n(f)(g_0,\dots,g_{n-1})\in W^*$ to be the function which for a fixed $w\in W$ \enquote{averages} the value of $f(g',g_0,\dots,g_{n-1})(w)$ over all $g'\in\group$. Moreover, we set $\Theta_0:\Cbc[0]^\group\to0$ to be the zero map.
		
		It is easy to see that $\Theta_n$ is well-defined and bounded. Indeed, $g'\mapsto f(g',g_0,\dots,g_{n-1})(w)$ is in $\linf{\group}$ and $\Theta_n(f)(g_0,\dots,g_{n-1})$ is bounded by the norm of $g'\mapsto f(g',g_0,\dots,g_{n-1})$, which is finite as $f\in\Cb[n]^\group$. It remains to be shown that $\Theta_n$ is a $\group$-morphism and that $id = \codiff[n-1]\circ\Theta_n + \Theta_{n+1}\circ\codiff[n]$\\
		We have for all $n\geq0$, $g,g_0,\dots,g_{n-1}\in\group$ and $w\in W$
		\begin{align*}
			\Theta_n(g\cdot f)(g_0,\dots,g_{n-1})(w) &= \mean(g'\mapsto g\cdot f(g',g_0,\dots,g_{n-1})(w))\\
			&=\mean(g'\mapsto gf(g^{-1}g',g^{-1}g_0,\dots,g^{-1}g_{n-1})(w))\\
			&= \mean(g\cdot(g'\mapsto f(g',g^{-1}g_0,\dots,g^{-1}g_{n-1})(w)))\\
			&= g(g^{-1}\cdot\mean(g'\mapsto f(g',g^{-1}g_0,\dots,g^{-1}g_{n-1})(w)))\\
			&=g\mean(g'\mapsto f(g',g^{-1}g_0,\dots,g^{-1}g_{n-1})(w))\\
			&=g\Theta_n(f)(g^{-1}g_0,\dots,g^{-1}g_{n-1})(w)) = g\cdot\Theta_n(f)(g_0,\dots,g_{n-1})(w),
		\end{align*}
		since $\mean$ is $\group$-invariant. Thus, $\Theta_n$ is a $\group$-morphism.\\
		Finally, we check that $id = \codiff[n-1]\circ\Theta_n + \Theta_{n+1}\circ\codiff[n]$:
		\begin{align*}
			(\codiff[n-1]\circ\Theta_n &+ \Theta_{n+1}\circ\codiff[n])(f)(g_0,\dots,g_n)(w)=\\ &=\codiff[n-1]\circ\Theta_n(f)(g_0,\dots,g_n)(w) + \Theta_{n+1}\circ\codiff[n](f)(g_0,\dots,g_n)(w)\\
			&=\codiff[n-1]\mean(g'\mapsto f(g',g_0,\dots,g_i,\dots,g_n)(w)) + \mean(g'\mapsto \codiff[n](f)(g_0,\dots,g_n)(w))\\
			&= f(g_0,\dots,g_n)(w)
		\end{align*}
		which follows by the linearity of $\mean$ and that constant functions are sent to the corresponding constant value under $\mean$.\\
		This proves that $\HHbc[n] = 0$ for all $n\geq1$.
	\end{proof}

	\begin{corollary}\label{cor:abelian-and-finite-groups-are-bAc}
		Abelian and finite groups are \bAc.
	\end{corollary}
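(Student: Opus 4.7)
The plan is to deduce this corollary immediately from the two results that have just been established in the preceding text. First, \Cref{exp:abelian-and-finite-groups-are-amenable} asserts that every abelian group and every finite group is amenable: for a finite group $\group$ the normalized counting measure $m(f) = \tfrac{1}{\abs{\group}}\sum_{\groupel\in\group} f(\groupel)$ is manifestly a $\group$-invariant mean, and for an abelian group the existence of an invariant mean follows from the Markov--Kakutani fixed point theorem applied to the (non-empty, convex, weak-$*$ compact) set of means on $\linf{\group}$ under the affine action of $\group$. Second, \Cref{prop:BC-of-amenable-groups} says that every amenable group $\group$ satisfies $\HHbc[n] = 0$ for every $n \geq 1$ and every dual normed \groupmodule[\RR] $\module$.

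Combining these two ingredients, I would simply observe that $\RR$, viewed as the trivial \groupmodule[\RR], is isomorphic to its own topological dual $\RR \cong \RR^{*}$, and hence it qualifies as a dual normed \groupmodule[\RR]. Therefore, if $\group$ is abelian or finite, then $\group$ is amenable by \Cref{exp:abelian-and-finite-groups-are-amenable}, and applying \Cref{prop:BC-of-amenable-groups} with $\module = \RR$ yields
\[
  \HHbR[n] = \HHbc[n][][\RR] = 0 \quad \text{for all } n \geq 1,
\]
which is precisely the statement that $\group$ is \bAc in the sense of \Cref{def:bAc}.

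There is essentially no obstacle here: the work is entirely contained in the two preceding statements, and the corollary amounts to an invocation. The only subtlety worth flagging explicitly in the writeup is that \Cref{prop:BC-of-amenable-groups} is formulated for dual Banach modules, so one should note the trivial identification $\RR \cong \RR^{*}$ to ensure the hypothesis is met; beyond this sentence the proof is a one-line composition of previously proved results.
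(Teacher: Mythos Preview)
Your proposal is correct and matches the paper's approach exactly: the corollary is stated there without proof, as it is an immediate consequence of \Cref{exp:abelian-and-finite-groups-are-amenable} and \Cref{prop:BC-of-amenable-groups}. Your remark that $\RR \cong \RR^{*}$ is a dual module is the only point worth making explicit, and the paper already notes this just before \Cref{prop:BC-of-amenable-groups}.
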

	
	Amenable groups have a lot of good properties. One such property is that they behave well with taking extensions.
	
	\begin{proposition}\label{prop:extensions-amenability}
		Let $\GrpSES[N, \group, Q]$ be a short exact sequence of groups. Then $\group$ is amenable if and only if $N$ and $Q$ are amenable.
	\end{proposition}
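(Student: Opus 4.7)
The plan is to prove the two implications separately; both rely on elementary constructions on invariant means, and no input beyond the existence of a set-theoretic section $Q \to \group$ is needed.

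For the forward direction, suppose $\group$ is amenable with invariant mean $m_\group$ on $\linf{\group}$, and let $\pi:\group\to Q$ denote the quotient map. Amenability of $Q$ is immediate by pullback: setting $m_Q(f):=m_\group(f\circ\pi)$ for $f\in\linf{Q}$ clearly gives a mean, and for any $q\in Q$ with lift $g\in\pi^{-1}(q)$ one checks the identity $(q\cdot f)\circ\pi = g\cdot(f\circ\pi)$, so $\group$-invariance of $m_\group$ yields $Q$-invariance of $m_Q$. For $N$, I fix a set $T\subseteq\group$ of right coset representatives so that $\group=\bigsqcup_{t\in T}Nt$, and extend $f\in\linf{N}$ to $\tilde{f}\in\linf{\group}$ by declaring $\tilde{f}(nt):=f(n)$. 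Then $m_N(f):=m_\group(\tilde{f})$ is a mean, and the key identity $\widetilde{n_0\cdot f}=n_0\cdot\tilde{f}$ for $n_0\in N$ (which holds because the left $N$-action preserves the factorization $g=nt$) transfers $\group$-invariance of $m_\group$ to $N$-invariance of $m_N$.

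For the reverse direction, assume $m_N$ and $m_Q$ are invariant means. Given $f\in\linf{\group}$, the strategy is a two-step averaging: first average along each coset of $N$ using $m_N$, then average the resulting function on $Q$ using $m_Q$. Concretely, define $F_f(g):=m_N\bigl(n\mapsto f(gn)\bigr)$. The function $F_f$ is constant on right $N$-cosets, since for $n_0\in N$ the function $n\mapsto f(gn_0 n)$ is a left translate of $n\mapsto f(gn)$ and $N$-invariance of $m_N$ forces $F_f(gn_0)=F_f(g)$. Hence $F_f$ descends to $\bar{F}_f\in\linf{Q}$, and I set $m_\group(f):=m_Q(\bar{F}_f)$. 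This is manifestly a mean; the verification of $\group$-invariance reduces to the identity $\bar{F}_{g_0\cdot f}=\pi(g_0)\cdot\bar{F}_f$ for $g_0\in\group$, after which $Q$-invariance of $m_Q$ closes the argument.

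I expect the main obstacle to be purely bookkeeping rather than substance: the left/right action conventions must be tracked carefully so that the identities $\widetilde{n_0\cdot f}=n_0\cdot\tilde{f}$ and $\bar{F}_{g_0\cdot f}=\pi(g_0)\cdot\bar{F}_f$ really hold on the nose. There is also a mild subtlety in the extension-by-transversal step for $N$: the extension depends on the choice of $T$, but since the invariance is only checked against $N$-translations (which preserve the factorization $g=nt$), the construction still produces a well-defined $N$-invariant mean.
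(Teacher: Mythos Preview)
Your proposal is correct and follows essentially the same strategy as the paper: pullback along $\pi$ for $Q$, extension of $f\in\linf{N}$ to $\linf{\group}$ via a transversal/section for $N$, and the two-step averaging (first over $N$-cosets, then over $Q$) for the reverse implication. The only differences are cosmetic---the paper writes the extension via a section $s:Q\to\group$ as $g\mapsto f(\iota^{-1}(gs(\pi(g)^{-1})))$ rather than via a transversal, and in the reverse direction it evaluates directly at $s(q)$ instead of first noting that $F_f$ descends to $Q$---but the underlying constructions coincide.
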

	\begin{proof}
		Denote the maps from the short exact sequence by $\iota:N\to\group$ and $\pi:\group\to Q$. We prove this \namecref{prop:extensions-amenability} by explicitly constructing the required means.
		\enquote{$\Rightarrow$}: Let $\mean_\group$ be the $\group$-invariant mean on $\linf{\group}$. Define the linear functional
		\begin{align*}
			\mean_Q:\linf{Q}&\to\RR,\\
			(f:Q\to\RR)&\mapsto (\mean_Q(f) := \mean_\group(f\circ\pi)).
		\end{align*}
		Then $\norm[op]{\mean_Q}\leq\norm[op]{\mean_\group}=1$ and this together with $\mean_Q(\mathbbm{1}_Q) = \mean_\group(\mathbbm{1}_Q\circ\pi) = \mean_\group(\mathbbm{1}_\group) = 1$ yields $\norm[op]{\mean_Q} = 1$. Further, if $f\geq0$, then $f\circ\pi\geq0$ and hence $\mean_Q(f) = \mean_\group(f\circ\pi) \geq 0$. Thus, $\mean_Q$ is indeed a mean on $\linf{Q}$. The $Q$-invariance follows from
		\begin{align*}
			\mean_Q(q\cdot f) = \mean_\group((q\cdot f)\circ\pi) = \mean_\group(g\cdot (f\circ\pi)) = \mean_\group(f\circ\pi) = \mean_Q(f) 
		\end{align*}
		for all $q\in Q$ where $g$ is any preimage of $q$ under $\pi$ which exists, since $\pi$ is surjective. Therefore, $Q$ is amenable.\\
		To construct an $N$-invariant mean on $\linf{N}$ we first choose some preimages of $\pi$, i.e. we fix a function $s:Q\to\group$ such that $\pi\circ s(q) = q$ for all $q\in Q$. Note that $s$ is injective but in general not a homomorphism. Define the linear functional
		\begin{align*}
			\mean_N:\linf{N}&\to\RR,\\
			(f:N\to\RR)&\mapsto (\mean_N(f) := \mean_\group(g\mapsto f(\iota^{-1}(gs(\pi(g)^{-1}))))).
		\end{align*}
		$\mean_N$ is well defined since $gs(\pi(g)^{-1})\in\ker(\pi)=\im(\iota)$. We have $\norm[op]{\mean_N} \leq \norm[op]{\mean_\group}=1$ and this together with $\mean_N(\mathbbm{1}_N) = \mean_\group(g\mapsto f(\iota^{-1}(gs(\pi(g)^{-1}))) = \mean_\group(\mathbbm{1}_\group) = 1$ yields $\norm[op]{\mean_N} = 1$. Finally, if $f\geq0$, then $g\mapsto f(\iota^{-1}(gs(\pi(g)^{-1}))\geq0$ and hence $\mean_N(f) \geq 0$. Thus, $\mean_N$ is indeed a mean on $\linf{N}$. The $N$-invariance follows from
		\begin{align*}
		\mean_N(n\cdot f) & = \mean_\group(g\mapsto f(n^{-1}\iota^{-1}(gs(\pi(g)^{-1})))\\
		&= \mean_\group(g\mapsto f(\iota^{-1}(\iota(n)^{-1}gs(\pi(g)^{-1})))\\
		&= \mean_\group(\iota(n)\cdot (g\mapsto f(\iota^{-1}(gs(\pi(g)^{-1}))))  = \mean_\group(f) 
		\end{align*}
		for all $n\in N$ where the second to last equality follows from $\iota(N) = \ker(\pi)$ and the last equality follows from the $\group$-invariance of $\mean_\group$. Therefore, $N$ is amenable.\\
		
		\enquote{$\Leftarrow$}: Let $N$ and $Q$ be amenable. Fix again a function $s:Q\to\group$ such that $\pi\circ s(q) = q$ for all $q\in Q$. Define the linear functional 		
		\begin{align*}
			\mean_\group:\linf{\group}&\to\RR,\\
			(f:\group\to\RR)&\mapsto (\mean_\group(f) := \mean_Q(q\mapsto \mean_N(n\mapsto f(s(q)\iota(n))))).
		\end{align*}
		Then $\mean_\group$ is a $\group$-invariant mean on $\linf{\group}$. It is easy to check that $\norm[op]{\mean_\group} \leq \norm[op]{\mean_Q}=1$ and $\mean_\group(\mathbbm{1}) = 1$, hence $\norm[op]{\mean_\group} = 1$. Moreover, for $f\geq0$ we have that $n\mapsto f(s(q)\iota(n))\geq0$ which implies $q\mapsto \mean_N(n\mapsto f(s(q)\iota(n)))\geq0$ and hence $\mean_\group(f)\geq0$, using that $\mean_Q$ is a mean. The $\group$-invariance follows from the $N$-invariance of $\mean_N$ as follows. Fix $g\in\group$. Then for a fixed $q\in Q$ it holds that $s(q)^{-1}gs(\pi(g)^{-1}q)\in\ker(\pi)=\im(\iota)$. Therefore, $s(\pi(g)^{-1}q) = g^{-1}s(q)\iota(n')$ for some $n'$ in $N$. Hence, for all $g\in\group$
		\begin{align*}
			\mean(g\cdot f) &= \mean_Q(q\mapsto \mean_N(n\mapsto 	f(g^{-1}s(q)\iota(n)))\\
			&= \mean_Q(q\mapsto \mean_N(n\mapsto f(g^{-1}s(q)\iota(n')\iota(n)))\\
			&= \mean_Q(q\mapsto \mean_N(n\mapsto f(s(\pi(g)^{-1}q)\iota(n))) = \mean(f)
		\end{align*}
		using first that $\mean_N$ is $N$-invariant and then that $\mean_Q$ is $Q$-invariant.\\
		This verifies that $\group$ is amenable and finishes the proof.
	\end{proof}

	It can further be shown that the class of amenable groups is also closed under taking subgroups. Unfortunately, for the class of \bAc groups this is not true. Here are some examples of \bAc groups, which contain a non \bAc subgroup.
	
	\begin{example}\label{exp:non-bAc-subgroups}\hfill
		\begin{enumerate}[nosep]
			\item Since by \Cref{prop:non-abelian-free-group-is-not-bAc} non-abelian free subgroups are not \bAc, every \bAc $\group$ which contains a non-abelian free subgroup serves as example. Most of the non-amenable examples of \bAc groups have free subgroups, for example most \bAc homeomorphism groups and all mitotic groups.
			\item In \Cref{sec:BC-of-thompsons-group-V} we will show that Thompson's group $V$ is \bAc, however, Thompson's group $T\leq V$ is not \bAc (\Cref{rem:BC-of-Thompsons-group-F-and-T}). Also $\V$ and $T$ contain free subgroups.
			\item All algebraically closed groups are mitotic \cite[Corollary 4.4]{baumslag1980topology} and therefore \bAc \cite[Theorem 1.2]{2017_LOEH}. However, it is well-known that all countable groups can be embedded into a countable algebraically closed group. Hence any countable non \bAc group is a subgroup of some countable \bAc algebraically closed group.
		\end{enumerate}		
	\end{example}
	
	There is one further property we are going to need to prove the bounded acyclicity of a certain group. That is the notion of coamenability:
	
	\begin{definition}[coamenability] \label{def:coamenability}
		Let $\group$ be a group. A group $\subgroup\leq\group$ is called \highlight{coamenable}, if there is a $\group$-invariant mean on $\linf{\group/\subgroup}$.
	\end{definition}

	It is easy to see that if we have subgroups $\subgroup_0\leq\subgroup_1\leq\group$ and $\subgroup_0$ is coamenable in $\group$, then so is $\subgroup_1$. Indeed, the projection $\pi:\group/\subgroup_0\to\group/\subgroup_1$ pushes every $\group$-invariant mean $\mean_0$ on $\linf{\group/\subgroup_0}$ to a $\group$-invariant mean $\mean_1$ on $\linf{\group/\subgroup_1}$, by defining $\mean_1(f) := \mean_0(f\circ\pi)$. However, $H_0$ is in general not coamenable in $H_1$ \cite[Theorem 1]{monod2003coamenability}.
	
	\begin{remark}\label{rem:coamenable-characterization}
		Similarly as for amenability there are different characterizations for coamenability. For example, the existence of an invariant mean on $\linf{\group/\subgroup}$ is equivalent to the property that every continuous affine $\group$-action on a convex compact subset of a locally convex space with an $\subgroup$-fixed point has also a $\group$-fixed point \cite{Eymard1972}.
	\end{remark}

	\begin{proposition}[{\cite[Proposition 3]{monod2003coamenability}, \cite[Proposition 8.6.2, 8.6.6]{monod2001continuous}}]\label{prop:coamenable->bAc}
		Let $\group$ be a group. If $\subgroup\leq\group$ is coamenable, then the inclusion map $\subgroup\to\group$ induces an isometrically injective map $\HHbR \hookrightarrow \HHbR[][\subgroup]$ in bounded cohomology.\\
		In particular, if $\subgroup$ is \bAc, so is $\group$.
	\end{proposition}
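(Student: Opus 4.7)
The plan is to combine the Eckmann--Shapiro isomorphism (\Cref{cor:Eckmann--Shapiro-for-RR}) with the invariant mean provided by coamenability. By \Cref{rem:equiv-induction-module-to-left-cosets} the induction module of the trivial $\subgroup$-module $\RR$ is isomorphic, as a normed \groupmodule, to $\linf{\group/\subgroup}$ equipped with the diagonal $\group$-action. Hence \Cref{cor:Eckmann--Shapiro-for-RR} yields an isometric isomorphism $\HHbR[][\subgroup] \cong \HHbc[][][\linf{\group/\subgroup}]$. Under this identification the restriction map $\HHbR \to \HHbR[][\subgroup]$ induced by the inclusion $\subgroup \hookrightarrow \group$ corresponds to the map $\HHbf[\iota]$ in bounded cohomology induced by the $\group$-equivariant isometric inclusion $\iota \colon \RR \hookrightarrow \linf{\group/\subgroup}$ sending $r$ to the constant function with value $r$. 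Verifying this identification is essentially a diagram chase using naturality of the chain maps $i^\bullet$ and $j^\bullet$ from the proof of \Cref{prop:Eckmann-Shapiro}.

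The key observation is then that coamenability of $\subgroup$ in $\group$ produces a $\group$-equivariant bounded linear map in the opposite direction: the invariant mean $\mean \colon \linf{\group/\subgroup} \to \RR$ (where $\RR$ carries the trivial $\group$-action) has operator norm equal to $1$ and satisfies $\mean \circ \iota = \mathrm{id}_\RR$, since $\mean$ sends constant functions to their value. Both $\iota$ and $\mean$ are $\group$-morphisms in the sense of \Cref{sec:rel-inj-res} with norm at most $1$, and hence induce norm non-increasing maps $\HHbf[\iota]$ and $\HHbf[\mean]$ in bounded cohomology. By functoriality, $\HHbf[\mean] \circ \HHbf[\iota] = \HHbf[\mathrm{id}_\RR] = \mathrm{id}_{\HHbR}$.

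From this identity we immediately extract both injectivity and the isometry of $\HHbf[\iota]$: for any $\alpha \in \HHbR$ one has
\[
\norm{\alpha} = \norm{\HHbf[\mean] \HHbf[\iota] \alpha} \leq \norm{\HHbf[\iota] \alpha} \leq \norm{\alpha},
\]
forcing equality throughout. Transporting along the Eckmann--Shapiro isomorphism, the restriction map $\HHbR \to \HHbR[][\subgroup]$ is isometrically injective, which proves the first claim. The \enquote{in particular} statement is then a direct consequence: if $\subgroup$ is \bAc, then $\HHbR[n][\subgroup] = 0$ for every $n \geq 1$, and injectivity of the restriction map forces $\HHbR[n] = 0$ for all $n \geq 1$, so that $\group$ is \bAc.

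The main (and really only) technical obstacle I anticipate is the naturality step in the first paragraph: one must check that the restriction map on bounded cohomology is indeed identified, via the Eckmann--Shapiro isomorphism, with the map induced by the inclusion of constant functions. Once this functoriality is nailed down, the existence of the $\group$-invariant mean $\mean$ (providing a left inverse to $\iota$ at the coefficient level) does all the remaining work, and the rest reduces to the elementary norm inequality above.
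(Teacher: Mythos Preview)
Your argument is correct and is a genuinely different route from the paper's. The paper works directly at the cochain level: it regards the standard $\group$-resolution $\CbR$ as a relatively injective strong $\subgroup$-resolution, so that $\HHbR[][\subgroup]$ is computed by $\CbR^\subgroup$, and then constructs an explicit averaging retraction $\rho^\bullet\colon \CbR^\subgroup \to \CbR^\group$ by $\rho^n(f)(g_0,\dots,g_n) = \mean(g'\mapsto g'\cdot f(g_0,\dots,g_n))$, checking by hand that $\rho^\bullet$ is a norm-one chain map with $\rho^n\circ\iota^n = \mathrm{id}$ already on cochains. Your approach instead pushes the averaging one level up, to the coefficient module: via Eckmann--Shapiro you replace $\HHbR[][\subgroup]$ by $\HHbc[][][\linf{\group/\subgroup}]$ and then use that the mean $\mean$ is a $\group$-equivariant norm-one left inverse to the inclusion of constants $\RR\hookrightarrow\linf{\group/\subgroup}$, invoking functoriality of $\HHbc[][][-]$ in the coefficient variable. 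This is cleaner conceptually and makes the role of the invariant mean more transparent; the price, as you yourself flag, is the naturality check identifying restriction with change of coefficients along $\RR\hookrightarrow\linf{\group/\subgroup}$, which the paper's hands-on argument sidesteps entirely. Both approaches give the isometric injectivity for the same reason (a norm-one left inverse), so neither is deeper than the other.
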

	\begin{proof}
		By definition, we can isometrically calculate $\HHbR$ as the bounded cohomology of the normed cochain complex $(\CbR^\group,\codiff)$.\\
		However, via the inclusion map $\subgroup\to\group$ we can view $\CbR$ also as cochain complex of normed \groupmodule[\RR][\subgroup]\s. Therefore, the resolution \resolution[\RR][\epsilon][\CbR][{\codiff[]}] is also a relatively injective strong resolution of the (trivial) normed  \groupmodule[\RR][\subgroup] $\RR$. Hence, by \Cref{cor:BC-via-resolution}
		\[ \HHb[n][\CbR^\subgroup] \cong \HHbR[n][\subgroup] \]
		holds for all $n\geq0$. Additionally, the inclusion $\CbR[n][H]\hookrightarrow\CbR[n]$ for all $n\geq1$ is in norm bounded by $1$ and gives a bounded chain map extending the identity $id$ on $\RR$. Thus, by \Cref{cor:norm-non-increasing-map-between-resolutions} the isomorphism $\HHb[n][\CbR^\subgroup] \cong \HHbR[n][\subgroup]$ is in fact also isometric.\\
		Moreover, we have an obvious inclusion
		\[
			\begin{tikzcd}
				0 & {\CbR[0]^\group} & {\CbR[1]^\group} & {\CbR[2]^\group} & {\CbR[3]^\group} & \cdots \\
				0 & {\CbR[0]^\subgroup} & {\CbR[1]^\subgroup} & {\CbR[2]^\subgroup} & {\CbR[3]^\subgroup} & \cdots
				\arrow["", from=1-1, to=1-2]
				\arrow["{\codiff[0]}", from=1-2, to=1-3]
				\arrow["{\codiff[1]}", from=1-3, to=1-4]
				\arrow["{\codiff[2]}", from=1-4, to=1-5]
				\arrow["{\codiff[3]}", from=1-5, to=1-6]
				\arrow["", from=2-1, to=2-2]
				\arrow["{\codiff[0]}", from=2-2, to=2-3]
				\arrow["{\codiff[1]}", from=2-3, to=2-4]
				\arrow["{\codiff[2]}", from=2-4, to=2-5]
				\arrow["{\codiff[3]}", from=2-5, to=2-6]
				\arrow[hook, from=1-2, to=2-2]
				\arrow[hook, from=1-3, to=2-3]
				\arrow[hook, from=1-4, to=2-4]
				\arrow[hook, from=1-5, to=2-5]
			\end{tikzcd}
		\]
		of the cochain complex $\CbR^\group$ into the cochain complex $\CbR^\subgroup$ and we denote this bounded chain map by $\iota^\bullet$ and thus the corresponding map in cohomology with $\HHbf[\iota]:\HHbR[]\to\HHb[][\CbR^\subgroup]$. This map is conjugated to the map $\HHbR \hookrightarrow \HHbR[][\subgroup]$ induced by the inclusion via the isometric isomorphisms $\HHbR[] = \HHbR[]$ and $\HHb[][\CbR^\subgroup]\cong\HHbR[][\subgroup]$ (see \cite[Proposition 8.4.2]{monod2001continuous}). Observe that $\iota^n$ is in norm bounded by $1$ for all $n\geq0$, so $\HHbf[\iota]$ does not increase the seminorm. What is hence left to show is that $\HHbf[\iota]$ is injective and also isometric. This can be done by constructing a left inverse function $\HHb[][\CbR^\subgroup]\rightarrow \HHbR$.\\
		Let $\mean$ denote the $\group$-invariant mean on $\linf{\group/\subgroup}$. Note that, for fixed $f\in\CbR[n]^\subgroup$ and $g_0,\dots,g_n\in\group$, the map $g'\mapsto g'\cdot f(g_0,\dots,g_n)$ from $\group$ to $\RR$ factors through $\group/\subgroup$, because $f$ is $\subgroup$-invariant. Now the map $\rho^\bullet$, which is for all $n\geq0$ defined by
		\begin{align*}
			\rho^n:\CbR[n]^\subgroup&\to\CbR[n]^\group &&\\
			(f:\group^{n+1}\to\RR)&\mapsto(\rho^n(f):\group^{n+1}\to\RR)\\
			&\quad~\,(g_0,\dots,g_n) \mapsto m(g'\mapsto g'\cdot f(g_0,\dots,g_n))
		\end{align*}
		is a well-defined bounded chain map such that $\HHbf[\rho]\circ\HHbf[\iota] = id$.\\
		Before we can show that $\rho^\bullet$ is a bounded chain map, we need to check that $\rho^n$ is well-defined. That is, we have to verify that $\rho^n(f)$ is $\group$-invariant. Let $f\in\CbR[n]^\subgroup$ and $g,g_0,\dots,g_n\in\group$, then
		\begin{align*}
			g\cdot\rho^n(f)(g_0\dots,g_n) &= g\rho^n(f)(g^{-1}g_0\dots,g^{-1}g_n) = g\mean(g'\mapsto g'\cdot f(g^{-1}g_0\dots,g^{-1}g_n))\\
			&=g\mean(g'\mapsto g^{-1}(gg'\cdot f(g_0,\dots,g_n))) = g\mean(g^{-1}\cdot(g'\mapsto g'\cdot f(g_0,\dots,g_n)))\\ 
			&= g\cdot \mean(g'\mapsto g'\cdot f(g_0,\dots,g_n)) = \mean(g'\mapsto g'\cdot f(g_0,\dots,g_n))\\
			&= \rho^n(f)(g_0\dots,g_n)
		\end{align*}
		where the second last equality holds, because $\mean$ is $\group$-invariant. In order to see that $\rho^\bullet$ is a chain map we calculate
		\begin{align*}
			\codiff[n]\rho^n(f)(g_0\dots,g_{n+1}) &= \sum_{i=0}^{n+1} (-1)^i \rho(f)(g_0\dots,\widehat{g}_i,\dots,g_{n+1})\\
			&= \sum_{i=0}^{n+1} (-1)^i \mean(g' \mapsto g'\cdot f(g_0\dots,\widehat{g}_i,\dots,g_{n+1})).
		\end{align*}
		Moreover we exploit that the mean $\mean$ as well as the action of $\group$ on $\CbR[n]$ are $\RR$-linear to get
		\begin{align*}
			\rho^n(\codiff[n]f)(g_0\dots,g_{n+1}) &= \mean(g'\mapsto g'\cdot (\sum_{i=0}^{n+1} (-1)^i \rho(f)(g_0\dots,\widehat{g}_i,\dots,g_{n+1})))\\
			&= \sum_{i=0}^{n+1} (-1)^i \mean(g' \mapsto g'\cdot f(g_0\dots,\widehat{g}_i,\dots,g_{n+1}))
		\end{align*}
		and we see that the expressions coincide, so $\codiff[n]\circ\rho^n= \rho^n\circ\codiff[n]$. The fact that $\rho^n$ is a bounded map is easy to see.\\
		Finally, for $f\in\CbR[n]^\group$ we have
		\begin{align*}
			\rho^n(\iota^n(f))(g_0\dots,g_n) &= \mean(g'\mapsto g'\cdot \iota^n(f)(g_0\dots,\widehat{g}_i,\dots,g_n)) \\
			&= \mean(g'\mapsto \iota^n(g'\cdot f)(g_0\dots,\widehat{g}_i,\dots,g_n))\\
			&= \mean(g'\mapsto \iota^n(f)(g_0\dots,\widehat{g}_i,\dots,g_n))\\
			&= \iota^n(f)(g_0\dots,\widehat{g}_i,\dots,g_n) = f(g_0\dots,\widehat{g}_i,\dots,g_n)
		\end{align*}
		since $\iota$ is a $\group$-morphism, $f$ is $\group$-invariant and a mean evaluates constant functions to the corresponding constant value. Hence, $\rho^n\circ\iota^n$ is the identity even on the cochain level for all $n\geq0$, so we deduce that the map $\HHbf[\rho]\circ\HHbf[\iota]$ is also the identity in cohomology.\\
		Moreover, the map $\rho^n$ is in fact of norm one for all $n\geq0$, so $\HHbf[\rho]$ does not increase the seminorm either.\\
		This shows that $\HHbf[\iota]$ is isometrically injective and concludes the proof.
	\end{proof}
	
	\begin{remark}\label{rem:coamenable->vanishing-BC}
		The result above holds not only for $\RR$-coefficients but more generally for $\module$-coefficients for all dual normed \groupmodule[\RR]\s $\module$ (\cite[Proposition 8.6.6]{monod2001continuous}).
	\end{remark}

\subsection{Bounded Cohomology via Boundedly Acyclic Modules}\label{subsec:BC-via-bAc-modules}
	
	\begin{definition}[boundedly acyclic module]\label{def:bAc-modules}
		Let $\group$ be a group, $\module$ a normed \groupmodule[\RR] and $n \geq 1$. We say that $\module$ is \highlight{\bAc[n]} if $\HHbc[i] \cong 0$ for all $1 \leq i \leq n$.\\
		Moreover, $\module$ is \highlight{\bAc} or \highlight{\bAc[\infty]} if it is \bAc[n] for all $n\geq 1$.
	\end{definition}
	
	\begin{lemma}[\cite{fournierfacio2021binate}]\label{lem:transitive-actions->stabilizer}
		Let $\group$ be a group acting transitively on a space $X$. If the stabilizer of every point in $X$ is \bAc[n] then $\HHbc[k][][\linf{X}] = 0$ for all $1\leq k\leq n.$
	\end{lemma}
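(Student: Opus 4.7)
The plan is to reduce the statement to the Eckmann--Shapiro Lemma with trivial real coefficients (Corollary \ref{cor:Eckmann--Shapiro-for-RR}). First I would fix a basepoint $x_0 \in X$ and let $\subgroup := \Stab_\group(x_0)$, which is \bAc[n] by hypothesis. Since the action of $\group$ on $X$ is transitive, the orbit map $g \mapsto g\cdot x_0$ descends to a $\group$-equivariant bijection $\group/\subgroup \to X$, and pulling functions back along this bijection produces a $\group$-equivariant isometric isomorphism of normed \groupmodule[\RR]\s
\[ \linf{X} \;\cong\; \linf{\group/\subgroup}. \]

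Next I would identify the right-hand side with the induction module of the trivial \groupmodule[\RR][\subgroup] $\RR$. Applying Remark \ref{rem:equiv-induction-module-to-left-cosets} to $V = \RR$ yields a $\group$-equivariant isometric isomorphism $\linf{\group}^\subgroup \cong \linf{\group/\subgroup}$, where the left-hand side is equipped with the right translation $\group$-action. Composing with the identification above, I obtain a $\group$-equivariant isometric isomorphism between $\linf{X}$ and the induction module of $\RR$ from $\subgroup$ to $\group$.

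Finally, Corollary \ref{cor:Eckmann--Shapiro-for-RR} provides an isometric isomorphism
\[ \HHbR[k][\subgroup] \;\cong\; \HHbc[k][][\linf{\group}^\subgroup] \;\cong\; \HHbc[k][][\linf{X}] \]
in every degree $k \geq 0$. Since $\subgroup$ is \bAc[n], the left-hand side vanishes for $1 \leq k \leq n$, giving the conclusion.

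The entire argument is essentially bookkeeping: all three ingredients (transitivity of the action, the standard identification of an induction module with functions on a coset space, and Eckmann--Shapiro for trivial real coefficients) are already available from earlier in the paper, so I do not expect a genuine technical obstacle. The only step demanding mild care is verifying that the composite isomorphism $\linf{X} \cong \linf{\group}^\subgroup$ is both $\group$-equivariant and isometric, so that it may be used to transport the Eckmann--Shapiro isomorphism from the induction module to $\linf{X}$.
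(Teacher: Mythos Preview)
Your proposal is correct and follows essentially the same route as the paper's proof: fix a point, identify $\linf{X}\cong\linf{\group/\subgroup}\cong\linf{\group}^\subgroup$ via Remark~\ref{rem:equiv-induction-module-to-left-cosets}, and then apply Corollary~\ref{cor:Eckmann--Shapiro-for-RR}. The paper is simply terser about the equivariance and isometry checks you flag at the end.
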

	\begin{proof}
		Fix an element $x\in X$ and denote the stabilizer of $x$ by $\subgroup$. Since $\group$ acts transitively on $X$ we have $\group/\subgroup \cong X$.
		Therefore
		\[ \HHbc[k][][\linf{X}] \cong \HHbc[k][][\linf{\group/\subgroup}] \cong \HHbc[k][][\linf{\group}^\subgroup] \cong \HHbR[k][\subgroup] = 0, \]
		where the second last isomorphism is due to $\linf{\group/\subgroup}\cong\linf{\group}^\subgroup$ (\Cref{rem:equiv-induction-module-to-left-cosets}) and the last isomorphism is exactly the one from \Cref{cor:Eckmann--Shapiro-for-RR}.
	\end{proof}
	
	\begin{remark}\label{rem:gen-of-bAc-stabilizer->bAc-module}
		The statement above even holds under weaker hypotheses that consider actions with infinitely many orbits fulfilling some technical condition. We refer the interested reader to \cite[Definition 5.13, Proposition 5.14]{li2022bounded}.\\
		However, in our situation it is enough to assume that we are in the special case of transitive actions in the sequel.
	\end{remark}
	
	Note that \Cref{lem:transitive-actions->stabilizer} allows to come up with various \bAc[(n)] modules for a given group $\group$. The only thing necessary to do is to find a transitive action on some set $X$ such that the stabilizer of a point is \bAc[(n)]. 
	This is especially useful in combination with the following result, which describes how bounded cohomology of a group can be calculated by considering certain resolutions of \bAc[i] modules (for varying $i$). 
	
	\begin{proposition}[{\cite[Proposition 2.5.4]{moraschini2021amenability}}]\label{prop:bounded-cohomology-through-acyclic-resolutions}
		Let $\group$ be a discrete group and $n\in\NN\cup\{\infty\}$. Suppose that we have given a cochain complex $C^\bullet$ of \groupmodule[\RR]\s such that $C^i$ is \bAc[(n-i)] for every $1\leq i \leq n$ and such that the sequence
		\[ \LES{\RR, C^0, C^1, C^2} \]
		is exact.\\
		Then there is a canonical isomorphism
		\[ \HH[i][C^{\bullet,\group}]\xrightarrow{\cong} \HHbR[i] \quad \text{for } 0\leq i\leq n \]
		as well as an injective map
		\[ \HH[n+1][C^{\bullet,\group}] \hookrightarrow \HHbR[n+1], \]
		where, as usual, $C^{\bullet,\group}$ denotes the cochain complex $\ChainComplex[0]{C^{0,\group}, C^{1,\group}, C^{2,\group}}$ of $\group$-invariants.
	\end{proposition}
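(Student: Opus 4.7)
The plan is to introduce the double cochain complex
\[ D^{p,q} := \Cbc[q][][C^p] \]
of $\group$-modules, whose horizontal differentials are induced by $d^p : C^p \to C^{p+1}$ and whose vertical differentials are the standard coboundary $\codiff[q]$ computing bounded cohomology of $\group$ with coefficients in $C^p$. After applying the functor of $\group$-invariants I would analyse the two spectral sequences attached to the resulting double complex $E^{p,q} := (D^{p,q})^\group$, both of which abut to the cohomology of the total complex of $E^{\bullet,\bullet}$.

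Filtering by rows (taking vertical cohomology first): for fixed $p$, the $q$-th vertical cohomology of $E^{p,\bullet}$ is by definition $\HHbc[q][][C^p]$. The bounded acyclicity hypothesis then forces the $E_2$-page to vanish at position $(p,q)$ whenever $1 \leq q \leq n-p$, while the row $q=0$ equals $(C^p)^\group$. Hence in the range $p+q \leq n$ the spectral sequence collapses onto the bottom row, so that its $E_\infty$-page along the total degree $i \leq n$ is exactly $H^i(C^{\bullet,\group})$, while on the diagonal $p+q = n+1$ the edge-map provides an injection $H^{n+1}(C^{\bullet,\group}) \hookrightarrow E_\infty^{n+1,0}$.

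Filtering by columns (taking horizontal cohomology first): using that the augmented complex $0 \to \RR \to C^\bullet$ is exact and that the functor $C^p \mapsto \Cbc[q][][C^p]$ preserves the exactness of this resolution, the horizontal cohomology concentrates in horizontal degree $p=0$ at $\Cbc[q][][\RR]$. Subsequent vertical cohomology together with $\group$-invariants then recovers, by \Cref{cor:BC-via-resolution} applied to the standard resolution, the bounded cohomology $\HHbR[q]$. Since both spectral sequences abut to the cohomology of the same total complex, comparing them yields the canonical isomorphism $H^i(C^{\bullet,\group}) \cong \HHbR[i]$ for $i \leq n$ together with the injection in degree $n+1$.

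The main obstacle is justifying horizontal exactness in the second spectral sequence: the functor $M \mapsto \Cbc[q][][M]$ does not preserve exactness of arbitrary sequences of normed $\RR\group$-modules, because bounded lifts of bounded functions need not exist. This is handled by splitting the resolution $0 \to \RR \to C^\bullet$ into short exact sequences $0 \to K^p \to C^p \to K^{p+1} \to 0$ and arguing that each admits a bounded set-theoretic section (equivalently, that the given resolution is \emph{strong}), so that the induced sequences of bounded cochains remain exact. Once this technical point is settled, the rest of the argument is formal double-complex bookkeeping.
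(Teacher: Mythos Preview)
The paper does not prove this proposition; it is quoted from \cite[Proposition~2.5.4]{moraschini2021amenability} and used as a black box. Your double-complex approach is the standard one and is essentially how the result is proved in the original reference.

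You have correctly isolated the only non-formal point: why the functor sending a module $M$ to $\Cbc[q][][M]$ preserves exactness of the augmented resolution $0\to\RR\to C^\bullet$. Your proposed fix, however, conflates two notions. Having a bounded \emph{set-theoretic} section for each surjection $C^p \twoheadrightarrow K^{p+1}$ is \emph{not} equivalent to the resolution being strong; strongness requires a bounded linear contracting homotopy, which is strictly stronger and does not follow from the stated hypotheses. What the argument actually needs is only the weaker statement, and for Banach modules it is automatic: $K^{p+1}=\ker(d^{p+1})$ is closed, so $d^p\colon C^p \to K^{p+1}$ is a bounded surjection between Banach spaces and hence open by the open mapping theorem, giving norm-controlled (non-linear) lifts and therefore surjectivity of the induced map on bounded cochains. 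With this in hand the second spectral sequence collapses onto $\HHbR[\bullet]$ exactly as you claim. In the applications of this thesis the issue evaporates anyway, since the resolutions $\linf{\FatPoints^{\generic}_\bullet}$ and $\linf{\dyadics^{\neq}_\bullet}$ are genuinely strong via the ultralimit contracting homotopy of \Cref{APX_prop:generic-rel->LES}.
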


	In comparison to \Cref{cor:BC-via-resolution} the isomorphisms obtained by this \namecref{prop:bounded-cohomology-through-acyclic-resolutions} might not be bi-Lipschitz maps. Hence, in general one cannot use this result if one wants to calculate the canonical norm of elements in $\HHbR$. However, if one only cares about vanishing results for bounded cohomology, as we do in this thesis, it is just as suitable.

\subsection{Dissipated and Binate Groups}
	
	In \cite{MR787909} \citeauthor{MR787909} gave the first example of a non-amenable group, which is \bAc, namely the group $\Homeo_c(\RR^n)$ of compactly supported homeomorphisms of $\RR^n$. Their proof is based upon a refinement of \citeauthor{MATHER1971297}'s proof of the acyclicity of $\Homeo_c(\RR^n)$ \cite{MATHER1971297}. A generalization of a construction made in their proofs leads to the larger class of dissipated and binate groups \cite{berrick2002topologist}, which give a unified approach to show that certain groups are \bAc. In this subsection we explain these concepts in more detail.
	
	The following is a generalization of compactly supported homeomorphisms of a given set:	
	
	\begin{definition}[boundedly supported group]\label{def:boundedly_supported}
		Let $X$ be a directed union of subsets $(X_i)_{i\in I}$ and let $\group$ act faithfully on $X$. Define $\group_i := \{g\in\group~|~ \supp(g) \subset X_i \}$ for all $i\in I$, where $\supp(g) := \{x\in X~|~ g\cdot x \neq x \}$ for all $g\in\group$.\\
		$\group$ is called \highlight{boundedly supported} if it is the directed union of all $\group_{i}$.
	\end{definition}
	
	\begin{definition}[dissipator]\label{def:dissipator}
		Let $\group$ be a boundedly supported group acting on the set $X$. Fix $i \in I$. A \highlight{dissipator} for $\group_i$ is an element $\groupel_i \in \group$ such that the following two conditions hold:
			\begin{enumerate}[label=(\roman*), itemsep=0pt, nosep]
				\item $\groupel_i^{k}(X_i)\cap X_i = \emptyset \quad \forall k\geq 1$.
				\item For all $g\in\group_i$ the element
					\[\phi(g) := \begin{cases}
									\groupel_i^{k}g\groupel_i^{-k}  & \text{on } \groupel_i^{k}(X_i), \forall k\geq 1\\
									id & \text{ elsewhere}
								\end{cases}\]
					is in $\group$.
			\end{enumerate}
		Furthermore, the group $\group$ is called \highlight{dissipated} if there exists a dissipator for $\group_i$ for all $i$.
	\end{definition}

	\begin{figure}[htbp]%
		\centering
		\begin{minipage}[t]{.95\textwidth}
			\centering
 \begin{tikzpicture}[every node/.append style={circle, draw, inner sep=0pt, minimum size=10pt}]
	\node[minimum size=3cm] at (0,0) (X0) {$X_i$};
	\node[circle, minimum size=2.3cm] at (3.4,0) (X1) {$\groupel_i(X_i)$};
	\node[circle, minimum size=1.8cm] at (6.1,0) (X2) {$\groupel_i^{2}(X_i)$};
	\node[circle, minimum size=1.4cm] at (8.2,0) (X3) {$\scriptstyle \groupel_i^{3}(X_i)$};
	\node[circle, minimum size=1.1cm] at (9.9,0) (X4) {$\scriptscriptstyle \groupel_i^{4}(X_i)$};
	\node[circle, minimum size=0.8cm] at (11.35,0) (X5) {}; 
	\node[draw=none] at (12.4,0) (dots) {$\scriptstyle\dots$};
	
	\draw[->, shorten <= 2pt, shorten >= 2pt, thick] (X0) -- (X1) node[draw=none,midway,below] {$\groupel_i$}; 
	\draw[->, shorten <= 2pt, shorten >= 2pt, thick] (X1) -- (X2) node[draw=none,midway,below] {$\groupel_i$};
	\draw[->, shorten <= 2pt, shorten >= 2pt, thick] (X2) -- (X3) node[draw=none,midway,below] {$\scriptstyle\groupel_i$};
	\draw[->, shorten <= 2pt, shorten >= 2pt, thick] (X3) -- (X4) node[draw=none,midway,below] {$\scriptstyle\groupel_i$};
	\draw[->, shorten <= 2pt, shorten >= 2pt, thick] (X4) -- (X5) node[draw=none,midway,below] {$\scriptscriptstyle\groupel_i$};
	\draw[->, shorten <= 2pt, shorten >= 2pt, thick] (X5) -- (dots);
	
	\node[ellipse,
		draw,
		minimum width = 14.5cm, 
		minimum height = 5cm,
		anchor = west, label={[xshift=0.0cm, yshift=-4.5cm]{$X_j$}}] (e) at (-2,0) {};		
\end{tikzpicture}
		\end{minipage}
		\begin{minipage}[t]{.95\textwidth}
			\centering
 \begin{tikzpicture}[every node/.append style={circle, draw, inner sep=0pt, minimum size=10pt}]
	\node[minimum size=3cm] at (0,0) (X0) {$X_i$};
	\node[circle, minimum size=2.3cm] at (3.4,0) (X1) {$\groupel_i(X_i)$};
	\node[circle, minimum size=1.8cm] at (6.1,0) (X2) {$\groupel_i^{2}(X_i)$};
	\node[circle, minimum size=1.4cm] at (8.2,0) (X3) {$\scriptstyle \groupel_i^{3}(X_i)$};
	\node[circle, minimum size=1.1cm] at (9.9,0) (X4) {$\scriptscriptstyle \groupel_i^{4}(X_i)$};
	\node[circle, minimum size=0.8cm] at (11.35,0) (X5) {};
	\node[draw=none] at (12.4,0) (dots) {$\scriptstyle\dots$};
	
	\draw[thick, ->] (3.4,-0.85) arc (-90:260:0.85) node[draw=none,midway,below] {$g$};
	\draw[thick, ->] (6.1,-0.7) arc (-90:260:0.7) node[draw=none,midway,below,yshift=1pt] {$\scriptstyle g$};
	\draw[->] (8.2,-0.55) arc (-90:260:0.55) node[draw=none,midway,below, yshift=2pt] {$\scriptscriptstyle g$};
	\draw[->] (9.9,-0.45) arc (-90:260:0.45) node[draw=none,midway,below, yshift=2pt] {$\scriptscriptstyle g$};
	\draw[->] (11.35,-0.3) arc (-90:260:0.3) node[draw=none,midway,below, yshift=2pt] {$\scriptscriptstyle g$};

	\draw[->, shorten <= 2pt, shorten >= 2pt, thick] (X0) -- (X1) node[draw=none,midway,below] {$\groupel_i$};
	\draw[->, shorten <= 2pt, shorten >= 2pt, thick] (X1) -- (X2) node[draw=none,midway,below] {$\groupel_i$};
	\draw[->, shorten <= 2pt, shorten >= 2pt, thick] (X2) -- (X3) node[draw=none,midway,below] {$\scriptstyle\groupel_i$};
	\draw[->, shorten <= 2pt, shorten >= 2pt, thick] (X3) -- (X4) node[draw=none,midway,below] {$\scriptstyle\groupel_i$};
	\draw[->, shorten <= 2pt, shorten >= 2pt, thick] (X4) -- (X5) node[draw=none,midway,below] {$\scriptscriptstyle\groupel_i$};
	\draw[->, shorten <= 2pt, shorten >= 2pt, thick] (X5) -- (dots);
	
	\node[ellipse,
	draw,
	minimum width = 14.5cm, 
	minimum height = 5cm,
	anchor = west, label={[xshift=0.0cm, yshift=-4.5cm]{$X_j$}}] (e) at (-2,0) {};		
\end{tikzpicture}
		\end{minipage}
		\caption{Iterative action of the dissipator $\groupel_i$ on $X_i$ on the top;\\ as above, together with the action of $\phi(g)$ on the bottom}
		\label{fig:dissipator}%
	\end{figure}
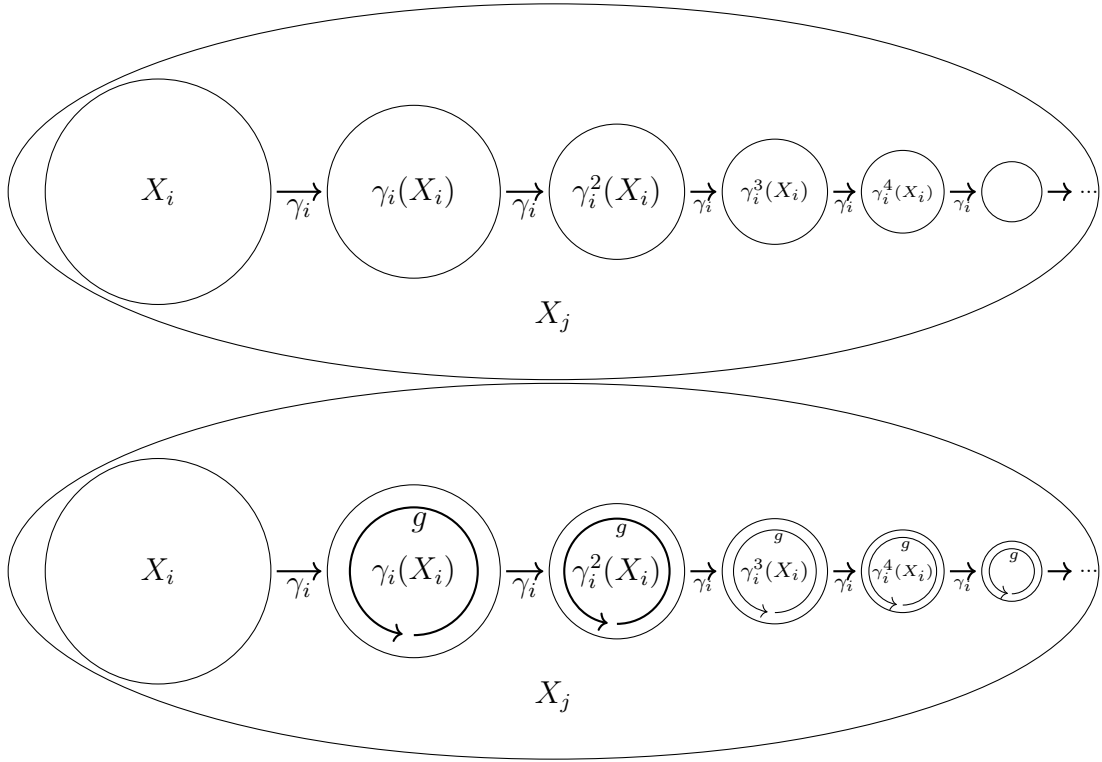
		
	Unfortunately, also the class of dissipated groups is not closed under taking subgroups. For illustrative purposes, we give some examples of dissipated groups, which have a non-dissipated subgroup.

	\begin{example}\label{exp:non-dissipated-subgroups}\hfill
		\begin{enumerate}[nosep]
			\item It is clear that a dissipator $\groupel_i$ as in \Cref{def:dissipator} must have infinite order, otherwise the first condition in \Cref{def:dissipator} cannot be satisfied. Hence, every \highlight{finite} subgroup $\subgroup$ of a dissipated group $\group$ is not dissipated.\\
			This is for example the case for $\CantorGrpStab$ (see \Cref{lem:stabilizer->dissipated->bAc}), some subgroup of all homeomorphisms of the Cantor set $\Cantor$, which is dissipated, but contains e.g. every finite cyclic subgroup.
			\item In \cite[Section 3.1.7]{berrick2002topologist} \citeauthor{berrick2002topologist} argues  that the fact that the general linear group of a cone over a ring is dissipated also yields that the group of permutations of $\NN\times\NN$, which fix all but finitely many copies of $\NN$, is dissipated. Via a bijection from $\NN\times\NN$ to $\NN$ it can then be seen that the infinite symmetric group $\sigma_\infty$, which is the group of permutations of $\NN$ with finite support, is a subgroup of a dissipated group. However, since $\sigma_\infty$ is locally finite, it cannot be dissipated.
			\item Moreover, by \Cref{prop:non-abelian-free-group-is-not-bAc} no free group can be dissipated, since it is not even \bAc. Therefore, if a dissipated group $\group$ contains a \highlight{free subgroup} $H$, this subgroup cannot be dissipated.\\
			This happens for example for $\group = \Homeo_c(\RR^n)$ which was shown to be dissipated in \cite{Gordon1960, berrick2002topologist} and any non-abelian free subgroup $\subgroup$, which can easily be constructed, for example by using the Ping-Pong lemma.
		\end{enumerate}
	\end{example}

	\begin{remark}
		Observe that by \Cref{cor:abelian-and-finite-groups-are-bAc} the non-dissipated subgroup considered in the first item in \Cref{exp:non-dissipated-subgroups} is itself \bAc, whereas the non-dissipated subgroup constructed in the third example is not \bAc (\Cref{prop:non-abelian-free-group-is-not-bAc}). Hence, these are examples of non-dissipated groups of different nature.
	\end{remark}

	In order to show that certain groups are \bAc, we are interested in a dynamical version of the following algebraic property, introduced by \citeauthor{Berrick1989UniversalGB} in \cite{Berrick1989UniversalGB}:

	\begin{definition}[binate group]\label{def:binate}
		A group $\group$ is \highlight{binate} if for every finitely generated subgroup $\subgroup\leq\group$ there exists a homomorphism $\phi:\subgroup\to\group$ and an element $\groupel\in\group$ such that
		\[ h = [\groupel, \phi(h)] = \groupel^{-1}\phi(h)^{-1}\groupel\phi(h) \quad \forall h \in H. \]
	\end{definition}

	\begin{lemma}[{\cite[Section 3.1.6]{berrick2002topologist}}]\label{lem:dissipated->binate}
		Every dissipated group is binate.
	\end{lemma}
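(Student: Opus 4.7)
The plan is to build the required homomorphism $\phi$ and element $\groupel$ explicitly out of the dissipator data. Given a finitely generated subgroup $\subgroup = \langle h_1, \ldots, h_n\rangle \leq \group$, I would first find a single index $i \in I$ with $\subgroup \leq \group_i$. Each $h_j$ lies in some $\group_{i_j}$ by the boundedly-supported hypothesis, and directedness of $I$ produces an $i$ with $X_{i_j} \subseteq X_i$ for all $j$. A direct support computation (using $\supp(gh) \subseteq \supp(g) \cup \supp(h)$ and $\supp(g^{-1}) = \supp(g)$) shows that $\group_{i_j} \subseteq \group_i$ and that $\group_i$ is itself a subgroup of $\group$; in particular $\subgroup \leq \group_i$.

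Next, with a dissipator $\groupel_i$ for $\group_i$ fixed, I would set $\groupel := \groupel_i^{-1}$ and define $\phi : \subgroup \to \group$ as the natural extension of the dissipator map that also acts as $h$ on $X_i$ itself:
\[
\phi(h) := \begin{cases} \groupel_i^k\, h\, \groupel_i^{-k} & \text{on } \groupel_i^k(X_i), \text{ for all } k \geq 0, \\ \mathrm{id} & \text{elsewhere.} \end{cases}
\]
Equivalently, $\phi(h) = h \cdot \phi_{\mathrm{diss}}(h)$, where $\phi_{\mathrm{diss}}$ is the map provided by the definition of dissipator. Both factors lie in $\group$, so $\phi(h) \in \group$. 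That $\phi$ is a homomorphism is a pointwise verification based on two facts: the sets $\groupel_i^k(X_i)$ for $k \geq 0$ are pairwise disjoint (condition (i) of the dissipator), and every element of $\group_i$ preserves $X_i$ setwise (since the support of a group element is always invariant under that element, and points outside the support are fixed).

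It remains to verify the identity $h = [\groupel, \phi(h)] = \groupel_i\, \phi(h)^{-1}\, \groupel_i^{-1}\, \phi(h)$ pointwise. On $x \in X_i$, reading the composition right to left, one has $x \mapsto h(x) \mapsto \groupel_i^{-1}(h(x))$, which lies in $\groupel_i^{-1}(X_i)$ where $\phi(h)^{-1}$ is the identity, so the third step leaves it unchanged, and a final $\groupel_i$ returns $h(x)$. On $\groupel_i^k(X_i)$ for $k \geq 1$ a telescoping cancellation of the form $\groupel_i^{k-1} h^{-1} h\, \groupel_i^{-k}$ collapses the composition to the identity, matching the trivial action of $h$ there (since $\supp(h) \subseteq X_i$), and outside $\bigcup_{k \geq 0} \groupel_i^k(X_i)$ everything is tautologically trivial. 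The one delicate bookkeeping point — and the real content of the argument — is the mismatch between $\phi_{\mathrm{diss}}$ (identity on $X_i$) and the $\phi$ that makes the binate identity work (equal to $h$ on $X_i$); multiplying by $h$ fixes this cleanly while preserving both the homomorphism property and membership in $\group$.
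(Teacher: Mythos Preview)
Your proof is correct and follows essentially the same strategy as the paper: locate $\subgroup$ inside some $\group_i$ by directedness, then manufacture the binate data directly from the dissipator $\groupel_i$. The only difference is bookkeeping: the paper takes $\groupel = \groupel_i$ and uses $\psi(h) = \phi_{\mathrm{diss}}(h)^{-1}$, whereas you take $\groupel = \groupel_i^{-1}$ and extend $\phi_{\mathrm{diss}}$ to include the $k=0$ block, i.e.\ $\phi(h) = h\cdot\phi_{\mathrm{diss}}(h)$. Your variant has the minor advantage that $\phi$ is visibly a homomorphism (block-wise conjugation on pairwise disjoint invariant pieces), while the paper's $\psi$ is, strictly speaking, an anti-homomorphism; so your packaging is arguably the cleaner one.
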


	\begin{proof}
		Let $\group$ be a dissipated group and $(X_i,\group_i)_{i\in I}$ be as defined in \Cref{def:dissipator}. For every finitely generated $\subgroup\leq\group$ there is an $i_0\in I$ such that $\subgroup\leq\group_{i_0}$. This subgroup $\group_ {i_0}$ has a dissipator $\groupel_{i_0}$ in $\group$.
		\begin{claim*}
			$\groupel := \groupel_{i_0}$ together with the homomorphism $\psi:\subgroup\to\group$, $g\mapsto (\phi(g))^{-1}$, given in \Cref{def:dissipator}, satisfy
			\[ h = [\groupel, \psi(h)] = \groupel^{-1}\psi(h)^{-1}\groupel\psi(h) \]
		\end{claim*}
		\begin{claim-proof}
			Indeed, for all $h\in \subgroup$ on $\groupel_{i_0}^{k}(X_{i_0}), \forall k\geq 1$, we have
			\begin{align*}
				[\groupel, \psi(h)] &= \groupel^{-1}\psi(h)^{-1}\groupel\psi(h) = \groupel^{-1}\psi(h)^{-1}\groupel_{i_0}\groupel_{i_0}^{k}h^{-1}\groupel_{i_0}^{-k} = \\
				&=\groupel_{i_0}^{-1}\groupel_{i_0}^{k+1}h\groupel_{i_0}^{-(k+1)}\groupel_{i_0}^{k+1}h^{-1}\groupel_{i_0}^{-k} = \groupel_{i_0}^{k}hh^{-1}\groupel_{i_0}^{-k} = id
			\end{align*}
			where $\psi(h)^{-1}$ is replaced by $\groupel_{i_0}^{k+1}h\groupel_{i_0}^{-(k+1)}$, since $\groupel_{i_0}^{k+1}h^{-1}\groupel_{i_0}^{-k}$ sends $\groupel_{i_0}^{k}(X_{i_0})$ to $\groupel_{i_0}^{k+1}(X_{i_0})$ and on $X_{i_0}$ we have
			\[ [\groupel, \psi(h)] = \groupel^{-1}\psi(h)^{-1}\groupel\psi(h) = \groupel^{-1}\psi(h)^{-1}\groupel_{i_0} = \groupel_{i_0}^{-1}\groupel_{i_0}h\groupel_{i_0}^{-1}\groupel_{i_0} = h. \]
		\end{claim-proof}		
		Therefore, $\group$ is binate.
	\end{proof}

	However, although being dissipated and being binate seems to be equivalent, the reverse statement of the \namecref{lem:dissipated->binate} above is wrong in general. There are binate groups which are not dissipated. This happens for instance for binate groups which are torsion, since by definition every dissipator has infinite order. In \cite[Section 3.1.3]{berrick2002topologist} it is shown that Phillip Hall's countable universal locally finite group is binate, which is, since it is locally finite, in particular also torsion.\\
	
	Nevertheless, binate groups are of special interest for us. Binate groups give a large class of \bAc groups, as the concluding \nameCref{prop:binate->bAc} of this subsection shows.
	
	\begin{proposition}[\cite{fournierfacio2021binate}]\label{prop:binate->bAc}
		Every binate group is \bAc.
	\end{proposition}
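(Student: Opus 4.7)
The plan is to adapt the Mather--Matsumoto--Morita swindle argument from \cite{MR787909} (for $\Homeo_c(\RR^k)$) to binate groups, following the strategy of Fournier-Facio, Löh, and Moraschini. The classical argument of \citeauthor{Berrick1989UniversalGB} already shows integral acyclicity of binate groups via an Eilenberg-style infinite sum of chains; the real task here is to carry out this sum with explicit norm control so that it produces a \emph{bounded} primitive of any given bounded cocycle.

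First, I would fix a bounded cocycle $c \in \Zbc[n]^\group$ representing a class $\alpha \in \HHbR[n][\group]$ with $n \geq 1$, and seek to construct a bounded $\group$-invariant $(n-1)$-cochain $b$ satisfying $\codiff[n-1] b = c$. Since the value $c(g_0,\dots,g_n)$ is determined by the finitely generated subgroup $\langle g_0,\dots,g_n\rangle$, the strategy is to build local primitives on each finitely generated $\subgroup \leq \group$ using the binate data $(\phi,\groupel)$ attached to $\subgroup$, and then to assemble these compatibly into a global $\group$-invariant cochain.

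Next, for a fixed finitely generated $\subgroup \leq \group$ I would rewrite the binate relation $h = [\groupel,\phi(h)] = \groupel^{-1}\phi(h)^{-1}\groupel\phi(h)$ and observe that the homomorphism $h \mapsto \groupel\phi(h)\groupel^{-1}$ is conjugate (in $\group$) to $\phi$, so by \Cref{prop:extension-between-resolutions} it induces the same map in bounded cohomology as $\phi^*$. Combining this with the commutator decomposition of the inclusion $\iota : \subgroup \hookrightarrow \group$, one obtains a chain-level formula relating $\iota^*\alpha$ to $\phi^*\alpha$ up to an explicit coboundary built from the chain homotopy witnessing conjugation invariance. Iterating the binate property on the enlarged subgroups $\langle \subgroup, \phi(\subgroup), \phi^2(\subgroup),\dots\rangle$ yields an infinite sequence of such identities whose telescoping sum, if it converges, gives the desired primitive.

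The hard part will be the norm control in this iteration. A formal Eilenberg swindle will express $c|_{\subgroup}$ as an infinite sum of coboundaries, but in the bounded setting one must certify that the sum of the sup-norms of the intermediate cochains is finite. The crucial feature to exploit is that the binate relation writes each element of $\subgroup$ as a \emph{single} commutator rather than an iterated product, so each stage of the iteration produces a bounded cochain whose norm is controlled uniformly by a constant multiple of $\norm{c}_\infty$; by choosing the successive binate data so that the iterates $\phi^k(\subgroup)$ are suitably independent, one should obtain geometric decay of these norms and hence absolute convergence of the telescoping sum to a bounded $(n-1)$-cochain $b$ with $\codiff b = c$. Verifying that the subgroup-local primitives glue $\group$-equivariantly, and that this chain-level construction is robust under the choices involved, is the main technical obstacle.
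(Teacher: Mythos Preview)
Note first that the paper does not actually prove this proposition; it is quoted from \cite{fournierfacio2021binate} without argument, so there is no ``paper's own proof'' to compare against beyond the published one you cite.

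Your proposal has a genuine gap at the convergence step. You correctly observe that each application of the binate data produces a bounded cochain whose norm is controlled by a fixed multiple of $\norm{c}$, but you then assert ``geometric decay of these norms'' with no mechanism to produce it. In an abstract binate group there is no metric or support structure: the sup-norm of a cochain on $\group^{n+1}$ is completely insensitive to any notion of ``independence'' of the subgroups $\phi^k(\subgroup)$. So your telescoping sum is an infinite sum of cochains each of norm comparable to $\norm{c}$, and there is no reason for it to converge in $\ell^\infty$. In the Matsumoto--Morita argument for $\Homeo_c(\RR^k)$ the decay is genuinely geometric --- supports shrink to a point --- and that ingredient is simply absent for general binate groups.

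The argument in \cite{fournierfacio2021binate}, extending L\"oh's treatment of mitotic groups in \cite{2017_LOEH}, avoids this issue by working at the level of cohomology \emph{classes} rather than cochains. The binate identity $h=[\groupel,\phi(h)]$ together with conjugation invariance of bounded cohomology (a consequence of \Cref{prop:extension-between-resolutions}) yields a \emph{finite} algebraic relation among the maps induced on $\HHbR[n]$ by the inclusion, $\phi$, and a suitable diagonal homomorphism; one deduces, in effect, that the restriction of any class $\alpha$ to each finitely generated $\subgroup$ equals twice itself and hence vanishes. This is an Eilenberg swindle executed once in cohomology, not an infinite sum of cochains, and it needs no decay estimate at all. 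Your outline is salvageable if you replace the infinite telescoping sum by this finite ``$\alpha=2\alpha$'' manoeuvre.
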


\subsection{Generic Relations and Semi-simplicial Sets}

	\begin{definition}[generic relation]\label{def:generic_rel}
		Let $X$ be a set. We call a binary relation $\generic$ generic if, for every given finite set $Y\subseteq X$, there is an element $x\in X$ such that $y~\generic~x$ for all $y\in Y$.\\
		Moreover, any generic relation on $X$ gives rise to a semi-simplicial set $X^{\generic}_\bullet$ in the following way: we define $X^{\generic}_n$ to be the set of all $(n+1)$-tuples $(x_0,\dots,x_n)\in X^{n+1}$ for which $x_i~\generic~x_j$ holds for all $0\leq j<i\leq n$. The face maps $\diff[n]:X^{\generic}_n\to X^{\generic}_{n-1}$ are the usual simplex face maps, i.e.
		\[	\diff[n](x_0,\dots,x_n) = \sum_{i=0}^{n}(-1)^{i}(x_0,\dots,\widehat{x}_i,\dots,x_n) \]
		where $\widehat{x}_i$ means that $x_i$ is omitted.\\
		Note that if $X\neq\emptyset$ and $\generic$ is generic then $X^{\generic}_n\neq\emptyset$ for all $n\geq0$.
	\end{definition}

	In the given setting generic relations are especially useful due to the following \namecref{prop:generic-rel->LES} which states that the cohomology of the cochain complex associated to the corresponding semi-simplical set vanishes.
	
	\begin{proposition}\label{prop:generic-rel->LES}
		Let $\generic$ be a generic relation on a set $X$. Denote by $X^{\generic}_\bullet$ the associated semi-simplicial set. Then $X^{\generic}_\bullet$ is \bAc, i.e. the cohomology of the corresponding cochain complex $\ChainComplex[0]{\linf{X^{\generic}_1}, \linf{X^{\generic}_2}, \linf{X^{\generic}_3}}[][\delta^]$ vanishes in all degrees except $0$.
	\end{proposition}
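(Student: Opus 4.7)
The plan is to build an explicit contracting homotopy on the augmented cochain complex
\[ 0 \to \RR \to \linf{X^{\generic}_0} \to \linf{X^{\generic}_1} \to \linf{X^{\generic}_2} \to \cdots \]
using an ultrafilter on $X$ that is compatible with the generic relation. This will simultaneously establish exactness in all positive degrees (and identify $H^0$ with $\RR$).

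First I would construct the ultrafilter. For every finite $Y \subseteq X$, genericity says that
\[ U_Y := \{x \in X \mid y \generic x \text{ for all } y \in Y\} \]
is nonempty. Since $U_{Y_1 \cup Y_2} \subseteq U_{Y_1} \cap U_{Y_2}$, the family $\{U_Y\}_Y$ is a filter base, and by the standard ultrafilter lemma (\Cref{APX_sec:Ultrafilters-and-ultralimits}) I would extend it to an ultrafilter $\omega$ on $X$. The key feature is that $U_{\{x_0,\dots,x_{n-1}\}} \in \omega$ for every tuple, which matches exactly the condition that $(x, x_0, \ldots, x_{n-1}) \in X^{\generic}_n$.

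Next I define $\Theta_n : \linf{X^{\generic}_n} \to \linf{X^{\generic}_{n-1}}$ for $n \geq 1$ and $\Theta_0 : \linf{X^{\generic}_0} \to \RR$ by
\[ \Theta_n(f)(x_0, \ldots, x_{n-1}) := \lim_{\omega}\, f(x, x_0, \ldots, x_{n-1}), \qquad \Theta_0(f) := \lim_{\omega}\, f(x). \]
Because $U_{\{x_0,\dots,x_{n-1}\}} \in \omega$, the ultralimit is taken over $x$'s for which $(x,x_0,\dots,x_{n-1})$ really is an element of $X^{\generic}_n$ (and one may extend $f$ arbitrarily outside, since $\omega$ ignores complements of its members). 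Standard properties of ultralimits give $\|\Theta_n(f)\| \leq \|f\|$ and linearity.

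The core computation is the cochain homotopy identity. Given $f \in \linf{X^{\generic}_n}$, splitting the $i=0$ summand in
\[ \Theta_{n+1}(\delta^n f)(x_0, \ldots, x_n) = \lim_{\omega}\sum_{i=0}^{n+1} (-1)^i f(x, x_0, \ldots, \widehat{y_i}, \ldots, x_n), \]
where $y_0 = x$ and $y_j = x_{j-1}$ for $j \geq 1$, isolates the term $f(x_0,\dots,x_n)$ (which is constant in $x$) and, after reindexing $j = i-1$, recognizes the remaining sum as $-\delta^{n-1}\Theta_n(f)(x_0,\ldots,x_n)$. Hence
\[ \delta^{n-1} \Theta_n + \Theta_{n+1} \delta^n = \mathrm{id}_{\linf{X^{\generic}_n}} \qquad (n \geq 1), \]
while for $n = 0$ the same computation yields $\Theta_1\delta^0 f = f - \Theta_0(f)\cdot \mathbf{1}$, identifying constants as the only class in degree $0$.

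The main obstacle is just the sign and indexing bookkeeping in this last identity: one must be careful that the element $x$ inserted by $\Theta$ occupies position $0$ (which is the only position compatible with genericity, since the defining relation demands later entries to be $\generic$ earlier ones), so that the reindexing of deletions in $\delta^n f(x, x_0, \ldots, x_n)$ precisely cancels $\delta^{n-1}\Theta_n f$ up to the surviving $f(x_0,\dots,x_n)$ term. Once this is verified, $(\Theta_\bullet)$ is a bounded contracting homotopy for the augmented complex, proving the desired vanishing.
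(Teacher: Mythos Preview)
Your proposal is correct and follows essentially the same approach as the paper: build a filter base from the sets of elements generic to a given finite tuple, extend to an ultrafilter, and use the resulting ultralimit (inserting the integrated variable at position $0$) as a bounded contracting homotopy. Your care about why position $0$ is the one compatible with the definition of $X^{\generic}_n$, and your remark that the ultralimit ignores values outside the filter set, are both on point and slightly more explicit than the paper's write-up.
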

	
	We postpone the proof of this proposition to the \Cref{APX_sec:Ultrafilters-and-ultralimits} (see \Cref{APX_prop:generic-rel->LES}) because it requires knowledge about \text{ultrafilters} and \text{ultralimits}. Since these concepts are nonstandard, \Cref{APX_sec:Ultrafilters-and-ultralimits} also contains a short (but by no means exhaustive) introduction into filters, ultrafilters and ultralimits.\\
	
	Summing up our results above we can now state a corollary which lets us calculate the bounded cohomology of certain groups.
	
	\begin{corollary}\label{cor:bounded-cohomology-through-action-and-generic-relation}
		Let $\group$ be a group acting transitively on a set $X$ on which there is a generic relation $\generic$ preserved by $\group$. If for every $n\geq 1$ the stabilizer of every point in $X^{\generic}_n$ is \bAc, we can calculate the bounded cohomology of $\group$ with (trivial) $\RR$-coefficients via
		\[ \HH[][\linf{X_{\bullet}^{\generic}}^\group] \xrightarrow{\cong} \HHbR. \]
	\end{corollary}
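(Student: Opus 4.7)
The plan is to apply \Cref{prop:bounded-cohomology-through-acyclic-resolutions} with $n = \infty$ to the cochain complex $C^\bullet$ defined by $C^k := \linf{X_k^{\generic}}$, augmented by the inclusion of constants $\epsilon \colon \RR \hookrightarrow C^0$. Because $\group$ preserves the relation $\generic$, the diagonal action on $X^{k+1}$ restricts to an action on $X_k^{\generic}$, making each $C^k$ into a normed \groupmodule[\RR]; the simplicial face maps $\codiff[k]$ are then automatically bounded $\group$-morphisms, and $\epsilon$ is a $\group$-morphism with respect to the trivial action on $\RR$.

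First I would verify exactness of the augmented complex
\[ 0 \to \RR \xrightarrow{\epsilon} C^0 \xrightarrow{\codiff[0]} C^1 \xrightarrow{\codiff[1]} C^2 \to \cdots. \]
Exactness at every $C^k$ with $k \geq 1$ is precisely the content of \Cref{prop:generic-rel->LES}. Exactness at $C^0$ reduces to a short observation: if $\codiff[0] f = 0$, then $f(x_0) = f(x_1)$ whenever $x_1 \generic x_0$; using the genericity of $\generic$ to produce, for any $x_0, x_0' \in X$, a common $\generic$-successor $y$, one obtains $f(x_0) = f(y) = f(x_0')$, so $f$ is constant and $\ker \codiff[0] = \im \epsilon$.

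Second I would verify that each $C^k$ with $k \geq 1$ is boundedly acyclic. By hypothesis the $\group$-stabilizer of every point of $X_k^{\generic}$ is \bAc, so \Cref{lem:transitive-actions->stabilizer}, applied orbit-by-orbit and invoking \Cref{rem:gen-of-bAc-stabilizer->bAc-module} if the action on $X_k^{\generic}$ fails to be transitive, shows that $C^k = \linf{X_k^{\generic}}$ is \bAc. Note that $C^0$ is allowed to be arbitrary: \Cref{prop:bounded-cohomology-through-acyclic-resolutions} only requires bounded acyclicity of the modules in positive degrees.

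With both hypotheses in place, \Cref{prop:bounded-cohomology-through-acyclic-resolutions} taken with parameter $\infty$ yields the canonical isomorphism $\HH[i][\linf{X_\bullet^{\generic}}^\group] \xrightarrow{\cong} \HHbR[i]$ for every $i \geq 0$, which is the desired conclusion. The argument is thus essentially bookkeeping; the single substantive input is \Cref{prop:generic-rel->LES}, whose proof via ultralimits is deferred to the appendix. One should also note that the isomorphism produced by \Cref{prop:bounded-cohomology-through-acyclic-resolutions} is not known to be bi-Lipschitz, so this approach is suited only to vanishing-type conclusions and will not detect the canonical seminorm on $\HHbR$.
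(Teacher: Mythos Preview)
Your proof is correct and follows essentially the same route as the paper: exactness via \Cref{prop:generic-rel->LES}, bounded acyclicity of the modules via \Cref{lem:transitive-actions->stabilizer} (with \Cref{rem:gen-of-bAc-stabilizer->bAc-module} for the non-transitive case), and then \Cref{prop:bounded-cohomology-through-acyclic-resolutions}. Your explicit check of exactness at $C^0$ and the remark about the failure of bi-Lipschitz control are welcome additions but do not change the argument.
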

	\begin{proof}
		Of course we want to apply \Cref{prop:bounded-cohomology-through-acyclic-resolutions} to $\group$ together with the cochain complex $(\linf{X^{\generic}_{\bullet}},\codiff)$. By \Cref{prop:generic-rel->LES} the sequence 
		\[ \LES[0]{\linf{X^{\generic}_1}, \linf{X^{\generic}_2}, \linf{X^{\generic}_3}} \]
		is exact, since we have, by assumption, a generic relation on $X$.\\
		What is left to show is that the $\RR[\group]$-module $\linf{X^{\generic}_i}$ is \bAc for all $i\geq 1$. However, if $\group$ acts transitively on $X^{\generic}_i$ for all $i\geq1$ this follows from \Cref{lem:transitive-actions->stabilizer} and the fact that all stabilizers are \bAc. More generally, if the $\group$-action has more than one orbit in $X^{\generic}_i$, this follows from \Cref{rem:gen-of-bAc-stabilizer->bAc-module} since the fact that all stabilizers are \bAc implies the technical requirement mentioned in the \namecref{rem:gen-of-bAc-stabilizer->bAc-module}. This yields
		\[ \HH[][\linf{X_{\bullet}^{\generic}}^\group] \xrightarrow{\cong} \HHbR. \]
	\end{proof}

	\newpage
	
 \section{Homeomorphisms of the Cantor Set}\label{sec:homeomorphisms-on-K}
	
	In this section we will discuss the group of homeomorphisms of the Cantor set. We will use results from the previous section to prove \Cref{INT_thm:Homeo(K)-is-bAc} (\Cref{thm:Homeo(K)-is-bAc} below) which states that the group under consideration is \bAc.\\
	The result is essentially a consequence of the vast flexibility the group of homeomorphisms of the Cantor set offers.
	
\subsection{Introduction to the Cantor Set \texorpdfstring{$\Cantor$}{K}}
	
	There are many different possibilities to describe the Cantor set. Probably the most known description is often referred to as the \highlight{\enquote{middle-third Cantor set}} which is constructed as follows:\\
	Set $C_0 := [0,1]$. Set $C_{n+1} := \tfrac{1}{3}\cdot C_n \cup (\tfrac{1}{3}\cdot C_n+\tfrac{2}{3})$ for all $n\geq 0$. Then the middle-third Cantor set can be obtained by $C := \bigcap_{n\geq 0}C_n$.\\
	
	However, this is just one specific model of the Cantor set. Although all of the models are (as we will see) homeomorphic to one another for the sake of generalization we chose to mainly work with an abstract set, which carries the characterizing properties of a Cantor set. Nevertheless, to illustrate some of the following concepts we will use the specific \highlight{dyadic model} of the Cantor set, introduced below
	 
	\begin{definition}[Cantor set]
		A Cantor set is a metric space which is compact, totally disconnected and perfect (has no isolated points).\\
		In the following $\Cantor$ will always denote a Cantor set and $\CantorMetric$ its metric.
	\end{definition}
	
	Moreover, for a subset $U\subseteq\Cantor$ we will denote the diameter of $U$ by $\diam(U) := \sup_{x,y\in U}\CantorMetric(x,y)$.
	
	\begin{theorem}\label{thm:Cantor-sets-are-homeomorphic}
		Every two Cantor sets are homeomorphic.
	\end{theorem}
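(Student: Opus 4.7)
The plan is to prove the theorem by showing that any Cantor set is homeomorphic to the dyadic model $\CantorModel = \{0,1\}^{\NN}$; since homeomorphism is transitive, this yields that any two Cantor sets are homeomorphic. The construction I would use builds a compatible system of clopen partitions indexed by the nodes of the infinite binary tree.

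The first step is to verify the topological ingredients we have at our disposal. From compactness together with total disconnectedness (and metrizability) I would deduce that $\Cantor$ is zero-dimensional, i.e.\ has a basis of clopen sets. In particular, any clopen set $U\subseteq \Cantor$ of positive diameter can be split: since $\Cantor$ is perfect, $U$ contains at least two points $x\neq y$, and by zero-dimensionality there is a clopen neighborhood $V$ of $x$ contained in $U$ with $y\notin V$; then $U = V \sqcup (U\setminus V)$ expresses $U$ as a disjoint union of two nonempty clopen subsets. Iterating and using compactness, I can refine any finite clopen partition of $\Cantor$ to a finite clopen partition of $\Cantor$ in which every piece has diameter less than a prescribed $\varepsilon>0$.

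The second step is the main construction. I would define inductively, for every finite binary word $w\in\{0,1\}^n$, a nonempty clopen set $U_w\subseteq\Cantor$ such that (i) $U_\emptyset = \Cantor$, (ii) $U_w = U_{w0}\sqcup U_{w1}$ for all $w$, and (iii) $\diam(U_w) < 2^{-n}$ for $w$ of length $n$. Given $U_w$, I use perfectness to choose two distinct points in it, then use the diameter-refinement from the previous step to split $U_w$ into two nonempty clopen pieces of diameter less than $2^{-(n+1)}$, calling them $U_{w0}$ and $U_{w1}$. By construction $\{U_w : |w|=n\}$ is a clopen partition of $\Cantor$ for every $n$.

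Finally I would define $\varphi:\Cantor\to\CantorModel$ as follows: for $x\in\Cantor$, the nested sequence $U_{w_1}\supseteq U_{w_1w_2}\supseteq\cdots$ containing $x$ picks out a unique infinite binary sequence $\varphi(x) := (w_1,w_2,\ldots)$. Well-definedness follows because for each $n$ there is exactly one length-$n$ word $w$ with $x\in U_w$. The map $\varphi$ is a bijection: surjectivity follows from compactness since $\bigcap_{n\geq 1} U_{w_1\ldots w_n}$ is a decreasing intersection of nonempty closed subsets of the compact space $\Cantor$, and the diameter condition (iii) forces this intersection to be a single point; injectivity is analogous. Continuity of $\varphi$ follows because the preimage of the basic clopen cylinder determined by $w$ is exactly $U_w$, hence clopen. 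Continuity of $\varphi^{-1}$ follows because $\Cantor$ is compact Hausdorff, so a continuous bijection onto a Hausdorff space is automatically a homeomorphism. The main obstacle is really the partition-refinement step, which quietly uses all three defining properties (compact, perfect, totally disconnected) simultaneously; once that lemma is in place, the rest is a clean tree-indexed inverse limit argument, with the detailed topological bookkeeping for Cantor sets (which is why the author defers the fullest treatment to Appendix~\ref{APX_sec:cantor-sets-and-maps-between-them}).
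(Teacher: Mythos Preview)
Your overall strategy---show that every Cantor set is homeomorphic to the fixed model $\CantorModel$ via a binary tree of nested clopen sets---is sound and is a standard alternative to the paper's proof. However, there is a genuine gap in your inductive step. You assert that each $U_w$ can be split into \emph{exactly two} nonempty clopen pieces $U_{w0},U_{w1}$ each of diameter $<2^{-(n+1)}$. This is false in general: in $\CantorModel$ itself (with the metric $\CantorModelMetric$), any clopen set of diameter $<2^{-3}$ is contained in a single cylinder of length~$3$, so a two-piece clopen partition of $\CantorModel$ can cover at most two of the eight length-$3$ cylinders and hence cannot have both pieces of diameter $<2^{-3}$. The diameter-refinement lemma you invoke produces \emph{many} small pieces, and regrouping them into two bunches destroys the diameter bound. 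The easy fix is to weaken (iii) to ``$\diam(U_{w_1\cdots w_n})\to 0$ along every branch'': first refine $U_w$ into $k\ge 2$ clopen pieces of diameter $<2^{-(n+1)}$, then use perfectness to split further until $k$ is a power of~$2$, and insert the corresponding $\log_2 k$ binary levels. The rest of your argument (Cantor's intersection theorem for surjectivity, compact--Hausdorff for continuity of the inverse) then goes through unchanged.

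For comparison, the paper's proof in Appendix~\ref{APX_sec:cantor-sets-and-maps-between-them} avoids this issue entirely by \emph{not} insisting on binary splits and by working directly between two arbitrary Cantor sets $\Cantor,\Cantor'$ rather than factoring through $\CantorModel$. It builds, alternately on the two sides, finite clopen partitions $\mathcal{W}^n,\mathcal{W}'^n$ of matching (but arbitrary) cardinality together with bijections $\sigma_n$ compatible with refinement; the alternation (odd~$n$: impose the $\epsilon_n$-mesh on $\Cantor$; even~$n$: on $\Cantor'$) is what forces the diameters to shrink on \emph{both} sides. Your approach is arguably more conceptual (everything is an inverse limit of two-point sets), while the paper's back-and-forth is more symmetric and sidesteps the arity bookkeeping that tripped you up.
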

	The proof of this fact can be found in many topology textbooks. (e.g. \cite[Section 12, Theorem 8]{moise2013geometric}). However, for the interested reader we also give a detailed proof in \Cref{APX_thm:Cantor-sets-are-homeomorphic}. This proof uses some topological properties of Cantor sets so we chose to move it to \Cref{APX_sec:cantor-sets-and-maps-between-them}.
	
	\begin{example}[Dyadic model of the Cantor set]\label{exp:dyadic-Cantor-model}
		A further model of the Cantor set is the following. Let $\{0,1\}$ be equipped with the discrete topology. Then the set $\prod_{n=1}^{\infty}X_n$, where $X_n = \{0,1\}$ for all $n\geq1$, equipped with the product topology is a Cantor set.\\
		Recall that if we let $p_k:\prod_{n=1}^{\infty}X_n \to X_k$ be the projection for all $k\geq1$, then a subbase for the topology on $\prod_{n=1}^{\infty}X_n$ is given by the set 
		\[ \{p_k^{-1}(U_k)  \subseteq \prod_{n=1}^{\infty}X_n~|~ k\geq1 \text{ and } U_i\subseteq X_k \text{ is open in } X_k\}. \]
		We will henceforth denote this set by $\CantorModel$ and represent its elements with sequences $\seq$, where $\seqel\in\{0,1\}$ for all $k\geq1$.\\
		Although this example is also quite standard, for sake of clarity we nevertheless show that this is a Cantor set in detail:\\
		A metric $\CantorModelMetric:\CantorModel\times\CantorModel\to\RR$ is given by
		\[ \CantorModelMetric(\seq,\seq[y]) := 2^{-N}, \]
		where $N\geq1$ is the first index for which $\seqel[][N] \not=\seqel[y][N]$. This metric is an ultrametric, that is for all $\seq,\seq[y],\seq[z]\in\CantorModel$ it holds that
		\[ \CantorModelMetric(\seq,\seq[z]) \leq \max\{\CantorModelMetric(\seq,\seq[y]),\CantorModelMetric(\seq[y],\seq[z])\}. \]
		Note that the topology induced by $\CantorModelMetric$ coincides exactly with the topology on $\CantorModel$ introduced before. It is easy to see that $\CantorModel$ is compact, since it is closed and totally bounded (see also Tychonoff's Theorem). Regarding isolated points, for every $\epsilon>0$ and every $\seq[y]\in\CantorModel$, we can find an $N\geq1$ such that $2^{-N}<\epsilon$ and choose an $\seqel\in\CantorModel$, so that $\seqel = \seqel[y]$ for all $k\leq N$. Then
		 \[ \CantorModelMetric(\seq,\seq[y]) \leq 2^{-N} < \epsilon. \]
		Hence, there are no isolated points, so $\CantorModel$ is also perfect.\\
		Furthermore, we have to argue why $\CantorModel$ is totally disconnected. Assume that $U\subseteq\CantorModel$ is a set containing two distinct points $\seq[y]\not=\seq[z]$ in $U$. Then there is an index $N\geq1$ so that, say, $0=\seqel[y][N]\not=\seqel[z][N]=1$. However, the sets
		\[U_0 := \{\seq\in\CantorModel~|~\seqel[][N] = 0 \} \text{ and } U_1 := \{\seq\in\CantorModel~|~\seqel[][N] = 1 \} \]
		are both open in $\CantorModel$. Moreover, $\CantorModel = U_0\cup U_1$ and $\seq[y]\in U_0$, $\seq[z]\in U_1$. Hence, $U = (U\cap U_0)\cup (U\cap U_1)$ is a disjoint union of non-empty, open subsets, so $U$ is not connected. Therefore $\CantorModel$ is totally disconnected.\\
		This shows that $\CantorModel$ is indeed a Cantor set.\\
		
		Finally, the lexicographic order gives a natural linear order $\lorder$ on $\CantorModel$, i.e. $\seq\lorder\seq[y]$ if and only if $\seqel[][N] = 0$ and $\seqel[y][N] = 1$, where again $N$ is the first index for which $\seqel[][N] \not=\seqel[y][N]$.
	\end{example}
	
	This example already hints to one of the most interesting properties of Cantor sets, namely \enquote{self-similarity} properties. The next few results will delve further into this. 
	
\subsection{Properties of the Cantor Set \texorpdfstring{$\Cantor$}{K}}
	
	\begin{theorem}[{\cite[Section 12, Theorem 5]{moise2013geometric}}]\label{thm:clopen-subset->Cantor-set}
		Let $\Cantor$ be a Cantor set and $U$ a subset of $\Cantor$. If $U$ is open and closed, then $U$ is a Cantor set.\\
		In particular, there is a homeomorphism between $\Cantor$ and $U$.
	\end{theorem}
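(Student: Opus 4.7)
The plan is to verify that $U$ inherits each of the three defining properties of a Cantor set (compactness, total disconnectedness, and perfectness) from $\Cantor$, equipping $U$ with the restricted metric $\CantorMetric|_{U \times U}$. Once $U$ is known to be a Cantor set, the ``in particular'' statement about the existence of a homeomorphism $\Cantor \to U$ is immediate from \Cref{thm:Cantor-sets-are-homeomorphic}, which asserts that any two Cantor sets are homeomorphic.

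For compactness, I would simply invoke the standard fact that a closed subset of a compact space is compact: since $U$ is closed in the compact space $\Cantor$, $U$ itself is compact. For total disconnectedness I would use that this property is hereditary: any connected subset of $U$ is a connected subset of $\Cantor$, and hence a singleton, so the connected components of $U$ are singletons.

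The only genuinely interesting point is perfectness, and here it is essential that $U$ is not merely closed but also \emph{open} (otherwise one could cook up easy counterexamples -- a convergent sequence together with its limit is a closed subspace of a perfect metric space with isolated points). I would argue by contradiction: suppose some $x \in U$ were isolated in $U$. Then $\{x\}$ would be open in the subspace topology on $U$, so there would exist an open set $V \subseteq \Cantor$ with $V \cap U = \{x\}$. But since $U$ is open in $\Cantor$, the intersection $V \cap U$ is open in $\Cantor$ as well, so $\{x\}$ would be open in $\Cantor$, contradicting the assumption that $\Cantor$ is perfect. Thus $U$ has no isolated points, concluding that $U$ is a Cantor set.

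The ``main obstacle'' here is not really an obstacle but rather a subtlety to be aware of: among the three defining properties, perfectness is the one that genuinely uses \emph{both} hypotheses on $U$ (closed and open), while compactness uses only closedness and total disconnectedness uses neither. The rest of the argument is routine topology, and the final homeomorphism claim is deferred to \Cref{thm:Cantor-sets-are-homeomorphic}.
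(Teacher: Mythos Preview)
Your proposal is correct and follows essentially the same approach as the paper: verify that $U$ inherits each of the three defining properties (compactness from closedness, total disconnectedness hereditarily, perfectness from openness), then invoke \Cref{thm:Cantor-sets-are-homeomorphic} for the homeomorphism. The paper's proof is terser (it simply asserts ``$U$ is open, therefore $U$ cannot have isolated points''), while you spell out the contradiction argument for perfectness explicitly, but the content is the same.
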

	\begin{proof}
		Clearly $U$ is also a metric space. Since $U$ is closed, $U$ is also compact. Moreover, $U$ is open, therefore $U$ cannot have isolated points. Finally, $U$ is also totally disconnected, because, every connected component of $U$ is also contained in a connected component of $\Cantor$, all of which are single points.\\
		Whence $U$ satisfies all defining properties of a Cantor set.
	\end{proof}

	\begin{lemma}[{\cite[Section 12, Theorem 3]{moise2013geometric}}]\label{lem:closed-subsets-of-compact-metric-space}
		Let $X$ be a compact metric space and $A$, $B$ be closed subsets of $X$. Then one of the following holds:
		\begin{itemize}
			\item There is a connected component of $X$ intersecting both $A$ and $B$.
			\item We can decompose $X$ into a union of two disjoint closed (thus open) subsets containing $A$ and $B$, respectively.
		\end{itemize}
	\end{lemma}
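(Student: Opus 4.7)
The plan is to assume the first alternative fails and then construct the desired clopen decomposition. The key classical tool I would invoke is that in a compact Hausdorff space (in particular a compact metric space), the connected component $C_x$ of a point $x$ coincides with its quasi-component, namely the intersection of all clopen sets containing $x$. I would either cite this or give a quick proof by first showing $C_x$ is contained in every clopen set containing $x$, and then showing that the quasi-component is itself connected using the compactness of $X$.

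Granting this, suppose no connected component of $X$ intersects both $A$ and $B$. Fix $a\in A$. Then $C_a\cap B=\emptyset$. Writing $C_a$ as the intersection of all clopen sets containing $a$, the family
\[
\{U\cap B : U\text{ clopen}, a\in U\}
\]
is a family of closed subsets of the compact set $B$ with empty intersection. By the finite intersection property, some finite subfamily already has empty intersection, and since finite intersections of clopen sets are clopen, there is a single clopen set $U_a$ with $a\in U_a$ and $U_a\cap B=\emptyset$.

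The collection $\{U_a:a\in A\}$ is an open cover of $A$, so by compactness of $A$ finitely many $U_{a_1},\dots,U_{a_m}$ suffice, and $U:=U_{a_1}\cup\cdots\cup U_{a_m}$ is clopen, contains $A$, and is disjoint from $B$. Setting $V:=X\setminus U$ yields a clopen complement containing $B$, giving the decomposition $X=U\sqcup V$ with $A\subseteq U$, $B\subseteq V$, as required. (The two alternatives are mutually exclusive: a connected component meeting both $A$ and $B$ cannot be partitioned across two disjoint clopen sets.)

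The main technical point, and the only step that is not a direct compactness argument, is the equality of connected components and quasi-components; the rest is a standard compactness-plus-finite-intersection argument. If the reader is willing to accept that equality as background, the proof is essentially two lines of finite covers.
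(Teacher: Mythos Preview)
The paper does not supply its own proof of this lemma; it merely cites Moise's textbook and moves on. Your argument is correct and is essentially the standard textbook proof: the equality of components and quasi-components in compact Hausdorff spaces reduces the problem to a routine compactness and finite-intersection argument, exactly as you outline. There is nothing to compare against here beyond noting that your write-up would serve perfectly well as the missing proof.
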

	
	\begin{corollary}[self-similarity of the Cantor set]\label{cor:self-similarity}		
		For every point $x\in\Cantor$ and every neighborhood $U\ni x$ there is an even smaller neighborhood $O\subsetneq U$ of $x$ which is homeomorphic to $\Cantor$.
	\end{corollary}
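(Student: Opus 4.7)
The plan is to produce $O$ as a clopen neighborhood of $x$ strictly contained in $U$, and then invoke the previously established results: clopen subsets of a Cantor set are Cantor sets (Theorem \ref{thm:clopen-subset->Cantor-set}), and every two Cantor sets are homeomorphic (Theorem \ref{thm:Cantor-sets-are-homeomorphic}).

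First, since $\Cantor$ is perfect, the open neighborhood $U$ of $x$ contains at least one point $y \neq x$. Because $\Cantor$ is Hausdorff, I can separate $x$ from $y$ by disjoint open sets and, after intersecting with $U$, obtain an open set $V \subseteq U$ with $x \in V$ and $y \notin V$; in particular $V \subsetneq U$. The goal is then to shrink $V$ to a clopen neighborhood of $x$.

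For this I would apply Lemma \ref{lem:closed-subsets-of-compact-metric-space} to the closed subsets $A := \{x\}$ and $B := \Cantor \setminus V$. Since $\Cantor$ is totally disconnected every connected component is a single point, so no component can meet both $A$ and $B$ (their only possible common point would be $x$, which does not lie in $B$). The first alternative of the lemma is thus excluded, so we obtain a decomposition $\Cantor = O \sqcup W$ into disjoint clopen subsets with $x \in O$ and $B \subseteq W$. The latter inclusion forces $O \subseteq V \subsetneq U$, while $O$ is open and contains $x$, so $O$ is indeed a neighborhood of $x$ strictly smaller than $U$.

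Finally, $O$ being clopen in $\Cantor$ is itself a Cantor set by Theorem \ref{thm:clopen-subset->Cantor-set}, and hence homeomorphic to $\Cantor$ by Theorem \ref{thm:Cantor-sets-are-homeomorphic}. The only step that requires genuine care is verifying the hypotheses of Lemma \ref{lem:closed-subsets-of-compact-metric-space}, and this is immediate from total disconnectedness; the rest is a straightforward chain of invocations.
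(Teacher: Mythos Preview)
Your proof is correct and follows essentially the same route as the paper: use Lemma~\ref{lem:closed-subsets-of-compact-metric-space} together with total disconnectedness to produce a clopen neighborhood of $x$ inside $U$, then invoke Theorem~\ref{thm:clopen-subset->Cantor-set}. The only real difference is cosmetic: the paper takes $A=\overline{B(x,\delta/2)}$ and $B=\Cantor\setminus U$, whereas you take $A=\{x\}$ and $B=\Cantor\setminus V$ after first shrinking $U$ to $V\subsetneq U$ via perfectness---this extra step makes the strict inclusion $O\subsetneq U$ explicit, something the paper's proof does not actually verify.
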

	\begin{proof}
		This follows easily from the two results above.\\
		Define $\delta := \inf\{\CantorMetric(x,y)~|~y\in\Cantor\setminus U\} > 0$, which is bigger than $0$, since $\Cantor\setminus U$ is closed. Now set $A := \overline{B(x,\delta/2)} = \{y\in\Cantor~|~\CantorMetric(x,y) \leq \delta/2\}$ the closed $\delta/2$-ball around $x$ and $B := \Cantor\setminus U$. Clearly, $A\subseteq U$, hence $A$ and $B$ are closed and disjoint. Since the only connected components of $X$ are singletons, we can, by the previous lemma, decompose $X$ into two disjoint closed subsets $C_A$ and $C_B$ such that $A\subseteq C_A$ and $B\subseteq C_B$.\\
		Finally, $\Cantor\setminus U = B \subseteq C_B$, hence $C_A\subseteq U$ and by \Cref{thm:clopen-subset->Cantor-set} $O := C_A$ is a neighborhood of $x$ homeomorphic to $\Cantor$.
	\end{proof}
	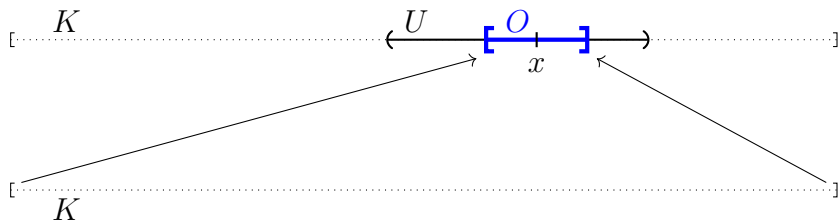
\begin{figure}[htbp]
		\begin{center}
 \begin{tikzpicture}
	\draw[{[-]}, dotted] (0,0) -- +(11,0); 
	\node at (0.75, -0.25) (Cantor1) {$\Cantor$};
	\draw[{[-]}, dotted] (0,2) -- +(11,0);
	\node at (0.75, 2.25)  (Cantor2) {$\Cantor$};
	\draw[{(-)}, thick] (5,2) -- (8.5,2);
	\node at (5.4, 2.25)  (U) {$U$};
	\draw[{[-]}, ultra thick, blue] (6.3,2) -- (7.7,2);
	\node[blue] at (6.75, 2.25)  (O) {$O$};
	\draw[thick] (7,2.1) -- (7,1.9) node[below] {$x$};
	\draw[->] (0.15,0.1) -- (6.2,1.75);
	\draw[->] (10.85,0.1) -- (7.8,1.75);
\end{tikzpicture}
			\caption{Schematic illustration of the homeomorphism from $\Cantor$ to $O$.}
		\end{center}
	\end{figure}

	\begin{corollary}\label{cor:homeomorphism_with_partial_id}
		Let $U,V \subset \Cantor$ be subsets of the Cantor set which are open and closed and such that their intersection has non-empty interior. Then there is a homeomorphism $\phi$ on $\Cantor$ which maps $U$ onto $V$ and is the identity on a non-empty open subset of $U\cap V$.  
	\end{corollary}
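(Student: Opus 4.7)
The strategy is to build $\phi$ piecewise on a clopen partition of $\Cantor$, exploiting that every non-empty clopen subset of $\Cantor$ is itself a Cantor set (\Cref{thm:clopen-subset->Cantor-set}) and that any two Cantor sets are homeomorphic (\Cref{thm:Cantor-sets-are-homeomorphic}).

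First, since $U$ and $V$ are clopen, so is $U \cap V$, and by hypothesis it is non-empty, hence a Cantor set. I would apply \Cref{cor:self-similarity} to pick a proper clopen subset $W_0 \subsetneq U \cap V$ that is non-empty; this $W_0$ will be the set on which $\phi$ is required to be the identity. The trivial case $U = V = \Cantor$ is disposed of by $\phi = \mathrm{id}$, so we may assume $U, V \subsetneq \Cantor$, which makes $\Cantor \setminus U$ and $\Cantor \setminus V$ non-empty clopen subsets of $\Cantor$, and in particular Cantor sets by \Cref{thm:clopen-subset->Cantor-set}.

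Next I would consider the two clopen partitions
\[
\Cantor = W_0 \sqcup (U \setminus W_0) \sqcup (\Cantor \setminus U) = W_0 \sqcup (V \setminus W_0) \sqcup (\Cantor \setminus V).
\]
Because $W_0 \subsetneq U \cap V$, both $U \setminus W_0$ and $V \setminus W_0$ contain the non-empty clopen set $(U \cap V) \setminus W_0$, so all pieces appearing above are non-empty clopen subsets of $\Cantor$, hence Cantor sets. By \Cref{thm:Cantor-sets-are-homeomorphic} I may pick homeomorphisms $\psi_1 : U \setminus W_0 \to V \setminus W_0$ and $\psi_2 : \Cantor \setminus U \to \Cantor \setminus V$, and then define $\phi$ to be the identity on $W_0$, to agree with $\psi_1$ on $U \setminus W_0$, and to agree with $\psi_2$ on $\Cantor \setminus U$.

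Because a map defined piecewise on a finite clopen partition is continuous whenever each piece is, $\phi$ is a continuous bijection onto $\Cantor$, and applying the same observation to its piecewise inverse (built from $\psi_1^{-1}$, $\psi_2^{-1}$ and the identity on $W_0$) shows that $\phi$ is a homeomorphism. By construction $\phi(U) = W_0 \cup (V \setminus W_0) = V$ and $\phi|_{W_0} = \mathrm{id}$ on the non-empty open set $W_0 \subseteq U \cap V$, which is exactly what is required. The only subtlety is the choice of $W_0$: it must be chosen strictly inside $U \cap V$ so that both $U \setminus W_0$ and $V \setminus W_0$ are non-empty Cantor sets, which is precisely where self-similarity (\Cref{cor:self-similarity}) is essential.
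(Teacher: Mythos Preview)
Your proof is correct and essentially identical to the paper's: both pick a proper clopen $W_0 \subsetneq U \cap V$ via self-similarity, set $\phi$ to be the identity on $W_0$, and glue this with homeomorphisms $U \setminus W_0 \to V \setminus W_0$ and $\Cantor \setminus U \to \Cantor \setminus V$ obtained from \Cref{thm:Cantor-sets-are-homeomorphic}. One small quibble: disposing of $U = V = \Cantor$ does not by itself justify assuming \emph{both} are proper---if exactly one of $U,V$ equals $\Cantor$ the conclusion is impossible (a self-homeomorphism of $\Cantor$ is surjective), so that case is implicitly excluded by the hypothesis; the paper's proof makes the same tacit assumption without comment.
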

	
	\begin{remark}
		Note that the property of the previous \namecref{cor:homeomorphism_with_partial_id} is rather restrictive. For most homeomorphism groups which do not offer that much flexibilty this fails completely. For example, consider homeomorphisms of $[0,1]$ and the sets $U = [0,3/4]$, $V=[1/4,1]$.\\
		Although there exist homeomorphisms on $[0,1]$ which send $U$ to $V$ (e.g. $x\to(1-x)$), there cannot be one which equals the identity on a subset of $U\cap V$ with non-empty interior, since any such map would have to be orientation preserving. However, any orientation preserving homeomorphism on $[0,1]$ has to preserve the endpoints, hence $U$ (containing $0$) cannot be mapped to $V$ (not containing $0$.)
	\end{remark}
	
	\begin{proof}
		Let $O'\subset U\cap V$ be an open subset of the intersection. Pick $x\in O'$ and let $O\subset O'$ be an open neighborhood of $x$ which is homeomorphic to $\Cantor$ which exists by \Cref{cor:self-similarity}. Then we have the following properties:
		\begin{enumerate}[label=(\roman*), itemsep=0pt, nosep]
			\item $U$ and $V$ are closed and open in $\Cantor$,
			\item $O$ is closed and open in $\Cantor$, $U$ and $V$, and
			\item $U\setminus O$ and $V\setminus O$ are closed and open in $\Cantor$, since $O$ is closed and open.
		\end{enumerate}
		By \Cref{thm:clopen-subset->Cantor-set} $O$, $U\setminus O$, $V\setminus O$, $\Cantor\setminus U$ and $\Cantor\setminus V$ are all homeomorphic to the Cantor set. Hence, we can choose an element $g\in\Homeo(\Cantor)$ which satisfies:
		\begin{align*}
		g|_O &= id|_O\\
		g(U\setminus O) &= g(V\setminus O)\\
		g(K\setminus U) &= g(K\setminus V)
		\end{align*}
		This proves the \namecref{cor:homeomorphism_with_partial_id}.
	\end{proof}
	
	\begin{remark}
		Unfortunately, we cannot get rid of the assumption for the subsets $U$ and $V$ to be both closed and open in $\Cantor$. Indeed, there are at least two types of closed subsets of $\Cantor$ which are homeomorphic to $\Cantor$. One of those types consists of all the sets which are closed and open in $\Cantor$, the other ones are of course also closed, but need not be open in $\Cantor$. If, say $U$ is open and closed and $V$ is non-open but closed, then there cannot exist a homeomorphism $\Cantor\to\Cantor$ which sends $U$ to $V$.
	\end{remark}
	
	\begin{example}\label{exp:non-open-homeomorphic-subsets}
		To illustrate the situation where we have a subset of $\Cantor$ which is homeomorphic to $\Cantor$ but not open in $\Cantor$, we consider the dyadic model $\CantorModel$ of the Cantor set and give a concrete example:\\
		Let $p := \seq[p]\in\CantorModel$, where $\seqel[p] = k+1 \mod 2$. So $p = (0,1,0,1,0,1,0,\dots)$. Further define $h:\Cantor\to\Cantor$ as
		\[ h(\seq) := p + (0,\seqel[][1],\seqel[][2],\seqel[][3],\seqel[][4],\dots). \]
		To be more precise, define 
		\[ \operatorname{carry}_k(\seq[][l]) := \exists n\geq \lfloor\tfrac{k-1}{2}\rfloor (\forall 0<l\leq n:\seqel[][2l] = 1~\wedge~\seqel[][2n+1] = 1) \]
		and 
		\[ \operatorname{no-carry}_k(\seq[][l]) :=\neg(\operatorname{carry}_k(\seq[][l])) = \forall n\geq\lfloor\tfrac{k-1}{2}\rfloor(\exists 0<l\leq n:\seqel[][2l] =0~\vee~\seqel[][2n+1] = 0).\]
		If now $h(\seq) = \seq[y]$, then  
		\begin{align*}
		\seqel[y][1] = 1&\text{ if and only if }\operatorname{carry}_0(\seq[][l]),\\			
		\seqel[y][2k] = 1 &\text{ if and only if } 
		\begin{cases} 
		\seqel[][2k-1] = 1 \text{ and } \operatorname{carry}_{2k}((\seq[][l]))\\
		\seqel[][2k-1] = 0 \text{ and } \operatorname{no-carry}_{2k}((\seq[][l]))\\
		\end{cases} \text{ for all } k\geq 1,\\
		\seqel[y][2k+1] = 1 &\text{ if and only if } 
		\begin{cases} 
		\seqel[][2k] = 1 \text{ and } \operatorname{no-carry}_{2k}((\seq[][l]))\\
		\seqel[][2k] = 0 \text{ and } \operatorname{carry}_{2k}((\seq[][l]))\\
		\end{cases} \text{ for all } k\geq 1.\\
		\end{align*}
		First of all, note that $h:\CantorModel\to\CantorModel$ is injective. Moreover, $h$ is continuous, since $\CantorModelMetric(h(\seq),h(\seq[y])) = \tfrac{1}{2}\cdot\CantorModelMetric(\seq,\seq)$. Thus, $h$ is automatically a homeomorphism on its image $h(\CantorModel)$, since $\CantorModel$ is compact and Hausdorff. (e.g. see \cite[Theorem 7.8]{bredon_topology_1993}).\\
		To conclude the argument, we need to show that $h(\CantorModel)$ is indeed not open in $\CantorModel$.\\
		Clearly, $p\in h(\CantorModel)$. Assume there is an open set $U\in\CantorModel$, so that $p\in U\subseteq h(\CantorModel)$. Recall the subbase of open sets introduced in $\CantorModel$ (\Cref{exp:dyadic-Cantor-model}). Using the same notation we can find  a finite set of indices $I\subseteq\NN$ and open sets $U_i\in\{0,1\}$ for all $i\in I$, such that $p\in \bigcap_{i\in I} p_i^{-1}(U_i)\subseteq U$. However, if we now choose a number $M\geq1$ bigger than all the finitely many indices $i\in I$, then the element $\seq[z]\in\CantorModel$ with $\seqel[z][k] = k+1 \mod 2$ if $k\leq M$ and $\seqel[z][k] = 0$ if $k\geq M$ is inside $\bigcap_{i\in I}p_i^{-1}(U_i)\subseteq U$, but not in $h(\CantorModel)$. This is because $p$ is \enquote{the left most point} of $h(\CantorModel)$, i.e. $p\lorder h(\seq)$ for all $\seq\in\CantorModel$, but we have $\seq[z]\lorder p$ and $\seq[z] \neq p$. A contradiction.\\
		Hence, in particular $U\not\subseteq h(\Cantor)$, so $h(\Cantor)$ cannot be open.
	\end{example}
	
	\begin{remark}\label{rem:homeo-without-interior}
		In fact, there are even homeomorphisms $h:\Cantor\to\Cantor$ such that the interior of the image is empty, so the image can definitely not be open in $\Cantor$.\\
		Considering again the dyadic model $\CantorModel$ we could define such a map $h:\CantorModel\to\CantorModel$ as:
		\[ h(\seqel) := (0,\seqel[][1],0,\seqel[][2],0,\seqel[][3],0,\seqel[][4],0,\seqel[][5],\dots). \]
		Now it is not difficult to show that no point inside $h(\CantorModel)$ has a neighborhood which is fully contained in the image of $h$.\\
		The argument is very similar to the argument given in the previous example. We leave the details to the interested reader.
	\end{remark}

	Now we can state the first result about the group of homeomorphisms of the Cantor set. We define
	\[ \Homeo(\Cantor) := \{f:\Cantor\to\Cantor~|~f \text{ is a homeomorphism}\}. \]
	
	\begin{proposition}\label{prop:Homeo(Cantor)-acts-transitively}
		$\Homeo(\Cantor)$ acts transitively on $\Cantor$.
	\end{proposition}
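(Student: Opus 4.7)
The case $x = y$ is handled by the identity, so I assume $x \neq y$. The plan is to construct the desired homeomorphism $\phi : \Cantor \to \Cantor$ with $\phi(x) = y$ by decomposing $\Cantor \setminus \{x\}$ and $\Cantor \setminus \{y\}$ into matching nested shells, on each of which $\phi$ is taken to be any homeomorphism between two Cantor sets.

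More precisely, I would apply \Cref{cor:self-similarity} inductively to produce a strictly decreasing chain of clopen neighborhoods $\Cantor = U_0 \supsetneq U_1 \supsetneq U_2 \supsetneq \cdots$ of $x$ with $\diam(U_n) \to 0$; at stage $n$ I apply the corollary to an open neighborhood of $x$ contained in $U_n$ and of diameter less than $1/(n+1)$. An analogous chain $\Cantor = V_0 \supsetneq V_1 \supsetneq V_2 \supsetneq \cdots$ is chosen around $y$. Setting $A_n := U_n \setminus U_{n+1}$ and $B_n := V_n \setminus V_{n+1}$, each $A_n$ is a nonempty clopen subset of $\Cantor$, hence itself a Cantor set by \Cref{thm:clopen-subset->Cantor-set}, and similarly for $B_n$. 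The condition $\diam(U_n) \to 0$ yields $\bigcap_n U_n = \{x\}$, so $\Cantor \setminus \{x\} = \bigsqcup_{n \geq 0} A_n$, and symmetrically $\Cantor \setminus \{y\} = \bigsqcup_{n \geq 0} B_n$.

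By \Cref{thm:Cantor-sets-are-homeomorphic} I pick homeomorphisms $\phi_n : A_n \to B_n$ for each $n$, and define $\phi$ by $\phi(x) := y$ and $\phi|_{A_n} := \phi_n$. This is a bijection, and continuity at any point $z \neq x$ is immediate since each $A_n$ is open in $\Cantor$. The one genuinely nontrivial check is continuity at $x$: if $z_k \to x$, then for every $N$ we have $z_k \in U_N$ eventually, hence $\phi(z_k) \in V_N$, and $\diam(V_N) \to 0$ forces $\phi(z_k) \to y$. Since $\Cantor$ is compact Hausdorff and $\phi$ is a continuous bijection, it is automatically a homeomorphism. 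The main obstacle is thus arranging that the shells cover everything outside the distinguished point, which reduces to producing clopen neighborhoods of arbitrarily small diameter — and this is precisely what iterated application of \Cref{cor:self-similarity} provides.
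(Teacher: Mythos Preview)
Your proof is correct, but it follows a genuinely different route from the paper's. The paper proves transitivity as an immediate corollary of the proof (not just the statement) of \Cref{thm:Cantor-sets-are-homeomorphic}: that proof constructs a homeomorphism $\Cantor\to\Cantor'$ by building coherent sequences of ever-finer clopen partitions $\mathcal{W}^n$, $\mathcal{W}'^n$ together with bijections $\sigma_n:\mathcal{W}^n\to\mathcal{W}'^n$; for transitivity one simply takes $\Cantor'=\Cantor$ and forces, at every stage, the piece of $\mathcal{W}^n$ containing $x$ to be matched to the piece of $\mathcal{W}'^n$ containing $y$. Your argument instead invokes \Cref{thm:Cantor-sets-are-homeomorphic} only as a black box, applied shell by shell: you decompose $\Cantor\setminus\{x\}$ and $\Cantor\setminus\{y\}$ into countably many clopen annuli $A_n$, $B_n$ and glue arbitrary homeomorphisms $A_n\to B_n$. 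This is more self-contained for the purpose at hand and avoids reopening the partition machinery; it is also exactly the ``nested shells'' technique the paper later uses in \Cref{lem:shrinking-map}. The paper's approach, on the other hand, generalises more readily --- as noted in the remark following \Cref{APX_cor:Homeo(Cantor)-acts-transitively}, the same partition-matching idea lets one extend any homeomorphism between countable dense subsets to all of $\Cantor$, which your shell construction does not immediately yield.
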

	
	To prove this result just in terms of the abstract properties of a Cantor set some topological niceties are needed. Thus, we chose to put this proof in \Cref{APX_sec:cantor-sets-and-maps-between-them} (see \Cref{APX_cor:Homeo(Cantor)-acts-transitively}) and work here only through the specific dyadic model $\CantorModel$. Of course, having the transitivity of the homeomorphisms on a single model of the Cantor set is already enough to show \Cref{prop:Homeo(Cantor)-acts-transitively}, assuming the fact that all models of the Cantor set are homeomorphic to one another.
	
	\begin{example} [$\Homeo(\CantorModel)$ acts transitively on $\CantorModel$]
		Let $\seq,\seq[y]\in\CantorModel$ be two elements of the dyadic Cantor set model. We want to find a homeomorphism $g\in\Homeo(\CantorModel)$, such that $g(\seq) = \seq[y]$.\\
		One such homeomorphism could be defined as follows:
		\[ \seqel[{(g(\seq[z]))}][n] := \seqel[][n] + \seqel[y][n] + \seqel[z][n] \mod 2 \quad \forall n\geq 1.\]
		First, observe that $\seqel[{(g(\seq))}][n] = \seqel[][n] + \seqel[y][n] + \seqel[][n] \mod 2 = \seqel[y][n]$ for all $n\geq1$, i.e. $g(\seq) = \seq[y]$. What is left to argue is why $g$ is actually a homeomorphism. We have $g\circ g = id$, therefore $g$ is bijective and we only have to verify that $g$ is continuous (e.g. see \cite[Theorem 7.8]{bredon_topology_1993}). Since $g$ is the inverse of $g$, saying that $g$ is continuous is equivalent to saying that $g$ is an open mapping, i.e. $g$ sends open sets to open sets. Let $U\subseteq\CantorModel$ be an open set and $\seq[z]\in U$. By the definition of the subbase of the topology on $\CantorModel$ in \Cref{exp:dyadic-Cantor-model}, there is an index $N\geq1$ such that $U' := \{ \seq[z']\in\CantorModel~|~\forall n\leq N:~ \seqel[z'][n] = \seqel[z][n]\} \subseteq U$. However, then $g(\seq[z]) \in g(U') = \{ \seqel[z']\in\CantorModel~|~\forall n\leq N:~ \seqel[z'][n] =  \seqel[][n] + \seqel[y][n] +\seqel[z][n]\} \subseteq g(U)$ is, by definition of the product topology on $\CantorModel$, an open neighborhood around $g(\seq[z])$. Hence, $g$ is an open mapping and therefore $g\in\Homeo(\CantorModel)$.\\
		Another way to see that $g$ is continuous and hence a homeomorphisms is by using the universal property of products: The map $g$ is defined coordinate-wise and it is easy to see that in every coordinate the preimages of all open sets $\emptyset$, $\{0\}$, $\{1\}$ and $\{0,1\}$ are open in $\CantorModel$. Hence $g$ is continuous in every coordinate, and thus $g:\CantorModel\to\CantorModel$ is continuous.
	\end{example}
	
	\begin{corollary}\label{cor:Homeo(Cantor)-acts-highly-transitively}
		$\Homeo(\Cantor)$ acts highly transitively on tuples of distinct points in $\Cantor$.
	\end{corollary}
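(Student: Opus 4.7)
The plan is to reduce to a clopen partition adapted to the two tuples, apply the transitivity of $\Homeo(\Cantor)$ on single points to each piece, and glue the resulting homeomorphisms.

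First, given two $k$-tuples of distinct points $(x_1,\dots,x_k)$ and $(y_1,\dots,y_k)$ in $\Cantor$, I would separate the $x_i$'s by clopen sets. Since $\Cantor$ is totally disconnected, compact and Hausdorff, clopen sets form a basis of the topology, so for every pair $i\neq j$ I can pick a clopen set $C_{ij}\ni x_i$ not containing $x_j$. Setting $W_i := \bigcap_{j\neq i} C_{ij}$ yields a clopen neighborhood of $x_i$ avoiding every other $x_j$. These $W_i$'s may overlap, so I disjointify inductively by $U_i := W_i \setminus \bigcup_{j<i} W_j$; each $U_i$ is still clopen and still contains $x_i$. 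Finally, absorbing the leftover set $\Cantor \setminus (U_1\cup\dots\cup U_k)$ (which is clopen) into, say, $U_k$ produces a clopen partition
\[ \Cantor = U_1 \sqcup U_2 \sqcup \dots \sqcup U_k \quad \text{with } x_i \in U_i. \]
Carrying out the same construction for the $y_i$'s yields a clopen partition $\Cantor = V_1 \sqcup \dots \sqcup V_k$ with $y_i \in V_i$.

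Second, by \Cref{thm:clopen-subset->Cantor-set} each $U_i$ and each $V_i$ is itself a Cantor set, hence homeomorphic to $\Cantor$. Fixing homeomorphisms $\phi_i\colon U_i\to\Cantor$ and $\psi_i\colon V_i\to\Cantor$ and using \Cref{prop:Homeo(Cantor)-acts-transitively} to pick $h_i\in\Homeo(\Cantor)$ with $h_i(\phi_i(x_i)) = \psi_i(y_i)$, the composition $g_i := \psi_i^{-1}\circ h_i \circ \phi_i\colon U_i\to V_i$ is a homeomorphism sending $x_i$ to $y_i$.

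Finally, I glue: define $g\colon\Cantor\to\Cantor$ by $g|_{U_i} := g_i$. This is well defined because the $U_i$ partition $\Cantor$, a bijection because the $V_i$ partition $\Cantor$ and each $g_i$ is a bijection onto $V_i$, and continuous because each $U_i$ is open (so continuity is a local property verified piecewise); the analogous description of $g^{-1}$ using the $V_i$-partition shows that the inverse is continuous too. Hence $g\in\Homeo(\Cantor)$ with $g(x_i)=y_i$ for all $i$. The only mildly nontrivial step is the disjointification of clopen neighborhoods in the first paragraph; once that bookkeeping is done, transitivity on each piece and the gluing argument are routine.
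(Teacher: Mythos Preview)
Your proof is correct and follows essentially the same strategy as the paper: separate the points by pairwise disjoint clopen sets, invoke \Cref{thm:clopen-subset->Cantor-set} so that each piece is again a Cantor set, apply \Cref{prop:Homeo(Cantor)-acts-transitively} piecewise, and glue. The only cosmetic difference is that the paper keeps the leftover $\Cantor\setminus\bigcup U_i$ as a separate $(k{+}1)$-st piece and maps it to the corresponding leftover on the target side, whereas you absorb the leftover into $U_k$; your variant has the mild advantage of not needing to ensure the leftover is nonempty.
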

	\begin{proof}
		Let $(x_1,\dots,x_k)$ and $(\bar{x}_1,\dots,\bar{x}_k)$ be tuples of pairwise distinct points in $\Cantor$. For each $1\leq i\leq k$  we can choose closed and open neighborhoods $U_i$, $\bar{U}_i$ of $x_i$ and $\bar{x}_i$, respectively, such that all the $U_i$ and all the $\bar{U}_i$ are pairwise disjoint. That is $U_i\cap U_j=\emptyset$ and $\bar{U}_i\cap \bar{U}_j=\emptyset$ for all $1\leq i,j\leq k$ with $i\neq j$. Moreover, define $U := \Cantor\setminus\bigcup_{i=1}^k U_i$, $\bar{U} := \Cantor\setminus\bigcup_{i=1}^k \bar{U}_i$ and observe that both of these sets are homeomorphic to $\Cantor$, since the complements are closed and open subsets of $\Cantor$.\\
		Now we construct a map $g$ satisfying the following properties:
		\begin{align*}
			g(x_i) &= \bar{x}_i &&\text{for all } 1\leq i\leq k\\
			g(U_i) &= \bar{U}_i &&\text{for all } 1\leq i\leq k\\
			g(U) &= \bar{U}
		\end{align*}
		First, for all $1\leq i\leq k$, $U_i$ and $\bar{U}_i$ are both homeomorphic to $\Cantor$. Therefore the existence of a homeomorphism $g|_{U_i}$ on $U_i$ satisfying the first two properties is given by \Cref{thm:Cantor-sets-are-homeomorphic} and \Cref{prop:Homeo(Cantor)-acts-transitively}. Second, we can choose $g|_U$ to be the composition of any homeomorphisms between $U$ and $\Cantor$, and $\Cantor$ and $\bar{U}$, whose existence is assured by \Cref{thm:Cantor-sets-are-homeomorphic}.\\
		Finally, observe that the resulting map $g$ is indeed a homeomorphism on all of $\Cantor$, since all the sets $U_i$ and $U$ are open. Hence $g$ is an element of $\Homeo(\Cantor)$ and sends the tuple $(x_1,\dots,x_k)$ to $(\bar{x}_1,\dots,\bar{x}_k)$. This proves the \namecref{cor:Homeo(Cantor)-acts-highly-transitively}.
	\end{proof}

	In order to simplify the notation, we will give the following (non-standard) definition
	
	\begin{definition}[embedding]\label{def:embedding}
		Let $X$, $Y$ be topological spaces. We call a function $f:X\to Y$ an \highlight{embedding}, if $f$ is a homeomorphism onto its image $f(X)$ and moreover $f(X)$ is open in $Y$.
	\end{definition}
	
	\begin{remark}
		Usually an embedding between topological spaces $X,Y$ is just a map $f:X\to Y$, which is a homeomorphism onto its image $f(X)$.\\
		A more common phrase for maps as in \Cref{def:embedding} would be \highlight{open embedding}. However, since we only consider this type of embeddings, we chose to drop the word \enquote{open}.
	\end{remark}

	In particular, any embedding $f:\Cantor\to\Cantor$ will lead to a closed and open set $f(\Cantor)$ inside $\Cantor$.
	
	\begin{definition}[shrinking]\label{def:shrinking-expansion}
		We call an embedding $\tau:\Cantor\to\Cantor$  \highlight{shrinking} if\linebreak $\lim_{n\to\infty}\diam(\tau^n(K)) = 0$.
	\end{definition}

	\begin{remark}
		Note that since $\tau$ is an embedding, $\tau(K)$ is closed in $\Cantor$. Hence, by induction, also $\tau^n(\Cantor)$ is closed in $\Cantor$ for all $n\geq1$. Moreover, $\tau^{n-1}(K)\supseteq\tau^n(\Cantor)$ holds for all $n\geq1$. Therefore, by Cantor's Intersection Theorem $\bigcap_{n\geq 0}\tau^n(K)\neq\emptyset$ and since the diameter of the sets converges to $0$, this intersection consists of exactly one point.
	\end{remark}
	
	The following lemma ensures that there are plenty of shrinking maps with image in a given proper non-empty closed and open subset of $\Cantor$:

	\begin{lemma}\label{lem:shrinking-map}
		Let $U\subsetneq\Cantor$ be a non-empty closed and open subset. Then there is a shrinking $\tau:\Cantor\to\Cantor$, such that $\tau(\Cantor) = U$.
	\end{lemma}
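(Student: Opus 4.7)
My plan is to construct $\tau$ piece by piece by cutting $\Cantor$ into a sequence of clopen ``annuli'' that telescope into a single point and then mapping each annulus homeomorphically onto the next. Concretely, I would first fix a point $x \in U$ and, using \Cref{cor:self-similarity} together with the fact that $\Cantor$ admits a basis of clopen sets, produce a nested sequence
\[ U = U_0 \supsetneq U_1 \supsetneq U_2 \supsetneq \cdots \ni x \]
of clopen neighborhoods of $x$ in $\Cantor$ with $\diam(U_n) \to 0$ and $\bigcap_{n \geq 0} U_n = \{x\}$. Since $\Cantor$ is perfect, I can arrange at each step that the strict containment holds, so that each ``annulus'' $A_n := U_n \setminus U_{n+1}$ is nonempty; set also $A_{-1} := \Cantor \setminus U_0$. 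Each $A_n$ is clopen in $\Cantor$ and hence a Cantor set by \Cref{thm:clopen-subset->Cantor-set}, so by \Cref{thm:Cantor-sets-are-homeomorphic} I may choose a homeomorphism $\tau_n \colon A_{n-1} \to A_n$ for every $n \geq 0$.

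Next I would define $\tau \colon \Cantor \to \Cantor$ by declaring $\tau|_{A_{n-1}} = \tau_n$ for all $n \geq 0$ and $\tau(x) = x$. Bijectivity onto $U$ is immediate from $\Cantor = \{x\} \sqcup \bigsqcup_{n \geq -1} A_n$ and $U = \{x\} \sqcup \bigsqcup_{n \geq 0} A_n$. Continuity away from $x$ is clear because each $A_{n-1}$ is clopen and $\tau|_{A_{n-1}}$ is continuous; continuity at $x$ follows from the telescoping property: by construction
\[ \tau(U_n) = \{x\} \cup \bigcup_{k \geq n+1} A_k = U_{n+1}, \]
and since $\{U_n\}$ is a neighborhood basis of $x$ and $\diam(U_{n+1}) \to 0$, the preimage of any neighborhood of $\tau(x)=x$ contains some $U_n$. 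Thus $\tau$ is a continuous bijection from the compact space $\Cantor$ onto the Hausdorff space $U$, hence a homeomorphism onto $U$; since $U$ is open in $\Cantor$ by hypothesis, $\tau$ is an embedding in the sense of \Cref{def:embedding}.

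Finally, I would verify the shrinking condition by induction on $n$: the identity $\tau(U_k)=U_{k+1}$ established above gives $\tau^n(\Cantor) = \tau^n(U_{-1}) \subseteq \tau^{n-1}(U_0) = U_{n-1}$ for all $n \geq 1$, so $\diam(\tau^n(\Cantor)) \leq \diam(U_{n-1}) \to 0$, as required.

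The only nontrivial step is the construction of the sequence $(U_n)$ with all the required properties simultaneously (nested, clopen, strictly decreasing, diameter shrinking to zero, intersecting to $\{x\}$); once that is in place, the rest is a routine gluing of homeomorphisms between Cantor sets, with continuity at the limit point $x$ enforced automatically by the diameter condition.
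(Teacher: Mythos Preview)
Your argument is correct and follows essentially the same strategy as the paper: decompose $\Cantor$ into a point together with a telescoping sequence of clopen ``annuli'', map each annulus homeomorphically onto the next via \Cref{thm:Cantor-sets-are-homeomorphic}, and glue at the limit point, checking continuity there via the shrinking diameters. The only cosmetic differences are that you fix the limit point $x$ at the outset and build the $U_n$ as a clopen neighborhood basis of $x$, whereas the paper constructs the nested $U_n$ first (using \Cref{APX_cor:exact-cover-of-Cantor-sets}) and recovers the point $p$ as their intersection; also, your $\tau^n(\Cantor)$ is in fact equal to $U_{n-1}$, not merely contained in it, since $\tau(U_k)=U_{k+1}$ holds for $k=-1$ as well once one sets $U_{-1}:=\Cantor$.
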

	\begin{remark}
		Note that we are not requiring $\diam(U)<\diam(\Cantor)$ we only require the set $U$ to be a proper closed and open subet of $\Cantor$. It could well be that both sets have the same diameter. Further, we do not impose any requirements on the metric properties of the image of the map $\tau$. Hence this result is of a different nature than Banach's Fixed Point Theorem.
	\end{remark}
	\begin{remark}
		To prove this \namecref{lem:shrinking-map} one may want to use a similar approach as in the proof of \Cref{thm:Cantor-sets-are-homeomorphic} (see \Cref{APX_sec:cantor-sets-and-maps-between-them}) and just keep track on which elements in the partition of the Cantor set contain the image of $\Cantor$, $\tau(\Cantor),\tau^2(\Cantor),\dots$ under $\tau$. However, since in this situation the sets $\Cantor$ and $\Cantor'$ are not separated, but $\Cantor'$ is in fact a subset of $\Cantor$, it is very hard to do this.
	\end{remark}
	\begin{proof}
		We cannot give an explicit description of $\tau$, but we will define it piece for piece on exhausting subsets of $\Cantor$. To do this we use the result of \Cref{thm:Cantor-sets-are-homeomorphic}.\\
		Set $U_1 := U$ and define $V_1 := \Cantor\setminus U_1$. Note that $V_1\neq\emptyset$, which is crucial, as otherwise the \namecref{lem:shrinking-map} clearly cannot hold. By \Cref{APX_cor:exact-cover-of-Cantor-sets} we can decompose $U_1$ into two non-empty disjoint closed subsets $U_1 = U_2 \cup V_2$ such that $\diam(U_2) < 2^{-2}$. Since both sets are closed and open in $U_1 = U$ (hence also in $\Cantor$), they are also Cantor sets. Thus, we can find some homeomorphisms $\tau_1:V_1\to V_2$ by \Cref{APX_thm:Cantor-sets-are-homeomorphic}. The map $\tau_1$ is well-defined, since $V_1$ is not empty.\\
		Suppose now that we have already constructed non-empty disjoint closed and open subsets $U_n,V_n,\dots V_2\subseteq U$, such that $U = U_n\cup \bigcup_{k=2}^{n} V_k$ and $\diam(U_n) < 2^{-n}$, as well as homeomorphisms $\tau_k:V_k\to V_{k+1}$ for all $1\leq k < {n-1}$. We now want to define $U_{n+1}$, $V_{n+1}$ and $\tau_n:V_n\to V_{n+1}$ as follows:\\
		By \Cref{APX_cor:exact-cover-of-Cantor-sets} we can again decompose $U_n$ into two non-empty disjoint closed subsets $U_n = U_{n+1} \cup V_{n+1}$ such that $\diam(U_{n+1}) < 2^{-{n+1}}$, and we pick an arbitrary homeomorphism $\tau_n:V_n\to V_{n+1}$.\\
		
		Since the diameter of the sets $U_n$ converges to $0$ if $n$ goes to infinity and $U_1\supseteq U_2\supseteq U_3\supseteq\dots$, we have $\bigcap_{n\geq 1}U_n =\{p\}$ for some $p$ in $\Cantor$. Moreover, we also have $\Cantor = \{p\}\cup\bigcup_{n\geq 1}V_n$. We can now define the map $\tau:\Cantor\to\Cantor\setminus(A\cup\{z\})$ as 
		\begin{align*}
			\tau(p) &:= p\\
			\tau(x) &:= \tau_n(x) \text{ if } x\in V_n.
		\end{align*}
		What is left to show is that $\tau$ is a homeomoprhism and an embedding. Once we showed that $\tau$ is a homeomorphism, the fact that $\tau$ is an embedding is clear, since $\tau(\Cantor) = \tau(\{p\}\cup\bigcup_{n\geq 1}V_n) = \{p\}\cup\bigcup_{n\geq 2}V_n = U$ which is open in $\Cantor$. Furthermore, it is also not difficult to show that $\tau$ is continuous, hence a homeomorphism. For a point $x\neq p$ continuity is obvious, since then $x$ is in the closed and open set $V_N$ for some $N\geq1$ and $\tau$ restricts to the homeomorphism $\tau_N$ on $V_N$. Moreover, for every neighborhood $O$ of $p$ there is some index $N\geq1$ such that $U_N\subseteq O$, hence
		\[ \tau^{-1}(O)\supseteq\tau^{-1}(U_N) = \tau^{-1}(\{p\}\cup\bigcup_{n\geq N+1}V_n) = \{p\}\cup\bigcup_{n\geq N}V_n = U_{N-1}, \] 
		which is an open neighborhood of $p$, so $\tau$ is also continuous at $p$.\\
		Finally, we have $\tau(\Cantor) = U = U_1$ and $\tau(U_n) = U_{n+1}$ for all $n\geq1$. Hence, $\bigcap_{n\geq1}\tau^n(\Cantor) = \bigcap_{n\geq1}U_n = \{p\}$, so $\tau$ is indeed a shrinking.\\
		This concludes the proof of the existence of shrinking maps.
	\end{proof}		
	
	\begin{example}[shrinking map on $\CantorModel$]\label{exp:shrinking-map-in-dyadic-Cantor-Model}
		Although it is quite abstract to describe a shrinking map in the general topological setting, once we work in the dyadic model of the Cantor set, for closed and open sets $U$ which are \enquote{nice enough}, a shrinking map can be depicted more easily.\\
		Consider the dyadic model $\CantorModel$ of the Cantor set and pick a point $\seq\in\CantorModel$. For an arbitrary $0<\delta<1$ and we can find a closed and open neighborhood $U$ of $\seq$, which can be described as
		\[ U = \{\seq[y]\in\CantorModel~|~\forall k\leq M:~ \seqel[y][k] = \seqel[][k]\} \]
		for some fixed integer $M\geq1$, such that $2^{-M} < \delta$.\\
		Now we can define $\tau:\CantorModel\to U$ as
		\[ \tau(\seq[y]) = (\seqel[][1],\seqel[][2],\dots,\seqel[][M],\seqel[y][1],\seqel[y][2]\dots). \]
		It is easy to see that $\tau(\CantorModel) = U$ and that $\tau$ is a homeomorphism. Hence it is also an embedding, since $U$ is open. Further 
		\[ \CantorModelMetric(\tau(\seq[y^1]),\tau(\seq[y^2]))= 2^{-M}\CantorModelMetric(\seq[y^1],\seq[y^2]) < \delta\CantorModelMetric(\seq[y^1],\seq[y^2]), \]
	 	thus $\tau$ is a shrinking.\\
	 	The unique fix point $p$ of $\tau$, is exactly the unique point inside $\bigcap_{n\geq1}\tau^n(\Cantor)$. In this case this will be $p = (x_1,\dots,x_M,x_1,\dots,x_M,\dots)$.
	\end{example}

	\begin{remark}
		Observe that this easy description of the map $\tau$ is only possible to achieve since $U$ is the intersection of finitely elements of the subbase of the topology described in \Cref{exp:dyadic-Cantor-model}. Further, note that since the map $\tau$ in the previous example is obtained by shifting the coordinates and adding a fixed finite sequence at the beginning we obtain a fixed point $p$ of $\tau$ which is described by a periodic sequence. However, this must not be the case in general.\\
		Indeed by \Cref{lem:shrinking-map}, it is possible to start with \highlight{any} proper closed and open subset $U$, and then require that a specific point $p\in U$ is the unique fixed point of $\tau$. This can be done by making sure that $p$ is always contained in $U_n$ and not in $V_n$ for all $n\geq1$.\\
		In particular, if we again look at $\CantorModel$ and pick a point $p$ which is not described by a periodic sequence, we can not define $\tau$ in the same way as we did in \Cref{exp:shrinking-map-in-dyadic-Cantor-Model}.
	\end{remark}
	
\subsection{Bounded Cohomology of Subgroups of \texorpdfstring{$\Homeo(\Cantor)$}{Homeo(K)}}
	
	In the following section we will discuss the bounded cohomology of the group of homeomorphisms of the Cantor set and deduce that it is \bAc.\\
	Throughout the section we use the following notation:\\
	$\Cantor$ will denote an arbitrary model of the Cantor set. In some examples and illustrations we will work with a concrete model, namely the product of countably many copies of $\{0,~1\}$ equipped with the product topology, and we write $\CantorModel$ in that case. Furthermore, we set
	\[ \CantorGrp := \Homeo(\Cantor) = \{g:\Cantor \to \Cantor~|~g \text{ is a homeomorphism}\}. \]
	
\subsubsection{Bounded Acyclicity of Rigid Stabilizer Subgroups}
	
	This subsection is devoted to the proofs of \Cref{INT_lem:stabilizer->dissipated->bAc} and \Cref{INT_cor:tuple-stabilizer->dissipated} (\Cref{lem:stabilizer->dissipated->bAc} and \Cref{cor:tuple-stabilizer->dissipated} below, respectively).\\
	In order to calculate the bounded cohomology of $\CantorGrp$, we would like to analyze the bounded cohomology of certain subgroups of $\CantorGrp$.
	
	\begin{definition}
		Let $z\in\Cantor$. We denote by  
		\[ \CantorGrpStab := \{g \in\CantorGrp~|~g \text{ fixes a neighborhood of } z \text{ pointwise}\} \]
		the subgroup of elements which fix a neighborhood of $z$ pointwise.
	\end{definition}

	\begin{lemma}[\Cref{INT_lem:stabilizer->dissipated->bAc}]\label{lem:stabilizer->dissipated->bAc}
		For every $z\in\Cantor$, $\CantorGrpStab$ is dissipated. In particular, $\CantorGrpStab$ is \bAc.
	\end{lemma}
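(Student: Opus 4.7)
My plan is to show directly that $\CantorGrpStab$ admits the structure of a boundedly supported group together with an explicit dissipator for every bounded piece, so that dissipatedness holds in the sense of \Cref{def:dissipator}. Bounded acyclicity then follows by composing dissipated $\Rightarrow$ binate (\Cref{lem:dissipated->binate}) and binate $\Rightarrow$ \bAc (\Cref{prop:binate->bAc}). The bounded support structure is almost tautological: $\CantorGrpStab$ acts faithfully on $\Cantor\setminus\{z\}$, which is the directed union of the clopen subsets $X_i\subseteq\Cantor$ with $z\notin X_i$ (directed by inclusion since a union of two clopen sets is clopen), and by definition every $g\in\CantorGrpStab$ has support contained in $\Cantor\setminus U$ for some clopen neighborhood $U$ of $z$ that $g$ pointwise fixes, hence in one of the $X_i$'s.

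The core of the proof is the construction, for each nonempty such $X_i$, of a dissipator $\groupel_i\in\CantorGrpStab$. I would first pick a clopen neighborhood $U$ of $z$ that is disjoint from $X_i$ and strictly contained in $\Cantor\setminus X_i$ (possible by \Cref{cor:self-similarity} applied to the open neighborhood $\Cantor\setminus X_i$ of $z$), so that $V:=\Cantor\setminus U$ is a Cantor set properly containing $X_i$ and $W:=V\setminus X_i$ is a nonempty clopen Cantor set. Splitting $W=W_+\sqcup W_-$ into two nonempty clopen Cantor pieces and applying \Cref{lem:shrinking-map} inside each, I can decompose $W_+=\{p_+\}\sqcup\bigsqcup_{k\geq 1}A_k$ and $W_-=\{p_-\}\sqcup\bigsqcup_{k\geq 1}A_{-k}$ with each $A_k$ a clopen Cantor set; by construction the $A_k$ with $k\to+\infty$ accumulate only at $p_+$ and those with $k\to-\infty$ only at $p_-$. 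Setting $A_0:=X_i$ produces the two-sided decomposition $V=\{p_+,p_-\}\sqcup\bigsqcup_{k\in\ZZ}A_k$. Since all Cantor sets are homeomorphic (\Cref{thm:Cantor-sets-are-homeomorphic}), I can fix arbitrary homeomorphisms $A_k\to A_{k+1}$ and glue them with the convention $\sigma(p_\pm)=p_\pm$ into a bijection $\sigma:V\to V$; extending by the identity on $U$ yields the candidate $\groupel_i:\Cantor\to\Cantor$, which fixes the clopen neighborhood $U$ of $z$ pointwise and so lies in $\CantorGrpStab$ once continuity is verified.

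With this $\groupel_i$ both dissipator axioms are straightforward. Condition (i) of \Cref{def:dissipator} is immediate: $\groupel_i^k(X_i)=A_k$ for every $k\in\ZZ$, and the $A_k$'s are pairwise disjoint. For condition (ii), given $g\in\CantorGrpStab$ with $\supp(g)\subseteq X_i$, the element $\phi(g)$ is the identity outside $\bigsqcup_{k\geq 1}A_k$ and equals $\groupel_i^k g\groupel_i^{-k}$, a self-homeomorphism of $A_k$, on each $A_k$ for $k\geq 1$ (using that $g$ restricts to a self-homeomorphism of $A_0=X_i$); it is the identity on the clopen neighborhood $U$ of $z$, so it lies in $\CantorGrpStab$ once continuity is checked. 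The only points at which $\sigma$ or $\phi(g)$ could fail to be continuous are $p_+$ and $p_-$: at $p_+$ the shift $\sigma$ sends each basic neighborhood $\{p_+\}\cup\bigcup_{k\geq K}A_k$ bijectively onto $\{p_+\}\cup\bigcup_{k\geq K+1}A_k$, while $\phi(g)$ preserves every individual $A_k$, so both preserve the neighborhood basis at $p_+$; the situation at $p_-$ is symmetric.

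I expect the main technical hurdle to be precisely this continuity check at the two accumulation points: one must apply \Cref{lem:shrinking-map} carefully so that the $A_k$'s accumulate only on the correct side, ensuring that $\sigma$ is a genuine homeomorphism of $V$ rather than merely a continuous bijection off the two fixed points. Once the construction for $\CantorGrpStab$ is in place, essentially the same argument establishes \Cref{cor:tuple-stabilizer->dissipated}: one replaces $\{z\}$ with the finite set $\{x_1,\dots,x_k\}$ in the bounded support setup and takes $U$ to be a small clopen neighborhood of this finite set disjoint from $X_i$, after which the two-sided shift construction goes through unchanged.
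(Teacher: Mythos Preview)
Your proposal is correct and follows essentially the same strategy as the paper: exhibit $\CantorGrpStab$ as boundedly supported via the clopen complements of clopen neighborhoods of $z$, and for each such piece build a two-sided $\ZZ$-indexed shift with two accumulation fixed points, extended by the identity near $z$. The only organizational difference is that the paper uses the shrinking maps $\tau_\pm$ from \Cref{lem:shrinking-map} \emph{directly} as the two halves of the dissipator (so $\groupel_U$ equals $\tau_+$ on the nonnegative pieces and $\tau_-^{-1}$ on the negative ones, with the pieces being $P_n=\tau_+^n(X_U)$ etc.), whereas you use the shrinkings only to produce the clopen decompositions $W_\pm=\{p_\pm\}\sqcup\bigsqcup A_{\pm k}$ and then glue arbitrary homeomorphisms $A_k\to A_{k+1}$; both routes require and carry out the same continuity check at the two accumulation points, and your remark on \Cref{cor:tuple-stabilizer->dissipated} matches the paper's.
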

	\begin{remark}
		By \Cref{lem:dissipated->binate} the fact that $\CantorGrpStab$ is dissipated implies that $\CantorGrpStab$ is binate and hence proves \Cref{INT_lem:stabilizer->dissipated->bAc}.
	\end{remark}
	\begin{remark}
		In \cite[Section 3.1.6]{berrick2002topologist} a similar statement, namely that the subgroup of $\CantorGrp$ fixing some non-empty open subset is dissipated, is given. However, the statement is mentioned without a proof.\\
		Note that this is similar to our situation. In fact, we have $\{g\in\CantorGrp~|~ g|_U = id \} \subseteq \CantorGrpStab$ for all open neighborhoods $U$ of $z$. Moreover, $\CantorGrpStab$ can be written as increasing union of sets of the form $\{g\in\CantorGrp~|~ g|_U = id \}$, where the corresponding union of the sets $U$ is equal to $\Cantor\setminus\{z\}$. Since increasing unions of dissipated groups are dissipated, this yields that $\CantorGrpStab$ is dissipated.
	\end{remark}
	\begin{proof}
		Consider the set $\Cantor$ and let $\mathcal{U}$ be the set of closed and open neighborhoods of $z$. Define $X_U := \Cantor\setminus U$ for all $U \in \mathcal{U}$. This gives a family of compact subsets of the Cantor set, such that $\Cantor\setminus\{z\}$ is the directed union of the $(X_U)_{U\in\mathcal{U}}$ equipped with the obvious inclusion. Further $\CantorGrpStab$ acts faithfully on $\Cantor\setminus\{z\}$. As in \Cref{def:boundedly_supported} let $\group_U$ be the subgroup of elements of $\CantorGrpStab$ with support in $X_U$, then it is easy to see that $\CantorGrpStab$ is the directed union of the $\group_U$, so $\CantorGrpStab$ is boundedly supported.\\
				
		Now fix $U\in \mathcal{U}$. We have to find a dissipator $\groupel_U \in\CantorGrpStab$ for $\group_U$, that is a homeomorphism on $\Cantor$, which fixes a neighborhood of $z$ pointwise and satisfies
		\begin{enumerate}[label=(\roman*), itemsep=0pt, nosep]
			\item $\groupel_U^{k}(X_U)\cap X_U = \emptyset$ for all $k\geq 1$, and
			\item for all $g\in\group_U$ the element
			\begin{align}
				\phi(g) := \begin{cases}
					\groupel_U^{k}g\groupel_U^{-k}  & \text{on } \groupel_U^{k}(X_U), \forall k\geq 1\\
					id & \text{ elsewhere}
				\end{cases}
			\end{align}
			is in $\CantorGrpStab$.
		\end{enumerate}
		Since $X_U$ is closed, we can fix two disjoint closed and open subsets $O_+$ and $O_-$, which do not intersect $X_U\cup\{z\}$ and apply \Cref{lem:shrinking-map} to both of them to obtain two shrinkings $\tau_+:\Cantor\to O_+$ and  $\tau_-:\Cantor\to O_-$. Using these maps iteratively we can now define the following subsets of $\Cantor$:
		\begin{align*}
			P_{0} &:= X_U,\\
			P_{n} &:= \tau_+(P_{n-1}) \quad\text{ for } n\geq 1,\\
			P_{-n} &:= \tau_-(P_{-n+1}) \quad\text{ for } n\geq 1,\\
			P_{\infty} &:= \{x_+^*\},\\
			P_{-\infty} &:= \{x_-^*\},
		\end{align*} 
		where $x_+^*$ (resp. $x_-^*$) denotes the unique fixed-point of the contraction $\tau_+$ (resp. $\tau_-$), which exists due to Cantor's intersection Theorem.
		
		Note that $P_0 \cap\tau_+(\Cantor) = \emptyset$ as well as $P_0 \cap\tau_-(\Cantor) = \emptyset$, hence by an inductive argument we see that $P_{n}\cap P_0=\emptyset$ for all $n\in(\ZZ\setminus\{0\})\cup\{\infty,-\infty\}$. Actually, we even have $P_{n}\cap P_{m}=\emptyset$ for $n,m\in\ZZ\cup\{\infty,-\infty\}$, if $n\neq m$.\\
		Indeed, assume that $0\leq n< m$ and $x \in P_{n}\cap P_{m}$. Then by the iterative definition of the sets $P_k$ there are points $x_0,y_0\in P_0$ such that $\tau_+^n(x_0) = x = \tau_+^m(y_0)$. Since $\tau_+$ is bijective, this implies $x_0 = \tau_+^{m-n}(y_0)$. However, this is clearly a contradiction to the fact that $\tau_+(\Cantor) = O_+$ and $O_+\cap P_0 = \emptyset$. The same argument works for $\leq m< n \leq 0$, if we replace $\tau_+$ by $\tau_-$. We will define the dissipator $\groupel_U$ as follows:
		\begin{align*}
			\groupel_U(x) &:= \tau_+(x) &&\forall x\in P_{\infty}\cup\textstyle\bigcup_{n\geq 0}P_n,\\
			\groupel_U(x) &:= \tau_-^{-1}(x) &&\forall x\in P_{-\infty}\cup\textstyle\bigcup_{n\leq-1}P_n,\\
			\groupel_U(x) &:= x &&\forall x \in\Cantor\setminus P,
		\end{align*}
		where $P := P_{-\infty}\cup\textstyle\bigcup_{n\in\ZZ}P_n\cup P_{\infty}$.
		By construction we get that $\groupel_U(P_n) = P_{n+1}$ for all $n\in\ZZ$ as well as $\groupel_U(P_{\pm\infty})= P_{\pm\infty}$.\\
		\begin{figure}[htbp]
			\begin{center}
 \begin{tikzpicture}[
	point/.style={
		circle}
	]					
	\node (n0) at (270:3) {$\{z\}$};
	\node (n1) at (230:3)  {$P_{-\infty}$};
	\node (n2) at (180:3)  {$P_{-3}$};
	\node (n3) at (150:3)  {$P_{-2}$};
	\node (n4) at (120:3)  {$P_{-1}$};
	\node (n5) at (90:3)  {$P_{0}$};
	\node (n6) at (60:3)  {$P_{1}$};
	\node (n7) at (30:3)  {$P_{2}$};
	\node (n8) at (0:3)  {$P_{3}$};
	\node (n9) at (310:3)  {$P_{+\infty}$};
	
	\draw[densely dotted] (n1) edge[bend left=15,looseness=0.8] (205:3.25);
	\path (205:3.25) edge [->,bend left=15,looseness=0.8] (n2);
	\path (n2) edge [->,bend left=15,looseness=0.8] (n3);
	\path (n3) edge [->,bend left=15,looseness=0.8] (n4);
	\path (n4) edge [->,bend left=15,looseness=0.8] (n5);
	\path (n5) edge [->,bend left=15,looseness=0.8] (n6);
	\path (n6) edge [->,bend left=15,looseness=0.8] (n7);
	\path (n7) edge [->,bend left=15,looseness=0.8] (n8);
	\draw[densely dotted] (n8) edge [bend left=15,looseness=0.8] (335:3.25);
	\path (335:3.25) edge [->,bend left=15,looseness=0.8] (n9);

	\draw[->] (n0) to [out=60,in=120,loop,looseness=4] (n0);
	\draw[->] (n1) to [out=0,in=90,loop,looseness=6] (n1);
	\draw[->] (n9) to [out=180,in=90,loop,looseness=6] (n9);
\end{tikzpicture}
				\caption{Illustration of the action of $\groupel_U$ on the sets $\{z\}$, $P_{\pm\infty}$ and $P_n$, $n\in\ZZ$.}
			\end{center}
		\end{figure}
		The dissipator  $\groupel_U$ has to be an element of $\CantorGrpStab$, so we have to verify that $\groupel_U$ is a homeomorphism and that it fixes a neighborhood of $z$ pointwise.\\ 
		The second point is easy to see: 
		\[ \supp(\groupel_U) \subseteq  P \subseteq X_U\cup O_+\cup O_- \]
		and hence $\groupel_U(z) = z$. Moreover, $X_U\cup O_+\cup O_-$ is closed, so $\Cantor\setminus X_U\cup O_+\cup O_-$ is an open neighborhood of $z$, where $\groupel_U$ acts trivially.\\
		Since $\groupel_U$ is bijective, it is enough to show that it is also continuous (because a bijective continuous map between a compact space and a Hausdorff space is automatically a homeomorphism.\\
		First of all, note that $\tau_+$ and $\tau_-$ are embeddings and $P_0$ is closed and open in $\Cantor$, so by an inductive argument we can see that also $P_n$ is closed and open for all $n\in\ZZ$. Therefore, for $n\in\ZZ$ the continuity of $\groupel_U$ on $P_n$ is a consequence of the continuity of $\tau_+$ and $\tau_-$.\\
		Furthermore, the set $P$ is closed in $\Cantor$. Indeed, although $\textstyle\bigcup_{n\in\ZZ}P_n$ is a union of countably many closed subsets, which in general will not be closed, if we add the points $x_+^*$ and $x_-^*$ to this union, we recover closedness. In fact, any converging sequence of elements in this union has either infinitely many elements inside $P_n$ for a (unique) $n$, or the limit point of the sequence will be inside $\{x_+^*,x_-^*\}$. Either way, the limit belongs to $P$.\\
		What is left to show is the continuity of $\groupel_U$ at the points $x_+^*$ and $x_-^*$. Since the arguments for both points are exactly the same, we only consider the point $x_+^*$. Let $V$ be a neighborhood of $\groupel_U(x_+^*) = x_+^*$. By the contracting property of $\tau_+$ we can find an index $N\in\ZZ$, such that $P_n\subseteq V$ for all $n\geq N$. Otherwise there would be a sequence $(x_{i_n})_{n\geq1}\subset \Cantor\setminus V$, with $x_{i_n}\in P_{i_n}\setminus V$, such that $i_n\xrightarrow{n\to\infty}\infty$. Then by compactness of $\Cantor$ we can pass to a converging subsequence, which can definitely not converge to $x_+^*\in V$, since $V$ is open. However, this contradicts the contracting property of $\tau_+$.\\
		Therefore, $W := V\setminus(P_{-\infty}\cup\textstyle\bigcup_{n\leq(N-1)}P_n)$ is an open neighborhood of $x_+^*$, which satisfies $\groupel_U(W) \subseteq V$. This shows the continuity of $\groupel_U$ in $x_+^*$ and hence we conclude that $\groupel_U\in\CantorGrpStab$.
		Finally, for all $g\in \group_U$, we can describe the element $\phi(g)$ as follows
			\begin{align*}
				\phi(g) &:= \begin{cases}
								\groupel_U^{k}g\groupel_U^{-k}  & \text{on } \groupel_U^{k}(X_U), \forall k\geq 1\\
								id & \text{ elsewhere}
							\end{cases}\\
						&:= \begin{cases}
								\groupel_U^{k}g\groupel_U^{-k}  & \text{on } P_k, \forall k\geq 1\\
								id & \text{ elsewhere.}
							\end{cases}\\	
			\end{align*}
		In particular, $\supp(\phi(g)) \subseteq P_{\infty}\cup\bigcup_{n\geq0}P_{n}\subseteq X_U\cup O_+$, so since $z\not\in X_U\cup O_+$, we get that $\phi(g)\in\CantorGrpStab$. The fact that $\phi(g)$ is a homeomorphism can be shown similarly to the continuity of $\groupel_U$. This shows that $\CantorGrpStab$ is indeed dissipated.\\
		Lastly, \Cref{lem:dissipated->binate} and \Cref{prop:binate->bAc} imply that $\CantorGrpStab$ is \bAc.
	\end{proof}

	\begin{example}		
		To make things more concrete we will describe the dissipator created in the proof above in the model $\CantorModel$ of the Cantor set. Recall that $\CantorModel$ is the set of sequences $\seq \in 2^\NN$ equipped with the product topology.
		Fix a neighborhood $\widetilde{U}$ of $\zero$. By definition of the product topology there is an integer $M \geq 3$ such that the set $U:=\{\seq\in\CantorModel~|~~\forall k\leq M-2: \seqel = 0\}$ is contained in $\widetilde{U}$.\\
		Now let $\alpha := (\alpha_k)_k$ be the element which satisfies $\alpha_k = 0$ if $k\leq M$ and $\alpha_k = 1$ if $k>M$. One possible partition of $\CantorModel$ using the same notation as in the proof of \Cref{lem:stabilizer->dissipated->bAc} could be obtained as follows. Define the sets
		\begin{align*}
			X_U &:=  \CantorModel\setminus U = \{\seq\in\CantorModel~|~~\exists k\leq M-2: \seqel = 1\},\\
			O_{+} &:= \{\seq \in\CantorModel~|~ \forall k\leq M-2:\seqel=0;~\seqel[][M-1]=\seqel[][M]=1\},\\
			O_{-} &:= \{\seq \in\CantorModel~|~ \forall k\leq M-2:\seqel=0;~\seqel[][M-1]=1;~\seqel[][M]=0\},
		\end{align*}
		and observe that they are all closed and open as well as pairwise disjoint.\\
		An explicit definition of the shrinkings $\tau_+:\Cantor\to O_+$ and  $\tau_-:\Cantor\to O_-$ is given by
		\begin{align*}
			\tau_+(\seq) &= (\overbrace{0,\dots,0}^{M-2\text{ zeros}}\!,1,1,\seqel[][1],\seqel[][2],\dots)\in O_{+},\\
			\tau_-(\seq) &= (\underbrace{0,\dots,0}_{M-2\text{ zeros}}\!,1,0,\seqel[][1],\seqel[][2],\dots)\in O_{-}.\\
		\end{align*}
		Observe that $d(\tau_+(\seq),\tau_+(\seq[y])) = 2^{-M}d(\seq,\seq[y])$ and also $d(\tau_-(\seq),\tau_-(\seq[y])) = 2^{-M}d(\seq,\seq[y])$, hence $\tau_+$ and $\tau_-$ are indeed shrinkings $(2^{-M} < 1)$.\\
		Finally, the partition of $\CantorModel$ is  
		\begin{align*}
			P_{0} &:= X_U = \{\seq \in\CantorModel~|~\exists k\leq M-2:\seqel=1\},\\
			P_{n} &:= \{\tau_+(\seq) \in\CantorModel~|~\seq \in P_{n-1}\} \quad\forall n\geq 1,\\
			P_{\infty} &:= \{(\seq \in\CantorModel~|~\seqel = 1 \Leftrightarrow (k=0 \vee k=-1 ) \mod M \},\\
			P_{-n} &:= \{\tau_-(\seq) \in\CantorModel~|~\seqel \in P_{-n+1}\} \quad\forall n\geq 1,\\
			P_{-\infty} &:= \{\seq \in\CantorModel~|~\seqel = 1 \Leftrightarrow k=-1\mod M\},\\
			I &:= \CantorModel\setminus(P_{-\infty}\cup(\textstyle\bigcup_{n\in\ZZ}P_n)\cup P_{\infty}).
		\end{align*}
		
		Note that $P_{\infty}$ and $P_{-\infty}$ are again just one point each and they can also be seen as the fixed point of the shrinkings $\tau_+$ and $\tau_-$, respectively. That is  
		\begin{align*}
			P_{\infty} = \{\lim_{n\to\infty} \tau_+^n(\seq)~|~\text{for some }\seq\in O_+\},\\ P_{-\infty} = \{\lim_{n\to\infty} \tau_-^n(\seq)~|~\text{for some }\seq\in O_-\}.
		\end{align*}
		The dissipator $\groupel_U$ satisfies
		\begin{align*}
			&\groupel_U|_{I} = id_{I},\\
			&\groupel_U(P_{n}) = P_{n+1} \text{ for } n\in \mathbb{Z},\\
			&\groupel_U(P_{\infty}) = P_{\infty},\\
			&\groupel_U(P_{-\infty}) = P_{-\infty}.
		\end{align*}
		More explicitly, $\groupel_U$ is defined as follows:
		\begin{align*}
			\groupel_U(\seq) =
			\begin{cases}
			\seq & \text{if } \seq \in I\cup P_{\infty}\cup P_{-\infty}\\
			\tau_+(\seq) = (\overbrace{0,\dots 0}^{M-2\text{ zeros}}\!\!\!,1,1,\seqel[][1],\seqel[][2]\dots) & \text{if } \seq \in P_{n} \text{ for } n\geq 0\\
			\tau_-^{-1}(\seq) =(\seqel[][M+1],\seqel[][M+2]\dots) & \text{if } \seq \in P_{-n} \text{ for } n\geq 1.
			\end{cases}
		\end{align*}
		So on $P_n$, $n\geq 0$, $\groupel_U$ shifts the coordinates by $M$ places to the right and inserts a sequence of $(M-2)$ zeroes followed by two ones. For negative $n$ $\groupel_U$ just  \enquote{forgets} the first $M$ coordinates.\\
		Note that the general case $z\neq0$ follows by conjugating this dissipator with a homeomorphism of $\CantorModel$ which maps $z$ to $0$, which exists by \Cref{prop:Homeo(Cantor)-acts-transitively}.
	\end{example}
	
	Analyzing the proof of \Cref{lem:stabilizer->dissipated->bAc} yields an even stronger result:
	
	\begin{corollary}[\Cref{INT_cor:tuple-stabilizer->dissipated}]\label{cor:tuple-stabilizer->dissipated}
		Let $k\in\NN$. For any $k$-tuple $(x_1,\dots,x_k)\in\Cantor^k$ the subgroup
		\[ \CantorGrpStab[(x_1,\dots,x_k)] = \{g\in\CantorGrp~|~g \text{ fixes a neighborhood of } x_i \text{ pointwise}, 1\leq i\leq k\} \]
		is dissipated, so in particular \bAc.
	\end{corollary}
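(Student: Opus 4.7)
The plan is to mirror the proof of \Cref{lem:stabilizer->dissipated->bAc} almost verbatim, only taking care that at every stage we stay away from all $k$ points simultaneously rather than from a single one. Together with \Cref{lem:dissipated->binate} and \Cref{prop:binate->bAc}, this will give bounded acyclicity.

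First I would fix as indexing family $\mathcal{U}$ the set of tuples $U=(U_1,\dots,U_k)$ of pairwise disjoint closed and open neighborhoods of $x_1,\dots,x_k$, ordered by componentwise reverse inclusion, and put $X_U:=\Cantor\setminus(U_1\cup\cdots\cup U_k)$. The group $\CantorGrpStab[(x_1,\dots,x_k)]$ acts faithfully on $\Cantor\setminus\{x_1,\dots,x_k\}$, which is the directed union of the compact sets $X_U$, and any element of the group supported in $X_U$ (for some $U$) fixes the whole neighborhood $U_1\cup\cdots\cup U_k$ of the $x_i$. Hence the subgroups $\group_U:=\{g\in\CantorGrpStab[(x_1,\dots,x_k)]\mid\supp(g)\subseteq X_U\}$ exhibit $\CantorGrpStab[(x_1,\dots,x_k)]$ as a boundedly supported group.

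Next, for a fixed $U\in\mathcal{U}$ I would construct a dissipator $\groupel_U$ for $\group_U$ exactly as in the proof of \Cref{lem:stabilizer->dissipated->bAc}. The only new ingredient is the choice of the two auxiliary closed and open sets $O_+$ and $O_-$: I need them to be disjoint from each other, disjoint from $X_U$, and disjoint from $\{x_1,\dots,x_k\}$. The open set $(U_1\cup\cdots\cup U_k)\setminus\{x_1,\dots,x_k\}$ is non-empty and, since $\Cantor$ is perfect, contains infinitely many points; by \Cref{cor:self-similarity} I can pick two small closed and open subsets of this set sitting inside, say, $U_1\setminus\{x_1\}$ (or spread across several $U_i$'s, it does not matter), and these serve as $O_+$ and $O_-$. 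Now \Cref{lem:shrinking-map} produces shrinkings $\tau_\pm:\Cantor\to O_\pm$, and the dissipator $\groupel_U$ and the maps $\phi(g)$ are built verbatim from the sets $P_n=\tau_+^n(X_U)$, $P_{-n}=\tau_-^n(X_U)$ and the fixed points $x_+^\ast,x_-^\ast$.

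The main thing to check is that both $\groupel_U$ and $\phi(g)$ for $g\in\group_U$ really lie in $\CantorGrpStab[(x_1,\dots,x_k)]$. By construction $\supp(\groupel_U)\subseteq X_U\cup O_+\cup O_-$ and $\supp(\phi(g))\subseteq X_U\cup O_+$, both closed sets chosen so as to miss each $x_i$; hence their complements contain a closed and open neighborhood of each $x_i$ on which both maps act trivially. The fact that $\groupel_U$ and $\phi(g)$ are homeomorphisms, and the verifications of the two dissipator axioms $\groupel_U^n(X_U)\cap X_U=\emptyset$ and $\phi(g)=\groupel_U^n g\groupel_U^{-n}$ on $\groupel_U^n(X_U)$, are identical to the single-point case and I would not reproduce them. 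The only genuine obstacle—ensuring that enough room exists to place $O_+,O_-$ away from every $x_i$ at once—is handled by perfectness and self-similarity of the Cantor set, so the argument goes through without further modification. Bounded acyclicity then follows immediately from \Cref{lem:dissipated->binate} and \Cref{prop:binate->bAc}.
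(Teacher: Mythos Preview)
Your proposal is correct and follows essentially the same approach as the paper: both observe that the single-point proof goes through unchanged once one ensures the auxiliary sets $O_+,O_-$ are chosen away from all of $x_1,\dots,x_k$ simultaneously, which is possible because $\Cantor$ is perfect. The only cosmetic difference is that the paper indexes by single closed-and-open neighborhoods $U$ of the whole set $\{x_1,\dots,x_k\}$ rather than by tuples of disjoint neighborhoods, but this yields the same directed family of sets $X_U$ and hence the same argument.
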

	
	Note that $\CantorGrpStab[(x_1,\dots,x_k)] = \bigcap_{i=1}^k\CantorGrpStab[x_i]$, so $\CantorGrpStab[(x_1,\dots,x_k)]$ is a subgroup of a dissipated group. However, dissipated groups are not closed with respect to taking subgroups in general (see \Cref{exp:non-dissipated-subgroups}). In fact, not even bounded acyclicity is preserved under taking subgroups (see \Cref{exp:non-bAc-subgroups}). Therefore, the corollary above does not follow immediately from the fact that $\CantorGrpStab$ is dissipated for every $z\in\Cantor$.
	
	\begin{proof}
		The proof is essentially the same as the one for just a single point in the Cantor set. The only difference is that we have to be careful about how to choose the shrinkings $\tau_+$ and $\tau_-$ or rather their images $O_+$ and $O_-$.\\
		Here the family $\mathcal{U}$ of subsets of $\Cantor$ consists of the sets $U$, which are closed and open neihgborhoods of \highlight{all} the points $x_1,\dots,x_k$ simultaneously. Moreover, the sets $O_+$ and $O_-$ have to be chosen in such a way that they do not intersect the set $X_U\cup\{x_1,\dots,x_k\}$. However, since $U/\{x_1,\dots,x_k\}$ is definitely not finite, we can certainly find two points $x_+$ and $x_-$ inside $U$ which both have closed and open neighborhoods $O_+$ and $O_-$, respectively, which do not intersect.\\
		The rest of the proof follows along the same lines.
	\end{proof}
	
	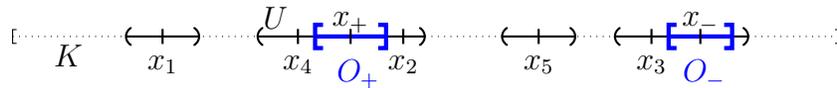
\begin{figure}[htbp]
		\begin{center}		
\begin{tikzpicture}
	\draw[{[-]}, dotted] (0,0) -- (11,0); 
	\node at (0.75, -0.25) {$\Cantor$};
	\draw[thick] (2,0.1) -- (2,-0.1) node[below] {$x_1$};
	\draw[thick] (5.2,0.1) -- (5.2,-0.1) node[below] {$x_2$};
	\draw[thick] (8.5,0.1) -- (8.5,-0.1) node[below] {$x_3$};
	\draw[thick] (3.8,0.1) -- (3.8,-0.1) node[below] {$x_4$};
	\draw[thick] (7,0.1) -- (7,-0.1) node[below] {$x_5$};
	\draw[{(-)}, thick] (1.5,0) -- (2.5,0);
	\draw[{(-)}, thick] (3.25,0) -- (5.5,0);
	\draw[{(-)}, thick] (6.5,0) -- (7.5,0);
	\draw[{(-)}, thick] (8,0) -- (9.8,0);
	\node at (3.5, 0.25)  (U) {$U$};
	\draw[{[-]}, ultra thick, blue] (4,0) -- (5,0);
	\node[blue] at (4.6, -0.5)  (O+) {$O_+$};
	\draw[thick] (4.5,0.1) -- (4.5,-0.1) node[above] {$x_+$};
	\draw[{[-]}, ultra thick, blue] (8.7,0) -- (9.6,0);
	\node[blue] at (9.2, -0.5)  (O-) {$O_-$};
	\draw[thick] (9.15,0.1) -- (9.15,-0.1) node[above] {$x_-$};
\end{tikzpicture}
			\caption{Schematic illustration of a possible situation in the proof of \Cref{cor:tuple-stabilizer->dissipated} for $k=5$.}
		\end{center}
	\end{figure}

\subsubsection{Fat Points on the Cantor Set}\label{subsec:fat-points-on-K}
	
	To make use of \Cref{lem:stabilizer->dissipated->bAc} we would like to find an action of $\CantorGrp$ on some set such that the stabilizer of an element coincides with a subgroup of the form $\CantorGrpStab$ as defined in the previous section. We follow an approach similar to \cite[Sections 3 and 4]{monod2021bounded}.
	
	\begin{definition}[Fat point]\label{def:fat-points}
		For an element $x$ of the Cantor set $\Cantor$ and an embedding $h:\Cantor\to\Cantor$ with $x = h(0)$ we call the tuple $(x,h)$ a \highlight{fat point} with \highlight{core} $x$ and \highlight{tissue} $h(\Cantor)$.\\
		We denote the set of all fat points on $K$ by $\AllFatPoints$.\\
	\end{definition}
	
	Recall that by our convention of embeddings (\Cref{def:embedding}) the image of embeddings is open. In particular, for all fat points $(x,h)$ the tissue $h(\Cantor)$ will be a closed and open subset of $\Cantor$. 

	However, we are not interested in all possible fat points on $\Cantor$ but rather on how the tissues behave in a small neighborhood around the core. Therefore we will consider the following relation on $\AllFatPoints$:

	Let $(x,h)$, $(\bar{x},\bar{h})$ be fat points. We define
	\begin{align*}
		(x,h) \sim (\bar{x}, \bar{h}) \Leftrightarrow& x = \bar{x}\\
		&\text{and  there is a closed and open set } U \ni 0 \text{ s.t. } h|_U \equiv \bar{h}|_U.
	\end{align*}

	\begin{lemma}
		The relation $\sim$ described above is an equivalence relation on $\AllFatPoints$.
	\end{lemma}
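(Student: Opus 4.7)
The proof is entirely routine: all three axioms of an equivalence relation follow directly from the definition, with transitivity using only that finite intersections of clopen sets are clopen. Here is how I would lay it out.

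For \emph{reflexivity}, given a fat point $(x,h)\in\AllFatPoints$, simply take $U=\Cantor$: this is certainly a clopen neighbourhood of $0$ in $\Cantor$, and $h|_U = h|_U$ holds trivially, so $(x,h)\sim(x,h)$. For \emph{symmetry}, suppose $(x,h)\sim(\bar{x},\bar{h})$. Then $x=\bar{x}$, so also $\bar{x}=x$, and the clopen neighbourhood $U$ of $0$ witnessing $h|_U\equiv \bar{h}|_U$ witnesses equally well that $\bar{h}|_U\equiv h|_U$, giving $(\bar{x},\bar{h})\sim(x,h)$.

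The only point requiring a small observation is \emph{transitivity}. Assume $(x,h)\sim(\bar{x},\bar{h})$ and $(\bar{x},\bar{h})\sim(\tilde{x},\tilde{h})$, so in particular $x=\bar{x}=\tilde{x}$, and choose clopen neighbourhoods $U_1,U_2\ni 0$ with $h|_{U_1}\equiv \bar{h}|_{U_1}$ and $\bar{h}|_{U_2}\equiv \tilde{h}|_{U_2}$. Set $U:=U_1\cap U_2$. Since the intersection of two clopen sets is clopen, and both contain $0$, $U$ is a clopen neighbourhood of $0$ in $\Cantor$. For every $y\in U\subseteq U_1\cap U_2$ we have
\[ h(y) \;=\; \bar{h}(y) \;=\; \tilde{h}(y), \]
so $h|_U\equiv \tilde{h}|_U$, and therefore $(x,h)\sim(\tilde{x},\tilde{h})$.

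I do not anticipate any genuine obstacle; the only conceptual point is that clopen sets of $\Cantor$ are closed under finite intersection, which is automatic in any topological space. This is merely a sanity check that the definition of $\sim$ is well-posed before introducing the set $\FatPoints := \AllFatPoints/{\sim}$ of fat points up to germ equivalence.
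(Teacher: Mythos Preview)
Your proof is correct and matches the paper's own proof essentially line for line: reflexivity via $U=\Cantor$, symmetry by the evident symmetry of the condition, and transitivity by intersecting the two clopen neighbourhoods of $0$. There is nothing to add.
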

	\begin{proof}
		\textbf{reflexivity}: We have $(x,h) \sim (x,h)$ by choosing $U = K$ in the characterization above.
		
		\textbf{symmetry}: Follows easily by the symmetry of the relation condition.
		
		\textbf{transitivity}: Let $(x,h) \sim (x',h')$ and $(x',h') \sim (\bar{x},\bar{h})$.\\
		Then we know that $x = x' = \bar{x}$. Moreover, if $U$ and $V$ are closed and open neighborhoods of $0$ satisfying $h|_U \equiv h'|_U$ and $h'|_V \equiv \bar{h}|_V$, then $U\cap V$ is a closed and open neighborhood of $0$ for which $h|_{U\cap V} \equiv h'|_{U\cap V} \equiv \bar{h}|_{U\cap V}$. Thus $(x,h) \sim (\bar{x},\bar{h})$.		
	\end{proof}
	
	\begin{remark}\label{rem:small_nbhd}
		Observe that any fat point $(x,h)$ depends on $h$ only in a very small neighborhood around $x$. Therefore, we can require the tissue to be contained in an arbitrarily small neighborhood around $x$.\\
		More precisely, for a given neighborhood $V\not\supseteq h(\Cantor)$ of $x$ we can precompose $h$ with an embedding $\iota:\Cantor\to U$, such that:
		\begin{enumerate}[label=(\roman*), itemsep=0pt, nosep]
			\item $U$ is a small neighborhood around $0$ isomorphic to $\Cantor$,
			\item $\iota$ is the identity on a small neighborhood $U'$ of $0$, which is also  isomorphic to $\Cantor$ and
			\item $\iota$ \enquote{squeezes} the remaining part of the Cantor set, in such a way that the image of $h\circ\iota$ is contained in $V$.
		\end{enumerate}
		Indeed, all we have to do is to first choose $U \subseteq h^{-1}(V)$ accordingly, which is possible because $h^{-1}(V)$ is a neighborhood of $0$, since $h$ is an embedding. Then take $U'\subsetneq U$ closed, open and small enough and map the remaining part $\Cantor\setminus U'$ which is homeomoprhic to $\Cantor$, into $U\setminus U'$ which is also homeomorphic to $\Cantor$.\\
		To sum up, for any neighborhood $V$ of $x$ we can choose a fat point $(x,\bar{h})$ which is equivalent to $(x,h)$ and has tissue contained in $V$.
	\end{remark}

	\begin{example}
		Again, for the sake of completeness, we give a concrete example in the model $\CantorModel$ of such a \enquote{squeezing} map.\\
		Suppose that $(x,h)$ is a fat point and $V$ is any neighborhood of $x$, such that $h(\CantorModel)$ is not already contained in $V$. By definition of the product topology there is an integer $M$ such that the closed and open set $O := \{\seq[y]\in2^\NN~|~\forall k\leq M: \seqel[y]=\seqel\}$ is in $V$. Now let $O' := \{\seq[y]\in2^\NN~|~\forall k\leq (M+1): \seqel[y]=\seqel\}$ and define the embedding $p:\CantorModel\to O\subset V$ by
		\begin{align*}
			p|_{O'} &= id_{O'},\\
			p(\seq[y]) &= (\seqel[][1],\seqel[][2],\dots,\seqel[][M], (\seqel[][M+1]+1 \mod 2),\seqel[y][1],\seqel[y][2],\dots) \in O/O'\quad \forall \seq[y]\in\Cantor/O'.\\
		\end{align*}
		Now this map $hp:\CantorModel \to \CantorModel$ is an embedding which satisfies that $(\seq,h) \sim (\seq, hp)$, since $h|_{O'} = (hp)|_{O'}$ and additionally achieves $hp(\CantorModel) \subset V$ as required.
	\end{example}
	
	\begin{notation}
		To simplify the notation, in the following we will refer to an equivalence class of fat points also just as fat point.\\
		Moreover, we define $\FatPoints$ to be the set of equivalence classes of fat points, i.e. $\FatPoints := \AllFatPoints/_{\sim}$.
	\end{notation}
	
	Finally, we can focus on constructing an action of $\CantorGrp$ on some set such that the point-stabilizers have the required from $\CantorGrpStab$. This set will of course be $\FatPoints$.\\
	Let us first define a map $\CantorGrp\times\AllFatPoints \to \AllFatPoints$ by $g\cdot(x,h) = (g(x), gh)$. This map is well-defined because if $h:\Cantor\to\Cantor$ is an embedding with $h(0) = x$, then $gh:\Cantor\to\Cantor$ is also an embedding satisfying $gh(0) = g(x)$. Moreover, we have
	\begin{align*}
		& id_{\Cantor}\cdot(x, h) = (x, h) \text{ and}\\
		&(g_1g_2)\cdot(x,h) = (g_1g_2(x), g_1g_2h) = g_1\cdot(g_2(x), g_2h) = g_1\cdot(g_2\cdot(x, h)).
	\end{align*}
	
	Hence, this map is indeed an action of $\CantorGrp$ on $\AllFatPoints$. In order to show that this induces an action of $\CantorGrp$ on $\FatPoints$, we need to show that $\CantorGrp$ preserves equivalences.\\
	
	Assume that $(x,h) \sim (x, \bar{h})$, i.e. there is a neighborhood $U$ of $0$ such that $h|_U \equiv \bar{h}|_U$. Then for the two fat points $g\cdot(x,h)$ and $g\cdot(x,\bar{h})$ the first condition of the equivalence relation is obviously satisfied. Moreover, we have that $gh|_U \equiv g\circ h|_U \equiv g\circ \bar{h}|_U \equiv g\bar{h}|_U$.\\
	Therefore, $g\cdot(x, h) \sim g\cdot(x, \bar{h})$ and we get a well-defined action of $\CantorGrp$ on $\FatPoints$.\\	
	
	What is left to verify to achieve the goal mentioned in the beginning of this section is to check that the stabilizer of a fat point has indeed the required form. Assume that $g\cdot(x,h) \sim (x,h)$. Then by the definition of the action and the equivalence relation we must have $g(x) = x$. Therefore, $x$ is a fixed point of $g$ and $gh$ and $h$ have the same preimage of $x$, namely 0. Further, there needs to be a closed and open neighborhood $U$ of $0$ on which $gh$ and $h$ coincide. However, since $h$ is an embedding, it sends a closed and open neighborhood $U$ of $0$ onto a closed and open neighborhood $V := h(U)$ of $x$. Thus, $gh_U = h_U \Leftrightarrow g $ fixes $V$ pointwise. In summary, for every fat point $(x,h)$ we get
	\[ \{g\in\CantorGrp~|~g\cdot(x,h) = (x,h)\} = \{g\in\CantorGrp~|~g \text{ fixes a neighborhood of } x \text{ pointwise} \} = \CantorGrpStab[x]. \]
	
	\begin{remark}
		Observe that precisely in the last argument we need the function $h$ to be an embedding, and not just a homeomorphism onto the image, to ensure that open neighborhoods of points are mapped to open (with regards to the topology on $\Cantor$ and not to the induced topology on the image) neighborhoods of the corresponding image point. For general homeomorphisms this is not the case (see \Cref{exp:non-open-homeomorphic-subsets}).
	\end{remark}
	
	We can summarize the previous discussion in the following \namecref{lem:action-of-CantorGrp-on-FatPoints}:
	\begin{lemma}\label{lem:action-of-CantorGrp-on-FatPoints}
		The action of $\CantorGrp$ on fat points $\AllFatPoints$ induces an action on the equivalence classes of fat points $\FatPoints$ and the stabilizer of an equivalence class $[(x,h)]$ of fat points is precisely 
		\[ \CantorGrpStab[x] = \{g\in\CantorGrp~|~g\cdot(x,h) = (x,h)\} = \{g\in\CantorGrp~|~g \text{ fixes a neighborhood of } x \text{ pointwise} \}. \]
	\end{lemma}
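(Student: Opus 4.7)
The plan is to verify the three assertions in sequence: first, that the formula $g\cdot(x,h) := (g(x),gh)$ defines an action of $\CantorGrp$ on $\AllFatPoints$; second, that this action descends to the quotient $\FatPoints = \AllFatPoints/{\sim}$; and third, that the stabilizer of $[(x,h)]$ equals $\CantorGrpStab[x]$. Since most of this has already been carried out in the discussion preceding the statement, I would simply package those computations into a proof.

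For the first step, I would check that if $h:\Cantor\to\Cantor$ is an embedding with $h(0)=x$, then for every $g\in\CantorGrp$ the composition $gh:\Cantor\to\Cantor$ is again an embedding (composition of homeomorphisms is a homeomorphism, and $gh(\Cantor)=g(h(\Cantor))$ is open since $g$ is a homeomorphism and $h(\Cantor)$ is open), and $gh(0)=g(x)$. The axioms $\mathrm{id}\cdot(x,h)=(x,h)$ and $(g_1g_2)\cdot(x,h)=g_1\cdot(g_2\cdot(x,h))$ are immediate. For the second step I would verify that $\sim$ is $\CantorGrp$-equivariant: if $(x,h)\sim(x,\bar h)$ with $h|_U=\bar h|_U$ for some closed and open neighborhood $U$ of $0$, then $g(x)=g(x)$ trivially and $gh|_U = g\circ h|_U = g\circ\bar h|_U = g\bar h|_U$, so $g\cdot(x,h)\sim g\cdot(x,\bar h)$. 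Hence the induced action on $\FatPoints$ is well-defined.

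The heart of the statement is the stabilizer computation, and the main (small) point to be careful about is the use of openness of the tissue. For the inclusion $\CantorGrpStab[x]\subseteq\mathrm{Stab}([(x,h)])$, suppose $g$ fixes a closed and open neighborhood $V$ of $x$ pointwise. Then $g(x)=x$ and $U:=h^{-1}(V)$ is a closed and open neighborhood of $0$ in $\Cantor$ (since $h$ is continuous and $h(0)=x\in V$) on which $gh|_U = h|_U$, because for $y\in U$ we have $h(y)\in V$, whence $g(h(y))=h(y)$. Thus $g\cdot(x,h)\sim (x,h)$. Conversely, suppose $g\cdot[(x,h)]=[(x,h)]$. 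By definition of $\sim$, this gives $g(x)=x$ together with a closed and open neighborhood $U$ of $0$ on which $gh|_U=h|_U$. Now I use that $h$ is an embedding in the sense of Definition \ref{def:embedding}: the image $V:=h(U)$ is an open subset of $\Cantor$ (being the $h$-image of a set that is open in $\Cantor$, and hence open in the open set $h(\Cantor)$, combined with $h(\Cantor)$ being open in $\Cantor$). The neighborhood $V$ is also closed, and for every $v=h(y)\in V$ we have $g(v)=g(h(y))=h(y)=v$, so $g$ fixes the neighborhood $V$ of $x$ pointwise, i.e.\ $g\in\CantorGrpStab[x]$.

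The only subtle point in the argument is the step where I invoke openness of $h(U)$ in $\Cantor$: if $h$ were merely a homeomorphism onto its image (not an embedding in the sense of this paper), then $h(U)$ would a priori only be open in $h(\Cantor)$, and Example \ref{exp:non-open-homeomorphic-subsets} and Remark \ref{rem:homeo-without-interior} show this really can fail. The convention adopted in Definition \ref{def:embedding} is precisely what removes this obstacle, so the argument goes through.
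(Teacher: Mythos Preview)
Your proof is correct and follows essentially the same approach as the paper: both verify the action on $\AllFatPoints$, check that $\sim$ is preserved, and compute the stabilizer via the two inclusions, with the key observation being that $V=h(U)$ is an open neighborhood of $x$ precisely because $h$ is an embedding in the sense of Definition~\ref{def:embedding}. The paper likewise stresses this point immediately after the lemma, noting that for a mere homeomorphism onto its image the argument would fail.
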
	
	
\subsubsection{Generic Relation on Fat Points}
	
	\begin{definition}\label{def:generic-rel-on-FatPoints}
		Consider the set $\FatPoints$ and define $(x,h)~\generic~(y,\bar{h}) :\Leftrightarrow x \neq y \text{ and } \im(h)\cap \im(\bar{h}) = \emptyset$ for some $h$, $\bar{h}$ in the equivalence classes.\\
		We call fat points which satisfy this relation \highlight{disjoint}.
	\end{definition}

	\begin{lemma}\label{lem:generic-rel-on-FatPoints}
		The binary relation $\generic$ on $\FatPoints$ is generic.
	\end{lemma}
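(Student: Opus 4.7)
The plan is to verify the defining property of genericity directly: given any finite collection $[(x_1,h_1)],\dots,[(x_n,h_n)]\in\FatPoints$, I will produce a fat point $[(y,k)]$ that is disjoint from each of them in the sense of \Cref{def:generic-rel-on-FatPoints}.

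First I would invoke \Cref{rem:small_nbhd} to replace each representative $h_i$ by an equivalent embedding $\bar h_i$ whose tissue $\bar h_i(\Cantor)$ lies inside a prescribed closed-open neighborhood of $x_i$. Since the cores $x_1,\dots,x_n$ are finitely many points in the perfect Hausdorff totally disconnected space $\Cantor$, \Cref{cor:self-similarity} supplies pairwise disjoint closed-open neighborhoods $U_i\ni x_i$ that can be chosen arbitrarily small; in particular I can arrange that $\bigcup_i U_i\subsetneq\Cantor$, ensuring the closed-open set $W:=\Cantor\setminus\bigcup_i U_i$ is non-empty. By shrinking the tissues as above I obtain representatives with $\bar h_i(\Cantor)\subseteq U_i$ for every $i$, so the tissues are pairwise disjoint and all disjoint from $W$.

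Next I would pick any point $y\in W$. By construction $y\neq x_i$ for every $i$ (because $x_i\in U_i$ and $U_i\cap W=\emptyset$). Applying \Cref{cor:self-similarity} once more, I can find a closed-open neighborhood $V\subseteq W$ of $y$ with $V$ homeomorphic to $\Cantor$ (via \Cref{thm:clopen-subset->Cantor-set}). Using \Cref{thm:Cantor-sets-are-homeomorphic} together with the transitivity of $\Homeo(\Cantor)$ on points of $\Cantor$ (\Cref{prop:Homeo(Cantor)-acts-transitively}), I obtain a homeomorphism $k\colon\Cantor\to V$ with $k(0)=y$. Since $V$ is open in $\Cantor$, the map $k$ is an embedding in the sense of \Cref{def:embedding}, and therefore $(y,k)$ is a legitimate fat point.

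Finally I would check that $[(y,k)]$ witnesses genericity. By construction $y\neq x_i$ for every $i$, and the tissue $k(\Cantor)=V\subseteq W$ is disjoint from every $U_i\supseteq\bar h_i(\Cantor)$, so $\im(k)\cap\im(\bar h_i)=\emptyset$ for all $i$. Hence $[(x_i,h_i)]\generic[(y,k)]$ for every $i$, which is exactly what is required. The argument has no real obstacle; the only subtlety is remembering that the relation $\generic$ is defined up to choice of representative, which is why the initial shrinking step via \Cref{rem:small_nbhd} is essential rather than cosmetic.
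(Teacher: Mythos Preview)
Your argument is correct and follows essentially the same route as the paper's proof: shrink the tissues via \Cref{rem:small_nbhd}, pick a point in the leftover clopen complement, and build an embedding landing there. The only cosmetic caveat is that distinct fat points may share a core, so the $U_i$ cannot literally be pairwise disjoint in that case---but this is handled trivially by assigning the same clopen neighborhood to all fat points with a common core.
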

	\begin{proof}
		Assume that $\{(y^0,h^0),\dots (y^n,h^n)\}\subseteq \FatPoints$ is a finite set of fat points. By \Cref{rem:small_nbhd} we can without loss of generality assume that $\im(h^i)\cap\im(h^j) = \emptyset$ whenever $y^i \neq y^j$ and that the union of all images does not cover $\Cantor$. Now for any point $x\in\Cantor\setminus\{y^0,\dots,y^k\}$ choose a small enough neighborhood $U$ of $x$ such that $U\cap \im(h^i) = \emptyset$ for all $q\leq i \leq k$ and an arbitrary embedding $h:\Cantor\to U$. Then we have $(x,h)~\generic~(y^i,h^i)$ for all $q\leq i \leq k$, hence $\generic$ is indeed generic.
	\end{proof}

\subsubsection{Transitivity of the \texorpdfstring{$\CantorGrp$}{G}-action}
	
	Recall that the generic relation $\generic$ on $\FatPoints$ gives a semi-simplicial set $\FatPoints_{\bullet}^{\generic}$, where $\FatPoints_k^{\generic}$ is the set of $(k+1)$-tuples $((x_0,h_0),\dots (x_k,h_k))$ for which $(x_i,h_i)~\generic~(y_j,h_j)$ for all $0\leq i<j\leq k$ holds.
	
	\begin{lemma}\label{lem:transitive-action-on-fat-points}
		The action of $\CantorGrp$ on $\FatPoints$ described above is highly transitive on generic tuples, that is, it is transitive on $\FatPoints_k^{\generic}$ for any integer $k\geq0$.
	\end{lemma}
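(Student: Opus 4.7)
The plan is to exhibit an explicit homeomorphism $g \in \CantorGrp$ sending one generic tuple of fat points to the other, by assembling $g$ piece-by-piece on pairwise disjoint clopen tissues and then on a clopen complement. Given two generic $(k+1)$-tuples $((x_0,h_0),\dots,(x_k,h_k))$ and $((\bar x_0,\bar h_0),\dots,(\bar x_k,\bar h_k))$ in $\FatPoints_k^{\generic}$, the first step is to normalise the representatives. Since the $x_i$ are pairwise distinct, I would choose pairwise disjoint clopen neighborhoods $V_i \ni x_i$ whose union does not cover $\Cantor$ (possible because $\Cantor$ is infinite and only finitely many points are involved), and similarly disjoint $\bar V_i \ni \bar x_i$ with $\bigcup \bar V_i \subsetneq \Cantor$. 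Then, by \Cref{rem:small_nbhd}, I replace each $h_i$ and $\bar h_i$ by an equivalent representative whose tissue lies inside $V_i$ respectively $\bar V_i$, so that all the $h_i(\Cantor)$ are pairwise disjoint, all the $\bar h_i(\Cantor)$ are pairwise disjoint, and neither family covers $\Cantor$.

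Next I would define $g$ piecewise. On each tissue $h_i(\Cantor)$, set $g := \bar h_i \circ h_i^{-1}$, a homeomorphism from the clopen set $h_i(\Cantor)$ onto the clopen set $\bar h_i(\Cantor)$. On the leftover clopen set $U := \Cantor \setminus \bigcup_i h_i(\Cantor)$, choose any homeomorphism $g|_U : U \to \bar U := \Cantor \setminus \bigcup_i \bar h_i(\Cantor)$; such a map exists because both $U$ and $\bar U$ are nonempty clopen subsets of $\Cantor$ and hence themselves Cantor sets by \Cref{thm:clopen-subset->Cantor-set}, and any two Cantor sets are homeomorphic by \Cref{thm:Cantor-sets-are-homeomorphic}.

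Since $\Cantor$ is partitioned into finitely many clopen pieces on each of which $g$ is a homeomorphism onto a clopen piece of $\Cantor$, and the target pieces partition $\Cantor$ as well, the assembled map $g$ is a bijection that is continuous (open neighborhoods in each piece map to open sets) and hence an element of $\CantorGrp$. Finally, I verify that $g \cdot (x_i,h_i) = (\bar x_i,\bar h_i)$: by construction $g \circ h_i = \bar h_i \circ h_i^{-1} \circ h_i = \bar h_i$ on all of $\Cantor$, so in particular $g(x_i) = g(h_i(0)) = \bar h_i(0) = \bar x_i$ and $g h_i = \bar h_i$, giving equivalence of the resulting fat point with $(\bar x_i,\bar h_i)$.

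I do not expect any real obstacle here; the only point requiring care is simultaneously shrinking the tissues so that the two complements $U$ and $\bar U$ are nonempty and that the tissues within each tuple are pairwise disjoint, which is handled by \Cref{rem:small_nbhd} together with the fact that only finitely many points are involved. The rest is routine gluing, relying on the abundance of clopen partitions available in $\Cantor$.
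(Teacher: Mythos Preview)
Your proposal is correct and follows essentially the same approach as the paper: normalise representatives via \Cref{rem:small_nbhd} so that the tissues within each tuple are pairwise disjoint and leave a nonempty clopen complement, then define $g$ as $\bar h_i \circ h_i^{-1}$ on each tissue and as an arbitrary homeomorphism between the clopen complements using \Cref{thm:clopen-subset->Cantor-set} and \Cref{thm:Cantor-sets-are-homeomorphic}. The verification that $g\cdot(x_i,h_i)=(\bar x_i,\bar h_i)$ is identical to the paper's.
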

	\begin{proof}
		Let $((x_0,h_0),\dots (x_k,h_k))$ and $((\bar{x}_0,\bar{h}_0),\dots (\bar{x}_k,\bar{h}_k))$ be two elements of $\FatPoints_k^{\generic}$. To prove the lemma we want to find an element $g\in\CantorGrp$ which satisfies $g\cdot(x_i,h_i) = (\bar{x}_i,\bar{h}_i)$ for all $1\leq i\leq k$.\\
		First of all, by \Cref{rem:small_nbhd} we can assume that all the tissues $O_i := h_i(\Cantor)$ ($\bar{O}_j := \bar{h}_j(\Cantor)$) for $0\leq k$ are disjoint. We only require $O_i \cap O_j = \emptyset$ and $\bar{O}_i \cap \bar{O}_j = \emptyset$ for $i\neq j$ and not $O_i \cap\bar{O}_j=\emptyset$, which might not be possible if $x_i = x_j$. Moreover, since the $h_i$ and $\bar{h}_j$ are embeddings, all the sets $O_i$ and $O_j$, respectively, are closed and open inside $\Cantor$. Define $O := \Cantor\setminus(\bigcup_{i=0}^kO_i)$ and $\bar{O} := \Cantor\setminus(\bigcup_{j=0}^k\bar{O}_j)$ without loss of generality we can assume $O\neq\emptyset$ and $\bar{O}\neq\emptyset$; otherwise we make the image of some tissues smaller.\\
		Then we have two decompositions of $\Cantor$ into $(k+2)$ non-empty disjoint closed and open sets. We define the desired element $g\in\CantorGrp$ as follows:
		\begin{align*}
			g|_O &: O\to\bar{O},\\
			g|_{O_i} := \bar{h}_i\circ h_i^{-1} &: O_i\to\bar{O}_i \quad\forall 0\leq i\leq k,
		\end{align*}
		where $g|_O$ is an arbitrary homeomorphism given by \Cref{thm:Cantor-sets-are-homeomorphic}. The argument that $g$ is indeed a homeomorphism of $\Cantor$ is the same as in the proof of \Cref{cor:Homeo(Cantor)-acts-highly-transitively}.\\
		Clearly, we have
		\begin{align*}
			g\cdot(x_i,h_i) = (g(x_i),gh_i)	&= (\bar{h}_i\circ h_i^{-1}(x_i), \bar{h}_i\circ h_i^{-1}h_i)\\
			& = (\bar{h}_i(0), \bar{h}_i) = (\bar{x}_i, \bar{h}_i) \quad \text{for all } 0\leq i\leq k,
		\end{align*}
		hence $g$ sends the tuple $((x_0,h_0),\dots (x_k,h_k))$ to the tuple $((\bar{x}_0,\bar{h}_0),\dots (\bar{x}_k,\bar{h}_k))$.\\
		This shows that $\CantorGrp$ acts transitively on $\FatPoints_k^{\generic}$ for any integer $k\geq0$.
	\end{proof}
	\begin{figure}[htbp]
		\begin{center}
 \begin{tikzpicture}
	\draw[{[-]}, dotted] (0,2) -- (11,2); 
	\node at (0.75, 2.25) {$\Cantor$};
	\draw[{[-]}, ultra thick, blue] (1.5,2) -- (2.5,2);
	\node[blue] at (2, 2.25)  (O1) {$O_0$};
	\draw[thick] (2,2.1) -- (2,1.9) node[below] {$x_0$};
	\draw[{[-]}, ultra thick, blue] (8,2) -- (9.5,2);
	\node[blue] at (8.75, 2.25)  (O2) {$O_1$};
	\draw[thick] (8.5,2.1) -- (8.5,1.9) node[below] {$x_1$};
	\draw[{[-]}, ultra thick, blue] (4,2) -- (5.5,2);
	\node[blue] at (4.75, 2.25)  (O3) {$O_2$};
	\draw[thick] (5.2,2.1) -- (5.2,1.9) node[below] {$x_2$};
	\node at (10.5, 2.25) {$O$};
	
	\draw[{[-]}, dotted] (0,0) -- (11,0); 
	\node at (0.75, -0.35) {$\Cantor_{}$};
	\draw[{[-]}, ultra thick, blue] (8.8,0) -- (10,0);
	\node[blue] at (9.4, -0.35)  (O1) {$\bar{O}_0$};
	\draw[thick] (9.2,0.1) -- (9.2,-0.1) node[above] {$\bar{x}_0$};
	\draw[{[-]}, ultra thick, blue] (5,0) -- (8,0);
	\node[blue] at (6.5, -0.35)  (O2) {$\bar{O}_1$};
	\draw[thick] (6,0.1) -- (6,-0.1) node[above] {$\bar{x}_1$};
	\draw[{[-]}, ultra thick, blue] (1.2,0) -- (3,0);
	\node[blue] at (2.1, -0.35)  (O3) {$\bar{O}_2$};
	\draw[thick] (1.7,0.1) -- (1.7,-0.1) node[above] {$\bar{x}_2$};
	\node at (10.5, -0.35) {$\bar{O}_{}$};
	
	\draw[->] (2.1,1.8) to node[midway,fill=white, scale=0.7] {$\bar{h}_0\circ h_0^{-1}$} (8.9,0.3);
	\draw[->] (8.2,1.65) to node[midway,pos= 0.4,fill=white, scale=0.7] {$\bar{h}_1\circ h_1^{-1}$} (6.7,0.2);
	\draw[->] (4.75,1.8) to node[midway,pos= 0.6, fill=white, scale=0.7] {$\bar{h}_2\circ h_2^{-1}$} (2.3,0.2);
\end{tikzpicture}
			\caption{Schematic illustration of the homeomorphism $g\in\CantorGrp$ constructed in the proof of \Cref{lem:transitive-action-on-fat-points} for $k=2$.}
		\end{center}
	\end{figure}

\subsection{Bounded Cohomology of \texorpdfstring{$\Homeo(\Cantor)$}{Homeo(K)}}
	
	We end this section by recalling and proving our main theorem concerning the group of homeomorphisms of the Cantor set. Due to our exhaustive preparatory work, the proof is easy since it is essentially a summary, respectively consolidation, of our previously obtained results.
	
	\begin{theorem}[\Cref{INT_thm:Homeo(K)-is-bAc}]\label{thm:Homeo(K)-is-bAc}
		Let $\Cantor$ be the Cantor set, then $\Homeo(\Cantor)$ is \bAc, i.e. $\HHbR[n][\Homeo(\Cantor)] \cong 0$ for all $n\geq1$.
	\end{theorem}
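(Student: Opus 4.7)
The plan is to invoke Corollary \ref{cor:bounded-cohomology-through-action-and-generic-relation} for the group $\CantorGrp$ acting on the set $\FatPoints$ of equivalence classes of fat points equipped with the generic relation $\generic$ from Definition \ref{def:generic-rel-on-FatPoints}. Essentially all the machinery has already been assembled in the preceding subsections, so the argument will amount to verifying the hypotheses and performing one short computation on the invariants complex.

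First, I would check the hypotheses of the corollary. Genericity of $\generic$ is Lemma \ref{lem:generic-rel-on-FatPoints}, and preservation of $\generic$ by $\CantorGrp$ is immediate from $g\cdot(x,h)=(g(x),gh)$: a homeomorphism is a bijection, so it sends distinct cores to distinct cores and disjoint tissues to disjoint tissues. High transitivity on every $\FatPoints^{\generic}_n$ is Lemma \ref{lem:transitive-action-on-fat-points}.

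Second, I would identify the stabilizers. Lemma \ref{lem:action-of-CantorGrp-on-FatPoints} identifies the stabilizer of a single fat point $(x,h)$ with the rigid stabilizer $\CantorGrpStab[x]$, so the stabilizer of a tuple $((x_0,h_0),\dots,(x_n,h_n)) \in \FatPoints^{\generic}_n$ equals $\bigcap_{i=0}^n \CantorGrpStab[x_i] = \CantorGrpStab[(x_0,\dots,x_n)]$. Because $\generic$ forces the cores $x_0,\dots,x_n$ to be pairwise distinct, Corollary \ref{cor:tuple-stabilizer->dissipated} applies and gives bounded acyclicity of these stabilizers for every $n \geq 1$. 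Hence Corollary \ref{cor:bounded-cohomology-through-action-and-generic-relation} yields a canonical isomorphism $\HH[n][\linf{\FatPoints_{\bullet}^{\generic}}^{\CantorGrp}] \cong \HHbR[n][\CantorGrp]$.

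Finally, I would compute the cohomology of the invariants complex. Transitivity of the $\CantorGrp$-action on each $\FatPoints^{\generic}_n$ collapses $\linf{\FatPoints^{\generic}_n}^{\CantorGrp}$ to the constants, so each term is isomorphic to $\RR$, and the induced differentials are alternating sums of identity maps on $\RR$, which equal $0$ or $\id$ depending on the parity of the number of summands. The resulting complex
\[ \RR \xrightarrow{0} \RR \xrightarrow{\id} \RR \xrightarrow{0} \RR \xrightarrow{\id} \cdots \]
has cohomology concentrated in degree $0$, so $\HHbR[n][\CantorGrp] = 0$ for every $n \geq 1$. The real work was of course already done in the preparatory sections; the main obstacle — establishing bounded acyclicity of the rigid stabilizers $\CantorGrpStab[(x_0,\dots,x_n)]$ via the dissipated/binate machinery — was overcome in Lemma \ref{lem:stabilizer->dissipated->bAc} and Corollary \ref{cor:tuple-stabilizer->dissipated}, so at this stage the proof is little more than bookkeeping.
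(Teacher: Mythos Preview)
Your proposal is correct and follows essentially the same route as the paper's proof: invoke Corollary \ref{cor:bounded-cohomology-through-action-and-generic-relation} for the $\CantorGrp$-action on $\FatPoints$ with the generic relation $\generic$, verify the hypotheses via Lemmas \ref{lem:generic-rel-on-FatPoints}, \ref{lem:transitive-action-on-fat-points}, \ref{lem:action-of-CantorGrp-on-FatPoints} and Corollary \ref{cor:tuple-stabilizer->dissipated}, and then compute the invariants complex using transitivity. Your explicit remark that $\CantorGrp$ preserves $\generic$ is a useful detail the paper leaves implicit.
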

	\begin{proof}
		As usual, to simplify the notation we set $\CantorGrp := \Homeo(\Cantor)$.\\
		We want to calculate the bounded cohomology of $\CantorGrp$ using \Cref{prop:bounded-cohomology-through-acyclic-resolutions}. By \Cref{prop:generic-rel->LES} the following sequence
		\[ \LES[0]{\RR, \linf{\FatPoints_1^{\generic}}, \linf{\FatPoints_2^{\generic}}, \linf{\FatPoints_3^{\generic}}} \]
		is a resolution of normed $\RR$-modules, i.e. a long exact sequence.		
		Hence, in order to apply \Cref{prop:bounded-cohomology-through-acyclic-resolutions} it would be enough to show that $\HHbc[][\CantorGrp][\linf{\FatPoints_n^{\generic}}]=0$ for all $n\geq 1$. However, by \Cref{lem:transitive-action-on-fat-points} $\CantorGrp$ acts transitively on $\FatPoints_n^{\generic}$, and by \Cref{cor:tuple-stabilizer->dissipated} the stabilizer of every tuple $(x_0,\dots,x_n)$ in $\FatPoints_n^{\generic}$ is \bAc.\\
 		
 		Therefore, by \Cref{cor:bounded-cohomology-through-action-and-generic-relation} we get that
 		
 		\[ \HHbR[i][\CantorGrp] \cong \HH[i][\linf{\FatPoints_{\bullet}^{\generic}}^\CantorGrp] \quad\text{for all } i\geq 0.\]
 		
 		Now since $\CantorGrp$ acts also transitively on $\FatPoints_{n}^{\generic}$ we have $\linf{\FatPoints_{n}^{\generic}}^\CantorGrp \simeq \RR$ for all $n\geq 1$, so every function in $g\in\linf{\FatPoints_{n}^{\generic}}^\CantorGrp$ is constant on $\FatPoints_{n}^{\generic}$. Looking at the definition of the differential map for the cochain complex $(\linf{\FatPoints_{\bullet}^{\generic}}^\CantorGrp,\codiff)$, we get that
 		\begin{align*}
 			\codiff[n]: \linf{\FatPoints_{n}^{\generic}}^\CantorGrp &\to \linf{\FatPoints_{n+1}^{\generic}}^\CantorGrp, \\
 			\codiff[n](f)(x_0,\dots,x_{n+1}) &=\sum_{i=0}^{n+1}(-1)^{i} f(x_0,\dots,\widehat{x}_i\dots,x_{n+1}).
 		\end{align*}
 		The last expression is equal to $0$ if $n$ is even, and $f(x_0,\dots,x_n)$ if $n$ is odd (since $f$ is constant on $\FatPoints_{n}^{\generic}$, it does not matter at which of the $(n+2)$ points we evaluate $f$). Hence, $\codiff[2n] \equiv 0$ and $\codiff[2n+1] \simeq id_{2n+1}$, where the identity is up to identifying $\linf{\FatPoints_{2n+1}^{\generic}}^\CantorGrp$ and $\linf{\FatPoints_{2n+2}^{\generic}}^\CantorGrp$ with $\RR$, for all $n\geq 1$.\\
 		It easily follows that $\HH[i][\linf{\FatPoints_{\bullet}^{\generic}}^\CantorGrp]$ vanishes, so $\CantorGrp$ is \bAc.
	\end{proof}

	\newpage
	
 \section{Thompson's Group \texorpdfstring{$V$}{V}}\label{sec:thompsons-group-V}
	
	In the following chapter we consider another group which arises from a specific type of action on the model $\CantorModel$ of the Cantor set, namely \highlight{Thompson's group} $\V$. This group (as well as the \highlight{Thompson's groups} $F$ and $T$) were introduced by Richard Thompson in 1965 in some unpublished work.\\
	Our goal is to prove \Cref{INT_thm:V-is-bAc}, that is, that $\V$ is \bAc and consequently also deduce \Cref{INT_cor:V-is-universally-bAc}. We will take a very similar approach as in the previous chapter. However, before we dive into the proof of bounded acyclicity we will give a short introduction into $\V$ and discuss some properties.\\
	
	\begin{remark}
		Until now $V$ always represented a module over some ring $\ring$. As already indicated in the preliminary words this gives a notational confliction with Thompson's group $V$. However, throughout this section $V$ will always denote Thompson's group and never a module.
	\end{remark}
		
\subsection{Introduction to Thompson's Groups \texorpdfstring{$F,T,V$}{F,T,V}}
	
	In what follows we will denote by $\dyadics\subseteq\CantorModel$ the subset of sequences $\seq$ which are eventually constant to $0$, i.e. $\exists N\geq1$ such that $\seqel[][n] = 0~\forall n\geq N$. Moreover, for a finite sequence $(r_1,r_2,\dots,r_N)\in\{0,1\}^N$ we define
	\[  \dyadicNgbh{(r_1,r_2\dots,r_N)} := \{\seq[]\in\CantorModel~|~\forall 1\leq k\leq N:~ \seqel[r][k] = \seqel[][k]\}. \]
	
	\begin{notation}
		\begin{itemize}
			\item We call a finite sequence $(r_1,r_2,\dots,r_N)$ a \highlight{subroot (of level $N$)}. Mostly we will shorten the notation and write $r_1r_2\cdots r_N$ instead of $(r_1,r_2\dots,r_N)$ and switch freely between these two notations. 
			\item Moreover, we can link together two subroots $r_1r_2\cdots r_N$ and $s_1s_2\cdots s_M$ of levels $N$ and $M$, respectively, to get a subroot $r_1r_2\cdots r_N\link s_1s_2\cdots s_M$ of level $N+M$. This linking operation works too if $r_1r_2\cdots r_N$ is a subroot of level $N$ and $(s_1,s_2,\dots)$ any element of $\CantorModel$. 
			\item Further, for an element $\seq\in\CantorModel$ we denote by $\seq|_N$ the subroot $(x_1,x_2\dots,x_N)$ of level $N$. In particular, if $N=0$ then $\seq|_N$ denotes an empty sequence.
			\item Finally, recall that we identify $\zero$ with the element $(0,0,0,\dots)$ and $\one$ with the element $(1,1,1,\dots)$. Therefore, combining this with the previous notation, we can write $\sroot{N}$ for the subroot $0\cdots0$ of level $N$ and similarly $\sroot{N}[\one]$ for the subroot $1\cdots1$ of level $N$. 
		\end{itemize}
	\end{notation}

	Using this notation convention we can define Thompson's group $\V$ as follows:
	
	\begin{definition}[Thompson's group $\V$]
		Thompson's group $\V\leq\Homeo(\CantorModel)$ consists of all those elements $g:\CantorModel\to\CantorModel$ for which there exist finitely many pairs of subroots $(r^1,s^1),\dots,(r^n,s^n)$, such that:
		\begin{itemize}[nosep]
			\item $\CantorModel = \bigcup_{i=1}^n {\dyadicNgbh{r^i}}$ is a (finite) disjoint union \label{item1:def-of-V}
			\item $\CantorModel = \bigcup_{i=1}^n \dyadicNgbh{s^i}$ is a (finite) disjoint union \label{item2:def-of-V}
			\item $g\vert_{\dyadicNgbh{r^i}}(\seq[]) = (s^i_1,s^i_2,\dots,s^i_{N_i},\seqel[][M_i+1],\seqel[][M_i+2],\dots) = s^i\link (x_k)_{k>M_i}$, where $M_i$ is the level of $r^i$ and $N_i$ is the level of $s^i$.
		\end{itemize}
		In the following, we will often use the shorter notation $\Vel$ for such an element in $\V$. Moreover, we also write $g(\dyadicNgbh{r^i}) = \dyadicNgbh{s^i}$, if $g$ sends $\dyadicNgbh{r^i}$ to $\dyadicNgbh{s^i}$ via the formula given above.
	\end{definition}

	Note that in the definition above neither $r^i$ and $s^i$, nor $r^i$ and $r^j$ must have the same level for any $i,j$.
	
	\begin{remark}
		Thompson's groups $F$ and $T$ are defined very similarly.\\
		However, for elements $g\in T$ we additionally require that $g$ preserves the \highlight{cyclic order} with respect to $\lorder$ of $\CantorModel$, that is if $r^1\lorder r^2\lorder\dots\lorder r^n$, then $s^j\lorder s^{j+1}\lorder\dots\lorder s^n\lorder s^1\lorder s^2\lorder\dots\lorder s^{j-1}$ for some $1\leq j\leq n$.\\
		Further, for elements $g\in F$ we even require that $g$ preserves the \highlight{total order} with respect to $\lorder$ of $\CantorModel$, that is if $r^1\lorder r^2\lorder\cdots\lorder r^n$, then $s^1\lorder s^2\lorder\cdots\lorder s^n$.\\
		Clearly, we have $F\subseteq T\subseteq V$.\\
		In particular, we can attach to every element $\Vel$ in $\V$ some permutation $\sigma:\{1,\dots,n\}\to\{1,\dots,n\}$ such that $s^{\sigma(1)}\lorder\dots\lorder s^{\sigma(n)}$ is in the correct order. For elements $g\in T$ this permutation is a cyclic permutation and for elements $g\in F$ this permutation is the identity.\\
		For the sake of completeness we point out that Thompson's groups $F$ and $T$ also arise as homeomorphisms of the unit interval and the unit circle, respectively.\\
		We refer the reader to \cite{cannon1996introductory} for a more thorough introduction.
	\end{remark}
	
	\begin{definition}
		Two subroots $r$ and $s$ are called \highlight{disjoint}, if $\dyadicNgbh{r}\cap\dyadicNgbh{s}=\emptyset$.\\
		Further, we call a tuple of finitely many pairwise disjoint subroots $(r^1,\dots,r^n)$ a partition of $\CantorModel$, if $\CantorModel = \bigcup_{i=1}^n \dyadicNgbh{r^i}$. If moreover $r^1\lorder\cdots\lorder r^n$ holds, we call it an ordered partition.
	\end{definition}

	\begin{lemma}\label{lem:existence-of-partition}
		Two subroots $r$ and $s$ are either disjoint or one of the sets $\dyadicNgbh{r}$ and $\dyadicNgbh{s}$ contains the other one.\\
		Consequently, finitely many pairwise disjoint subroots $r^1,\dots,r^n$ can always be extended to a partition of $\CantorModel$.
	\end{lemma}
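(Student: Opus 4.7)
The plan is to handle both claims through prefix reasoning on finite binary sequences, since $\dyadicNgbh{r}$ depends only on the prefix pattern encoded by $r$.

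For the first claim I would let $r$ have level $N$ and $s$ have level $M$, and assume without loss of generality that $N\leq M$. Two cases arise: either $s|_N = r$, in which case the defining conditions $\seqel[y][k] = s_k$ for $1\leq k\leq M$ already force $\seqel[y][k] = r_k$ for $1\leq k\leq N$, so $\dyadicNgbh{s}\subseteq\dyadicNgbh{r}$; or there is some index $1\leq k\leq N$ with $s_k\neq r_k$, and then any $\seq[y]\in\dyadicNgbh{r}\cap\dyadicNgbh{s}$ would have to satisfy both $\seqel[y][k]=r_k$ and $\seqel[y][k]=s_k$, which is impossible. Hence $\dyadicNgbh{r}\cap\dyadicNgbh{s}=\emptyset$. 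This gives exactly the dichotomy.

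For the second claim I would extend the given disjoint subroots to a partition by leveling them up. Set $N := \max_{1\leq i\leq n} M_i$, where $M_i$ is the level of $r^i$, and consider the finite set $\Lambda := \{0,1\}^N$ of all subroots of level exactly $N$. For each $t\in\Lambda$, by the first claim applied to $t$ and each $r^i$, either $t$ extends some $r^i$ (meaning $t|_{M_i} = r^i$, so $\dyadicNgbh{t}\subseteq\dyadicNgbh{r^i}$) or $t$ is disjoint from every $r^i$. Pairwise disjointness of the $r^i$ guarantees that $t$ can extend at most one $r^i$. Let $S$ be the collection of those $t\in\Lambda$ that extend no $r^i$. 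I would then claim that $(r^1,\ldots,r^n)$ together with the elements of $S$ forms a partition of $\CantorModel$.

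Verifying this is straightforward: pairwise disjointness within $S$ follows from the dichotomy applied to two distinct subroots of the same level $N$ (they cannot extend each other, so they are disjoint), and $r^i$ is disjoint from any $t\in S$ by construction of $S$. For the covering property, any $\seq\in\CantorModel$ has $\seq|_N\in\Lambda$, and $\seq|_N$ either extends some $r^i$ (so $\seq\in\dyadicNgbh{r^i}$) or lies in $S$ (so $\seq\in\dyadicNgbh{\seq|_N}$). There is no real obstacle here — the argument is purely combinatorial prefix bookkeeping — the only point to be careful about is indexing the levels correctly when reading off the defining condition of $\dyadicNgbh{\cdot}$.
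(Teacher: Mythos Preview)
Your proof is correct and follows essentially the same approach as the paper: both arguments establish the prefix dichotomy by case analysis on whether the shorter subroot is an initial segment of the longer one, and both extend to a partition by passing to the collection of all subroots at a common level and discarding those already covered by some $r^i$. The only cosmetic difference is that the paper takes the common level to be $1+\max_i M_i$ while you take $\max_i M_i$; your choice works just as well, and your verification of disjointness and covering is in fact spelled out more carefully than the paper's.
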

	\begin{proof}
		The first part is easy. Let us denote $r=(r_1,\dots,r_N)$ and $s=(s_1,\dots,d_M)$ and assume without loss of generality that $M\leq N$. If $r_i = s_i$ for all $1\leq i\leq M$, then $\dyadicNgbh{s}$ is contained in $\dyadicNgbh{r}$; otherwise the sets are disjoint.\\
		For the second part let us denote $r^i = (r^i_1,\dots,r^i_{N_i})$ and set $N :=1 + \max_{1\leq i\leq n}N_i$. Now consider the $2^N$ subroots $s^1 \lorder\cdots\lorder s^{2^N}$ of all possible subroots of level $N$. Note that the collection 
		\[ \{\dyadicNgbh{r^1},\dots,\dyadicNgbh{r^n},\dyadicNgbh{s^1},\dots,\dyadicNgbh{s^{2^N}}\} \]
		forms a cover of $\CantorModel$. From the corresponding $2^N + n$ subroots we remove all the subroots $s$, which are not disjoint with some subroot $r$ and such that $\dyadicNgbh{s}\subseteq\dyadicNgbh{r}$. By our assumption of the pairwise disjointness of the subroots $r^1,\dots,r^n$ and the construction of the subroots $s^1,\dots,s^{2^N}$ we only remove subroots $s$ of the form $s^j$ for some $1\leq j\leq 2^N$, since the level of those is strictly bigger than the level of the subroots $r^i$, $1\leq i\leq n$. The set of remaining subroots is now pairwise disjoint and the collection of neighborhoods $\dyadicNgbh{\cdot}$ still forms a cover of $\CantorModel$, as is easy to see.
	\end{proof}

	We call such a finite sequence \enquote{subroot} because of the identification of $\CantorModel$ as the \highlight{boundary} of an \highlight{infinite rooted binary tree}. We delve further into this identification in the sequel.
	
	\begin{definition}[infinite rooted binary tree and its boundary]
		A \highlight{rooted binary tree} is an acyclic connected graph $\tree$ with an distinguished vertex $r$, called root, such that:
		\begin{itemize}[nosep]
			\item If $\tree$ has more vertices than $r$, then $r$ has valence precisely $2$.
			\item If a vertex $v\in\tree$ has valence more than $1$, then there are exactly two edges $e_v^L$ and $e_v^R$ incident with $v$ but not contained in the geodesic from $r$ to $v$.
		\end{itemize}
		Moreover, the vertices incident to $e_v^L$ and $e_v^R$ which are not $v$ are designated as \enquote{left} and \enquote{right} child of $v$, respectively.
		An \highlight{infinite rooted binary tree} is a rooted binary tree $\tree$, which has no leaves.\\
		The \enquote{boundary} $\bdryTree$ of an infinite rooted binary tree $\tree$ is the set of all infinite directed paths (no backtracking) starting at the root.
	\end{definition}
	Henceforth, $\tree$ stands for an infinite rooted binary tree and $\bdryTree$ for its boundary.\\
	Using the convention of denoting left children by $0$ and right children by $1$, we can write down an element of $\bdryTree$ by an infinite sequence of zeros and ones, e.g. $010100111010\dots$.
	
	\includeElement{binarytree}{Illustration of an infinite rooted binary tree, where the path $0110$ starting from the root $r$ is emphasized.}{binarytree}[1]
	
	
	This notation already implies the equivalence (as sets) between $\CantorModel$ and $\bdryTree$ and we can naturally equip $\bdryTree$ with the topology induced by $\CantorModel$. Then it is evident that two elements in $\bdryTree$ are close, if the common beginning of the corresponding infinite paths is large.\\
	The interpretation of $\CantorModel$ as the boundary of an infinite rooted binary tree also gives a good visualization of the action of $\V$ on $\CantorModel$. Observe that every ordered partition $(r^1,\dots,r^n)$ of $\CantorModel$ corresponds exactly to a unique \highlight{finite} subtree of $\tree$. In this bijection the subroot $r^i=(r^i_1,\dots,r^i_{n_i})$ is precisely the end vertex $v$ of the path  $r^i_1\cdots r^i_{n_i}$ starting at the root $r$. Moreover, the set $\dyadicNgbh{r^i}\subseteq\CantorModel$ corresponds to the set of infinite directed paths which traverse the vertex $v$.

	\begin{corollary}[$\V$ acts on the binary tree]
		There is a bijection between $\V$ and triples $(\tree_1,\tree_2,\sigma)$, where $\tree_1$ and $\tree_2$ are finite subtrees of $\tree$ with the same number $n\geq1$ of leaves, and $\sigma:\{1,\dots,n\}\to\{1,\dots,n\}$ is a bijection.
	\end{corollary}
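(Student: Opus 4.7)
The plan is to produce the bijection in both directions, using the identification between ordered partitions of $\CantorModel$ by subroots and finite rooted binary subtrees of $\tree$.

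First I would recall that, by \Cref{lem:existence-of-partition}, an ordered partition $(r^1,\dots,r^n)$ of $\CantorModel$ (with $r^1\lorder\dots\lorder r^n$) corresponds canonically to a finite subtree $\tree_{(r^1,\dots,r^n)}$ of $\tree$: this is the subtree spanned by the root together with all initial segments of the $r^i$, whose leaves are exactly the vertices $r^1,\dots,r^n$ (ordered left-to-right in the tree precisely as $r^1\lorder\dots\lorder r^n$). This correspondence is a bijection between finite subtrees of $\tree$ with $n$ leaves and ordered partitions of $\CantorModel$ by $n$ subroots.

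Next I would pass from $\V$ to triples. Given $\Vel \in \V$, choose pairs $(r^i,s^i)_{i=1}^n$ as in the definition of $\V$. Reorder the $(r^i)$ so that $r^1\lorder\dots\lorder r^n$ and let $\tau \in S_n$ be the unique permutation for which $s^{\tau(1)}\lorder\dots\lorder s^{\tau(n)}$; set $\tree_1 := \tree_{(r^1,\dots,r^n)}$, $\tree_2 := \tree_{(s^{\tau(1)},\dots,s^{\tau(n)})}$ and $\sigma := \tau^{-1}$. Then $\tree_1$ and $\tree_2$ each have $n$ leaves, and $\sigma$ records which leaf of $\tree_2$ the $i$-th leaf of $\tree_1$ is sent to. For the inverse map, given $(\tree_1,\tree_2,\sigma)$, read off the ordered leaves of $\tree_1$ as subroots $(r^1,\dots,r^n)$ and of $\tree_2$ as $(s^1,\dots,s^n)$, and define $g \in \V$ by $g(\dyadicNgbh{r^i}) = \dyadicNgbh{s^{\sigma(i)}}$ via the shift formula in the definition of $\V$; both sides form partitions, so $g$ is a well-defined homeomorphism in $\V$.

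The main obstacle is that the assignment $\V \to \{\text{triples}\}$ as described is not injective on the nose: if one refines the partition $(r^i)$ by splitting some $r^i$ into $r^i\link 0$ and $r^i\link 1$ and simultaneously splits $s^{\sigma(i)}$, one obtains a new triple representing the same element of $\V$. To get an honest bijection I would therefore identify triples that differ by such a simultaneous leaf-subdivision (equivalently, restrict to the unique \emph{reduced} triple, i.e.\ the one where no pair of sibling leaves of $\tree_1$ whose images are sibling leaves of $\tree_2$ can be pruned). A short verification that (i) any two triples representing the same $g$ have a common refinement and (ii) the reduction process terminates in a unique reduced triple then shows that $\V$ is in bijection with reduced triples $(\tree_1,\tree_2,\sigma)$; with this (standard, and surely intended) reading of the statement, the construction above is the desired bijection.
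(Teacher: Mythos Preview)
Your approach is essentially the same as the paper's: build the map from triples to elements of $\V$ by reading off the leaves of the two trees as partitions and sending one to the other according to $\sigma$, and go back by taking the defining data of an element of $\V$. The paper's proof is very brief and simply asserts that this construction ``can be easily seen to be reversible.''

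You have in fact been more careful than the paper. The point you raise --- that the same element of $\V$ is represented by infinitely many triples related by simultaneous leaf-subdivision, so that the literal statement only yields a bijection with \emph{reduced} triples (equivalently, with equivalence classes of triples) --- is entirely correct, and the paper's proof does not address it at all. Your proposed fix (pass to reduced tree-pair diagrams, checking common refinements exist and reduction is confluent) is the standard and correct way to make the statement precise; the paper tacitly relies on this reading without saying so.
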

	\begin{proof}
		The proof is straightforward and follows from the identification given in the paragraph above.\\
		More precisely, let $\tree_1$ and $\tree_2$ be two finite subtrees of $\tree$ where $v^1_1,v^1_2,\dots,v^1_n$ and $v^2_1,v^2_2,\dots,v^2_n$ are the leaves, respectively. Moreover, let $\sigma:\{1,\dots,n\}\to\{1,\dots,n\}$ be a bijection. Then we can construct an element $g\in\V$ as follows:
		\begin{itemize}[nosep]
			\item $\CantorModel = \bigcup_{i=1}^n {\dyadicNgbh{v^1_i}}$ is a disjoint union
			\item $\CantorModel = \bigcup_{i=1}^n \dyadicNgbh{v^2_i}$ is a disjoint union
			\item $g(\dyadicNgbh{v^1_i}) = \dyadicNgbh{v^2_i}$ for all $1\leq i\leq n$.
		\end{itemize}
		The construction can be easily seen to be reversible.
	\end{proof}
	
	Hence, $\V$ is just a subset of all homeomorphims from $\bdryTree$ to itself. In fact, $\V$ is closely related to \highlight{Neretin's group $N_2$} which is the group of all \highlight{spheromorphisms} of $\bdryTree$ (see \cite{GarncarekLazarovich}).
	
	Using the identification between $\CantorModel$ and $\bdryTree$ we can depict elements of $\V$ far clearer. In all following figures illustrating an element $\Vel$ of $\V$ we use the following convention. There will be two finite subtrees of $\tree$. The left hand tree illustrates the partition $(r^1,\dots,r^n)$ of $\bdryTree$ where the subtree with root $r^i$ is depicted by $\fBTriangle[i]$. The right hand tree illustrates the partition $(s^1,\dots,s^n)$ of $\bdryTree$ where the subtree which root is $s^i$, hence the image of $r_i$ under $g$, is also depicted by $\fBTriangle[i]$. Additionaly, there will be an arrow connecting the two trees which has a label above with the name of the element of $\V$ under consideration. 
	
	\includeElement{binarytreeA}{Illustration of $\Vel[A][00,01,1][0,10,11]$.}{A-in-V}
	\includeElement{binarytreeB}{Illustration of $\Vel[B][0,10,110,111][0,100,101,11]$.}{B-in-V}
%
\subsection{Properties of Thompson's Group \texorpdfstring{$V$}{V}}

	Having found this second description of Thompson's group $\V$, we can now prove some introductory properties:
	 
	\begin{lemma}\label{lem:V-acts-transitively}
		$\V$ acts transitively on $\dyadics\subseteq\CantorModel$.
	\end{lemma}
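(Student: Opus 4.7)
The plan is to show that for any two points $x, y \in \dyadics$, we can cook up an element of $\V$ sending $x$ to $y$ by exploiting the finite-support representation of dyadic points together with Lemma~\ref{lem:existence-of-partition}. Since each element of $\dyadics$ is eventually zero, we can write $x = r \link \zero$ and $y = s \link \zero$ for some finite subroots $r$ and $s$ (possibly empty). If $x = y$ we take $g = \id$, so we may assume $x \neq y$.

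The key structural step is to arrange that $r$ and $s$ are \emph{disjoint} subroots, because then they can simultaneously belong to a partition of $\CantorModel$. By Lemma~\ref{lem:existence-of-partition}, either $r$ and $s$ are already disjoint, or one is a prefix of the other, say $s = r \link t$ with $t$ a subroot of some level $M \geq 1$. In the latter case, since $x \neq y$ forces $t \neq \sroot{M}$, the subroot $t$ contains at least one coordinate equal to $1$, so $\sroot{M}$ and $t$ differ somewhere and are disjoint subroots of the same level. We then replace $r$ by the longer subroot $r' := r \link \sroot{M}$; this represents the same point $x = r' \link \zero$, and $r'$ is disjoint from $s$. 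The symmetric case is handled the same way.

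Once $r$ and $s$ are disjoint, Lemma~\ref{lem:existence-of-partition} lets us extend $\{r\}$ to a partition $(r^1, \ldots, r^n)$ with $r^1 = r$, and $\{s\}$ to a partition $(s^1, \ldots, s^m)$ with $s^1 = s$. If $n \neq m$, we refine the shorter one by iteratively replacing some piece $r^i$ (with $i \geq 2$, respectively $s^i$ with $i \geq 2$) by the two children $r^i \link 0$ and $r^i \link 1$, which still yields a partition; after finitely many such refinements we may assume $n = m$. We then define
\[
g := \Vel[g][r^1, \ldots, r^n][s^1, \ldots, s^n] \in \V.
\]
By construction $g$ restricts on $\dyadicNgbh{r} = \dyadicNgbh{r^1}$ via the formula $r \link (x_k)_{k > M} \mapsto s \link (x_k)_{k > M}$, so $g(x) = g(r \link \zero) = s \link \zero = y$, proving transitivity.

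I expect the only delicate point to be the case where the naive representatives $r, s$ are nested; this is precisely where the special structure of $\dyadics$ is used, since appending a string of zeros to $r$ does not change the point $x$ but does allow us to separate $r$ from $s$.
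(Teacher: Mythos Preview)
Your proof is correct. The paper's argument follows the same underlying idea but executes it more directly: it chooses a single level $M$ large enough that both sequences are already zero beyond $M$, so the two length-$M$ prefixes $\seq|_M$ and $\seq[y]|_M$ are automatically distinct subroots of the \emph{same} level and hence disjoint; it then defines $g$ as the involution swapping $\dyadicNgbh{\seq|_M}$ and $\dyadicNgbh{\seq[y]|_M}$ and acting as the identity elsewhere, invoking Lemma~\ref{lem:existence-of-partition} once to complete the partition. Your separate extension of $\{r\}$ and $\{s\}$ to partitions followed by a refinement to equalize their sizes is correct but unnecessary: choosing equal-level representatives from the start collapses both your disjointness case analysis and the size-balancing step.
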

	\begin{proof}
		Let $\seq,\seq[y]\in\dyadics$. If $\seq=\seq[y]$ we can just take the identity mapping $id\in\V$. Hence, assume that $\seq\neq\seq[y]$. Then we can find an integer $M\geq1$, such that $x_n = y_n = 0$ for all $n\geq M$. One possible element $g\in\V$ which sends $\seq$ to $\seq[y]$ is given by \begin{align*}
			g(\dyadicNgbh{\sroot{M}[\seq]}) = \dyadicNgbh{\sroot{M}[{\seq[y]}]}\\
			g(\dyadicNgbh{\sroot{M}[{\seq[y]}]}) =\dyadicNgbh{\sroot{M}[\seq]}
		\end{align*}
		and $g(\seq[z]) = \seq[z]$ for all $\seq[z]\in\CantorModel\setminus(\dyadicNgbh{\sroot{M}[\seq]}\cup\dyadicNgbh{\sroot{M}[{\seq[y]}]})$. The corresponding partition of $\CantorModel$ can be obtained via \Cref{lem:existence-of-partition}, since $\sroot{M}[\seq]$ and $\sroot{M}[{\seq[y]}]$ are disjoint.
	\end{proof}

	\begin{lemma}\label{lem:fixpoints-of-commutators}
		If $g,h\in\V$ both fix a point $\seq[z]\in\Cantor$, then $[g,h]$ not only fixes $z$ but also a neighborhood of $z$ pointwise.
	\end{lemma}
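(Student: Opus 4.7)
The plan is to show that elements of $V$ fixing $z$ admit a very rigid local ``shift'' description, and that two such local shifts already commute on some small dyadic neighborhood of $z$; the commutator is then forced to be the identity there.

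First I would establish the local shift form. Suppose $g\in V$ fixes $z$ and is given by the partition data $(r^1,s^1),\dots,(r^n,s^n)$. Then $z\in\dyadicNgbh{r^i}$ for some $i$, and $z=g(z)\in\dyadicNgbh{s^i}$. By \Cref{lem:existence-of-partition}, two dyadic neighborhoods that intersect are comparable; the fact that both contain $z$ then forces $r^i=z|_{M_g}$ and $s^i=z|_{N_g}$, where $M_g,N_g$ are the respective levels. Hence on $\dyadicNgbh{z|_{M_g}}$ we have $g(x)=z|_{N_g}\link(x_k)_{k>M_g}$. Setting $k_g:=N_g-M_g$, the identity $g(z)=z$ forces the periodicity $z_{M_g+j}=z_{N_g+j}$ for every $j\geq 1$, and a short induction using this periodicity shows that the shift formula persists on every smaller neighborhood: for each $L\geq M_g$,
\[
g(x)=z|_{L+k_g}\link(x_k)_{k>L}\quad\text{on }\dyadicNgbh{z|_L}.
\]
Thus the germ of $g$ at $z$ is captured by a single integer $k_g$.

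Next, writing $k_h,M_h$ for the analogous data of $h$, I would pick $L$ large enough that each of $L,\,L+k_g,\,L+k_h,\,L+k_g+k_h$ exceeds $\max(M_g,M_h)$, so that both shift formulas apply on all the relevant neighborhoods. A direct composition of the two shift formulas on $U:=\dyadicNgbh{z|_L}$ then yields
\[
gh(x)=z|_{L+k_g+k_h}\link(x_k)_{k>L}=hg(x),
\]
the essential point being that composition of shift maps simply adds the shifts. Hence $gh\equiv hg$ on $U$, and for every $x\in U$
\[
[g,h](x)=g^{-1}h^{-1}(gh(x))=g^{-1}h^{-1}(hg(x))=g^{-1}(g(x))=x,
\]
which gives the claim: $[g,h]$ is the identity on the neighborhood $U$ of $z$.

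The only nontrivial step is the extension of the shift formula in the first paragraph to arbitrary smaller dyadic neighborhoods; this is where I expect the bulk of the work (though still quite mild) to lie, since one must use carefully that the periodicity of $z$ imposed by $g(z)=z$ allows $M_g$ to be replaced by any $L\geq M_g$. After that, the commutation and the final identification of $[g,h]$ with $\mathrm{id}$ on $U$ are purely formal bookkeeping.
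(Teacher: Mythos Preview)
Your argument is correct and rests on the same observation as the paper's: near $z$, each of $g$ and $h$ acts as a pure shift (by the integers $k_g=N_g-M_g$ and $k_h=N_h-M_h$), and the periodicity $z_{m}=z_{m+k_g}$ forced by $g(z)=z$ lets one propagate this shift description from $\dyadicNgbh{z|_{M_g}}$ down to any smaller dyadic neighborhood $\dyadicNgbh{z|_L}$. The paper then verifies the identity by a direct eighteen-line evaluation of $g^{-1}h^{-1}gh(z')$ term by term, whereas you first isolate the shift form and observe that two shifts commute (addition of $k_g$ and $k_h$ is commutative), giving $gh=hg$ on a small neighborhood and hence $[g,h]=\mathrm{id}$ there; this is the same computation, only packaged more transparently.
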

	\begin{proof}
		Let $g$ and $h$ be represented by $\Vel[g][r^1,\dots,r^m][s^1,\dots,s^m]$ and $\Vel[h][\overline{r}^1,\dots,\bar{r}^n][\bar{s}^1,\dots,\bar{s}^n]$, respectively. Moreover, let $i$ and $j$ be the unique indices such that $\seq[z]\in\dyadicNgbh{r^i}$ and $\seq[z]\in\dyadicNgbh{\bar{r}^j}$. For ease of exposition, we drop the indices, i.e. $r = r_i$,$s = s_i$, $\bar{r} = \bar{r}_j$ and $\bar{s} = \bar{s}_j$.\\
		By assumption we have 
		\begin{align}
			g(\seq[z]) = \seq[z] &\Leftrightarrow (s_1,\dots,s_{N},\seqel[z][M+1],\seqel[z][M+2],\dots) = (\seqel[z][1],\dots,\seqel[z][N],\seqel[z][N+1],\dots)\label{eq:1}\\
			h(\seq[z]) = \seq[z] &\Leftrightarrow (\bar{s}_1,\dots,\bar{s}_{\bar{N}},\seqel[z][\bar{M}+1],\seqel[z][\bar{M}+2],\dots)= (\seqel[z][1],\dots,\seqel[z][\bar{N}],\seqel[z][\bar{N}+1],\dots)\label{eq:2}
		\end{align}
		where $M,N,\bar{M},\bar{N}$ denote the levels of $r,s,\bar{r},\bar{s}$, respectively. Note that this gives an equality between $z_k$ and $r_k, s_k, \bar{r}_k, \bar{s}_k$ up to the corresponding levels.\\
		Now let $L := N+\bar{N} +1$. Then $[g,h]$ fixes $\dyadicNgbh{\sroot{L}[z]}$ pointwise. Indeed, for $\seq[z']\in\dyadicNgbh{\sroot{L}[z]}$ we have
		\begingroup
		\allowdisplaybreaks
		\begin{align*}
			[g,h]&(\seq[z'])\\
			&= ghg^{-1}h^{-1}(z_1,\dots,\seqel[z][L],\seqel[z'][L+1],\dots)\\
			&= ghg^{-1}h^{-1}(\bar{s}_1,\dots,\bar{s}_{\bar{N}},\seqel[z][\bar{N}+1],\dots,\seqel[z][L],\seqel[z'][L+1],\dots) &(z\in\dyadicNgbh{\bar{s}})\\
			&= ghg^{-1}(\bar{r}_1,\dots,\bar{r}_{\bar{M}},\seqel[z][\bar{N}+1],\dots,\seqel[z][L],\seqel[z'][L+1],\dots) &(\text{definition of }h)\\	
			&= ghg^{-1}(z_1,\dots,z_{\bar{M}},\seqel[z][\bar{N}+1],\dots,\seqel[z][L]\seqel[z'][L+1],\dots) &(z\in\dyadicNgbh{r})\\
			&= ghg^{-1}(z_1,\dots,z_{\bar{M}},\seqel[z][\bar{M}+1],\dots,\seqel[z][\bar{M}+L-\bar{N}],\seqel[z'][L+1],\dots) &(\text{use }\ref{eq:2})\\	
			&= ghg^{-1}(s_1,\dots,s_N,\seqel[z][N+1],\dots,\seqel[z][\bar{M}+L-\bar{N}],\seqel[z'][L+1],\dots) &(z\in\dyadicNgbh{s} \wedge N +\bar{N}<L)\\
			&= gh(r_1,\dots,r_M,\seqel[z][N+1],\dots,\seqel[z][\bar{M}+L-\bar{N}],\seqel[z'][L+1],\dots) &(\text{definition of }g)\\
			&= gh(z_1,\dots,z_M,\seqel[z][N+1],\dots,\seqel[z][\bar{M}+L-\bar{N}],\seqel[z'][L+1],\dots) &(z\in\dyadicNgbh{r})\\
			&= gh(z_1,\dots,z_M,\seqel[z][M+1],\dots,\seqel[z][M +\bar{M}+L-\bar{N}-N],\seqel[z'][L+1],\dots) &(\text{use }\ref{eq:1})\\
			&= gh(\bar{r}_1,\dots,\bar{r}_{\bar{M}},\seqel[z][\bar{M}+1],\dots,\seqel[z][M+\bar{M}+L-\bar{N}-N],\seqel[z'][L+1],\dots) &(z\in\dyadicNgbh{\bar{r}} \wedge N +\bar{N} < L)\\
			&= g(\bar{s}_1,\dots,\bar{s}_{\bar{N}},\seqel[z][\bar{M}+1],\dots,\seqel[z][M+\bar{M}+L-\bar{N}-N],\seqel[z'][L+1],\dots) &(\text{definition of }h)\\
			&= g(z_1,\dots,z_{\bar{N}},\seqel[z][\bar{M}+1],\dots,\seqel[z][M+\bar{M}+L-\bar{N}-N],\seqel[z'][L+1],\dots) &(z\in\dyadicNgbh{\bar{s}})\\
			&=
			g(z_1,\dots,z_{\bar{N}},\seqel[z][\bar{N}+1],\dots,\seqel[z][M+L-N],\seqel[z'][L+1],\dots) &(\text{use }\ref{eq:2})\\
			&= g(r_1,\dots,r_{M},\seqel[z][M+1],\dots,\seqel[z][M+L-N],\seqel[z'][L+1],\dots) &(z\in\dyadicNgbh{r})\\
			&= (s_1,\dots,s_{N},\seqel[z][M+1],\dots,\seqel[z][M+L-N],\seqel[z'][L+1],\dots) &(\text{definition of }g)\\
			&= (z_1,\dots,z_{N},\seqel[z][M+1],\dots,\seqel[z][M+L-N],\seqel[z'][L+1],\dots) &(z\in\dyadicNgbh{s})\\
			&= (z_1,\dots,z_{N},\seqel[z][N+1],\dots,\seqel[z][L],\seqel[z'][L+1],\dots) = \seq[z'] &(\text{use }\ref{eq:1}).\\
		\end{align*}
		\endgroup
		This concludes the proof.
	\end{proof}
	
	\begin{proposition}[{\cite[§6. Thompson's Group $V$]{cannon1996introductory}}]\label{prop:V-is-finitely-generated}
		Thompson's group $\V$ is finitely generated.\\
		More precisely, the four elements $A,B,C$ and $D$ (see \Cref{fig:A-in-V,,fig:B-in-V,,fig:C-in-V,,fig:D-in-V}, respectively) generate $\V$.
	\end{proposition}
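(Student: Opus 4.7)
My plan follows the classical strategy from \cite{cannon1996introductory}, exploiting the bijective description of $\V$ as triples $(\tree_1, \tree_2, \sigma)$ consisting of two finite subtrees of $\tree$ with the same number $n$ of leaves and a permutation $\sigma \in S_n$. I would split the task into handling the two finite subtrees via $A$ and $B$, and handling the permutation via $C$ and $D$.

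First, I would establish that $\langle A, B \rangle \supseteq F$; this is classical and is typically done by showing that $A$ and $B$ generate all ``elementary caret operations'' on ordered finite subtrees of $\tree$ --- namely, for any finite subtree and any chosen leaf, there is a product of (conjugates of) $A$ and $B$ that adds or removes a single caret at that leaf while preserving the left-to-right order of the remaining leaves. As a consequence, for any two finite subtrees $\tree_1, \tree_2$ with the same number of leaves, the element with representation $(\tree_1, \tree_2, \mathrm{id})$ lies in $\langle A, B \rangle$.

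Second, I would reduce an arbitrary $g \in \V$ to a pure permutation of leaves. Fix, for each $n \geq 1$, a ``standard'' finite subtree $\tree_0^{(n)}$ of $\tree$ with $n$ leaves (for instance, the right-caret tree with leaves $\sroot{k}[\one] \link 0$ for $0 \leq k \leq n-2$ together with $\sroot{n-1}[\one]$). Given $g$ represented by $(\tree_1, \tree_2, \sigma)$, the previous step yields $h_1, h_2 \in \langle A, B \rangle$ with representations $(\tree_0^{(n)}, \tree_1, \mathrm{id})$ and $(\tree_0^{(n)}, \tree_2, \mathrm{id})$ respectively, so that
\[
  g \;=\; h_2 \circ \pi \circ h_1^{-1},
\]
where $\pi$ has representation $(\tree_0^{(n)}, \tree_0^{(n)}, \sigma)$. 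Hence it suffices to realize, for every $n$ and every $\sigma \in S_n$, the pure permutation $(\tree_0^{(n)}, \tree_0^{(n)}, \sigma)$ inside $\langle A, B, C, D \rangle$.

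Third, for the permutation part I would use $C$ and $D$ --- which by construction realize a short cyclic permutation and a transposition of leaves of a small standard tree --- together with conjugations by elements of $\langle A, B\rangle$ to transport these ``local'' permutations onto any chosen pair of adjacent leaves of $\tree_0^{(n)}$. Since adjacent transpositions generate $S_n$, this yields every $\sigma$. The main obstacle is precisely this last step: one must carefully check that $F$-conjugates of $C$ and $D$ produce every adjacent transposition inside $\tree_0^{(n)}$ without disturbing the remaining leaves, uniformly in $n$. The trick is that, given any two adjacent leaves of $\tree_0^{(n)}$, one uses an element of $\langle A, B \rangle$ first to isolate them as the leaves of a common small subtree (matching the domain on which $C$ or $D$ acts nontrivially), then applies the appropriate local generator, and finally undoes the $F$-operation. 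Combining the three steps gives $\langle A, B, C, D \rangle = \V$, in particular $\V$ is finitely generated.
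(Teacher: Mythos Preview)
The paper does not give its own proof of this proposition: it is simply quoted from \cite{cannon1996introductory} and immediately followed by the figures for $C$ and $D$ and a remark. There is therefore nothing to compare against at the level of argument.

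Your outline is the standard Cannon--Floyd--Parry strategy and is correct in spirit: reduce to $F$ via tree-pair normalization using $\langle A,B\rangle$, then handle the residual permutation with $C$ and $D$ transported by $F$-conjugation. As a sketch it is fine; if you intend it as a full proof you would still owe the reader the verification that $\langle A,B\rangle = F$ (usually done via the infinite presentation $x_0,x_1,\dots$ with $A=x_0$, $B=x_1$, and $x_n = A^{1-n}BA^{n-1}$) and the explicit check that suitable $\langle A,B\rangle$-conjugates of $C$ and $D$ yield the adjacent transpositions on the standard right-vine tree for every $n$. None of this is difficult, but it is exactly the content of the cited section, so citing \cite{cannon1996introductory} as the paper does is also an entirely acceptable resolution.
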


	\includeElement{binarytreeC}{Illustration of $\Vel[C][0,10,11][11,0,10]$.}{C-in-V}
	\includeElement{binarytreeD}{Illustration of $\Vel[D][0,10,11][10,0,11]$.}{D-in-V}	

	\begin{remark}
		In fact, Thompson's group $F$ is generated by $A$ and $B$ and Thompson's group $T$ is generated by $A,B$ and $C$.\\
		Observe that $A$ and $B$ preserve the order of the subroots, $C$ permutes them cyclically and $D$ is an element which permutes two of the three corresponding subroots. Hence, this single permutation together with the $3$-cycle given by the element $C$ is enough to generate \highlight{all} possible non-order-preserving bijections between subtrees $\tree_1$, $\tree_2$ of $\tree$ with the same number of roots.
	\end{remark}
	
	Moreover, in \cite[Theorem 6.4]{SzyminWahl2019} it was proved that $\V$ is $\ZZ$-acyclic, that is, the homology with coefficients in $\ZZ$ vanishes in every positive degree. The fact that $\V$ is rationally acyclic was earlier proven by \citeauthor{Brown1992} in \cite[§7, Theorem 4, p. 134]{Brown1992} and the author already conjectured that $\V$ is also $\ZZ$-acyclic. However, the proof of $\ZZ$-acyclicity is much more intricated. Using $\QQ$-acyclicity of $\V$ together with the fact that $\V$ is uniformly simple \cite{2017SwiatoslawGismatullin}, the authors in \cite{fournierfacio2021binate} were able to deduce that $\V$ is \bAc[2]. In the following chapter we will give a positive answer to the question \cite[Question 6.3]{fournierfacio2021binate} showing that $\V$ is in fact even \bAc.
	
\subsection{Bounded Cohomology of Subgroups of \texorpdfstring{$V$}{V}}\label{sec:BC-of-thompsons-group-V}
	
	Since $\V$ is much more accessible than the group of all homeomorphisms of a Cantor set $\Cantor$, one might be tempted to show that $\V$ is \bAc via a direct argument. For example by showing that $\V$ is binate or dissipated.\\
	However, at least for those two properties, $\V$ is in some sense too small. Indeed, $\V$ cannot be binate and therefore also not dissipated since it is finitely generated (see \Cref{prop:V-is-finitely-generated}) and no binate group can be finitely generated:\\
	Assume that $\group$ is a (non-trivial) finitely generated binate group. Then by definition there exists a homomorphism $\phi:\group\to\group$ and an element $\groupel\in\group$ such that
	\[ h = [\groupel, \phi(h)] = \groupel^{-1}\phi(h)^{-1}\groupel\phi(h) \quad \forall h \in \group. \]
	Using this equality for $h = \groupel\in\group$ we get
	\begin{align*}
		\groupel = [\groupel, \phi(\groupel)] = \groupel^{-1}\phi(\groupel)^{-1}\groupel\phi(\groupel) \text{ and}\\
		\groupel^{-1} = [\groupel, \phi(\groupel^{-1})] = \groupel^{-1}\phi(\groupel)\groupel\phi(\groupel)^{-1},\\
	\end{align*}
	where the second statement follows using that $\phi$ is a homomorphism. Now substituting $\groupel$ on the right-hand side of the second expression with the right-hand side of the first expression we get
	\begin{align*}
		\groupel^{-1} = \groupel^{-1}\phi(\groupel)(\groupel^{-1}\phi(\groupel)^{-1}\groupel\phi(\groupel))\phi(\groupel)^{-1} = \groupel^{-1}\phi(\groupel)\groupel^{-1}\phi(\groupel)^{-1}\groupel.
	\end{align*}
	Hence, $\phi(\groupel)\groupel^{-1}\phi(\groupel)^{-1}\groupel$ is the identity. However, then $id = \phi(\groupel)\groupel^{-1}\phi(\groupel)^{-1}\groupel = [\groupel, \phi(\groupel)]^{-1} = \groupel^{-1}$. This clearly cannot be the case.
	
\subsubsection{Bounded Acyclicity of Point Stabilizers}\label{subsec:bAc-of-point-stabilizers}

	Denote by $\Vstab := \{g\in\V~|~ g\cdot\zero= \zero \}$ the point-stabilizer of $\zero$.\\
	Similarly as we did for fatpoint stabilizers for the full homeomorphism group $\Homeo(\Cantor)$ we would like to prove that $\Vstab$ is \bAc and then use the action of $\V$ on $\CantorModel$ to deduce bounded acyclicity of $\V$. Regarding our approach to handle the fat point stabilizers in \Cref{sec:homeomorphisms-on-K} the first thing which might come to mind is trying to prove that $\Vstab$ is binate, or even dissipated and then again use the results from \Cref{sec:bounded-cohomology} to compute the bounded cohomology of $\V$. However, this will not work for instance due to the following reasons.\\
	Most apparently, $\Vstab$ has elements whose support is $\CantorModel\setminus\{\zero\}$. To give an example consider the element $E\in\V$ depicted in \Cref{fig:element-of-almost-full-support}.
	\includeElement{binarytreeE}{Illustration of $\Vel[E][00,01,1][0,11,10]$. We have $E = C^2\circ D\circ C\circ A$.}{element-of-almost-full-support}

	Therefore, $\Vstab$ cannot be boundedly supported (note the incompatibility with \Cref{def:boundedly_supported}) and thus not dissipated.\\
	However, even if we replace the point stabilizer $\Vstab$ with the rigid stabilizer $\V_{\stab[\zero]} := \{g\in\V~|~ g \text{ fixes a neighborhood of } \zero \text{ pointwise} \}$ in order to avoid this problem, although the resulting subgroup is now boundedly supported, it will not be dissipated either. Indeed, assume that $\V_{\stab[\zero]}$ is the directed union of all $\subgroup_{i}$, $i\in I$ for some (infinite) index set $I$. Using the same notation as in \Cref{def:boundedly_supported} recall that a dissipator $\groupel_i$ of a subgroup $\subgroup_{i}$ must satisfy 
	\begin{enumerate}[label=(\roman*), itemsep=0pt, nosep]
		\item $\groupel_i^{k}(X_i)\cap X_i = \emptyset \quad \forall k\geq 1$,
		\item for all $g\in\group_i$ the element
				\[\phi(g) := \begin{cases}
								\groupel_i^{k}g\groupel_i^{-k}  & \text{on } \groupel_i^{k}(X_i), \forall k\geq 1\\
								id & \text{ elsewhere}
							\end{cases}\]
			  is in $\V_{\stab[\zero]}$.
	\end{enumerate}
	Combining both of those requirements, we see that if $g\in\subgroup_{i}$ is not the identity, $\phi(g)$ would have to be an element in $\V_{\stab[\zero]}\leq \V$ whose corresponding partition of $\CantorModel$ has infinitely many elements. Each subset $X_i$ would have to be partitioned in a non-trivial way. This is a contradiction to the finiteness condition \Cref{item1:def-of-V} in the definition of $\V$.
	
	To sum up, we need a different way to show that $\Vstab$ is \bAc. To approach the bounded cohomology of $\Vstab$ we can use the next \namecref{thm:bAc-through-action}. This result was established by \citeauthor{monod2021lamplighters} as byproduct in the proof of bounded acyclicity of lamplighter groups which in turn was a key step in proving that Thompson's group $F$ is \bAc \cite[Theorem 1]{monod2021lamplighters}. 

	\begin{theorem}[{\cite[Corollary 6]{monod2021lamplighters}}]\label{thm:bAc-through-action}
		Let $\group$ act faithfully on a set $X$. Suppose we have a subset $X_0\subseteq X$ and an element $\groupel\in\group$ such that the following two properties are satisfied:
		\begin{enumerate}[label=(\roman*), itemsep=0pt, nosep]
			\item every finite subset of $\group$ can be conjugated in such a way, that the conjugates are supported in $X_0$, \label{itm:finite-set-conjugation}
			\item $\groupel^k(X_0)\cap X_0 = \emptyset$ for all $k\geq 1$. \label{itm:displacement}
		\end{enumerate}
		Then $\HHbc[n][][W] = 0$, for all separable normed dual $\RR$-modules $W$ and $n\geq 1$. In particular, $\group$ is boundedly acyclic.
	\end{theorem}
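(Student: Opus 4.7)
The overall plan is to realize the two hypotheses as producing a ``lamplighter-compatible'' structure inside $\group$, and then invoke the lamplighter vanishing theorem (\Cref{INT_thm:lamplighters}) to force bounded acyclicity. Heuristically, condition (ii) provides a sequence of disjoint ``shelves'' $\groupel^k(X_0)$, while condition (i) says that any finite configuration can be conjugated onto the ``ground shelf'' $X_0$; the combination should produce an embedded wreath product $\subgroup_0 \wr \ZZ \hookrightarrow \group$ for every finitely generated $\subgroup \leq \group$.

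First I would unpack the displacement hypothesis (ii) into pairwise disjointness: for $j \neq k$, applying $\groupel^{-\min(j,k)}$ to $\groupel^{|j-k|}(X_0) \cap X_0 = \emptyset$ shows that the translates $\{\groupel^k(X_0)\}_{k \in \ZZ}$ are pairwise disjoint. Since the action is faithful, elements of $\group$ whose supports lie in disjoint $\groupel$-translates of $X_0$ commute. Given a finitely generated $\subgroup \leq \group$, condition (i) yields an $\eta \in \group$ with $\subgroup_0 := \eta \subgroup \eta^{-1}$ supported in $X_0$; the family $\{\groupel^k \subgroup_0 \groupel^{-k}\}_{k \in \ZZ}$ then consists of pairwise commuting copies of $\subgroup_0$, so
\[
\Phi \colon \subgroup_0 \wr \ZZ \longrightarrow \group, \qquad \bigl((h_k)_{k}, n\bigr) \longmapsto \Bigl(\prod_{k} \groupel^k h_k \groupel^{-k}\Bigr)\groupel^n,
\]
is a well-defined injective homomorphism onto a subgroup $L_\subgroup \leq \group$. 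Finiteness of the product is guaranteed by the direct-sum convention in the wreath product, while injectivity follows from inspecting the action on $\bigsqcup_{k} \groupel^k(X_0)$.

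Next, given a bounded $n$-cocycle $c$ on $\group$ with coefficients in a separable dual Banach $\group$-module $W$, its restriction to $L_\subgroup$ represents a class in $\HHbc[n][L_\subgroup][W]$, which vanishes by \Cref{INT_thm:lamplighters} applied to $\subgroup_0 \wr \ZZ$. Hence there is a bounded primitive $b_\subgroup$ of $c|_{L_\subgroup}$, and pulling $b_\subgroup$ back along the zeroth-slot inclusion $\subgroup_0 \hookrightarrow L_\subgroup$ (and then conjugating by $\eta^{-1}$) transfers it to a bounded primitive of $c|_\subgroup$ with controlled norm. This shows that the restriction of $c$ to every finitely generated subgroup is a bounded coboundary, uniformly in the norm of $c$.

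The main obstacle I expect is the final patching step: bounded cohomology with Banach coefficients does not commute with filtered colimits in an entirely elementary way, so promoting ``trivial on every finitely generated subgroup, uniformly'' to ``trivial on $\group$'' requires a weak-$*$ compactness argument on the net $\{b_\subgroup\}_\subgroup$. This is precisely where the separability assumption on $W$ enters: it renders the unit ball of the predual weak-$*$ metrizable, so that a uniformly bounded net of local primitives has sequential weak-$*$ cluster points that can be assembled into an honest global primitive on $\group$. Once this patching is executed (a standard feature of Monod's framework in \cite{monod2021lamplighters}), the vanishing of $\HHbc[n][\group][W]$ follows for every $n \geq 1$, and bounded acyclicity of $\group$ is the special case $W = \RR$.
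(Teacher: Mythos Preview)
Your approach is genuinely different from the paper's and has a real gap.

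The paper does not patch over finitely generated subgroups. It uses hypothesis~\ref{itm:finite-set-conjugation} to prove that the subgroup $\group_0$ of \emph{all} elements supported in $X_0$ is coamenable in $\group$ (via the fixed-point characterization in \Cref{rem:coamenable-characterization}: the directed system of finite subsets of $\group$, each conjugable into $\group_0$, turns any $\group_0$-fixed point in a convex compact $\group$-space into a $\group$-fixed point). Coamenability then gives a single injection $\HHbc[n][\group][W] \hookrightarrow \HHbc[n][H_0][W]$ for $H_0 = \langle \group_0, \groupel\rangle$. Hypothesis~\ref{itm:displacement} exhibits $H_0$ as a quotient of the \emph{one} wreath product $\group_0 \wr \ZZ$ by an amenable (in fact metabelian) kernel, and a single invocation of the lamplighter theorem finishes the argument. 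No limit over subgroups, no patching.

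Your route needs two things you have not justified. First, the ``controlled norm'' on the $b_\subgroup$: vanishing of $\HHbc[n][\subgroup_0 \wr \ZZ][W]$ produces a primitive, but the bound coming from the open mapping theorem a priori depends on the group, and you have a different wreath product for every finitely generated $\subgroup$. Without a uniform bound the weak-$*$ compactness argument cannot even start. (One fix is to note that all your $L_\subgroup$ sit inside the single $H_0$ and take the primitive there once --- but then you must explain why vanishing on $H_0$ controls $\group$, which is precisely the coamenability step you omitted.) Second, your account of where separability enters is incorrect on both points: Banach--Alaoglu requires no separability, weak-$*$ metrizability of the unit ball would need separability of the \emph{predual} rather than of $W$, and in fact the separability hypothesis is consumed by the ergodic-theoretic proof of the lamplighter theorem itself, not by any patching step.
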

	\begin{proof}[Proof sketch:]
		Denote by $\group_0<\group$ the subgroup of elements with support in $X_0$. The conjugation property \ref{itm:finite-set-conjugation} ensures that the subgroup $\group_0$ is coamenable in $\group$. Indeed, for a given group action of $\group$ on a locally convex compact space $Y$ with an $\subgroup$-fixed point the directed set of finite subsets of $\group$ together with the conjugation property give rise to a $\group$-fixed point in $Y$. Thus, using \Cref{rem:coamenable-characterization} we deduce that $\group_0$ is coamenable in $\group$. Moreover, the subgroup $H_0\leq\group$ generated by $\group_0$ together with the element $\groupel$ given in property \Cref{itm:displacement} is also coamenable in $\group$, since it obviously contains $\group_0$. In particular, by \Cref{rem:coamenable->vanishing-BC} there is an injective map $\HHbc\to\HHbc[][H_0]$.\\
		By the dislpacement property \Cref{itm:displacement} the sets $\groupel^n(X_0)$ and $\groupel^m(X_0)$ are disjoint for all $n,m\in\ZZ$ with $n\neq m$. Since we have a faithful group action of $\group$ on $X$, this implies that the conjugates of $\group_0$ by $\groupel^n$ and $\groupel^m$ commute for all $n,m\in\ZZ$ with $n\neq m$, because these two groups act on disjoint sets.\\
		Further, this commutativity of subgroups gives a well-defined surjective group homomorphism $\psi$ from the wreath product $H := \group_0\wr\ZZ := \left(\bigoplus_{\ZZ}\group_0\right)\rtimes\ZZ$ to $H_0$ by mapping $1$ to $\groupel$. This surjective map $\psi$ turns $\module$ into an $H$-module and gives rise to a natural map $\HHbf[\psi]:\HHbc[][H_0]\to\HHbc[][H]$. By \cite[Theorem 3]{monod2021lamplighters} the bounded cohomology $\HHbc[n][H]$ of the wreath product $H$ vanishes for all $n\geq1$ and all separable normed dual $\RR$-modules $\module$. What is hence left to show is that the map $\HHbf[\psi]:\HHbc[][H_0]\to\HHbc[][H]$ is injective. Then we can deduce that $\HHbc[n][H_0]$ and thus also $\HHbc[n]$ vanishes for $n\geq1$.\\
		However, it can be shown that the kernel $\ker(\psi)$ of the homomorphism $\psi$ from $H$ to $H_0$ is metabelian, that is $[\ker(\psi),\ker(\psi)]$ is abelian. In particular, by \Cref{exp:abelian-and-finite-groups-are-amenable} $[\ker(\psi),\ker(\psi)]$ is amenable and for the same reason $\ker(\psi)/[\ker(\psi),\ker(\psi)]$ is amenable. Hence, by \Cref{prop:extensions-amenability} $\ker(\psi)$ is a normal and amenable subgroup of $H$, which forces the map
		\[ \HHbc[n][H/\ker(\psi)]\to\HHbc[n][H] \]
		induced by the projection $\psi:H\to h/\ker(\psi)$ to be an isometric isomorphism for all $n\geq1$ (see \cite[Corollary 8.5.3, Remark 8.5.4]{monod2001continuous}). Since $H_0 \cong H/\ker(\psi)$ this concludes the proof.
	\end{proof}
	
	\begin{remark}
		Note that the advantage in this \namecref{thm:bAc-through-action} compared to the dissipated case is that although we also need an element $\groupel$ which satisfies $\groupel^k(X_0)\cap X_0 = \emptyset$ for all $k\geq 1$, we do not have an assumption about all subsets $\groupel^k(X_0)$, for $k\geq1$ at once. So we do not require an element whose support is the disjoint union of infinitely many subsets as we did in the dissipated case.\\
		This allows to apply this result in more general situations (\cite{monod2021lamplighters, monod2021bounded}).
	\end{remark}
	
	We need one further ingredient to prove the bounded acyclicity of $\Vstab$ given by the following \namecref{lem:element-in-derived-subgroup}. Roughly speaking the \namecref{lem:element-in-derived-subgroup} reveals that commutators of rigid point stabilizers can mimic the action of those elements of $\V$ on almost all of $\CantorModel$. This reflects the fact that the derived subgroups of rigid point stabilizers are very flexible. 
	
	\begin{lemma}\label{lem:element-in-derived-subgroup}
		Let $h\in\V$ be an element which fixes a non-empty open set $U$ pointwise and $\seq[z]$ some point in $U$. Then there is a subroot $r$ and an element $h'\in\V$ such that:
		\begin{enumerate}[label=(\roman*), itemsep=0pt, nosep]
			\item $\seq[z]\in\dyadicNgbh{r}\subseteq U$,
			\item $h|_{\CantorModel\setminus U} = [h',h]|_{\CantorModel\setminus U}$, and
			\item $h'$ fixes $\dyadicNgbh{r}$ pointwise.
		\end{enumerate}
		In particular, $h,h'$ and $[h',h]$ fix every element in $\dyadicNgbh{r}$ pointwise.
	\end{lemma}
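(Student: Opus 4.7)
The plan is to build $h'$ as an involution that swaps the set $\CantorModel\setminus U$ with a copy of itself sitting inside $U\setminus\dyadicNgbh{r}$. The point is that conjugation by such an $h'$ moves the active part of $h$ into a region where $h$ is the identity, so the $h^{-1}$ factor in $[h',h]=h'^{-1}h^{-1}h'h$ has no effect and the outer $h'^{-1}$ undoes what the inner $h'$ did, leaving exactly $h$ behind on $\CantorModel\setminus U$.

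Concretely, I would start by enlarging $U$ to the maximal clopen pointwise-fixed set $F\supseteq U$ of $h$. This $F$ is clopen because it is a union of subroots in the canonical tree partition of $h$, so I can write $\CantorModel\setminus F=\bigsqcup_{i=1}^k\dyadicNgbh{t^i}$. Next I pick a subroot $r$ with $\seq[z]\in\dyadicNgbh{r}\subseteq U$, refining if necessary (replace $r$ by $r\link\seqel[z][|r|+1]$) so that $U\setminus\dyadicNgbh{r}$ is nonempty. I then choose a subroot $a$ with $\dyadicNgbh{a}\subseteq U\setminus\dyadicNgbh{r}$ and subdivide $\dyadicNgbh{a}$ into $k$ subroot-neighborhoods $\dyadicNgbh{a^1},\dots,\dyadicNgbh{a^k}$, e.g.\ $a^i:=a\link 1^{i-1}0$ for $i<k$ and $a^k:=a\link 1^{k-1}$.

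Now I define $h'\in\V$ by the tree description: on each $\dyadicNgbh{a^i}$ it is the subroot-shift onto $\dyadicNgbh{t^i}$, on each $\dyadicNgbh{t^i}$ it is the reverse shift onto $\dyadicNgbh{a^i}$, and on the complement $F\setminus\dyadicNgbh{a}$ it is the identity. By construction $h'$ is an involution supported on $\dyadicNgbh{a}\sqcup(\CantorModel\setminus F)$ and it fixes $\dyadicNgbh{r}$ pointwise, giving conditions (i) and (iii).

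The verification of (ii) is then a short two-case check, which I expect to be the only place requiring care. For $x\in F\setminus U$ both $h$ and $h'$ fix $x$, so $[h',h](x)=x=h(x)$. For $x\in\CantorModel\setminus F$, the point $h(x)$ stays in $\CantorModel\setminus F$ (since $h$ preserves its support), so $h'h(x)\in\dyadicNgbh{a}\subseteq F$; then $h^{-1}$ fixes this point, and finally $h'^{-1}=h'$ sends it back to $h(x)$. The main obstacle is simply the bookkeeping: ensuring that the swap between the $a^i$ and the $t^i$ is genuinely realized by an element of $\V$, which follows from the fact that any clopen subset of $\CantorModel$ is a finite disjoint union of subroot-neighborhoods and that $\V$ permits arbitrary bijections between two such partitions with the same cardinality.
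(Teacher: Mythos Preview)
Your proposal is correct and follows essentially the same strategy as the paper: construct $h'$ as an involution that swaps the ``active'' region of $h$ with a copy placed inside the pointwise-fixed region, so that in $[h',h]=h'^{-1}h^{-1}h'h$ the middle $h^{-1}$ is neutralised and the outer $h'^{-1}$ undoes the inner $h'$. The only cosmetic difference is that the paper does not pass to the maximal clopen fixed set $F$; instead it picks a subroot $r'$ with $\dyadicNgbh{r'}\subseteq U$, swaps \emph{all} of $\CantorModel\setminus\dyadicNgbh{r'}$ (not just the support of $h$) into $\dyadicNgbh{r'}\setminus\dyadicNgbh{r}$, and checks the commutator identity by a direct coordinate computation. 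Your use of $F$ makes the case split a little cleaner but requires the (easy) observation that the interior of $\mathrm{Fix}(h)$ is a finite union of subroot neighbourhoods; the paper avoids this by moving a larger set.
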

	\begin{proof}
		Let $h\in\V$ be represented by $\Vel[h][r^1,\dots,r^m][s^1,\dots,s^m]$.	Since $U$ is open, we can choose $M\geq1$ bigger than all the levels of the subroots $r^i$, $1\leq i\leq m$, such that the subroot $r' = (z_1,\dots,z_M)$ satisfies $\seq[z]\in\dyadicNgbh{r'}\subseteq U$. Then, using \Cref{lem:existence-of-partition}, we have that $\dyadicNgbh{r'}\subseteq\dyadicNgbh{r^i}$ for some $1\leq i\leq m$. In particular $r^i = (z_1,\dots,z_l)$ for some $l< M$. Since $h$ fixes $U$ pointwise, it must be the case that $s^i = r^i$. Indeed, $\dyadicNgbh{r'}\subseteq U$ and thus
		\[ r^i\link z_{l+1}\cdots z_M\link\seq = r'\link\seq = g(r'\link\seq) = s^i\link z_{l+1}\cdots z_M\link\seq \]
		for all sequences $\seq\in\CantorModel$. By choosing $\seq$ to be the constant sequence not equal to $z_M$ we can first deduce that the levels of $r^i$ and $s^i$ must be the same and hence that even $r^i = s^i$ holds. Therefore, $g$ even fixes $\dyadicNgbh{r^i}$ pointwise.\\
		Now we will construct an element $h'\in\V$ which \enquote{exchanges} the set $\cup_{j\neq i}\dyadicNgbh{r^j}$ with a certain subset of $\dyadicNgbh{r'}$. First, by \Cref{lem:existence-of-partition} we can extend $\{r'\}$ to a partition $r',r'^1,\dots r'^n$ of $\CantorModel$. Second, define $r=r'\link z_{M+1}$, $s'^{1} := r'\link \eta\link\sroot{n-1}$ and $s'^j := s\link\eta\link\sroot{n-j}\link 1$ for all $2\leq j\leq n$, where $\eta:=z_{M+1} +1\mod 2$. Then we can define $h'$ on the (in general not ordered) partition 
		\[ (r'^1,\dots,\dots,r'^n,r,s'^1,\dots,s'^{n}), \]
		as follows:
		\begin{align*}
			h'(\dyadicNgbh{r'^j}) &= \dyadicNgbh{s'^j}, \qquad \text{for all } 1\leq j\leq n,\\
			h'(\dyadicNgbh{s'^j}) &= \dyadicNgbh{r'^j}, \qquad \text{for all } 1\leq j\leq n,\\
			h'(\dyadicNgbh{r}) &= \dyadicNgbh{r}.
		\end{align*}
		Thus, $h'$ exchanges $\bigcup_{j=1}^n\dyadicNgbh{r'^j}$ with $\bigcup_{j=1}^n\dyadicNgbh{s'^j}$ and fixes $\dyadicNgbh{r}$.	Further, observe that $\dyadicNgbh{r}\cup\bigcup_{j=1}^n\dyadicNgbh{s'^j}$ is a partition of $\dyadicNgbh{r'}$ which is fixed by $h$ (and hence also $h^{-1}$) pointwise. Therefore, we get
		\[ [h',h](\dyadicNgbh{r}) = h'^{-1}h^{-1}h'h(\dyadicNgbh{r}) = \dyadicNgbh{r}. \]
		Moreover, for a point $\seq$ in $\CantorModel\setminus U$ it holds that $h(\seq)$ is inside $\CantorModel\setminus U \subseteq \bigcup_{j=1}^n\dyadicNgbh{r'^j}$, hence
		\begin{align*}
			&h'h(\seq) \in\bigcup_{j=1}^n\dyadicNgbh{s'^j},
		\end{align*}
		and thus
		\begin{align*}
			&[h',h](\seq) = h'^{-1}h^{-1}h'h(\seq) = h'^{-1}h'h(\seq) = h(\seq).
		\end{align*}
		The last line follows from the fact that $h^{-1}$ fixes $\bigcup_{j=1}^n\dyadicNgbh{s'^j}\subseteq\dyadicNgbh{r'}$ pointwise.\\
		Hence, $h'$ and $r$ fulfill all claimed properties.
	\end{proof}

	\begin{proposition}\label{prop:V0-is-bAc}
		$\Vstab$ is \bAc.
	\end{proposition}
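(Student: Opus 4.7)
My plan is to first apply \Cref{thm:bAc-through-action} to the strictly smaller rigid stabilizer
\[
V_{\stab[\zero]} := \{g \in V : g \text{ fixes a neighborhood of } \zero \text{ pointwise}\} \leq \Vstab,
\]
and then lift the resulting bounded acyclicity to $\Vstab$ by a coamenability argument based on \Cref{lem:fixpoints-of-commutators}. The reason to detour through this subgroup is also the main conceptual obstacle: \Cref{thm:bAc-through-action} cannot be applied to $\Vstab$ directly, because every element of $\Vstab$ fixes $\zero$ and is continuous, so any $X_0 \subseteq \CantorModel \setminus \{\zero\}$ satisfying condition (ii) for some $\groupel \in \Vstab$ must be bounded away from $\zero$; but elements of $\Vstab \setminus V_{\stab[\zero]}$ have supports that accumulate at $\zero$ and therefore cannot be conjugated into such an $X_0$ by any element of $\Vstab$.

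For the application of \Cref{thm:bAc-through-action} to $V_{\stab[\zero]}$ acting faithfully on $X := \CantorModel \setminus \{\zero\}$, I take $X_0 := \dyadicNgbh{10}$ and
\[
\groupel := \Vel[\groupel][00,010,011,1][00,01,10,11] \in V_{\stab[\zero]},
\]
so that $\groupel$ fixes $\dyadicNgbh{00}$ pointwise and acts on $\CantorModel \setminus \dyadicNgbh{00}$ as a rescaled copy of the generator $A$. A direct induction gives $\groupel^k(\dyadicNgbh{10}) = \dyadicNgbh{\sroot{k+1}[\one] \link 0} \subseteq \dyadicNgbh{11}$ for every $k \geq 1$, which is disjoint from $X_0$, verifying (ii). For (i), given a finite family $h_1, \dots, h_n \in V_{\stab[\zero]}$, each $h_i$ fixes some $\dyadicNgbh{\sroot{k_i}}$ pointwise, so setting $K := \max_i k_i$ every $\supp(h_i)$ lies in $\CantorModel \setminus \dyadicNgbh{\sroot{K}}$. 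For any $l > K$ a routine partition-matching argument produces $g \in V_{\stab[\zero]}$ that fixes $\dyadicNgbh{\sroot{l}}$ pointwise and sends $\dyadicNgbh{\sroot{K}}$ onto $\dyadicNgbh{0} \cup \dyadicNgbh{11}$ (equivalently, $\CantorModel \setminus \dyadicNgbh{\sroot{K}}$ onto $X_0 = \dyadicNgbh{10}$); this $g$ simultaneously conjugates every $h_i$ into $X_0$. \Cref{thm:bAc-through-action} then yields that $V_{\stab[\zero]}$ is boundedly acyclic.

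To finish, I check that $V_{\stab[\zero]}$ is coamenable in $\Vstab$. Normality is direct: for $g \in \Vstab$ and $h \in V_{\stab[\zero]}$ fixing $\dyadicNgbh{\sroot{k}}$ pointwise, the conjugate $ghg^{-1}$ fixes the open set $g(\dyadicNgbh{\sroot{k}})$ pointwise, which is a neighborhood of $g(\zero) = \zero$ and hence contains some basic clopen $\dyadicNgbh{\sroot{k'}}$. By \Cref{lem:fixpoints-of-commutators} every commutator of two elements of $\Vstab$ fixes a neighborhood of $\zero$ pointwise, so $[\Vstab, \Vstab] \subseteq V_{\stab[\zero]}$ and the quotient $\Vstab / V_{\stab[\zero]}$ is abelian and hence amenable. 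Consequently $V_{\stab[\zero]}$ is coamenable in $\Vstab$, and \Cref{prop:coamenable->bAc} provides an injection $\HHbR[n][\Vstab] \hookrightarrow \HHbR[n][V_{\stab[\zero]}] = 0$ for every $n \geq 1$, so $\Vstab$ is boundedly acyclic.
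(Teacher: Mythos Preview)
Your proof is correct and follows the same overall architecture as the paper's, but with a cleaner choice of intermediate subgroup. The paper applies \Cref{thm:bAc-through-action} to the derived subgroup $\Vstab' = [\Vstab,\Vstab]$, whereas you apply it to the rigid stabilizer $V_{\stab[\zero]}$; since \Cref{lem:fixpoints-of-commutators} gives $\Vstab' \subseteq V_{\stab[\zero]}$, your intermediate group is larger, and both approaches end with the same coamenability step via an abelian quotient. The practical payoff of your choice is that the conjugating element in condition~\ref{itm:finite-set-conjugation} and the displacing element $\groupel$ in condition~\ref{itm:displacement} only need to lie in $V_{\stab[\zero]}$, which is immediate from the construction. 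The paper, working inside $\Vstab'$, must additionally ensure these elements are commutators; it handles $\groupel$ by exhibiting it as $[A,B]$ and handles the conjugator via the auxiliary \Cref{lem:element-in-derived-subgroup}, which your argument bypasses entirely. Both routes ultimately hinge on \Cref{lem:fixpoints-of-commutators}: the paper uses it to see that elements of $\Vstab'$ fix a neighborhood of $\zero$ (so they can be conjugated into $X_0$), while you use it to see that $\Vstab/V_{\stab[\zero]}$ is abelian.
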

	\begin{proof}
		\begin{claim*}
			$\Vstab' := [\Vstab,\Vstab]$ is \bAc.\\
		\end{claim*} 
		Assuming the claim we can deduce the statement easily. Since the quotient $\Vstab/\Vstab'$ is abelian, so in particular amenable (see \Cref{exp:abelian-and-finite-groups-are-amenable}), $\Vstab'$ is coamenable in $\Vstab$. Hence, $\Vstab$ is \bAc by \Cref{prop:coamenable->bAc}.\\
	
		\begin{claim-proof}
			$\Vstab'$ acts faithfully on $X := \CantorModel$. Choose $X_0 := \dyadicNgbh{10}$. In order to be able to apply \Cref{thm:bAc-through-action} we need an element $\groupel\in\Vstab$ which \enquote{moves $X_0$ away from itself}. More concretely, define the following sets
			\begin{align*}
				O_{-} := \dyadicNgbh{01},\\
				O_{+} := \dyadicNgbh{110}.
			\end{align*}
			Then we can define $\groupel\in\Vstab$ as $\Vel[\groupel][00,010,011,10,110,111][00,01,10,1100,1101,111]$. Note that $g$ is even in $F$ as can be seen in \Cref{fig:groupel}.
			
			\includeElement{binarytreeg}{Illustration of $\Vel[\groupel][00,010,011,10,110,111][00,01,10,1100,1101,111]$.}{groupel}[1.5]

			It is easy to see that $\groupel$ fixes $\zero$ and that $\groupel^k(X_0) \subseteq O_{-}$ for all $k\geq1$, so in particular $\groupel^k(X_0)\cap X_0 = \emptyset$. Moreover, we need to verify that $\groupel$ is not only an element of $\Vstab$ but also of $\Vstab'$. However, we have $\groupel = [A,B]$ where $\Vel[A][00,01,1][0,10,11]$ and $\Vel[B][0,10,110,111][0,100,101,11]$ are in $\Vstab$. Therefore, the second assumption of \Cref{thm:bAc-through-action} holds.\\
			
			It remains to verify the first one, i.e. that for given finitely many elements $g_1,\dots,g_n\in\Vstab'$ , we can always conjugate them to be supported in $X_0$.\\
			First of all, note that by \Cref{lem:fixpoints-of-commutators} every element in $\Vstab'$ fixes not only $\zero$, but also a neighborhood of $\zero$ pointwise. Therefore, we can find a (large enough) integer $M\geq 2$ such that the neighborhood $\dyadicNgbh{\sroot{M}}$ is fixed pointwise by all elements $g_1\dots g_n$. Let $t_0 = \sroot{M}$ and $t_k = \sroot{M-k}\link 1$ be the subroot of level $(M-k+1)$, for $1\leq k\leq M$. Observe that $\dyadicNgbh{\sroot{M}} = \dyadicNgbh{\sroot{M+1}}\cup\left(\bigcup_{k=0}^M \dyadicNgbh{t_0\link 1\link t_k}\right)$.\\
			Using this notation we can define $h\in\Vstab$ as 
			\[ \Vel[h][0\link t_0,t_0\link 1\link t_0,\dots,t_0\link 1\link t_M,t_1,\dots,t_M][0\link t_0,\dots,0\link t_M,11,1\link t_0,\dots,1\link t_{M-1}]. \]
			\includeElement{binarytreeh}{Illustration of $h\in\V$ in the case $m=3$.}{h}[1.0]
			Note that $h$ maps $\CantorModel\setminus\dyadicNgbh{\sroot{M}}=\bigcup_{k=1}^M \dyadicNgbh{t_k}$ into $X_0 = \dyadicNgbh{10}$ and so if $x\not\in X_0$, then $h^{-1}(x)\in\dyadicNgbh{\sroot{M}}$. In particular, we have $\supp(hg_ih^{-1}) \subseteq X_0$ for all $1\leq i\leq n$ as required.\\
			However, we need to conjugate the elements $g_i$ not by a general element of $\Vstab$ but by one of the derived subgroup $\Vstab'$. This is the place where we make use of \Cref{lem:element-in-derived-subgroup}. Indeed, we can apply \Cref{lem:element-in-derived-subgroup} to the element $h\in\V$ and the open set $U = \dyadicNgbh{\sroot{M+1}}$ to obtain an element $h'\in\V$ such that $[h',h]|_{\CantorModel\setminus\dyadicNgbh{\sroot{M+1}}} = h|_{\CantorModel\setminus\dyadicNgbh{\sroot{M+1}}}$. Moreover, $[h',h]\in\Vstab'$ is an element of the derived subgroup. Finally, we get that $\supp([h',h]g_i[h',h]^{-1}) \subseteq X_0$, since $[h',h]^{-1}$ acts on $\CantorModel\setminus\dyadicNgbh{\sroot{M}}\subseteq\CantorModel\setminus\dyadicNgbh{\sroot{M+1}}$ in the same way as $h$.\\
			Therefore, by applying \Cref{thm:bAc-through-action} we deduce that $\Vstab'$ is \bAc, whence it follows that $\Vstab$ is \bAc as well.
		\end{claim-proof}	
	
	\end{proof}
	
	Although, as already mentioned, the map $g\in\V$ in the proof above is an element of $F$, $h\in\V$ does not even preserve the cyclic order of the partition, as can be easily seen in \Cref{fig:h}: the subroot $s_0$, which is not the biggest subroot of the partition $(\sroot{M+1},s_M\dots,s_0,r_M,\dots,r_0)$ is mapped to the biggest subroot in $(\sroot{M+1},r_M,\dots,r_1,11,10\link s_M,\dots,10\link s_0)$.

	\begin{remark}\label{rem:fat-points-for-V}
		In order to show that $\V$ is \bAc, we could have also worked with \enquote{fat points} similar to the case of $\Homeo(\Cantor)$. However, working with point stabilizers simplifies the arguments below.
	\end{remark}
	
	Again, the choice of the point $\zero$ is not particularly relevant. By conjugation, we could replace $\zero$ by any other point in $\dyadics$.
	
	\begin{corollary}\label{cor:stab-of-tuple-is-bAc}
		For a $k$-tuple $(x_1,\dots,x_k)\subseteq \dyadics$, where $x_i \neq x_j$ if $i\neq j$, the stabilizer $\V_{(x_1,\dots,x_k)}$ is \bAc.
	\end{corollary}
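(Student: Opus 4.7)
The plan is to mimic the proof of \Cref{prop:V0-is-bAc} almost verbatim, the only additional care being that we must avoid finitely many distinguished points rather than just one. The two-step structure is identical: first show that the derived subgroup $\V_{(x_1,\ldots,x_k)}' := [\V_{(x_1,\ldots,x_k)},\V_{(x_1,\ldots,x_k)}]$ is \bAc via \Cref{thm:bAc-through-action}, and then use that $\V_{(x_1,\ldots,x_k)}/\V_{(x_1,\ldots,x_k)}'$ is abelian, hence amenable, so that $\V_{(x_1,\ldots,x_k)}'$ is coamenable in $\V_{(x_1,\ldots,x_k)}$ and we can invoke \Cref{prop:coamenable->bAc}.

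For the key step, first choose an integer $N\geq 1$ large enough that the subroots $x_1|_N,\ldots,x_k|_N$ are pairwise distinct (possible because the $x_i$ are distinct) and $x_i[n]=0$ for all $n>N$ and all $i$ (possible because each $x_i\in\dyadics$). Then the cylinders $\dyadicNgbh{x_i|_N}$ are pairwise disjoint neighborhoods of the $x_i$. Since we only use $k$ of the $2^N$ level-$N$ cylinders (and we may enlarge $N$ so that $2^N > k+2$), there are at least three further level-$N$ cylinders $X_0, O_+, O_-$ that are pairwise disjoint and disjoint from $\bigcup_i \dyadicNgbh{x_i|_N}$. I would then build a shift-like element $g\in\V_{(x_1,\ldots,x_k)}$ with $g^n(X_0)\subseteq O_-$ for all $n\geq 1$, on the model of the element constructed in \Cref{fig:groupel}: $g$ permutes a finite partition of $\CantorModel$ refining $X_0 \cup O_+ \cup O_-$ and is the identity outside $X_0\cup O_+\cup O_-$, so in particular fixes each $\dyadicNgbh{x_i|_N}$ pointwise. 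Moreover, writing $g$ as a product of two elements supported in $X_0\cup O_+\cup O_-$ (again as in the original proof, where $\groupel = [A,B]$) shows $g\in\V_{(x_1,\ldots,x_k)}'$. This verifies hypothesis \ref{itm:displacement} of \Cref{thm:bAc-through-action}.

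For hypothesis \ref{itm:finite-set-conjugation}, given finitely many $g_1,\ldots,g_n\in\V_{(x_1,\ldots,x_k)}'$, by iteratively applying \Cref{lem:fixpoints-of-commutators} at each $x_i$, each $g_j$ fixes a neighborhood of each $x_i$ pointwise, so for some $M\geq N$ large enough all $g_j$ fix $\bigcup_i \dyadicNgbh{x_i|_M}$ pointwise. I would then construct an $h\in\V$ whose partition refines level $M$ in a way analogous to \Cref{fig:h}: $h$ sends every cylinder of the partition lying outside $\bigcup_i \dyadicNgbh{x_i|_M}$ into the fixed cylinder $X_0$, while leaving some sub-cylinder of each $\dyadicNgbh{x_i|_M}$ untouched (there is enough room in $X_0$ as we may shrink it by subdividing). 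Conjugation by $h$ then supports each $hg_jh^{-1}$ in $X_0$. Finally, to replace $h$ by an element of $\V_{(x_1,\ldots,x_k)}'$ without changing this support property, apply \Cref{lem:element-in-derived-subgroup} to $h$ together with a slightly smaller open set $U\subsetneq\bigcup_i\dyadicNgbh{x_i|_M}$ containing all the $x_i$ in its interior: this yields $h'\in\V$ with $[h',h]\in\V_{(x_1,\ldots,x_k)}'$ and $[h',h]|_{\CantorModel\setminus U}=h|_{\CantorModel\setminus U}$, so the conjugates $[h',h]g_j[h',h]^{-1}$ are again supported in $X_0$.

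The main obstacle is purely bookkeeping: ensuring that one can simultaneously arrange the disjoint cylinders $X_0,O_+,O_-$ to avoid all $k$ neighborhoods and that the constructed $h$ genuinely fits into a single element of $\V$ (i.e.\ uses a finite partition). The first is handled by enlarging $N$, and the second is automatic since at every stage we work with finitely many level-$M$ cylinders. No new ideas beyond those already present in the proof of \Cref{prop:V0-is-bAc} are needed.
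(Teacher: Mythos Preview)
Your overall two-step strategy (show the derived subgroup is \bAc\ via \Cref{thm:bAc-through-action}, then pass up by coamenability) is exactly the paper's, but there is a genuine gap where you invoke \Cref{lem:element-in-derived-subgroup}. That lemma takes a \emph{single} point $z\in U$ and outputs an $h'$ fixing a neighborhood of that one $z$; inspecting its proof, $h'$ exchanges all of $\CantorModel\setminus\dyadicNgbh{r'}$ with a subset of the single cylinder $\dyadicNgbh{r'}\ni z$, so it will in general displace the remaining points $x_2,\ldots,x_k$. Hence $h'\notin\V_{(x_1,\ldots,x_k)}$, and your assertion $[h',h]\in\V_{(x_1,\ldots,x_k)}'$ is unjustified --- both $h$ and $h'$ must stabilize \emph{every} $x_i$ for their commutator to lie in the derived subgroup of the tuple stabilizer.

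The paper avoids this by taking a more local route: rather than placing $X_0$ in a single cylinder away from the $x_i$ and appealing to the lemma abstractly, it sets $X_0=\bigcup_i\dyadicNgbh{x_i|_N\link 10}$ as a disjoint union of $k$ local pieces, and defines $\tilde g,\tilde h,\tilde{h'}$ as independent local copies (one inside each $\dyadicNgbh{x_i|_N}$) of the single-point maps $g,h,h'$ from the proof of \Cref{prop:V0-is-bAc}, identity elsewhere. Since each local block already fixes a neighborhood of its own $x_i$ and is the identity near the other $x_j$, all three elements lie in $\V_{(x_1,\ldots,x_k)}$ by construction, and $[\tilde{h'},\tilde h]$ is honestly in the derived subgroup. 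Your placement of $X_0$ can also be made to work, but you would need to prove a multi-point analogue of \Cref{lem:element-in-derived-subgroup} (reserving a small cylinder near \emph{each} $x_i$ for $h'$ to fix); the lemma as stated does not deliver what you need.
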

	\begin{proof}
		Essentially the same argument as for the stabilizer of a single point works here too. It is merely more cumbersome to describe the elements corresponding to $g$ and $h$.\\
		First, it can be shown that the derived subgroup $\V_{(x_1,\dots,x_k)}' := [\V_{(x_1,\dots,x_k)},\V_{(x_1,\dots,x_k)}]$ is \bAc.\\
		Choose $N\geq1$ large enough, such that all sequences corresponding to the finitely many points $x_1,\dots,x_k$ are constantly equal to $0$ for indices larger than $N$. We can define the set $X_0$ as well as the map $\tilde{g}$ to be \enquote{locally} the same as the map $g$ in the proof of \Cref{prop:V0-is-bAc}. More precisely, set $X_0  := \bigcup_{i=1}^k \dyadicNgbh{\sroot{N}[x_i]\link 10}$, $O_-  := \bigcup_{i=1}^k \dyadicNgbh{\sroot{N}[x_i]\link 01}$, $O_+  := \bigcup_{i=1}^k \dyadicNgbh{\sroot{N}[x_i]\link 110}$. Now define $\tilde{g}\in\V$ for  $\seq[z]\in\dyadicNgbh{\sroot{N}[x_i]}$ as
		\[ \tilde{g}(\seq[z]) := \sroot{N}[x_i]\link g(z_{N+1},z_{N+2},z_{N+3},\dots), \]
		for all $1\leq i\leq k$ as well as the identity on $\CantorModel\setminus(O_-\cup X_0\cup O_+)$.\\
		Then $\tilde{g}$ fixes all points $x_1,\dots,x_k$, and $\tilde{g}\in\V_{(x_1,\dots,x_k)}'$ is a commutator of maps $\tilde{A}$ and $\tilde{B}$ which are analogously locally equal to $A$ and $B$ (see the proof of \Cref{prop:V0-is-bAc}).\\
		
		Now let $g_1,\dots,g_n\in\V_{(x_1,\dots,x_k)}'$. By \Cref{lem:fixpoints-of-commutators} every one of those elements $g_1,\dots,g_n$ fixes a neighborhood of $x_i$ pointwise for $1\leq i\leq k$. Hence, we can find a (large enough) integer $M\geq1$ such that $g_j$ fixes $\dyadicNgbh{\sroot{M}[x_i]}$ for all $1\leq j\leq n$, $1\leq i \leq k$. Now we define $\tilde{h},\tilde{h'}\in\V_{(x_1,\dots,x_k)}$ by
		\begin{align*}
			\tilde{h}(\seq[z]) := \sroot{M}[x_i]\link h(z_{M+1},z_{M+2},z_{M+3},\dots),\\
			\tilde{h'}(\seq[z]) := \sroot{M}[x_i]\link h'(z_{M+1},z_{M+2},z_{M+3},\dots).
		\end{align*}
		for $\seq[z]\in\dyadicNgbh{\sroot{M}[x_i]}$ for all $1\leq i\leq k$ and $\tilde{h}(\seq[z]) := \seq[z] =: \tilde{h'}(\seq[z])$ if $\seq[z]$ is not in any of the sets $\dyadicNgbh{\sroot{M}[x_1]},\dots,\dyadicNgbh{\sroot{M}[x_k]}$. (Here we use again the same definitions for $h$ and $h'$ as in the proof of \Cref{prop:V0-is-bAc}.)\\
		Then we have that $\supp([\tilde{h'},\tilde{h}]g_i[\tilde{h'},\tilde{h}]^{-1}) \subseteq X_0$ and we can use \Cref{thm:bAc-through-action} to deduce that $\V_{(x_1,\dots,x_k)}'$ is \bAc.\\
		The bounded acyclicity of $V_{(x_1,\dots,x_k)}$ follows since $\V_{(x_1,\dots,x_k)}'$ is coamenable in $\V_{(x_1,\dots,x_k)}$.
	\end{proof}

\subsubsection{Generic Relation on \texorpdfstring{$\dyadics$}{Z[1/2]}}

	\begin{lemma}
		The relation $\neq$ on the set $\dyadics$ is a generic relation.
	\end{lemma}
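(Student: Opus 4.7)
The plan is immediate once we unpack the definition of a generic relation: we must show that for every finite subset $Y \subseteq \dyadics$ there exists some $x \in \dyadics$ with $y \neq x$ for all $y \in Y$. In other words, this reduces to verifying that $\dyadics$ is infinite, so that $\dyadics \setminus Y$ is non-empty for any finite $Y$.

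First I would observe that $\dyadics$ is countably infinite: for each $N \geq 1$, the finite sequence $\sroot{N-1}\link 1$ gives rise to a distinct element $\sroot{N-1}\link 1 \link \zero \in \dyadics$, and these are pairwise distinct as $N$ varies. Hence $\dyadics$ contains infinitely many elements.

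Then, given any finite $Y \subseteq \dyadics$, the set $\dyadics \setminus Y$ is non-empty since $\dyadics$ is infinite. Picking any $x \in \dyadics \setminus Y$ we obtain $y \neq x$ for every $y \in Y$, which is exactly the defining property of a generic relation. There is no real obstacle here; the statement is essentially a reformulation of the fact that $\dyadics$ is infinite, and the role of this lemma is only to license the application of \Cref{cor:bounded-cohomology-through-action-and-generic-relation} to the $\V$-action on $\dyadics$ in the subsequent argument.
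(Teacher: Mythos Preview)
Your proof is correct and matches the paper's approach exactly: both observe that $\dyadics$ is infinite, so any finite $Y \subseteq \dyadics$ has a point in its complement. You add an explicit construction of infinitely many elements, which the paper omits, but the core argument is identical.
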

	\begin{proof}
		This follows immediately, since $\dyadics$ is infinite, so for every finite subset $Y\subset \dyadics$ there is another element $x\in \dyadics$ such that $y\neq x$ for all $y\in Y$.
	\end{proof}

	\begin{remark}
		Note that considering the cochain complex $(\linf{\dyadics_\bullet^{\neq}}, \codiff)$ is closely related to the more standard approach of using the alternating cochain complex $(\linf[alt]{\dyadics^\bullet}, \codiff)$, see \cite[Section 4.10]{2017Frigerio}. For example in \cite{fournierfacio2021binate} they were used to calculate the bounded cohomology of Thompson's group $T$.\\
		Indeed, in $\linf{\dyadics_k^{\neq}}$ we consider functions from $\dyadics_k^{\neq}$ to $\RR$. These could be extended by $0$ to obtain functions on $\dyadics^k$. So, in particular, all these extended functions evaluate to $0$ for tuples $(x_0,\dots,x_k)\in\dyadics_k$, where $x_i = x_j$ for some $i\neq j$.\\
		The alternating cochain complex $\linf[alt]{\dyadics^\bullet}$ is defined as the subcomplex of alternating cochains of the cochain complex $\linf{\dyadics^\bullet}$, i.e.  $f\in\linf{\dyadics^k}$ is in $\linf[alt]{\dyadics^k}$ if and only if
		\[ f(x_{\sigma(0)},\dots,x_{\sigma(k)}) = \operatorname{sgn}(\sigma)\cdot f(x_0,\dots,x_k) \]
		for all $(x_0,\dots,x_k)\in\dyadics^{k+1}$ and permutations $\sigma:\{0,\dots,k\}\to\{0,\dots,k\}$, for all $k\geq0$.\\
		Thus, it is easy to see that all $f\in\linf[alt]{\dyadics^k}$ evaluate to $0$ for tuples $(x_0,\dots,x_k)\in\dyadics_k$, where $x_i = x_j$ for some $i\neq j$, as well. However, in this case there are certainly further conditions on the map $f$.\\
		Nevertheless we chose to use generic relations again, since these have a closer relation to the previous section where we worked on $\Homeo(\Cantor)$.
	\end{remark}

\subsubsection{Transitivity of the \texorpdfstring{$\V$}{V}-action}
	
	Recall that every generic relation $\generic$ on a set $X$, gives rise to a semi-simplicial set $X^{\generic}_\bullet$, where $X^{\generic}_n$ is the set of all $(n+1)$-tuples $(x_0,\dots,x_n)\in X^{n+1}$ for which $x_i~\generic~x_j$ holds for all $0\leq j<i\leq n$.
	
	\begin{lemma}\label{lem:V-acts-transitively-on-generic-tuples}
		$V$ acts highly transitively on generic tuples, that is it acts transitive on $\dyadics_k^{\neq}$ for all $k\geq0$.
	\end{lemma}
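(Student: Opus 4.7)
The plan is constructive: given two tuples $(x_0,\dots,x_k)$ and $(y_0,\dots,y_k)$ of pairwise distinct points in $\dyadics$, I will directly build an element $g \in V$ with $g(x_i) = y_i$ for all $0 \leq i \leq k$ by choosing suitable subroots and applying the fact that an element of $V$ is specified by a matched pair of partitions together with a bijection between them.

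First I choose $N \geq 1$ large enough so that three conditions hold simultaneously: the truncations $x_0|_N, \dots, x_k|_N$ are pairwise distinct; the truncations $y_0|_N, \dots, y_k|_N$ are pairwise distinct; and every $x_i$ and every $y_j$ is constantly $0$ after position $N$. The last condition is possible because all points lie in $\dyadics$, which by definition consists of sequences that are eventually zero, and we have only finitely many points. Setting $r^i := x_i|_N$ and $s^i := y_i|_N$, we obtain pairwise disjoint subroots with $x_i = r^i \link \zero$ and $y_i = s^i \link \zero$.

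Next I invoke \Cref{lem:existence-of-partition} to extend $(r^0,\dots,r^k)$ to a partition $(r^0,\dots,r^k, t^1,\dots,t^l)$ of $\CantorModel$ and $(s^0,\dots,s^k)$ to a partition $(s^0,\dots,s^k, u^1,\dots,u^m)$ of $\CantorModel$. The definition of an element of $V$ requires two partitions of the \emph{same} cardinality, so if $l \neq m$, I further subdivide some of the extra subroots $t^j$ or $u^j$ by replacing a subroot $t$ with its two children $t \link 0$ and $t \link 1$. Since each such subdivision increases the count by one and does not affect the subroots $r^i$ or $s^i$, after finitely many subdivisions on whichever side is smaller I may assume both partitions have the same size $l' = l = m$. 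I then define $g \in V$ by the matched partitions, setting $g(\dyadicNgbh{r^i}) = \dyadicNgbh{s^i}$ for $0 \leq i \leq k$ via the canonical prefix-exchange formula, and matching the remaining pieces $t^1,\dots,t^{l'}$ to $u^1,\dots,u^{l'}$ in any way.

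By the prefix-exchange formula we immediately get $g(x_i) = g(r^i \link \zero) = s^i \link \zero = y_i$ for each $i$, so $g$ sends the first tuple to the second. There is no substantive obstacle here: the only subtlety is matching partition sizes, which is handled by the subdivision trick. This establishes that $V$ acts transitively on $\dyadics^{\neq}_k$ for every $k \geq 0$.
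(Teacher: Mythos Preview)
Your proof is correct and follows essentially the same approach as the paper: choose a level deep enough that all the dyadic points are determined by their truncations, then build an element of $V$ from two equinumerous partitions containing the relevant subroots. The only cosmetic difference is that the paper takes both partitions to be the full set of level-$M$ subroots (so the sizes match automatically and the element is defined as a swap), whereas you extend via \Cref{lem:existence-of-partition} and then subdivide to equalize sizes; your construction is arguably cleaner since it sidesteps any consistency check when some $x_i|_N$ coincides with some $y_j|_N$.
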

	\begin{proof}
		The case $k=0$ is handled in \Cref{lem:V-acts-transitively}. The proof for arbitrary $k$ uses the same idea.\\
		Let $(x_1,\dots,x_k),(y_1,\dots,y_k)\in\dyadics_k^{\neq}$ be two generic tuples. Since those are only finitely many points, there is an integer $M\geq1$ such that the sequences corresponding to all of those $2k$ points are constantly equal to $0$ for indices larger than $M$.\\
		Consider the ordered partition $(r^1,\dots,r^{2^M})$ of all subroots of level $M$. There are injections $\sigma_x,\sigma_y:\{1,\dots,k\}\hookrightarrow\{1,\dots,2^M\}$ such that $\sroot{M}[x_i] = r^{\sigma_x(i)}$ and $\sroot{M}[y_i] = r^{\sigma_y(i)}$ for all $1\leq i\leq k$. Using these injections we can define $g\in\V$ on this ordered partition as follows:
		\begin{align*}
			g(\dyadicNgbh{\sroot{M}[x_i]}) = g(\dyadicNgbh{r^{\sigma_x(i)}}) &= \dyadicNgbh{r^{\sigma_y(i)}} = \dyadicNgbh{\sroot{M}[y_i]} &&1\leq i\leq k,\\
			g(\dyadicNgbh{\sroot{M}[y_i]}) = g(\dyadicNgbh{r^{\sigma_y(i)}}) &= \dyadicNgbh{r^{\sigma_x(i)}} = \dyadicNgbh{\sroot{M}[x_i]} &&1\leq i\leq k,\\
			g(\dyadicNgbh{r^i}) &= \dyadicNgbh{r^i} &&\text{ for all } i\not\in\sigma_x(\{1,\dots,k\})\cup\sigma_y(\{1,\dots,k\}).
		\end{align*}
		Thus, $g$ exchanges the neighborhoods $\dyadicNgbh{\sroot{M}[x_i]}$ and $\dyadicNgbh{\sroot{M}[y_i]}$ for all $1\leq i\leq k$ and acts as the identity on all neighborhoods $\dyadicNgbh{r}$, $r\in\{r^1,\dots,r^{2^M}\}$, which do not contain any of the points in the generic tuples $(x_1,\dots,x_k)$ and $(y_1,\dots,y_k)$.
	\end{proof}

\subsection{Bounded Cohomology of \texorpdfstring{$\V$}{V}}

	In the remaining lines we can finally give a proof of \Cref{INT_thm:V-is-bAc} and \Cref{INT_cor:V-is-universally-bAc}. Let us recall the statement of the \namecref{INT_thm:V-is-bAc}:

	\begin{theorem}[\Cref{INT_thm:V-is-bAc}]\label{thm:V-is-bAc}
		Thompson's group $\V$ is \bAc.
	\end{theorem}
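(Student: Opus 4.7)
The plan is to mirror exactly the strategy used to prove that $\Homeo(\Cantor)$ is boundedly acyclic (Theorem~\ref{thm:Homeo(K)-is-bAc}), since all the ingredients have already been assembled in Subsections~\ref{subsec:bAc-of-point-stabilizers} and the following ones. Namely, I will apply Corollary~\ref{cor:bounded-cohomology-through-action-and-generic-relation} to the action of $\V$ on $X := \dyadics$ equipped with the generic relation $\neq$.

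First I would verify that the three hypotheses of Corollary~\ref{cor:bounded-cohomology-through-action-and-generic-relation} are satisfied. The relation $\neq$ on $\dyadics$ is generic because $\dyadics$ is infinite, and it is obviously preserved by every element of $\V$ (which acts by bijections). Transitivity of the induced $\V$-action on the simplices $\dyadics^{\neq}_n$ is exactly the content of Lemma~\ref{lem:V-acts-transitively-on-generic-tuples}. Finally, the stabilizer in $\V$ of a tuple of pairwise distinct dyadic points is boundedly acyclic by Corollary~\ref{cor:stab-of-tuple-is-bAc}. Hence Corollary~\ref{cor:bounded-cohomology-through-action-and-generic-relation} yields a canonical isomorphism
\[
\HHbR[i][\V] \;\cong\; \HH[i][\linf{\dyadics^{\neq}_\bullet}^\V] \qquad \text{for every } i\geq 0.
\]

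It remains to compute the cohomology of the cochain complex of $\V$-invariants on the right-hand side. Since $\V$ acts transitively on $\dyadics^{\neq}_n$ for every $n\geq 0$ (Lemma~\ref{lem:V-acts-transitively-on-generic-tuples}), each space $\linf{\dyadics^{\neq}_n}^\V$ consists of constant functions and is thus isomorphic to $\RR$. The simplicial coboundary $\codiff[n]$ sends a constant function to an alternating sum of $n+2$ copies of the same constant, which vanishes for even $n$ and equals the constant itself for odd $n$; under the identifications $\linf{\dyadics^{\neq}_n}^\V \cong \RR$ this turns $\codiff[\bullet]$ into the alternating sequence $0,\mathrm{id},0,\mathrm{id},\dots$. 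The cohomology of the complex $(\RR \xrightarrow{0} \RR \xrightarrow{\mathrm{id}} \RR \xrightarrow{0} \RR \xrightarrow{\mathrm{id}} \cdots)$ is concentrated in degree $0$, so $\HH[i][\linf{\dyadics^{\neq}_\bullet}^\V] = 0$ for all $i\geq 1$, and therefore $\HHbR[i][\V] = 0$ for all $i\geq 1$.

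Since every step beyond the invocation of Corollary~\ref{cor:bounded-cohomology-through-action-and-generic-relation} is essentially bookkeeping, there is no substantial obstacle in this final proof: the hard work has already been done in establishing Corollary~\ref{cor:stab-of-tuple-is-bAc} (which in turn relied on the delicate application of Theorem~\ref{thm:bAc-through-action} together with Lemma~\ref{lem:element-in-derived-subgroup} to produce commutators in the derived subgroup that conjugate any finite set into $X_0$) and Lemma~\ref{lem:V-acts-transitively-on-generic-tuples}. The bounded acyclicity of $\V$ is then a direct consequence, and Corollary~\ref{INT_cor:V-is-universally-bAc} follows by combining this theorem with the $\ZZ$-acyclicity of $\V$ from \cite[Theorem~6.4]{SzyminWahl2019}.
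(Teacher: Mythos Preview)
Your proposal is correct and follows essentially the same approach as the paper. The only cosmetic difference is that you invoke the packaged Corollary~\ref{cor:bounded-cohomology-through-action-and-generic-relation} directly, whereas the paper unwinds it into its constituents (Proposition~\ref{prop:generic-rel->LES}, Lemma~\ref{lem:transitive-actions->stabilizer}, and Proposition~\ref{prop:bounded-cohomology-through-acyclic-resolutions}); the computation of the invariant cohomology via the alternating $0,\mathrm{id},0,\mathrm{id},\dots$ pattern is identical.
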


	For the sake of completeness we go through the proof in detail, although bounded acyclicity follows from all the results we have collected so far together with the strategy of the proof of \Cref{thm:Homeo(K)-is-bAc}.
	
	\begin{proof}
		By \Cref{prop:generic-rel->LES} the sequence
		\[ \LES[0]{\linf{\dyadics^{\neq}_1}, \linf{\dyadics^{\neq}_2}, \linf{\dyadics^{\neq}_3}} \]
		is exact.\\
		Moreover, $\V$ acts transitively on $\dyadics^{\neq}_n$ for all $n\geq1$ (\Cref{lem:V-acts-transitively-on-generic-tuples}) and the stabilizer of a generic tuple $(x_0,\dots,x_n)\in \dyadics^{\neq}_n$ is \bAc by \Cref{cor:stab-of-tuple-is-bAc}. Hence, we are in place to apply \Cref{lem:transitive-actions->stabilizer} and deduce that $\HHbc[][\V][\linf{\dyadics^{\neq}_n}] = 0$ for all $n\geq1$.\\
		Now by \Cref{prop:bounded-cohomology-through-acyclic-resolutions} we get that 
		\[ \HHbR[i][\V] \cong \HH[i][\linf{\dyadics_{\bullet}^{\neq}}^\V] \quad\text{for all } i\geq 0.\]
		Since $\V$ acts transitively on $\dyadics_{\bullet}^{\neq}$ we can argue in the same way as in the end of the proof of \Cref{thm:Homeo(K)-is-bAc} to show that $\HH[i][\linf{\dyadics_{\bullet}^{\neq}}^\V] = 0$ for all $i\geq1$, hence $\V$ is \bAc.
	\end{proof}

	\begin{remark}\label{rem:BC-of-Thompsons-group-F-and-T}
		It was recently shown \cite[Theorems 1 and 2]{monod2021lamplighters}, that also Thompson's group $F$ is \bAc. This also reveals the bounded cohomology of Thompson's group $T$ \cite[Corollary 6.12]{fournierfacio2021binate} with real coefficients. However, it turns out that $T$ is not \bAc. Instead, $\HHbR[][T]$ is isomorphic to the polynomial ring $\RR[x]$ with $\abs{x}=2$ and the bounded Euler class of $T$ is a polynomial generator of $\HHbR[][T]$. See \cite[Section 10.2]{2017Frigerio} for a detailed discussion on the bounded Euler class.
	\end{remark}
	
	Recall that the homology of $\V$ with coefficients in $\ZZ$ vanishes \cite[Theorem 6.4]{SzyminWahl2019}. Thus, 

	\begin{corollary}[\Cref{INT_cor:V-is-universally-bAc}]\label{cor:V-is-universally-bAc}
		$\V$ is \highlight{universally boundedly acyclic}, that is $\HHbc[n][\V][\mathbb{K}] = 0$ for all $n\geq1$ and all complete valued fields $\mathbb{K}$.
	\end{corollary}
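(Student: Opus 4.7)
The plan is to split the proof into cases according to whether the complete valued field $\mathbb{K}$ is archimedean or non-archimedean, and in each case reduce to a known vanishing result. Throughout, the two inputs we have at our disposal are \Cref{thm:V-is-bAc} (that $\HHbR[n][\V]=0$ for all $n\geq 1$) and the Szymik--Wahl result \cite[Theorem 6.4]{SzyminWahl2019} that $\V$ is $\ZZ$-acyclic, i.e. $\HH[n][\V;\ZZ]=0$ for all $n\geq 1$.

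First, by Ostrowski's theorem the archimedean complete valued fields are exactly $\RR$ and $\CC$. The case $\mathbb{K}=\RR$ is \Cref{thm:V-is-bAc} directly. For $\mathbb{K}=\CC$, one observes that $\CC\cong\RR\oplus i\RR$ as normed $\RR$-modules with an equivalent norm, so the complex of bounded $\CC$-valued cochains on $\V$ decomposes (up to bi-Lipschitz equivalence of seminorms) as the direct sum of two copies of the complex of bounded $\RR$-valued cochains. Hence $\HHbc[n][\V][\CC]\cong\HHbR[n][\V]\oplus i\,\HHbR[n][\V]=0$ for $n\geq 1$.

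For a non-archimedean complete valued field $\mathbb{K}$, the plan is to invoke the standard fact (used throughout \cite{fournierfacio2021binate} and attributed there; see also the discussion around \cite[Theorem 1.4]{fournierfacio2021binate}) that, for such $\mathbb{K}$, the inclusion of bounded into arbitrary cochains induces an isomorphism
\[ \HHbc[n][\V][\mathbb{K}]\xrightarrow{\cong}\HH[n][\V;\mathbb{K}] \quad \text{for all } n\geq 0. \]
Intuitively, this is because any finite set of values in $\mathbb{K}$ is automatically bounded under a non-archimedean absolute value, so the obstruction to boundedness disappears when passing to cohomology via a suitable rescaling of representatives. Granted this identification, it remains to verify that $\HH[n][\V;\mathbb{K}]=0$ for $n\geq 1$; this is a consequence of the $\ZZ$-acyclicity of $\V$ together with the universal coefficient theorem, which gives $\HH[n][\V;\mathbb{K}]\cong\HH[n][\V;\ZZ]\otimes_\ZZ\mathbb{K}\oplus\mathrm{Tor}_1^\ZZ(\HH[n-1][\V;\ZZ],\mathbb{K})=0$.

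The main conceptual obstacle is the non-archimedean identification $\HHbc[n][\V][\mathbb{K}]\cong\HH[n][\V;\mathbb{K}]$, which is a general fact about groups and non-archimedean Banach coefficients rather than anything specific to $\V$; once this is invoked, the rest is an assembly of the two ingredients above. Combining the three cases yields $\HHbc[n][\V][\mathbb{K}]=0$ for all $n\geq 1$ and every complete valued field $\mathbb{K}$, as required.
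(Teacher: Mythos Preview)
Your argument is correct and is essentially the same as the paper's: the paper simply invokes \cite[Theorem 5.2]{fournierfacio2021binate}, which packages exactly the implication ``$\ZZ$-acyclic $+$ $\RR$-boundedly acyclic $\Rightarrow$ universally boundedly acyclic'', and your case split (Ostrowski, then $\RR$/$\CC$ versus non-archimedean) is precisely how that theorem is proved. One small quibble: your stated intuition for the non-archimedean comparison isomorphism is off, since finite sets of values are bounded under \emph{any} absolute value; the genuine reason the comparison map $\HHbc[n][\group][\mathbb{K}]\to\HH[n][\group;\mathbb{K}]$ is an isomorphism in the non-archimedean case is the ultrametric inequality, which forces the integers to have absolute value at most $1$ and makes $\mathcal{O}_{\mathbb{K}}$ a subring, so that integral cocycle representatives are automatically bounded.
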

	\begin{proof}
		Indeed, by \cite[Thereom 5.2]{fournierfacio2021binate} this follows from $\ZZ$-acyclicity (see paragraph above \Cref{sec:BC-of-thompsons-group-V}) and $\RR$-bounded acyclicity of $\V$, proven in \Cref{thm:V-is-bAc}.
	\end{proof}

	Hence \Cref{thm:V-is-bAc} makes Thompson's group $\V$ the first example of a finitely generated universally \bAc group, as well as the first example of a universally \bAc group of type $F_\infty$.

	\newpage
	
 \section{Conclusion, Outlook and Further Questions}\label{sec:outlook-and-open-questions}

	The two approaches to calculate the bounded cohomology of the full group of homeomorphisms of a Cantor set $\Homeo(\Cantor)$ and of Thompson's group $\V$ which is a subgroup of $\Homeo(\Cantor)$ given in this thesis share a lot of similarities. In both cases we deduced the vanishing of bounded cohomology by using actions on certain sets. We needed one structural property on the sets, namely that they carry a generic relation (\Cref{def:generic_rel}) which is preserved by the corresponding group action. We further exploited that the group actions were highly-transitive on specific tuples of points and more importantly, that the stabilizers of those tuples where themselves boundedly acylic subgroups.\\
	
	Here we want to point out two things:\\
	First, in our cases the group actions were rich enough to deduce high transitivity not only on the underlying set but also on tuples (restricted by the generic relation) of arbitrary size. However, as suggested by \Cref{cor:bounded-cohomology-through-action-and-generic-relation} and \Cref{rem:gen-of-bAc-stabilizer->bAc-module} such a strong condition on the group action is in general not necessary.\\
	
	Second, although the overall strategy of the proofs of \Cref{INT_thm:Homeo(K)-is-bAc} and \Cref{INT_thm:V-is-bAc} is the same the underlying arguments were quite different. On the one hand, in the case of $\Homeo(\Cantor)$ we showed that the stabilizers of tuples/points under the corresponding action are boundedly acyclic by proving that they are dissipated groups (\Cref{lem:stabilizer->dissipated->bAc,cor:tuple-stabilizer->dissipated}). As discussed in the preamble of \Cref{subsec:bAc-of-point-stabilizers}, the stabilizers of tuples/points under the associated action of $\V$ are neither dissipated nor binate due to the fact that Thompson's group $\V$ is in some sense \enquote{too small}. Therefore, we needed a different result (\Cref{thm:bAc-through-action}) to show that the stabilizer subgroups are boundedly acyclic. On the other hand, in the case of Thompson's group $\V$ the underlying set whereon $\V$ acted was the set of dyadic points $\dyadics$ in the binary Cantor model, which is somehow a more natural set than the artificially created set of fat points (\Cref{def:fat-points}) for the $\Homeo(K)$ action. For the letter mentioned set we additionally had to argue that the stabilizer of a fat point is indeed the rigid stabilizer subgroup (see \Cref{subsec:fat-points-on-K} for details), whereas the stabilizer subgroup of a dyadic point is evident.\\
	
	These differences arose from looking at the situation from two points of view. Regarding $\Homeo(\Cantor)$, we first observed that rigid stabilizers of the natural action on the set $\Cantor$ are boundedly acyclic and looked for an action of $\Homeo(\Cantor)$ such that the point stabilizers coincide with these subgroups. Regarding $\V$, we only considered the natural group action on $\CantorModel$ and found a (dense) subset $\dyadics$ for which we proved the bounded acyclicity of the point-stabilizers directly.\\
	
	
	Nevertheless, we want to stress that one could match these two strategies even more. This could for example be achieved by working with rigid stabilizers for Thompson's group $\V$ (\Cref{rem:fat-points-for-V}) or by applying \Cref{thm:bAc-through-action} to rigid stabilizers with respect to the natural action $\Homeo(\Cantor)\curvearrowright\Cantor$. Therefore, we expect that there is a more general subtle reason for the vanishing of bounded cohomology of the groups under consideration and hence pose the following question:
	
	\begin{question}
		Let $\group$ be a topological full group of a Cantor set $\Cantor$ and $S\subseteq\Cantor$ a dense subset. Suppose $\group$ acts highly transitively on $S$. Is $\group$ boundedly acyclic?
	\end{question}
	
	Here we use the definition of topological full group of a Cantor set $\Cantor$ given in \cite{Giordano1999}:
	
	\begin{definition}[{\cite[Definition 2.2]{Giordano1999}}]
		Let $\group$ be a subgroup of $\Homeo(\Cantor)$.\\
		To any finite family $(U_k,\groupel_k)_{k=1,\dots,n}$, where $U_k\subseteq\Cantor$ are closed and open subsets, $\groupel_k\in\group$ and
		\[ \Cantor = \bigsqcup_{k=1}^n U_k = \bigsqcup_{k=1}^n \groupel_k(U_k), \]
		we associate the homeomorphism $g$ of $\Cantor$ defined by
		\[ g(x) = \groupel_k(x), \text{for all } x\in U_k. \]
		Then the \highlight{topological full group} $[[\group]]$ of $\group$ is the group of all such homeomorphisms.
	\end{definition}
	
	Apart from the whole group $\Homeo(\Cantor)$, the groups of all quasi-isometries of $\Cantor$ and of all almost-isometries of $\Cantor$ are both examples of topological full groups, see \cite[Section 7, Lemma 7.2]{2017SwiatoslawGismatullin} for the relevant definitions and the result.\\
	
	A positive answer to the question above would handle the particular cases of $\Homeo(K)$ and Thompson's group $V$ simultaneously. However, there are many more groups to which such a statement would be applicable.\\
	Anyway, this question definitely needs further investigation and could be part of future work. Moreover, so far we only considered groups acting on Cantor sets. However, maybe the overall strategy could be applied to more general situations.
	
	\begin{question}
		To which other sets $X$ (and groups $\group$) is the strategy of the proofs of \Cref{INT_thm:Homeo(K)-is-bAc} and \Cref{INT_thm:V-is-bAc} also applicable? More precisely, what are sufficient assumptions on the set $X$ and/or the group $\group$ such that we have a generic relation $\generic$ on $X$ (preserved by the group action) for which the point stabilizers of the associated semi-simplicial set are (uniformly) \bAc?
	\end{question}

	In this setting it might be interesting to look at groups acting on a \highlight{dendrite}, which is a locally connected, connected, compact metrizable space, without simple closed curves. See \cite{Duchesne2018Groups, Duchesne2018Structural} for further information on dendrites.

	\newpage
	
	\appendix
	\addcontentsline{toc}{part}{Appendix}
 \section{Cantor sets and maps between them}\label{APX_sec:cantor-sets-and-maps-between-them}
	
	\subsection{Topological properties of the Cantor set}
	As usual, we denote by $\Cantor$ a Cantor set. Our goal is to show that $\Homeo(\Cantor)$ acts transitively on the Cantor set $\Cantor$. In order to prove this in \Cref{APX_cor:Homeo(Cantor)-acts-transitively} we need some auxiliary results.
	
	\begin{lemma}[{\cite[Section 12, Theorem 4]{moise2013geometric}}]\label{APX_lem:epsilon-cover-of-Cantor-sets}
		Let $\Cantor$ be a Cantor set and $\epsilon>0$. There is a finite collection $\mathcal{W} := \{W_1,\dots,W_{n(\epsilon)}\}$ of non-empty disjoint closed and open subsets of $\Cantor$, such that the diameter $\diam(W_i) = \sup_{x,y\in W_i} \CantorMetric(x,y)$ is smaller than $\epsilon$ for all $1\leq i\leq n$, and that $\Cantor = \bigcup_{i=1}^n W_i$.\\
	\end{lemma}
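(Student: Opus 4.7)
The plan is to combine the self-similarity result (Corollary~\ref{cor:self-similarity}) with the compactness of $\Cantor$, and then to disjointify the resulting finite cover.

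First, for every point $x \in \Cantor$ I would produce a clopen neighborhood of $x$ of small diameter. Fix $x \in \Cantor$ and consider the open ball $B(x, \epsilon/3)$. By Corollary~\ref{cor:self-similarity} there is a clopen neighborhood $O_x \subseteq B(x,\epsilon/3)$ of $x$ (in fact one homeomorphic to $\Cantor$, though we only need the clopen property here). Since $O_x \subseteq B(x,\epsilon/3)$, we have $\diam(O_x) \leq 2\epsilon/3 < \epsilon$. The family $\{O_x\}_{x \in \Cantor}$ is an open cover of $\Cantor$, so by compactness there exists a finite subcover $O_{x_1}, \dots, O_{x_m}$.

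Next I would make this cover disjoint by the standard difference trick. Define inductively
\[
    W_1 := O_{x_1}, \qquad W_i := O_{x_i} \setminus \bigcup_{j<i} O_{x_j} \quad \text{for } 2 \leq i \leq m.
\]
Each $W_i$ is clopen, since finite unions and complements of clopen sets are clopen in $\Cantor$, and by construction the $W_i$ are pairwise disjoint and still cover $\Cantor$. Moreover $W_i \subseteq O_{x_i}$, so $\diam(W_i) \leq \diam(O_{x_i}) < \epsilon$ for all $i$. Finally, I would discard any $W_i$ that happens to be empty and relabel, obtaining the required finite collection $\mathcal{W} = \{W_1, \dots, W_{n(\epsilon)}\}$ of non-empty disjoint clopen sets of diameter less than $\epsilon$ covering $\Cantor$.

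The argument is essentially routine once self-similarity is in hand; no step presents a genuine obstacle. The only small care needed is to shrink the radius from $\epsilon/2$ to something like $\epsilon/3$ in the first step in order to guarantee a \emph{strict} inequality $\diam(W_i) < \epsilon$ rather than $\leq \epsilon$, and to note explicitly that the difference of two clopen sets is clopen so that the disjointification preserves the clopen property.
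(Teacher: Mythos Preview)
Your proof is correct and follows essentially the same strategy as the paper: produce small clopen neighborhoods around each point, extract a finite subcover by compactness, then disjointify via $W_i := O_{x_i}\setminus\bigcup_{j<i}O_{x_j}$ and drop empty pieces. The only cosmetic difference is that the paper invokes Lemma~\ref{lem:closed-subsets-of-compact-metric-space} directly (separating $\overline{B(x_i,\epsilon/2)}$ from $\Cantor\setminus B(x_i,\epsilon)$) to obtain the clopen sets, whereas you quote Corollary~\ref{cor:self-similarity}, which is itself a packaged consequence of that same lemma.
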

	We call such a collection an \highlight{\CantorPartition of $\Cantor$}.
	\begin{proof}
		The sets
		\[ B(x,\epsilon/2)\quad x\in\Cantor \]
		form an open cover of $\Cantor$. By compactness, there are finitely many points $x_1,\dots,x_m\in\Cantor$ such that $\Cantor = \bigcup_{i=1}^m B(x_i,\epsilon/2)$.\\
		For all $1\leq i\leq m$ define the two disjoint closed sets $A_i := \overline{B(x_i,\epsilon/2)}$ and $B_i := \Cantor\setminus B(x_i,\epsilon)$. Fix an index $i$. Since $\Cantor$ is totally disconnected, by \Cref{lem:closed-subsets-of-compact-metric-space}, we can decompose $\Cantor$ into two closed, disjoint sets $C_{A_i}$ and $C_{B_i}$ which contain $A_i$ and $B_i$ respectively. Observe that $B_i\subseteq C_{B_i}$ and $C_{A_i}\cap C_{B_i} = \emptyset$ imply that $C_{A_i}\subseteq B(x_i,\epsilon)$, in particular $\diam(C_{A_i}) < \epsilon$. Moreover note that $C_{A_i}$ is closed and open, since $C_{B_i}$ is closed.\\
		We can now define $W_i$ inductively:\\
		First, set $W_1 := C_{A_1}\supseteq B(x_1,\epsilon/2)$ and then define
		\[ W_i := C_{A_i}\setminus (\bigcup_{j=1}^{i-1} W_j) \]
		for $i\geq2$.\\
		$W_1$ is closed and open, hence, by induction also $W_i$ is closed and open since it arises from a closed set $C_{A_i}$ intersected with finitely many closed sets $\Cantor\setminus W_1,\dots,\Cantor\setminus W_{i-1}$. Furthermore we have
		\[ \bigcup_{i=1}^m W_i = \bigcup_{i=1}^m C_{A_i} \supseteq \bigcup_{i=1}^m A_i \supseteq \bigcup_{i=1}^m B(x_i,\epsilon/2) = \Cantor \]
		hence the collection $\{W_1,\dots,W_m \}$ covers $\Cantor$. In order to finish the argument, we just have to drop all the empty sets $W_i$, which might occur for $i\geq 2$, if $C_{A_i}\subseteq\bigcup_{j=1}^{i-1} W_j$. Then (after re-indexing accordingly) we get the desired collection 
		\[ \mathcal{W} := \{W_1,\dots,W_{n(\epsilon)}\}. \]
		Note, since the sets $W_1,\dots,W_n(\epsilon)$ are all closed, disjoint and cover $\Cantor$, they are in particular also open sets. In particular, by \Cref{thm:clopen-subset->Cantor-set} the set $W_i$ is a Cantor set for all $1\leq i\leq n(\epsilon)$.
	\end{proof}

	\begin{corollary}\label{APX_cor:exact-cover-of-Cantor-sets}
		Let $\Cantor$ be a Cantor set, $\epsilon>0$ and $m\geq1$. Then there is a collection $\mathcal{W} := \{W_1,\dots,W_{m}\}$ of exactly $m$ non-empty disjoint closed subsets of $\Cantor$, such that $\Cantor = \bigcup_{i=1}^m W_i$ and that $\diam(W_i)<\epsilon$ for all $1\leq i< m$.\\
		Additionally, all the sets $W_1,\dots,W_{m}$ are also Cantor sets.
	\end{corollary}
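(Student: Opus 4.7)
The plan is to bootstrap the result from the preceding \Cref{APX_lem:epsilon-cover-of-Cantor-sets}, which already produces an $\epsilon$-fine partition $\{V_1,\dots,V_{n(\epsilon)}\}$ of $\Cantor$ into non-empty pairwise disjoint clopen subsets. Each $V_i$ is automatically a Cantor set by \Cref{thm:clopen-subset->Cantor-set}. All that remains is bookkeeping: adjust the number of pieces from $n(\epsilon)$ to exactly $m$, while preserving the diameter condition on the first $m-1$ pieces and the property of each being a Cantor set.

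First I would handle the case $n(\epsilon)\geq m$ by \emph{merging}. Set $W_i:=V_i$ for $1\leq i<m$ and $W_m:=\bigcup_{i=m}^{n(\epsilon)}V_i$. The sets $W_1,\dots,W_{m-1}$ inherit the bound $\diam(W_i)<\epsilon$ from the lemma and are Cantor sets. The set $W_m$ is a finite union of pairwise disjoint clopen subsets of $\Cantor$, hence itself clopen and non-empty, hence a Cantor set by \Cref{thm:clopen-subset->Cantor-set}. The sets are pairwise disjoint and cover $\Cantor$ by construction.

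Next I would handle the case $n(\epsilon)<m$ by iterative \emph{splitting}. The idea is that any Cantor set admits a partition into exactly two non-empty disjoint clopen (hence Cantor) subsets: since a Cantor set is infinite and totally disconnected, applying \Cref{APX_lem:epsilon-cover-of-Cantor-sets} to it with a sufficiently small parameter yields at least two pieces, which can be merged down to exactly two as in the first case. Starting from $\{V_1,\dots,V_{n(\epsilon)}\}$, pick any $V_j$ with $j<n(\epsilon)$, split it into two non-empty clopen subsets, and replace $V_j$ by these two pieces. This increases the total count by one without destroying the diameter bound on any piece indexed below the last one (diameters of subsets only decrease, and the new pieces sit inside $V_j$). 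Iterating this $m-n(\epsilon)$ times produces exactly $m$ non-empty pairwise disjoint clopen subsets covering $\Cantor$, each with diameter $<\epsilon$, and each a Cantor set.

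The main obstacle is essentially nil, since all the real work has already been done in \Cref{APX_lem:epsilon-cover-of-Cantor-sets} and \Cref{thm:clopen-subset->Cantor-set}; the only mildly delicate point is justifying that every Cantor set splits into two non-empty clopen pieces, which one sees immediately from total disconnectedness together with the existence of two distinct points. No additional structural property of $\Cantor$ is needed.
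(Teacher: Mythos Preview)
Your argument is correct. The paper's proof is a slight streamlining of your merging case: rather than splitting into two cases, it observes that by choosing a smaller parameter $\epsilon'<\epsilon$ one can force the \CantorPartition[$\epsilon'$] from \Cref{APX_lem:epsilon-cover-of-Cantor-sets} to already have at least $m$ pieces (since $\Cantor$ is infinite), and then merges exactly as you do. This sidesteps your splitting case entirely. Your approach is equally valid and in fact makes explicit the useful observation that any Cantor set splits into two non-empty clopen pieces; the only cosmetic wrinkle is that your restriction ``pick $V_j$ with $j<n(\epsilon)$'' is unnecessary (all $V_j$ already have diameter $<\epsilon$, so splitting any of them is fine) and would be vacuous in the edge case $n(\epsilon)=1$.
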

	We call such a collection an \highlight{\exactCantorPartition[$m$] of $\Cantor$}.
	\begin{proof}
		This is an immediate consequence of \Cref{APX_lem:epsilon-cover-of-Cantor-sets}. Indeed, since every Cantor set has infinitely many points, we can choose $\epsilon'$, with $\epsilon>\epsilon'>0$, such that the \CantorPartition[$\epsilon'$] $\mathcal{\widetilde{W}} = \{W_1,\dots,W_{n(\epsilon')}\}$ has at least $n$ elements. Now we can just amalgamate the last $n(\epsilon') - (m-1)$ sets to obtain a collection of exactly $m$ disjoint non-empty closed subsets 
		\[ \mathcal{W} := \{W_1,\dots,W_{m-1},\bigcup_{i=m}^{n(\epsilon)}W_i\} \]
		which partition the Cantor set. Moreover, since we did not change the first $(m-1)$ sets, the statement about the diameter is clear.\\
		The last statement follows again from \Cref{thm:clopen-subset->Cantor-set}.
	\end{proof}

	\subsection{Cantor sets are homeomorphic and \texorpdfstring{$\Homeo(\Cantor)$}{Homeo(K)} acts transitively}

	\begin{theorem}[\Cref{thm:Cantor-sets-are-homeomorphic}]\label{APX_thm:Cantor-sets-are-homeomorphic}
		Every two Cantor sets are homeomorphic.
	\end{theorem}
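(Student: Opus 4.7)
The plan is to build, for each Cantor set, a tower of nested clopen partitions whose pieces shrink in diameter, and to maintain a compatible bijective indexing between the two towers. The homeomorphism then emerges as the unique map sending the intersection point of a shrinking chain on one side to the intersection point of the matched chain on the other side. The key ingredient is that in both Cantor sets, via \Cref{APX_cor:exact-cover-of-Cantor-sets}, we can prescribe the \emph{exact} number of pieces in a partition (with one piece of arbitrary diameter, the rest arbitrarily small), which is what lets the two towers be kept in lockstep.

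More precisely, write $\Cantor$ and $\Cantor'$ for the two given Cantor sets. I will construct inductively, for every $k \geq 0$, finite index sets $I_k$ and partitions $\{W^k_i\}_{i \in I_k}$ of $\Cantor$, $\{V^k_i\}_{i \in I_k}$ of $\Cantor'$ into non-empty clopen subsets, such that for $k \geq 1$ every $W^k_i$ and $V^k_i$ has diameter $<2^{-k}$, and such that the partition at level $k+1$ refines the one at level $k$ \emph{in the same combinatorial way} on both sides: there is a surjection $I_{k+1} \twoheadrightarrow I_k$, $j \mapsto \pi(j)$, with $W^{k+1}_j \subseteq W^k_{\pi(j)}$ and $V^{k+1}_j \subseteq V^k_{\pi(j)}$ for all $j$. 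Starting from $I_0 = \{*\}$, $W^0_* = \Cantor$, $V^0_* = \Cantor'$, the induction step works as follows: for each index $i \in I_k$, the piece $W^k_i$ is clopen in $\Cantor$ and hence a Cantor set by \Cref{thm:clopen-subset->Cantor-set}, so \Cref{APX_lem:epsilon-cover-of-Cantor-sets} applied inside $W^k_i$ with $\epsilon = 2^{-(k+1)}$ produces a partition into some number $m_i \geq 1$ of non-empty clopen pieces of diameter $<2^{-(k+1)}$. Then $V^k_i$ is also a Cantor set, and by \Cref{APX_cor:exact-cover-of-Cantor-sets} it admits a partition into \emph{exactly} $m_i$ non-empty clopen pieces of diameter $<2^{-(k+1)}$. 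Letting $I_{k+1}$ be the disjoint union of the labelings of these refinements yields matching refinements indexed by a common set.

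Given the towers, define $\phi : \Cantor \to \Cantor'$ as follows. For $x \in \Cantor$, let $i_k(x) \in I_k$ be the unique index with $x \in W^k_{i_k(x)}$; the refinement compatibility gives $\pi(i_{k+1}(x)) = i_k(x)$, hence $V^{k+1}_{i_{k+1}(x)} \subseteq V^k_{i_k(x)}$. Each $V^k_{i_k(x)}$ is closed in the compact $\Cantor'$, nested, and has diameter tending to $0$, so by Cantor's intersection theorem $\bigcap_k V^k_{i_k(x)}$ consists of a single point, which I declare to be $\phi(x)$. The same construction in reverse produces $\psi : \Cantor' \to \Cantor$, and $\psi \circ \phi$, $\phi \circ \psi$ are the identities by uniqueness of the intersection points. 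Continuity of $\phi$ is immediate: for $\epsilon > 0$ choose $k$ with $2^{-k} < \epsilon$; then $W^k_{i_k(x)}$ is an open neighborhood of $x$ whose $\phi$-image lies inside $V^k_{i_k(x)}$, a set of diameter $<\epsilon$. Continuity of $\psi$ is the same, so $\phi$ is a homeomorphism.

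There is no real obstacle beyond bookkeeping. The only subtlety to watch in writing is that the refinement step on the $\Cantor'$-side genuinely uses the strengthened \Cref{APX_cor:exact-cover-of-Cantor-sets} (prescribing the number of pieces), not just \Cref{APX_lem:epsilon-cover-of-Cantor-sets}, since otherwise the cardinalities produced on the two sides would typically disagree; this is precisely why the earlier subsection set up that corollary. Everything else is standard nested-partition bookkeeping.
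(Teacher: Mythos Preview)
Your overall architecture is the same as the paper's, but there is a genuine gap in the inductive step on the $\Cantor'$-side. You write that by \Cref{APX_cor:exact-cover-of-Cantor-sets} the piece $V^k_i$ admits a partition into exactly $m_i$ non-empty clopen pieces \emph{of diameter $<2^{-(k+1)}$}. That is not what the corollary says: it guarantees $\diam(W_j)<\epsilon$ only for $1\le j<m$, while the last piece $W_m$ is an amalgam and carries no diameter bound at all. Consequently, in your construction one piece of every refinement on the $\Cantor'$-side may stay large, and there can be a nested chain $V^0_{i_0(x)}\supseteq V^1_{i_1(x)}\supseteq\cdots$ whose diameters do \emph{not} tend to $0$; for such an $x$ the intersection need not be a single point, and neither the well-definedness of $\phi$ nor its continuity follows.

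This is exactly why the paper alternates: at odd steps it applies the $\epsilon$-cover lemma on the $\Cantor$-side (forcing all pieces there to be small) and the exact-count corollary on the $\Cantor'$-side; at even steps it swaps roles. Then every piece on the $\Cantor'$-side becomes small at the next even step, and Claim~\ref{claim:shrinking-subsets} goes through. If you prefer not to alternate, you can instead repair the step by first taking $\epsilon$-covers on \emph{both} $W^k_i$ and $V^k_i$ and then splitting pieces (each is a Cantor set, hence splits into two non-empty clopens without increasing diameter) on whichever side has fewer until the counts agree; but some such device is needed, and the corollary alone does not provide it.
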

	\begin{proof}
		Let $\Cantor$ and $\Cantor'$ be two Cantor sets. We want to find a homeomorphism $f:\Cantor\to\Cantor'$.	However, since we have no explicit description of the two given Cantor sets, we have also no chance in giving an explicit homeomorphism. In fact, the homeomorphism $f$ we are going to construct is given very implicitly. The main tools we are going to use over and over again are \Cref{APX_lem:epsilon-cover-of-Cantor-sets} and \Cref{APX_cor:exact-cover-of-Cantor-sets}. This will eventually allow us to define a bijective map $f:\Cantor\to\Cantor'$, for which we have to show continuity in order to deduce that it is also a homeomorphism (\cite[Theorem 7.8]{bredon_topology_1993}).\\

		We will now define a sequence of maps $\seq[\sigma][n]$ between finite collections of certain subsets of $\Cantor$ and $\Cantor'$ of diameter converging to $0$, which will lead to the construction of $f:\Cantor\to\Cantor'$.\\
		First, let $\seq[\epsilon][n]$ be a sequence with $\seqel[\epsilon][n] = 2^{-n}$. Then define the sequences  $(\mathcal{W}^n)_{n}$, $(\mathcal{W'}^n)_{n}$ of collections of non-empty disjoint closed subsets of $\Cantor$ and $\Cantor'$ respectively, inductively as follows:\\
		Let $\mathcal{W}^1 := \{W^1_1,\dots,W^1_{n(\epsilon_1)}\}$ be an \CantorPartition[$\epsilon_1$] and $\mathcal{W'}^1 := \{W'^1_1,\dots,W'^1_{n(\epsilon_1)}\}$ be an \exactCantorPartition[$n(\epsilon_1)$]. Observe that the diameter of every set $W_i$ in $\mathcal{W}^1$ is smaller than $\epsilon_1$, which might not be the case for $W'_{n(\epsilon_1)}$ in $\mathcal{W'}^1$. However, since the cardinality of both collections is the same, we can choose an arbitrary bijection $\sigma_1:\mathcal{W}^1\to\mathcal{W'}^1$  between the sets.\\
		Assume now that, for $1\leq k<n$, $\mathcal{W}^k$ and $\mathcal{W'}^k$ as well as $\sigma_k:\mathcal{W}^k\to\mathcal{W'}^k$ are already constructed. We will define $\mathcal{W}^n$, $\mathcal{W}^n$ and $f_n$ depending on the parity of $n$:\\
		
		If $n$ is even, we first construct an \CantorPartition[$\epsilon_n$] $\mathcal{W}'^n_j = \{W'^n_{j,1},\dots,W'^n_{j,n_j(\epsilon_n)}\}$ for every set $W'^{n-1}_j\in\mathcal{W}'^{n-1}$ and then define 
		\[ \mathcal{W}'^n := \bigcup_{W'^{n-1}_j\in\mathcal{W}'^{n-1}} \mathcal{W}'^n_{j} = \{W'^n_1,W'^n_2,W'^n_3,\dots\} \]
		Further, using \Cref{APX_cor:exact-cover-of-Cantor-sets}, for every set $W'^{n-1}_j\in\mathcal{W}'^{n-1}$ we partition the set $W^{n-1}_i = \sigma_{n-1}^{-1}(W'^{n-1}_j)\in\mathcal{W}^{n-1}$ into exactly $n_j(\epsilon_n)$ many disjoint non-empty closed subsets $\mathcal{W}^n_i := \{W^n_{i,1},\dots,W^n_{i,n_j(\epsilon_n)}\}$ and similarly define
		\[ \mathcal{W}^n := \bigcup_{W^{n-1}_i\in\mathcal{W}^{n-1}} \mathcal{W}^n_i  = \{W^n_1,W^n_2,W^n_3,\dots\} \]
		Note that $\abs{\mathcal{W}^n} = \abs{\mathcal{W}'^n}<\infty$. Finally, we choose a bijection $\sigma_n:\mathcal{W}^n\to\mathcal{W}'^n$, which satisfies $\sigma_n(W^n_{i,l}) \in\mathcal{W}'^n_j$ if and only if $\sigma_{n-1}(W^{n-1}_i) = W^{n-1}_j$ for all $W^n_{i,l}\in\mathcal{W}^n$. This is possible, because $\abs{\mathcal{W}^n_i} = \abs{\mathcal{W}'^n_j}$ holds by construction.
		
		If $n$ is odd we do essentially the same thing but we switch the roles of $\mathcal{W}^{n-1}$ and $\mathcal{W}'^{n-1}$. That is, first we partition the sets in $\mathcal{W}^{n-1}$ into sets of diameter less than $\epsilon_n$, and set
		\[ \mathcal{W}^n := \bigcup_{W^{n-1}_i\in\mathcal{W}^{n-1}} \mathcal{W}^n_i \]
		Afterwards we decompose the set $W'^{n-1}_j = \sigma_{n-1}(W^{n-1}_i)\in\mathcal{W}'^{n-1}$ into exactly $n_i(\epsilon_n)$ many sets using \Cref{APX_cor:exact-cover-of-Cantor-sets} and define
		\[ \mathcal{W}'^n := \bigcup_{W'^{n-1}_j\in\mathcal{W}'^{n-1}} \mathcal{W}'^n_j \]
		The bijection $\sigma_n:\mathcal{W}^n\to\mathcal{W'}^n$ can then be chosen to have the same properties as in the case of $n$ being even.\\
		
		Not that it was necessary to distinguish the case where $n$ is even or odd since we need that the maximal diameter of a set in both sequences of collections $(\mathcal{W}^n)_{n}$, $(\mathcal{W}'^n)_{n}$ converges to $0$. If we would apply \Cref{APX_lem:epsilon-cover-of-Cantor-sets} to the sets in $\mathcal{W}^n$ and \Cref{APX_cor:exact-cover-of-Cantor-sets} to the sets in $\mathcal{W'}^n$,for all $n\geq1$, we would not have a control on the maximal diameter of a set in $\mathcal{W'}^n$.\\
		Moreover, recall that if $W\in\mathcal{W}^{n+1}_i\subseteq\mathcal{W}^{n+1}$ is a subset of $W^n_i\in\mathcal{W}_{n}$, then $\sigma_{n+1}(W)\in\mathcal{W}'^{n+1}_j\subseteq\mathcal{W}'^{n+1}$, where $\sigma_n(W^n_i) = W'^n_j$. So in particular, $\sigma_{n+1}(W)\subseteq \sigma_n(W^n_i)$ holds.
		
		\begin{claim}\label{claim:shrinking-subsets}
			Let $n\geq1$. For all $W\in\mathcal{W}^{2n}\cup\mathcal{W}^{2n-1}$ we have $\diam(W) < \epsilon_{2n-1}$.
			Similarly, for all $W'\in\mathcal{W'}^{2n}\cup\mathcal{W'}^{2n+1}$ we have $\diam(W') < \epsilon_{2n}$.
		\end{claim}
		\begin{claim-proof}[Proof of \Cref{claim:shrinking-subsets}]
			For $W\in\mathcal{W}^{2n-1}$ and $W'\in\mathcal{W}'^{2n}$ this follows immediately from the construction of the collection of subsets $\mathcal{W}^{2n-1}$ and $\mathcal{W}'^{2n}$. Moreover, every element of $\mathcal{W}^{2n}$ is a subset of an element in $\mathcal{W}^{2n-1}$ and every element of $\mathcal{W}'^{2n+1}$ is a subset of an element in $\mathcal{W}^{2n}$, so the diameter cannot increase.
		\end{claim-proof}
	
		We can finally focus on defining the map $f:\Cantor\to\Cantor'$. To every $x\in\Cantor$, we can associate a (unique) sequence of indices $\seq[n^x]$, such that $\forall k\geq1:~x\in W^k_{\seqel[n^x]}\in\mathcal{W}^k$. (The collection $\mathcal{W}^k$ consists of disjoint sets and covers $\Cantor$, so $\seqel[n^x]$ exists and is unique for all $k\geq1$.)\\
		We define $f(x)$ to be the unique point in $\bigcap_{k=1}^\infty \sigma_k(W^k_{\seqel[n^x]})$.
		\begin{claim}\label{claim:well-defined}
			The map $f$ constructed above is well-defined, i.e. the intersection is non-empty and consists of a single point.
		\end{claim}
		\begin{claim-proof}[Proof of \Cref{claim:well-defined}]
			Notice that the sequence $(W^k_{\seqel[n^x]})_k$ is nested, i.e.
			\[ W^1_{\seqel[n^x][1]}\supseteq W^2_{\seqel[n^x][2]}\supseteq W^3_{\seqel[n^x][3]}\dots\]
			hence, by the last lines above \Cref{claim:shrinking-subsets} also 
			\[ \sigma_1(W^1_{\seqel[n^x][1]})\supseteq \sigma_2(W^2_{\seqel[n^x][2]})\supseteq \sigma_3(W^3_{\seqel[n^x][3]})\dots \]
			holds. Therefore $(\sigma_k(W_{\seqel[n^x]}))_k$ is a sequence of non-empty nested closed (hence compact) subsets of $\Cantor'$, so their intersection is non-empty by Cantor's Intersection Theorem. Moreover, since $\diam(\sigma_{2k}(W^{2k}_{\seqel[n^x]}))_{2k}) < \epsilon^{2k} = 2^{-2k}$ the intersection cannot contain more than one point, hence $f$ is well-defined.
		\end{claim-proof}
	
		\begin{claim}\label{claim:bijective}
			The map  $f:\Cantor\to\Cantor'$ is bijective.
		\end{claim}
		\begin{claim-proof}[Proof of \Cref{claim:bijective}]
			Since for all $n\geq1$ the map $\sigma_n:\mathcal{W}^n\to\mathcal{W'}^n$ is a bijection, we can use the maps $\sigma_n^{-1}:\mathcal{W'}^n\to\mathcal{W}^n$, to define a map $g:\Cantor'\to\Cantor$, analogously to the construction of $f$.\\
			One can then easily deduce that $g\circ f = id$ and $f\circ g = id$, so $f$ is bijective.
		\end{claim-proof}
	
		\begin{claim}\label{claim:homeomorphism}
			The bijective map $f:\Cantor\to\Cantor'$ is continuous, hence a homeomorphism. 
		\end{claim}
		\begin{claim-proof}[Proof of Claim~\ref{claim:homeomorphism}]
			Let $U'\subseteq\Cantor'$ be an open subset and $x'\in U'$. Let $x := f^{-1}(x')$. For the sequence of indices $\seq[n^{x'}]$ associated to $x'$ (such that $\forall k\geq1:~x'\in W'^k_{\seqel[n^{x'}]}\in\mathcal{W'}^k$), the diameter of the sets $W'^k_{\seqel[n^{x'}]}$ converges to $0$, hence there is $N\geq1$, such that $W'^N_{\seqel[n^{x'}][N]}\subseteq U'$. Now the set $W := \sigma_n^{-1}(W'^N_{\seqel[n^{x'}][N]})\in\mathcal{W}_N$ is open and closed in $\Cantor$ and a neighborhood of $x$.\\			
			Observe that for every point $y\in W$, the associated sequence of indices $\seq[n^y]$ has to satisfy $W'^N_{\seqel[n^y][N]} = W$, by the uniqueness of the subset in $\mathcal{W}^N$ containing $y$.\\
			Thetefore it holds that $f(W) = f(\sigma_n^{-1}(W'^N_{\seqel[n^{x'}][N]})) \subseteq W'^N_{\seqel[n^{x'}][N]} \subseteq U'$, so in particular $f^{-1}(U')$ contains an open neighborhood $W$ of $x$, hence $f$ is continuous.
		\end{claim-proof}
		This concludes the implicit construction of the homeomorphism $f:\Cantor\to\Cantor'$ between any two given Cantor sets $\Cantor$ and $\Cantor'$.
	\end{proof}

	An immediate corollary of \Cref{APX_thm:Cantor-sets-are-homeomorphic} is the transitivity of $\Homeo(\Cantor)$, which was crucial in order to show that $\Homeo(\Cantor)$ is \bAc.
	
	\begin{corollary}\label{APX_cor:Homeo(Cantor)-acts-transitively}
		$\Homeo(\Cantor)$ acts transitively on $\Cantor$.
	\end{corollary}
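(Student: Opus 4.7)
Fix $x, y \in \Cantor$; if $x = y$ we take $f = \operatorname{id}$, so assume $x \neq y$. The plan is to construct a homeomorphism $f: \Cantor \to \Cantor$ with $f(x) = y$ by decomposing each of $\Cantor$ (with its basepoint $x$) and $\Cantor$ (with its basepoint $y$) as the disjoint union of the basepoint and a countable family of clopen ``annuli'' whose diameters shrink to zero, then matching the annuli via homeomorphisms supplied by \Cref{APX_thm:Cantor-sets-are-homeomorphic}.

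First I would iterate \Cref{cor:self-similarity} to obtain strictly decreasing nested clopen neighborhoods $\Cantor = U_0 \supsetneq U_1 \supsetneq U_2 \supsetneq \cdots$ of $x$ with $\diam(U_n) < 2^{-n}$, and similarly $\Cantor = V_0 \supsetneq V_1 \supsetneq V_2 \supsetneq \cdots$ of $y$ with $\diam(V_n) < 2^{-n}$. The strict containment clause in \Cref{cor:self-similarity} ensures each $A_n := U_n \setminus U_{n+1}$ and $B_n := V_n \setminus V_{n+1}$ is nonempty; being clopen in $\Cantor$, each of them is itself a Cantor set by \Cref{thm:clopen-subset->Cantor-set}. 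Since $\diam(U_n) \to 0$ and the $U_n$ are closed and nested, Cantor's intersection theorem gives $\bigcap_n U_n = \{x\}$, so
\[ \Cantor = \{x\} \sqcup \bigsqcup_{n \geq 0} A_n, \qquad \Cantor = \{y\} \sqcup \bigsqcup_{n \geq 0} B_n. \]

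Next, \Cref{APX_thm:Cantor-sets-are-homeomorphic} provides homeomorphisms $f_n : A_n \to B_n$ for every $n \geq 0$. I define $f : \Cantor \to \Cantor$ by $f(x) := y$ and $f|_{A_n} := f_n$. This is clearly a bijection by the two disjoint-union decompositions above, and by construction $f(x) = y$.

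Finally I would verify that $f$ is continuous, which by compactness of $\Cantor$ and Hausdorffness of the target will upgrade $f$ to a homeomorphism (see e.g. \cite[Theorem 7.8]{bredon_topology_1993}). For any point $z \neq x$, $z$ lies in a unique $A_n$, which is a clopen neighborhood of $z$ on which $f$ equals the continuous map $f_n$, so $f$ is continuous at $z$. The only nontrivial case is continuity at $x$: given $\epsilon > 0$, choose $N$ with $2^{-N} < \epsilon$; then $U_N$ is an open neighborhood of $x$ and by construction $f(U_N) = \{y\} \cup \bigcup_{n \geq N} B_n \subseteq V_N$, which has diameter less than $\epsilon$. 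This is the step where the shrinking-diameter condition on the $V_n$ is essential, and it is really the only place where anything subtle happens; everything else is bookkeeping on top of \Cref{cor:self-similarity}, \Cref{thm:clopen-subset->Cantor-set}, and \Cref{APX_thm:Cantor-sets-are-homeomorphic}.
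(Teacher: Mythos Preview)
Your proof is correct but takes a genuinely different route from the paper's. The paper does not build a new homeomorphism from scratch; instead it reopens the proof of \Cref{APX_thm:Cantor-sets-are-homeomorphic}, which constructed $f$ via nested finite partitions $\mathcal{W}^n$, $\mathcal{W}'^n$ of the two Cantor sets together with bijections $\sigma_n:\mathcal{W}^n\to\mathcal{W}'^n$. For transitivity it simply imposes one extra constraint on each $\sigma_n$: the piece of $\mathcal{W}^n$ containing $x$ must be sent to the piece of $\mathcal{W}'^n$ containing $y$. Since $f(x)$ is by definition the unique point in $\bigcap_k \sigma_k(W^k_{n^x_k})$, this forces $f(x)=y$.

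By contrast, you treat \Cref{APX_thm:Cantor-sets-are-homeomorphic} as a black box and instead build concentric clopen annuli $A_n$, $B_n$ around $x$ and $y$, match them one by one, and check continuity at the basepoints by hand. Your approach is more self-contained (one need not recall how the bijections $\sigma_n$ were built), while the paper's is shorter precisely because the heavy lifting---nested partitions with shrinking diameters and the continuity verification---has already been done in the preceding theorem. One small remark: \Cref{cor:self-similarity} as stated only guarantees $O\subsetneq U$, not a diameter bound; to get $\diam(U_{n+1})<2^{-(n+1)}$ you should apply it to the neighborhood $U_n\cap B(x,2^{-(n+2)})$ rather than to $U_n$ alone, but this is a cosmetic fix.
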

	\begin{proof}
		Let $x,y\in\Cantor$. We want to find $f\in\Homeo(\Cantor)$ such that $f(x) = y$.\\
		We use the same notation as in the proof of \Cref{APX_thm:Cantor-sets-are-homeomorphic}.\\
		We can adapt this proof in such a way that for every $n\geq1$ we do not choose an arbitrary bijection $\sigma_n:\mathcal{W}^n\to\mathcal{W'}^n$, but choose $\sigma_n$ in such a way, that if $x\in W\in\mathcal{W}^n$, then $\sigma_n(W)$ is the unique element in $\mathcal{W'}^n$ containing $y$. This will eventually lead to $f(x) = y$. Indeed $f(x)$ is the unique point in $\bigcap_{k=1}^\infty f_k(W^k_{\seqel[n^x]})$. However, by construction of $\sigma_k$, for all $k\geq 1$, $f_k(W^k_{\seqel[n^x]})$ contains $y$, hence so will their intersection.
	\end{proof}
	
	\begin{remark}
		Even much more is true. One could choose two countable dense subsets $D,D'\subseteq\Cantor$, together with a homeomorphism $h:D\to D'$ and extend $h$ to a homeomorphism $f:\Cantor\to\Cantor$ on the whole Cantor set (see \cite[Section 12]{moise2013geometric}).
	\end{remark}

	\newpage
	
 \section{Ultrafilters and Ultralimits}\label{APX_sec:Ultrafilters-and-ultralimits}
	
	In the proof of \Cref{prop:generic-rel->LES} we give in this thesis we need the concepts of ultrafilters and ultralimits. For that reason we first give a short introduction into those tools and then conclude by giving the proof of \Cref{prop:generic-rel->LES} (\Cref{APX_prop:generic-rel->LES} below).
	
\subsection{Filters and Ultrafilters}

	\begin{definition}[Filter, Ultrafilter]
		Let $\Omega$ be a set. A \highlight{filter} $\filter$ over $\Omega$ is a family of subsets of $\Omega$ which satisfies the following three properties:
		\begin{enumerate}[label=(\roman*), itemsep=0pt, nosep]
			\item $\filter$ is not empty, \label{filter-itm:1}
			\item $\filter$ is closed under finite intersections: if $U,V \in \filter$, then $U\cap V \in \filter$ and \label{filter-itm:2}
			\item $\filter$ is closed under taking supersets: if $U \in \filter$ and $U\subseteq V$, then $V \in \filter$. \label{filter-itm:3}
		\end{enumerate}
		Moreover, a filter $\ultrafilter$ is called \highlight{ultrafilter} if it is not strictly contained in any other filter.
	\end{definition}
	
	\begin{lemma}\label{lem:family-of-subsets-give-rise-to-filter}
		Let $\mathscr{B}$ be a non-empty family of subsets of $\Omega$ which is closed under intersection, i.e. if $B,B'\in \mathscr{B}$, then also $B\cap B' \in \mathscr{B}$. Then there exists a filter $\filter$ on $\Omega$ which contains $\mathscr{B}$.
	\end{lemma}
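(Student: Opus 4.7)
The plan is to explicitly construct the filter generated by $\mathscr{B}$ by closing it under supersets. Specifically, I would define
\[ \filter := \{ U \subseteq \Omega \mid \exists B \in \mathscr{B}, B \subseteq U \}, \]
and then verify that $\filter$ satisfies the three defining properties of a filter and contains $\mathscr{B}$.

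First I would check that $\mathscr{B} \subseteq \filter$, since every $B \in \mathscr{B}$ is a subset of itself, so $B \in \filter$. In particular, as $\mathscr{B}$ is non-empty, so is $\filter$, which gives property \ref{filter-itm:1}. Next, for closure under supersets (property \ref{filter-itm:3}), if $U \in \filter$ with witness $B \in \mathscr{B}$ satisfying $B \subseteq U$, and if $U \subseteq V$, then $B \subseteq V$, hence $V \in \filter$; this is immediate from transitivity of inclusion.

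The slightly more interesting step is closure under finite intersections (property \ref{filter-itm:2}), and this is exactly where the hypothesis on $\mathscr{B}$ is used. Given $U, V \in \filter$, choose $B, B' \in \mathscr{B}$ with $B \subseteq U$ and $B' \subseteq V$. By the assumption on $\mathscr{B}$, the intersection $B \cap B'$ lies again in $\mathscr{B}$, and clearly $B \cap B' \subseteq U \cap V$, so $U \cap V \in \filter$. This completes the verification.

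There is no real obstacle here, since the argument is essentially a bookkeeping exercise; the only point that requires any hypothesis on $\mathscr{B}$ at all is the intersection closure, and the assumption in the lemma is tailored to make this step work. Note that no use of Zorn's lemma is required, as we are only constructing a filter (not an ultrafilter) containing $\mathscr{B}$.
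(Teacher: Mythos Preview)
Your proof is correct and follows essentially the same approach as the paper: you define $\filter$ as the upward closure of $\mathscr{B}$ and verify the three filter axioms, using the intersection-closure hypothesis on $\mathscr{B}$ precisely for property \ref{filter-itm:2}.
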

	\begin{proof}
		We can simply define $\filter$ as the closure of $\mathscr{B}$ with respect to taking supersets, i.e. $\filter := \{U\subseteq\Omega~|~\exists B\in\mathscr{B}:~B\subseteq U\}$.\\
		Ad \ref{filter-itm:1}: $\filter$ clearly contains $\mathscr{B}$, so it is in particular non-empty.\\
		Ad \ref{filter-itm:2}: Now let $U,V\in \filter$, then there are two sets $B,B'\in\mathscr{B}$, such that $B\subseteq U$ and $B'\subseteq V$. However, since $\mathscr{B}$ is closed under intersections we get that $B\cap B'\in\mathscr{B}$, hence $U\cap V\in\filter$, as $B\cap B'\subseteq U\cap V$.
		Ad \ref{filter-itm:3}: This is clear, since $\filter$ contains all supersets of elements in $\mathscr{B}$.
	\end{proof}

	\begin{lemma}\label{lem:characterization-of-ultrafilter}
		Let $\Omega$ be a set and $\filter$ a filter on $\Omega$. The following are equivalent:
		\begin{enumerate}[label=(\roman*), itemsep=0pt, nosep]
			\item $\filter$ is an ultrafilter. \label{char-ultrafilter-itm:1}
			\item For every subset $U\subseteq\Omega$, we have either $U\in\filter$ or $\Omega\setminus U\in\filter$.\label{char-ultrafilter-itm:2}
		\end{enumerate}
	\end{lemma}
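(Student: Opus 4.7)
The plan is to prove the two implications separately. For both directions, the key structural fact is that a filter on $\Omega$ automatically contains $\Omega$ (since it is nonempty and closed under supersets) and that any filter containing both a set and its complement must contain $\emptyset$ and hence be the whole power set $\mathcal{P}(\Omega)$. I will treat ultrafilters as \emph{proper} maximal filters, i.e.\ maximal among filters not equal to $\mathcal{P}(\Omega)$, since otherwise $\mathcal{P}(\Omega)$ would be the unique ultrafilter and the statement would be trivial.

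For the direction \ref{char-ultrafilter-itm:1}~$\Rightarrow$~\ref{char-ultrafilter-itm:2}, I would fix an ultrafilter $\filter$ and a subset $U \subseteq \Omega$, assume $U \notin \filter$, and produce $\Omega \setminus U \in \filter$ by invoking maximality. The natural candidate is to build a filter larger than $\filter$ that contains $\Omega \setminus U$. To this end, consider the family
\[ \mathscr{B} := \{ F \cap (\Omega \setminus U) \mid F \in \filter \}. \]
It is closed under intersection, since $\filter$ is. The crucial observation is that $\emptyset \notin \mathscr{B}$: if $F \cap (\Omega \setminus U) = \emptyset$ for some $F \in \filter$, then $F \subseteq U$, and closure under supersets forces $U \in \filter$, a contradiction. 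Therefore Lemma~\ref{lem:family-of-subsets-give-rise-to-filter} yields a proper filter $\filter'$ containing $\mathscr{B}$. Since $\Omega \in \filter$, we have $\Omega \setminus U \in \mathscr{B} \subseteq \filter'$, and since every $F \in \filter$ is a superset of $F \cap (\Omega \setminus U) \in \mathscr{B}$, we have $\filter \subseteq \filter'$. Maximality of $\filter$ then forces $\filter = \filter'$, so $\Omega \setminus U \in \filter$.

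For the direction \ref{char-ultrafilter-itm:2}~$\Rightarrow$~\ref{char-ultrafilter-itm:1}, I would argue by contradiction: suppose $\filter$ is properly contained in some proper filter $\filter'$, and pick $V \in \filter' \setminus \filter$. By \ref{char-ultrafilter-itm:2}, the complement $\Omega \setminus V$ lies in $\filter$, hence also in $\filter'$. Closure of $\filter'$ under intersection then gives $\emptyset = V \cap (\Omega \setminus V) \in \filter'$, and closure under supersets makes $\filter'$ equal to the whole power set $\mathcal{P}(\Omega)$, contradicting properness of $\filter'$. Hence no proper filter strictly contains $\filter$, so $\filter$ is an ultrafilter.

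I do not anticipate a substantial obstacle here; the only subtle point is the implicit convention that the ``other filters'' in the definition of ultrafilter are the proper ones, which should be flagged briefly to avoid the degenerate case where $\emptyset \in \filter$. Apart from that, the argument is a direct manipulation of the filter axioms combined with Lemma~\ref{lem:family-of-subsets-give-rise-to-filter}.
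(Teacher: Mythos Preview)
Your proof is correct and follows essentially the same approach as the paper. The direction \ref{char-ultrafilter-itm:2}~$\Rightarrow$~\ref{char-ultrafilter-itm:1} is identical; for \ref{char-ultrafilter-itm:1}~$\Rightarrow$~\ref{char-ultrafilter-itm:2} the paper instead assumes both $U$ and $\Omega\setminus U$ are missing, proves the auxiliary claim ``$V\notin\filter$ implies some $W\in\filter$ is disjoint from $V$'' (via the same filter-extension idea you use), and applies it to both sets to force $\emptyset\in\filter$ --- a minor rearrangement of the same argument. Your remark about the properness convention is well taken: the paper's definition does not exclude $\emptyset\in\filter$, but its proof tacitly treats $\emptyset\in\filter$ as a contradiction, so your flag is appropriate.
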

	\begin{proof}
		$\ref{char-ultrafilter-itm:1}\Rightarrow\ref{char-ultrafilter-itm:2}:$ Assume that there exists $U\subseteq\Omega$ such that $U$ and $\Omega\setminus U$ are not in the ultrafilter $\filter$.\\
		
		\begin{claim*}
			If $V \notin \filter$, then $\exists W\in \filter$ such that $V\cap W =\emptyset$.
		\end{claim*}
		\begin{claim-proof}
			Indeed, if $V\cap W \not = \emptyset$ for all $W\in\filter$, then we could define $\filter'$ as the closure with respect to taking supersets of the set $\filter\cup\{V\}$ to get a filter (\Cref{lem:family-of-subsets-give-rise-to-filter}) which strictly contains $\filter$. A contradiction to $\filter$ being an ultrafilter.
		\end{claim-proof}

		Applying this to the sets $U$ and $\Omega\setminus U$, we get two sets $V,W \in\filter$ such that $U\cap V=\emptyset \Leftrightarrow V\subseteq \Omega\setminus U$ and $\Omega\setminus U\cap W=\emptyset \Leftrightarrow W\subseteq U$. However, then $V\cap W=\emptyset\in\filter$, contradicting the properties of a filter.\\
		
		$\ref{char-ultrafilter-itm:2}\Rightarrow\ref{char-ultrafilter-itm:1}:$ Assume $\filter'$ is a filter which strictly contains $\filter$. Pick $U\in\filter'\setminus\filter$. Then by $\ref{char-ultrafilter-itm:2}$ we have $\Omega\setminus U\in\filter\subseteq\filter'$. But then $\Omega\setminus U\cap U=\emptyset\in\filter'$. Again contradicting the fact that $\filter'$ is a filter, hence $\filter$ is an ultrafilter.
	\end{proof}

	\begin{example}[Fr\'echet Filter]
		Consider $\Omega = \NN$ and let $\filter$ be the set of cofinite subsets of $\NN$, i.e. subsets, for which their complement in $\NN$ is finite.\\
		Then $\filter$ is a filter on $\NN$. It is called the \highlight{Frech\'et Filter}.
	\end{example}

	\begin{example}[Neighborhood filter]
		Let $X$ be a topological space and $x\in X$. Then the set $\filter_x := \{U\subseteq X~|~U \text{ is a neighborhood of } x\}$ is a filter on $X$. We call $\filter_x$ the \highlight{neighborhood} filter of $x$.
	\end{example}

	\begin{theorem}[Ultrafilter lemma \textbf{(UL)}]\label{thm:ultrafilter-lemma}
		Given a set $X$, every filter $\filter$ on $X$ may be extended to an ultrafilter $\ultrafilter$.
	\end{theorem}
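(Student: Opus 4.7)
The plan is to prove the Ultrafilter Lemma by a standard application of Zorn's Lemma to the poset of filters containing $\filter$, ordered by inclusion. First I would set $P := \{\filter'~|~\filter'$ is a filter on $X$ with $\filter\subseteq\filter'\}$, equipped with the partial order given by set-theoretic inclusion. This set is non-empty, since $\filter\in P$. A maximal element of $P$ would necessarily be an ultrafilter: if $\filter^*$ is maximal in $P$ and were strictly contained in some filter $\filter''$, then $\filter''$ would contain $\filter$ and hence lie in $P$, contradicting maximality.

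To apply Zorn's Lemma it remains to verify that every chain in $P$ has an upper bound. Given a chain $\mathcal{C}=\{\filter_\alpha\}_{\alpha\in A}$ in $P$, the natural candidate is $\filter_\infty:=\bigcup_{\alpha\in A}\filter_\alpha$. I would check the three filter axioms for $\filter_\infty$: it is non-empty because each $\filter_\alpha$ is; it is closed under taking supersets because if $U\in\filter_\infty$ with $U\subseteq V$, then $U\in\filter_\alpha$ for some $\alpha$, hence $V\in\filter_\alpha\subseteq\filter_\infty$; and the only subtle point is closure under finite intersections. Here one uses the fact that $\mathcal{C}$ is a \emph{chain}: if $U\in\filter_\alpha$ and $V\in\filter_\beta$, then by comparability we may assume $\filter_\alpha\subseteq\filter_\beta$, so both $U$ and $V$ lie in $\filter_\beta$, whence $U\cap V\in\filter_\beta\subseteq\filter_\infty$. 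Clearly $\filter\subseteq\filter_\infty$, so $\filter_\infty\in P$ and is an upper bound for $\mathcal{C}$.

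With the hypotheses of Zorn's Lemma verified, there exists a maximal element $\ultrafilter\in P$, which by the observation in the first paragraph is an ultrafilter extending $\filter$. The main (conceptual) obstacle is simply the invocation of Zorn's Lemma itself, which is equivalent to the Axiom of Choice; the actual filter-theoretic verifications are routine, with the only point requiring genuine attention being the use of the chain property to establish closure of $\filter_\infty$ under binary intersections (without this, arbitrary unions of filters need not be filters).
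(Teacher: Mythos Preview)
Your proof is correct and follows essentially the same approach as the paper: apply Zorn's Lemma to the poset of filters extending $\filter$, ordered by inclusion, and verify that the union of a chain of filters is again a filter (using the chain condition for closure under finite intersections). If anything, your treatment is slightly more careful, since you allow arbitrary chains rather than writing the chain as a sequence indexed by $\NN$.
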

	\begin{proof}
		Consider the non-empty partially order set $\mathcal{S}$ of filters over $X$ containing $\filter$, where the order is induced by inclusion. $\mathcal{S}$ is not empty, since $\filter\in\mathcal{S}$. We want to apply Zorn's Lemma to deduce that there exists a maximal element, which has to be an ultrafilter extending $\filter$. Hence let 
		\[ \filter_0 \subseteq \filter_1\subseteq \filter_2\dots \]
		be an ascending chain of filters containing in $\mathcal{S}$. Define $\filter_\infty := \bigcup_{n\geq 0}\filter_n$.
		\begin{claim*}
			$\filter_\infty\in\mathcal{S}$ is an upper bound for the ascending chain $\filter_0 \subseteq \filter_1\subseteq \filter_2\dots$
		\end{claim*}
		\begin{claim-proof}
			It is clear that $\filter_n \subseteq \filter_\infty$ for all $n\geq0$, hence also $\filter \subseteq \filter_\infty$. So we just have to show that $\filter_\infty$ is indeed a filter in order to conclude that $\filter_\infty\in\mathcal{S}$.\\
			Let $U,V\in\filter_\infty$ then there is an index $N\geq0$ such that $U,V\in\filter_N$, hence also $U\cap V\in\filter_N\filter_\infty$ since $\filter_N$ is a filter. Furthermore, if $U\in\filter_\infty$ and $U\subseteq V$, then again, there is an index $N\geq0$, such that $U\in\filter_N$, hence also $V\in\filter_N$. Therefore we conclude that $V\in\filter_\infty$ which shows that $\filter_\infty$ is indeed a filter.
		\end{claim-proof}
		So the assumptions for Zorn's Lemma are satisfied and we can conclude that there is some ultrafilter $\ultrafilter$ containing $\filter$.
	\end{proof}

	\begin{remark}\hfill
		\begin{itemize}[nosep]
			\item Filters (hence also ultrafilters) clearly exist, since for any set $X$, the set $\filter := \{X\}$ is a filter on $X$.
			\item We have seen that under Zermelo–Fraenkel set theory \textbf{(ZF)} the Axiom of Choice \textbf{(AC)} implies the Ultrafilter Lemma. However, the other implication is not true. One can show that the Ultrafilter Lemma is strictly weaker than the Axiom of Choice \cite[Corollary 2.4]{BellFremlin1972}.
			\item Nevertheless, under \textbf{(ZF)} the Ultrafilter Lemma together with the \highlight{Krein-Milmann-Theorem} are equivalent to \textbf{(AC)} \cite[Corollary 2.3]{BellFremlin1972}.
		\end{itemize}
		In the last two items one has to use that under \textbf{(ZF)} the ultrafilter lemma \textbf{(UL)} is equivalent to the \highlight{boolean prime ideal theorem} \textbf{(BPIT)}. This can be shown by considering the boolean ring $(\mathcal{P}(X),+,\cdot)$, where $x + y = x\setminus y \cup y\setminus x$, and explicitly constructing a prime ideal $\mathcal{I}\subsetneq\mathcal{P}(X)$ from an ultrafilter $\ultrafilter$ over $X$ and vice versa.
	\end{remark}
	
	\begin{definition}
		Let $\Omega$ be a set. A \highlight{principal filter} on $\Omega$ is the smallest filter induced by a given subset $\omega\subseteq\Omega$. Equivalently, it is the filter given by $\filter(\omega) := \{U\subseteq\Omega~|~ \omega\subseteq U\}$.\\
		If $\abs{\omega} = 1$, then $\filter(\omega)$ is even an ultrafilter, which is called $\highlight{trivial ultrafilter}$ based on $\omega$.
	\end{definition}
	
	\begin{remark}
		We can make the last statement even more precise. That is, a principal filter $\filter(\omega)$ is an ultrafilter if and only $\abs{\omega} = 1$.\\
		Indeed, if we pick $x_0\in\omega$, then $\filter(\omega)\subseteq\filter(\{x_0\})$. Hence $\filter(\omega)$ can only be an ultrafilter if this is an equality. This only happens if $\omega = \{x_0\}$, otherwise $\{x_0\}\not\in\filter(\omega)$.
	\end{remark}

	\begin{remark}
		Note that for a topological space $X$ and every point $x\in X$, the neighborhood filter $\filter_x$ is a subset of the trivial ultrafilter $\filter(\{x\})$.
	\end{remark}

	\begin{example}
		If $\Omega$ is a finite set, then every ultrafilter $\ultrafilter$ is a trivial filter, i.e. $\exists \omega\in\Omega$ such that  $\ultrafilter = \{U\subseteq\Omega~|~\omega\in U\}$.\\
		This can be easily seen by \Cref{lem:characterization-of-ultrafilter}.
	\end{example}

\subsection{Limits along Filters and Ultralimits}
	
	\begin{definition}[convergent filter]
		Let $X$ be a topological space and $\filter$ a filter on $X$. We say that $\filter$ is \highlight{convergent} if there exists a $x_0\in X$ such that every neighborhood of $x_0$ is in the filter $\filter$. In other words, if $\filter_{x_0}\subseteq\filter$ for some $x_0\in X$.\\
		We say that $\filter$ \highlight{converges to} $x_0$ or that $x_0$ \highlight{is a limit of}  $\filter$.
	\end{definition}
	
	\begin{lemma}\label{lem:uniqueness_of_limit_in_Hausdorff_spaces}
		Assume moreover that $X$ is Hausdorff, then the limit of a convergent filter $\filter$ is unique.
	\end{lemma}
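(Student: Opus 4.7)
The plan is to argue by contradiction using the classical Hausdorff separation argument. Suppose $\filter$ converges to two distinct points $x_0, x_1 \in X$ with $x_0 \neq x_1$. Since $X$ is Hausdorff, I can select disjoint open neighborhoods $U_0 \ni x_0$ and $U_1 \ni x_1$, i.e.\ with $U_0 \cap U_1 = \emptyset$.

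By the definition of convergence, both neighborhood filters satisfy $\filter_{x_0} \subseteq \filter$ and $\filter_{x_1} \subseteq \filter$. In particular $U_0 \in \filter_{x_0} \subseteq \filter$ and $U_1 \in \filter_{x_1} \subseteq \filter$. Using property \ref{filter-itm:2} of a filter (closure under finite intersections), I obtain $U_0 \cap U_1 = \emptyset \in \filter$. Finally, by property \ref{filter-itm:3} (closure under supersets), every subset of $\Omega$ would have to lie in $\filter$, which is the degenerate/improper filter that is tacitly excluded in this setting (indeed, neither the Fréchet filter nor any neighborhood filter contains the empty set, and the ultrafilter lemma is only meaningful for proper filters). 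This contradicts $\filter$ being a (proper) filter, so $x_0 = x_1$ and the limit is unique.

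The only subtle point — and the one worth flagging — is the implicit convention that filters do not contain $\emptyset$; given this, the argument is purely the standard Hausdorff uniqueness-of-limits proof, rephrased in the language of filters rather than sequences or nets. No further machinery (in particular no use of ultrafilters, Zorn, or the UL) is required.
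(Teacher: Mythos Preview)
Your proof is correct and follows essentially the same approach as the paper: assume two distinct limits, separate them by disjoint neighborhoods using the Hausdorff property, and observe that both neighborhoods must lie in $\filter$, forcing $\emptyset \in \filter$. You are in fact slightly more careful than the paper, which simply says this ``contradicts that $\filter$ is closed under intersections'' without explicitly flagging the implicit convention that proper filters exclude $\emptyset$.
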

	\begin{proof}
		Assume $x,y\in X$ are both limits of the filter $\filter$. Since $X$ is Hausdorff we can choose neighborhoods $U$ of $x$ and $V$ of $y$ such that $U\cap V = \emptyset$. However, by definition of convergence of a filter, we have $U,V\in\filter$.\\
		This clearly contradicts that $\filter$ is closed under intersections.
	\end{proof}
	
	\begin{example}
		Consider the two point set $X=\{0,1\}$ with the non-Hausdorff topology $\tau_X=\{\emptyset,\{0\}, \{0,1\}\}$. Then $\filter = \{\{0\}, \{0,1\}\}$ is a filter which converges to $0$ and to $1$.
	\end{example}

	\begin{proposition}
		Let $X$ be a topological space. Then the following are equivalent.
		\begin{enumerate}[label=(\roman*), itemsep=0pt, nosep]
			\item $X$ is compact. \label{char-convergent-ultrafilter-itm:1}
			\item Every ultrafilter $\ultrafilter$ is convergent. \label{char-convergent-ultrafilter-itm:2}
		\end{enumerate}
	\end{proposition}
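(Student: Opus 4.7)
The plan is to prove both implications by exploiting the characterization of ultrafilters from \Cref{lem:characterization-of-ultrafilter}, namely that for every $U \subseteq X$ either $U \in \ultrafilter$ or $X \setminus U \in \ultrafilter$.

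For $\ref{char-convergent-ultrafilter-itm:1}\Rightarrow\ref{char-convergent-ultrafilter-itm:2}$, I would start from an ultrafilter $\ultrafilter$ on a compact space $X$ and consider the family $\{\overline{U} : U \in \ultrafilter\}$ of closures. Since $\ultrafilter$ is closed under finite intersections and contains only non-empty sets (as $\emptyset$ cannot be in a filter), this family has the finite intersection property. Compactness of $X$ then gives some $x_0 \in \bigcap_{U \in \ultrafilter} \overline{U}$. The claim is that $\ultrafilter$ converges to $x_0$, i.e. $\filter_{x_0} \subseteq \ultrafilter$. If some open neighborhood $V$ of $x_0$ were not in $\ultrafilter$, then by \Cref{lem:characterization-of-ultrafilter} we would have $X \setminus V \in \ultrafilter$, forcing $x_0 \in \overline{X \setminus V} = X \setminus V$, which contradicts $x_0 \in V$. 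So every neighborhood of $x_0$ lies in $\ultrafilter$.

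For $\ref{char-convergent-ultrafilter-itm:2}\Rightarrow\ref{char-convergent-ultrafilter-itm:1}$, I would argue by contradiction. Suppose $\{U_i\}_{i \in I}$ is an open cover of $X$ admitting no finite subcover. Consider the family
\[ \mathscr{B} := \{X \setminus (U_{i_1} \cup \dots \cup U_{i_n}) : n \geq 1,~ i_1,\dots,i_n \in I\} \]
of complements of finite subunions. No member of $\mathscr{B}$ is empty (otherwise we would have a finite subcover), and $\mathscr{B}$ is closed under finite intersections via the identity $(X \setminus A) \cap (X \setminus B) = X \setminus (A \cup B)$. By \Cref{lem:family-of-subsets-give-rise-to-filter} there is a filter $\filter$ containing $\mathscr{B}$, and by the Ultrafilter Lemma (\Cref{thm:ultrafilter-lemma}) we can extend $\filter$ to an ultrafilter $\ultrafilter$. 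By hypothesis $\ultrafilter$ converges to some point $x_0 \in X$. Picking $i_0 \in I$ with $x_0 \in U_{i_0}$, convergence forces $U_{i_0} \in \ultrafilter$, while $X \setminus U_{i_0} \in \mathscr{B} \subseteq \ultrafilter$, yielding $\emptyset \in \ultrafilter$ — a contradiction.

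I do not anticipate any major obstacle here; the only subtle point is being careful that the characterization of ultrafilters is what translates \enquote{$x_0$ lies in every closure} into \enquote{every neighborhood of $x_0$ lies in the ultrafilter}, and recalling that the Ultrafilter Lemma is needed for the reverse direction (so the overall equivalence depends on choice-style principles strictly weaker than \textbf{(AC)}, consistent with the earlier discussion).
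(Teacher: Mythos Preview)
Your proof is correct and, for the direction $\ref{char-convergent-ultrafilter-itm:2}\Rightarrow\ref{char-convergent-ultrafilter-itm:1}$, essentially identical to the paper's. For $\ref{char-convergent-ultrafilter-itm:1}\Rightarrow\ref{char-convergent-ultrafilter-itm:2}$ you take a slightly different but equivalent route: you use the finite intersection property characterization of compactness (applied to $\{\overline{U}:U\in\ultrafilter\}$) to produce a candidate limit point directly, whereas the paper argues by contradiction via the open-cover definition, picking for each $x$ an open $U_x\notin\ultrafilter$, so $X\setminus U_x\in\ultrafilter$, and extracting a finite subcover to force $\emptyset\in\ultrafilter$. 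The two arguments are dual to one another and of the same difficulty; your version has the mild advantage of exhibiting the limit constructively rather than through a double negation, while the paper's version avoids invoking the FIP reformulation of compactness.
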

	\begin{proof}
		$\ref{char-convergent-ultrafilter-itm:1}\Rightarrow\ref{char-convergent-ultrafilter-itm:2}:$ Assume that $X$ is compact and $\ultrafilter$ is a non-convergent ultrafilter. Then, by definition of convergence of a filter, for every $x\in X$ there exists a neighborhood $N_x$ of $x$, which is not in $\ultrafilter$. Hence we can find an open set $U_x$ containing $x$ which is also not in $\ultrafilter$, since $U_x\in\ultrafilter$ together with $U_x\subseteq N_x$ would imply that $N_x\in\ultrafilter$. Therefore we get that $X\setminus U_x\in\ultrafilter$.\\
		Now $X = \bigcup_{x\in X}U_x$, hence by compactness of $X$ we can choose finitely many points $x_1,\dots,x_n$ such that $X = \bigcup_{i=1}^nU_{x_i}$.\\
		However, then the finite intersection $\bigcap_{i=1}^nX\setminus U_{x_i} = X \setminus\bigcup_{i=1}^nU_{x_i} = \emptyset$ is in $\ultrafilter$. This is a contradiction.\\		
		
		$\ref{char-convergent-ultrafilter-itm:2}\Rightarrow\ref{char-convergent-ultrafilter-itm:1}:$ Assume that every ultrafilter on $X$ converges and that $(U_i)_{i\in I}$ is an open cover of $X$, which does not have a finite subcover. Define the family of subsets $\mathscr{B} := \{X\setminus \cup_{i\in F}U_i~|~F\subseteq I, \abs{F} <\infty\}$. We have $(X\setminus \cup_{i\in F}U_i)\cap(X\setminus \cup_{i\in F'}U_i) = X\setminus \cup_{i\in F\cup F'}U_i$, hence $\mathscr{B}$ is closed under taking intersections. Therefore, by \Cref{lem:family-of-subsets-give-rise-to-filter} and \Cref{thm:ultrafilter-lemma}, there is an ultrafilter $\ultrafilter$ containing $\mathscr{B}$. By assumption the ultrafilter converges, hence there is a $x_0\in X$ such that $\filter_{x_0}\subseteq\ultrafilter$. Now note that since the $U_i$ are open it follows that if $x_0\in U_i$, then $U_i\in\filter_{x_0}\subseteq\ultrafilter$. However, also $X\setminus U_i\in \mathscr{B}\subseteq\ultrafilter$. Hence $x_0\not\in U_i$ for all $i\in I$. This contradicts the fact $(U_i)_{i\in I}$ is an open cover of $X$, since $x_0\not\in\bigcup_{i\in I}U_i=X$.
	\end{proof}

	\begin{lemma}[Pushforward filter]
		Let $X,Y$ be sets, $\filter$ a filter on $X$ and $f:X\to Y$ a function. Then the set $f_*(\filter) := \{V\subseteq Y~|~f^{-1}(V) \in \filter\}$ is a filter on $Y$. We call it the \highlight{pushforward filter} of $\filter$ under $f$.\\
		Moreover, if $\filter$ is an ultrafilter, so is $f_*(\filter)$.
	\end{lemma}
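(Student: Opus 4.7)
The plan is to verify the three filter axioms directly using the fact that preimage commutes with the basic set operations, and then to use the characterization of ultrafilters established in \Cref{lem:characterization-of-ultrafilter}.

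For the first assertion, I would proceed as follows. First, to check that $f_*(\filter)$ is non-empty, I note that since $\filter$ is a non-empty filter on $X$, it must contain $X$ itself (choose any $U \in \filter$ and apply the superset axiom to $U \subseteq X$); hence $f^{-1}(Y) = X \in \filter$, so $Y \in f_*(\filter)$. Second, closure under finite intersections follows from the identity $f^{-1}(V_1 \cap V_2) = f^{-1}(V_1) \cap f^{-1}(V_2)$: if $V_1, V_2 \in f_*(\filter)$, both preimages lie in $\filter$, which is closed under intersection. Third, closure under supersets follows from the monotonicity of preimage: if $V \in f_*(\filter)$ and $V \subseteq W \subseteq Y$, then $f^{-1}(V) \subseteq f^{-1}(W)$, and applying the superset axiom to $\filter$ gives $f^{-1}(W) \in \filter$, i.e.\ $W \in f_*(\filter)$.

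For the second assertion, I would invoke \Cref{lem:characterization-of-ultrafilter}: it suffices to show that for every $V \subseteq Y$, either $V \in f_*(\filter)$ or $Y \setminus V \in f_*(\filter)$. The key observation is that preimage commutes with taking complements, i.e.\ $f^{-1}(Y \setminus V) = X \setminus f^{-1}(V)$. Since $\filter$ is an ultrafilter on $X$, applying the same characterization to the subset $f^{-1}(V) \subseteq X$ yields that either $f^{-1}(V) \in \filter$ or $X \setminus f^{-1}(V) = f^{-1}(Y \setminus V) \in \filter$, which translates directly into $V \in f_*(\filter)$ or $Y \setminus V \in f_*(\filter)$, as required.

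There is no real obstacle here; the entire content of the lemma is that preimage under a function interacts well with the Boolean operations (intersection, complement, superset), and both parts reduce to pulling the filter axioms back through $f^{-1}$. The only minor subtlety worth explicitly mentioning is the very first step, namely why $X$ must belong to every filter, which is needed to ensure $f_*(\filter)$ is itself non-empty.
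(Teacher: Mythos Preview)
Your proof is correct and follows essentially the same route as the paper: verify the three filter axioms via the behaviour of $f^{-1}$ under intersection and inclusion, then use \Cref{lem:characterization-of-ultrafilter} together with $f^{-1}(Y\setminus V)=X\setminus f^{-1}(V)$ for the ultrafilter claim. The only difference is cosmetic---you spell out why $X\in\filter$, whereas the paper simply asserts $Y\in f_*(\filter)$---and the paper additionally records the (unneeded-for-this-lemma) functoriality $g_*f_*=(g\circ f)_*$.
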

	\begin{proof}		
		Observe that the definition above is well-posed, since $f_*(\filter)$ is indeed a filter. Non-emptyness is obvious, since for example $Y\in f_*(\filter)$. Moreover, for $U,V \in f_*(\filter)$, we have $f^{-1}(U\cap V) = f^{-1}(U)\cap f^{-1}(V) \in \filter$, so $U\cap V \in f_*(\filter)$. Finally, if $U \in f_*(\filter)$ and $U\subseteq V$, then $f^{-1}(V) \supseteq f^{-1}(U) \in\filter$, so also $f^{-1}(V)\in\filter$, since $\filter$ is a filter.\\
		Moreover the construction of the pushforward filter is functorial. For sets $X,Y,Z$ and $\filter$ a filter on $X$ we have
		\begin{align*}
			&W\in g_*(f_*(\filter)) \Leftrightarrow g^{-1}(W) \in f_*(\filter) \Leftrightarrow f^{-1}(g^{-1}(W)) \in \filter\\
			&\Leftrightarrow f^{-1}\circ g^{-1}(W) \in \filter \Leftrightarrow (g\circ f)^{-1}(W) \in \filter \Leftrightarrow W \in (g\circ f)_*(\filter).
		\end{align*}
		
		Finally assume that $\filter$ is an ultrafilter and $V\not\in f_*(\filter)$. Then $f^{-1}(V) \not\in \filter\Leftrightarrow X\setminus f^{-1}(V) \in \filter$. However, we also have that $f^{-1}(Y\setminus V) = X\setminus f^{-1}(V)$  and hence $Y\setminus V \in f_*(\filter)$. Using \Cref{lem:characterization-of-ultrafilter} we get that $f_*(\filter)$ is an ultrafilter.
	\end{proof}

	\begin{definition}[limit along a filter and ultralimit]
		Let $X$ be a set and $Y$ be a topological space. Moreover let $\filter$ be a filter on $X$ and $f:X\to Y$ a function. We say that $f$ \highlight{converges to $y_0\in Y$ along $\filter$}, denoted by $f\fconv y_0$, if the pushforward filter $f_*(\ultrafilter)$ converges to $y_0$.\\
		If $y_0$ is unique we call it the \highlight{limit of $f$ along $\filter$} and write $y_0 = \flim f(x)$.\\
		Moreover, if $\filter$ is an ultrafilter and the limit $y_0 = \flim f(x)$ exists, we call it the \highlight{ultralimit of $f$} (along $\filter$).
	\end{definition}
	
	\begin{remark}
		Note that this definition generalizes the concept of converging sequences (or nets) as the following example shows.\\
		However, in general the existence of a limit of a function, as well as the value of the limit, do not only depend on the function, but also on the filter itself, as the following example shows.
	\end{remark}

	\begin{example}
		Consider the Frech\'et Filter $\filter$ on $\NN$ as well as a sequence $(a_n)_{n\in\NN}$ (respectively a function $a:\NN\to Y$) in a topological space $Y$ (e.g. $\RR$). Then $(a_n)_{n\in\NN}$ converges to $y\in Y$ iff for all neighborhoods $V$ of $y$ only finitely many elements of the sequence do not lie in $V$, equivalently the preimage $a^{-1}(V)$ is a cofinite set. However, this corresponds exactly to the convergence of the pushforward filter $a_*(\NN)$ to $y$.\\
		If we pick the sequence $a_n = (-1)^n$, this will clearly not converge along $\filter$. However, if we choose any ultrafilter $\ultrafilter$ containing $\filter$, then $(a_n)_{n\in\NN}$ will converge to $1$, or $-1$, depending on whether the set of all even natural numbers is in $\ultrafilter$, or its complement, the set of all odd natural numbers is in $\ultrafilter$.
	\end{example}
	
	\begin{example}\label{exp:ultralimit-of-constant-function}
		Of course, constant functions should intuitively converge along any given filter and the limit should not depend on the filter and be equal to the value of the function. This is indeed the case.\\
		Let $f:X\to Y$ be a constant function with value $y\in Y$ and $\filter$ be any filter on $X$. Then $f_*(\filter) = \ultrafilter(\{y\})$ is the trivial filter induced by $\{y\}$, hence $f$ converges along $\filter$ to $y$.
	\end{example}

	We give two more special examples of limits along filter.\\
	
	\begin{example}
		Observe that any function $f:X\to Y$ between topological spaces $X,Y$ preserves trivial ultrafilters. We have $f_*(\ultrafilter(\{x\}) = \ultrafilter(\{f(x)\})$. Hence the limit of any function $f$ along a trivial ultrafilter induced by $\{x\}$ is $f(x)$.\\
	\end{example}

	This stands in contrast to the situation when we only look at the neighborhood filters of a point:
	
	\begin{example}\label{exp:nbhd_filter-under-continuous-function}
		Neighborhood filters are not preserved in general, i.e. we need not have that $\filter_{f(x)} \subseteq f_*(\filter_x)$. More precisely, preserving the neighborhood filter of a point $x$ is equivalent to being continuous (in the topological sense) at $x$.\\
		In particular, the limit of a continuous function along the neighborhood filter $\filter_x$ is $f(x)$.\\
		Indeed, first assume that $f:X\to Y$ is continuous at $x$. That is for every neighborhood $V$ of $f(x)$ there is a neighborhood $U$ of $x$ such that $f(U) \subseteq V$.\\
		Let $V\in\filter_{f(x)}$, then, as just described, $f^{-1}(V)$ contains a neighborhood $U$ of $x$. Since $U\in\filter_x$ we also get that $f^{-1}(V) \in \filter_{x}$ because it is a superset of $U$. However, that means exactly that $V\in f_*(\filter_{x})$. Hence $\filter_{f(x)}\subseteq f_*(\filter_{x})$, i.e. $f$ preserves the neighborhood filter $\filter_{f(x)}$\\
		Conversely, assume that $\filter_{f(x)}\subseteq f_*(\filter_x)$. That means that if $V$ is a neighborhood of $f(x)$, then $f^{-1}(V)\in\filter_x$. However, then $f^{-1}(V)$ is a neighborhood of $x$ such that $f(f^{-1}(V))\subseteq V$, so $f$ is continuous at $x$.
	\end{example}
	
	\begin{remark}
		Note that even if $f$ is continuous we do not have $\filter_{f(x)} = f_*(\filter_{x})$ in general. So $f_*(\filter_{x})$ can well be larger than the neighborhood filter of $f(x)$. It can even be an ultrafilter, which $\filter_{f(x)} $ definitely is not. This happens for example if $f$ is a constant function (see \Cref{exp:ultralimit-of-constant-function}).\\
		Nevertheless there are also examples where $f_*(\filter_{x})$ is not an ultrafilter. Consider $f:\RR\to\RR$, $f(x) = 0$ for $x\leq 0$ and $f(x) = x$ if $x>0$. $V = [0,\infty)$ is certainly not a neighborhood of $0 = f(0)$, but $f^{-1}(V) = \RR$ is inside every filter, so also inside $f_*(\filter_{0})$. Moreover $f_*(\filter_{0})$ is not an ultrafilter, since $f^{-1}(\{0\}) = (-\infty,0] \not \in \filter_{0}$, so $\{0\}\not\in f_*(\filter_{0})$, but also $f^{-1}(\RR\setminus\{0\}) = (0,\infty)\not\in \filter_{0}$, so $\RR\setminus\{0\}\not\in f_*(\filter_{0})$.
	\end{remark}

	By \Cref{lem:uniqueness_of_limit_in_Hausdorff_spaces} we know that in a topological Hausdorff space, the limit of a function along a filter is unique. Moreover, for a compact image space we even have the existence of a limit along an ultrafilter for any function. One particular case, which will be useful later on, is considered below.
	
	\begin{corollary}\label{cor:bounded-maps-attain-ultralimits}
		Let $X$ be a set and $f:X\to\RR$ a bounded function. Then for every ultrafilter $\ultrafilter$ an ultralimit $\ulim(f)$ along the ultrafilter exists.
	\end{corollary}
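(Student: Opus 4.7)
The plan is to combine the two structural facts about ultralimits that were proved just above: on a compact space every ultrafilter converges, and on a Hausdorff space a limit along a filter (if it exists) is unique. Since $\RR$ is Hausdorff but not compact, I cannot apply the first fact directly to $f:X\to\RR$, so the first step will be to replace the target by a compact subspace that still contains the image of $f$.

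Concretely, since $f$ is bounded, choose $a,b\in\RR$ with $a\leq f(x)\leq b$ for every $x\in X$, and regard $f$ as a map $f:X\to[a,b]$. The closed interval $[a,b]$ is compact and Hausdorff in the subspace topology inherited from $\RR$. Push the ultrafilter $\ultrafilter$ on $X$ forward under $f$ to obtain $f_*(\ultrafilter)$, which is again an ultrafilter on $[a,b]$ (by the pushforward lemma, since ultrafilterhood is preserved by pushforward). By the compactness characterization proved in this appendix, every ultrafilter on a compact space converges, so there exists $y_0\in[a,b]$ with $\filter_{y_0}\subseteq f_*(\ultrafilter)$. This is precisely the statement that $f\fconv y_0$.

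Uniqueness of $y_0$ then follows from Lemma~\ref{lem:uniqueness_of_limit_in_Hausdorff_spaces} applied to the Hausdorff space $[a,b]$: if two limits existed, one could separate them by disjoint neighborhoods, both of which would have to belong to $f_*(\ultrafilter)$, contradicting closure under finite intersection. Hence the limit point $y_0$ is unique, and setting $\ulim(f):=y_0\in[a,b]\subseteq\RR$ produces the required ultralimit.

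I do not expect any serious obstacle here: the argument is essentially a one‑line application of the two preceding results, and the only point that needs a brief remark is that the ultralimit can be taken to lie in $\RR$ rather than in an abstract compact space, which is handled by the observation that a bounded real‑valued function takes values in a compact interval of $\RR$.
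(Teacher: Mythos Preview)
Your argument is correct and essentially identical to the paper's: both reduce to the compactness of a closed interval containing $f(X)$ (you take $[a,b]$, the paper takes $[-\norm{f},\norm{f}]$) and invoke that ultrafilters on compact spaces converge, with uniqueness from Hausdorffness. The only cosmetic difference is that the paper first pushes forward to $\RR$ and then restricts the resulting ultrafilter to the interval, whereas you regard $f$ as landing in the interval from the start; your formulation is, if anything, slightly cleaner.
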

	\begin{proof}
		This follow directly from the fact that the pushforward ultrafilter $f_*(\ultrafilter)$ on $\RR$ induces (by restriction) an ultrafilter $\ultrafilter|_{[-\norm{f},\norm{f}]}$ on the compact set $[-\norm{f},\norm{f}]$ and that ultrafilters on compact set converge.\\
		Here we define $\ultrafilter|_{[-\norm{f},\norm{f}]} := \{ V\cap [-\norm{f},\norm{f}]~|~V\in \ultrafilter \}$, which is indeed an ultrafilter.
	\end{proof}
	
	\begin{lemma}\label{lem:limit-product}
		Let $X$ be a set, $\filter$ a filter on $X$ and $Y,Z$ be topological spaces. If $f:X\to Y$ and $g:X\to Z$ converge along $\filter$ then so does the function $H:X\to Y\times Z$, $x\mapsto (f(x),g(x))$ where $Y\times Z$ carries the product topology.\\
		More precisely, if $f\fconv y_0$ and $g\fconv z_0$ then $H \fconv (y_0,z_0)$.
	\end{lemma}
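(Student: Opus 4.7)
The plan is to unravel the definition of convergence along $\filter$ and use that the product topology on $Y \times Z$ has a convenient basis of neighborhoods at $(y_0,z_0)$. Concretely, to show $H \fconv (y_0,z_0)$ I need to verify that every neighborhood of $(y_0,z_0)$ lies in the pushforward filter $H_*(\filter)$.

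First I would recall that a basis of neighborhoods of $(y_0,z_0)$ in the product topology is given by sets of the form $U \times V$, where $U$ is a neighborhood of $y_0$ in $Y$ and $V$ is a neighborhood of $z_0$ in $Z$. Since filters are closed under taking supersets (property \ref{filter-itm:3} of a filter), it suffices to show that every such basic open set $U \times V$ belongs to $H_*(\filter)$.

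Next I would compute the preimage: $H^{-1}(U \times V) = f^{-1}(U) \cap g^{-1}(V)$. The hypothesis $f \fconv y_0$ means that $\filter_{y_0} \subseteq f_*(\filter)$, and hence $f^{-1}(U) \in \filter$; similarly, $g \fconv z_0$ yields $g^{-1}(V) \in \filter$. Applying the finite intersection property (property \ref{filter-itm:2}) of $\filter$, we conclude $f^{-1}(U) \cap g^{-1}(V) \in \filter$, i.e.\ $U \times V \in H_*(\filter)$. This is the heart of the argument and there is no real obstacle — the main thing to be careful about is just matching the definition of convergence (a neighborhood being in the pushforward filter) with the definition of the pushforward filter (preimage being in the original filter).

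To finish, given an arbitrary neighborhood $W$ of $(y_0,z_0)$, I would pick a basic neighborhood $U \times V \subseteq W$ by the definition of the product topology. Then $H^{-1}(U \times V) \subseteq H^{-1}(W)$, and since $H^{-1}(U \times V) \in \filter$ by the previous step, the superset property gives $H^{-1}(W) \in \filter$, i.e.\ $W \in H_*(\filter)$. Thus $\filter_{(y_0,z_0)} \subseteq H_*(\filter)$, which is exactly the statement $H \fconv (y_0,z_0)$.
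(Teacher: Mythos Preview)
Your proof is correct and follows essentially the same approach as the paper: take an arbitrary neighborhood $W$ of $(y_0,z_0)$, shrink it to a basic product neighborhood $U\times V$, compute $H^{-1}(U\times V)=f^{-1}(U)\cap g^{-1}(V)\in\filter$, and conclude via the superset property. The paper's argument is just a more compressed version of yours.
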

	\begin{proof}
		Assume that $f\fconv y_0$ and $g\fconv z_0$. Let $W\subseteq Y\times Z$ be any neighborhood of $(y_0,z_0)$. By definition of the product topology, there exist neighborhoods $U\subseteq Y$, $V\subseteq Z$ of $y_0,z_0$ respectively, such that $U\times V\subseteq W$. Now $H^{-1}(W)\supseteq H^{-1}(U\times V) = f^{-1}(U)\cap g^{-1}(V) \in \filter$. Therefore $\filter_{(y_0,z_0)} \subseteq H_*(\filter)$, so $H \fconv (y_0,z_0)$. 
	\end{proof}
	
	\begin{lemma}\label{lem:limit-composition}
		Let $X$ be a set, $\filter$ a filter on $X$ and $Y,Z$ be topological spaces. Assume $f:X\to Y$ converges along $\filter$ to $y_0$ and $g:Y\to Z$ is continuous at $y_0$ then $(g\circ f):X\to Z$ converges along $\filter$ to $z_0 = g(y_0)$.
	\end{lemma}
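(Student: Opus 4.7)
The plan is to simply chase the definitions: by the definition of convergence along a filter, $f\fconv y_0$ means $\filter_{y_0}\subseteq f_*(\filter)$, and our task is to verify $\filter_{z_0}\subseteq (g\circ f)_*(\filter)$. The natural route is to combine this with the characterization of continuity in terms of neighborhood filters already recorded in \Cref{exp:nbhd_filter-under-continuous-function}, namely that $g$ being continuous at $y_0$ is equivalent to $\filter_{g(y_0)}\subseteq g_*(\filter_{y_0})$, and then invoke the functorial property $(g\circ f)_*(\filter) = g_*(f_*(\filter))$ established in the proof of the pushforward lemma.

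Concretely, I would argue as follows. Starting from $\filter_{y_0}\subseteq f_*(\filter)$ and applying the pushforward $g_*$ (which is monotone with respect to inclusion of filters, since $g_*$ is defined by preimage), I obtain the inclusion $g_*(\filter_{y_0})\subseteq g_*(f_*(\filter))$. By functoriality of the pushforward this right-hand side equals $(g\circ f)_*(\filter)$. Finally, continuity of $g$ at $y_0$ gives $\filter_{z_0}=\filter_{g(y_0)}\subseteq g_*(\filter_{y_0})$, and chaining the two inclusions yields $\filter_{z_0}\subseteq (g\circ f)_*(\filter)$, which is exactly the statement $(g\circ f)\fconv z_0$.

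Alternatively, and essentially equivalently, one could argue directly with neighborhoods: given any neighborhood $W$ of $z_0$, continuity of $g$ at $y_0$ supplies a neighborhood $V$ of $y_0$ with $V\subseteq g^{-1}(W)$; since $f\fconv y_0$ we have $f^{-1}(V)\in\filter$, and monotonicity of preimage together with the superset axiom of filters gives $(g\circ f)^{-1}(W)=f^{-1}(g^{-1}(W))\supseteq f^{-1}(V)\in\filter$, hence $W\in (g\circ f)_*(\filter)$.

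There is no real obstacle here; the only thing worth being careful about is the potential non-uniqueness of limits if $Z$ is not Hausdorff, but the statement only asserts convergence to the specific point $g(y_0)$ and not uniqueness, so this is not an issue. Both arguments sketched above are essentially one line once the functoriality of $g_*$ and the filter-theoretic reformulation of continuity are in hand.
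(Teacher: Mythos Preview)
Your argument is correct and essentially identical to the paper's: both chain the inclusion $\filter_{z_0}\subseteq g_*(\filter_{y_0})$ (from continuity via \Cref{exp:nbhd_filter-under-continuous-function}) with $g_*(\filter_{y_0})\subseteq g_*(f_*(\filter)) = (g\circ f)_*(\filter)$ (from monotonicity and functoriality of the pushforward). Your alternative direct neighborhood-chasing argument is a fine unpacking of the same idea.
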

	\begin{proof}
		This follows from the facts that the construction of the pushforward filter is functorial, preserves inclusion and that the pushforward filter of the neighborhood filter of $y_0$ under $g$ contains the neighborhood filter of $z_0 = g(y_0)$. (see \Cref{exp:nbhd_filter-under-continuous-function})\\
		Indeed, by assumption we have $\filter_{y_0}\subseteq f_*(\filter)$ and therefore $\filter_{z_0} \subseteq g_*(\filter_{y_0})\subseteq g_*(f_*(\filter)) = (g\circ f)_*(\filter_{(y_0,z_0)})$. This means precisely that $(g\circ f)\fconv z_0$
	\end{proof}
	
	\begin{definition}[mean]\label{APX_def:mean}
		Let $X$ be a set. A linear function $m:\linf{X}\to\RR$ is a mean if:
		\begin{enumerate}[label=(\roman*), itemsep=0pt, nosep]
			\item $\mean(\mathbbm{1}) = 1$. \label{mean-itm:1}
			\item if $f\geq 0$ then $\mean(f) \geq 0$ for all $f\in\linf{X}$. \label{mean-itm:2}
			\item $\norm[op]{\mean} = 1$. \label{mean-itm:3}
		\end{enumerate}
	\end{definition}

	\begin{remark}
		Actually any two of the properties above imply the third one.\\
		$\ref{mean-itm:1} + \ref{mean-itm:2} \Rightarrow \ref{mean-itm:3}$: By \ref{mean-itm:1} it is clear that $\norm[op]{\mean} \geq 1$. By \ref{mean-itm:2} we get for all $f\in\linf{X}$ that $\mean(\norm{f} - f) \geq 0 \Leftrightarrow \norm{f}\mean(\mathbbm{1}) \geq \mean(f) \Leftrightarrow \norm{f} \geq \mean(f)$, where we used \ref{mean-itm:1} in the last step. Hence $\norm[op]{\mean} \leq 1$, which implies \ref{mean-itm:3}.\\
		$\ref{mean-itm:2} + \ref{mean-itm:3} \Rightarrow \ref{mean-itm:1}$: It follows directly from \ref{mean-itm:3} that $\mean(\mathbbm{1}) \leq 1$. Moreover, by \ref{mean-itm:2} it again holds that $\mean(\norm{f} - f) \geq 0 \Leftrightarrow \norm{f}\mean(\mathbbm{1}) \geq \mean(f)$ for all $f\in\linf{X}$. Taking the supremum on all $f\in\linf{X}$ with $\norm{f} = 1$, we get $\mean(\mathbbm{1}) \geq \sup_{\norm{f} = 1}\mean(f) = \norm[op]{\mean} = 1$. Hence $\mean(\mathbbm{1}) = 1$.\\
		$\ref{mean-itm:3} + \ref{mean-itm:1} \Rightarrow \ref{mean-itm:2}$: It is certainly enough to show that $\inf_{x\in X} f(x) \leq \mean(f) \leq \sup_{x\in X} f(x)$ for all $f\in\linf{X}$.\\
		Linearity of $\mean$ gives that $\mean(f) = \norm{f}\mean(f/\norm{f}) \leq \norm{f}$.\\
		Moreover, by \ref{mean-itm:1} and the preceding statement, we have $\mean(f) + \norm{f} = \mean(f + \norm{f}\mathbbm{1}) \leq \norm{f + \norm{f}\mathbbm{1}} = \sup_{x\in X}f(x) + \norm{f}$. Where the last equality holds because $f + \norm{f}\mathbbm{1} \geq 0$. Therefore $\mean(f) \leq \sup_{x\in X}$.\\
		Similarly we get $\norm{f} - \mean(f) = \mean(\norm{f}\mathbbm{1} - f) \leq \norm{\norm{f}\mathbbm{1}- f} = \norm{f} - \inf_{x\in X}f(x)$. Where the last equality holds because $\norm{f}\mathbbm{1} - f \geq 0$. Therefore $\inf_{x\in X}f(x) \leq \mean(f)$. This concludes the proof.
	\end{remark}

	\begin{proposition}\label{lem:properties-of-ultralimits}
		Let $X$ be a set and $\ultrafilter$ be an ultrafilter on $X$. Then the map $m_{\ultrafilter}:\linf{X}\to\RR$, $m_{\ultrafilter}(f) := \ulim f(x)$ is a mean.\\
		In particular, $m_{\ultrafilter}$ is a linear map.
	\end{proposition}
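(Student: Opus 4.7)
The plan is to verify in order: well-definedness of $m_{\ultrafilter}$, linearity, and then the three defining properties of a mean (noting that by the remark following \Cref{APX_def:mean} any two of the latter imply the third, so I only need two of them).

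For well-definedness, I would invoke \Cref{cor:bounded-maps-attain-ultralimits} to guarantee that for every $f\in\linf{X}$ the ultralimit $\ulim f(x)$ exists, and \Cref{lem:uniqueness_of_limit_in_Hausdorff_spaces} to guarantee that it is unique (since $\RR$ is Hausdorff). Hence $m_{\ultrafilter}$ is a well-defined function $\linf{X}\to\RR$.

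For linearity, let $f,g\in\linf{X}$ and $\alpha,\beta\in\RR$, and write $y_0 := m_{\ultrafilter}(f)$, $z_0 := m_{\ultrafilter}(g)$. I would first apply \Cref{lem:limit-product} to conclude that the function $H:X\to\RR\times\RR$, $H(x) = (f(x),g(x))$, converges along $\ultrafilter$ to $(y_0,z_0)$. Composing $H$ with the continuous map $s_{\alpha,\beta}:\RR\times\RR\to\RR$, $(y,z)\mapsto \alpha y+\beta z$, and invoking \Cref{lem:limit-composition} shows that $\alpha f+\beta g = s_{\alpha,\beta}\circ H$ converges along $\ultrafilter$ to $\alpha y_0+\beta z_0$. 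Thus $m_{\ultrafilter}(\alpha f+\beta g)=\alpha m_{\ultrafilter}(f)+\beta m_{\ultrafilter}(g)$.

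For property \ref{mean-itm:1}, namely $m_{\ultrafilter}(\mathbbm{1})=1$, this is immediate from \Cref{exp:ultralimit-of-constant-function}, which states that constant functions converge along any filter to their constant value. For property \ref{mean-itm:2}, suppose $f\geq 0$ and assume for contradiction that $y_0:=m_{\ultrafilter}(f)<0$. Then $V := (-\infty, y_0/2)$ is an open neighborhood of $y_0$, so by definition of convergence $V \in f_*(\ultrafilter)$, i.e.\ $f^{-1}(V) \in \ultrafilter$. But $f\geq 0$ forces $f^{-1}(V)=\emptyset$, which would place $\emptyset$ in the filter $\ultrafilter$, violating that $\ultrafilter$ is closed under supersets (equivalently, that it is a proper filter). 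Hence $y_0\geq 0$. The third condition \ref{mean-itm:3} then follows from \ref{mean-itm:1} and \ref{mean-itm:2} by the remark after \Cref{APX_def:mean}.

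No step presents a genuine obstacle here: the entire argument is a direct bookkeeping exercise assembled from the three preceding lemmas (\Cref{cor:bounded-maps-attain-ultralimits}, \Cref{lem:limit-product}, \Cref{lem:limit-composition}) together with the elementary observation that a filter cannot contain the empty set. The mildest subtlety is the positivity argument, where one must remember to use the \emph{open} neighborhood $(-\infty,y_0/2)$ (rather than $(-\infty,0)$) so as to genuinely contain the putative negative limit $y_0$ in its interior.
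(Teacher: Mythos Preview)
Your proof is correct and follows essentially the same route as the paper: well-definedness via \Cref{cor:bounded-maps-attain-ultralimits}, linearity via \Cref{lem:limit-product} and \Cref{lem:limit-composition} applied to a continuous linear combination map, $m_{\ultrafilter}(\mathbbm{1})=1$ via \Cref{exp:ultralimit-of-constant-function}, and positivity by the same contradiction argument (the paper uses the neighborhood $(y_0-\epsilon,y_0+\epsilon)$ instead of your $(-\infty,y_0/2)$, but the point is identical). The only cosmetic difference is that the paper treats homogeneity and additivity separately rather than in one combined step.
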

	\begin{proof}
		We already saw that $m_{\ultrafilter}$ is well-defined, since for any $f\in\linf{X}$, the pushforward ultrafilter $f_*(\ultrafilter)$ induces an ultrafilter on the compact set $[-\norm{f},\norm{f}]$, which converges to a unique point, since $[-\norm{f},\norm{f}]$ is Hausdorff (\Cref{lem:uniqueness_of_limit_in_Hausdorff_spaces}).\\
		Moreover we also know that the ultralimit of constant functions is the value of that function, so in particular $m_{\ultrafilter}(\mathbbm{1}) = 1$ (\Cref{exp:ultralimit-of-constant-function}).
		Concerning linearity, note that (for a fixed $c\in\RR$) the functions $G:\RR\to\RR$, $G(x) = cx$ and $H:\RR\times\RR\to \RR$, $H(x,y) = x+y$ are continuous. Hence by \Cref{lem:limit-product} and \Cref{lem:limit-composition} we have
		\[m_{\ultrafilter}(cf) = m_{\ultrafilter}\left (G\circ f\right ) =  \ulim (G\circ f)(x) = G(\ulim f(x)) = c\cdot\ulim f(x) = c\cdot m_{\ultrafilter}(f) \]
		as well as
		\begin{align*}
			m_{\ultrafilter}(f+g) &= m_{\ultrafilter}(H\circ (f,g))) =  \ulim (H\circ (f,g))(x) = H(\ulim (f,g)(x)))\\
			&= H(\ulim f(x), \ulim g(x)) = \ulim f(x) + \ulim g(x) = m_{\ultrafilter}(f) + m_{\ultrafilter}(g)
		\end{align*}
		for all $f,g\in\linf{X}$.\\
		What is left to show is that $m_{\ultrafilter}(f) \geq 0$ for all $f\in\linf{X}$ with $f\geq 0$. Let $f\geq 0$ and assume that $y = m_{\ultrafilter}(f) < 0$. Then by definition of the ultralimit, we have $\filter_y \subseteq f_*(\ultrafilter)$. Choose $\epsilon >0$ small enough such that $y+\epsilon < 0$. Then also $y\in(y-\epsilon,y+\epsilon) \in f_*(\ultrafilter)$. However, $f^{-1}(y-\epsilon,y+\epsilon) = \emptyset$, since $f\geq 0$. This is a contradiction.\\
		This concludes the proof that $m_{\ultrafilter}$ is a mean.
	\end{proof}

	Now we are ready to prove that generic relations give rise to \bAc semi-simplicial sets. For the convenience of the reader we recall the statement once more.

	\begin{proposition}\Cref{prop:generic-rel->LES}\label{APX_prop:generic-rel->LES}
		Let $\generic$ be a generic relation on a set $X$. Denote by $X^{\generic}_\bullet$ the associated semi-simplicial set. Then $X^{\generic}_\bullet$ is \bAc, i.e. the cohomology of the corresponding cochain complex $\ChainComplex[0]{\linf{X^{\generic}_1}, \linf{X^{\generic}_2}, \linf{X^{\generic}_3}}[][\delta^]$ vanishes in all degrees except $0$.
	\end{proposition}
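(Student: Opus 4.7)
The plan is to build an explicit contracting chain homotopy on the cochain complex $(\linf{X^\generic_\bullet},\delta^\bullet)$ using ultralimits, which will immediately force the cohomology to vanish in all positive degrees. Genericity of $\generic$ is precisely the ingredient that makes such a homotopy definable.

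I will start by encoding the generic relation as a filter. For every finite $Y\subseteq X$ put
\[ B_Y := \{x \in X \mid y \generic x \text{ for all } y \in Y\}; \]
genericity makes each $B_Y$ non-empty, and $B_Y \cap B_{Y'} = B_{Y\cup Y'}$ shows that $\{B_Y\}_Y$ is a filter base. By \Cref{lem:family-of-subsets-give-rise-to-filter} it generates a filter on $X$, which by the Ultrafilter Lemma (\Cref{thm:ultrafilter-lemma}) extends to an ultrafilter $\ultrafilter$ on $X$ containing every $B_Y$.

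Next I will define $h^n:\linf{X^\generic_n}\to\linf{X^\generic_{n-1}}$ for $n\geq 1$ by
\[ h^n(f)(x_0,\dots,x_{n-1}) := \ulim_{x\in X} \tilde f(x,x_0,\dots,x_{n-1}), \]
where $\tilde f$ is any bounded extension of $f$ to $X^{n+1}$ (say, by $0$ off $X^\generic_n$). The ultralimit exists by \Cref{cor:bounded-maps-attain-ultralimits}. The key verification at this step is that $h^n(f)$ really depends only on $f$: for fixed $(x_0,\dots,x_{n-1})\in X^\generic_{n-1}$, the set $B_{\{x_0,\dots,x_{n-1}\}}$ lies in $\ultrafilter$ and for every $x$ in it the enlarged tuple $(x,x_0,\dots,x_{n-1})$ belongs to $X^\generic_n$, so the ultralimit samples only genuine values of $f$. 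This is exactly where genericity is indispensable, and it is the step I expect to be the most delicate; once it is in place, the remainder of the argument is essentially formal, and moreover $\norm{h^n(f)}\leq\norm{f}$ follows for free from $|\ulim|\leq\sup$.

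Finally I will verify the chain-homotopy identity $\delta^{n-1}h^n(f) + h^{n+1}\delta^n(f) = f$ for all $n\geq 1$. Expanding the coboundary gives
\[ \delta^n(f)(x,x_0,\dots,x_n) = f(x_0,\dots,x_n) - \sum_{i=0}^n (-1)^i f(x,x_0,\dots,\hat x_i,\dots,x_n); \]
applying $h^{n+1}$ and using the linearity of the ultralimit (\Cref{lem:properties-of-ultralimits}) together with the fact that $\ulim_x c = c$ for constants (\Cref{exp:ultralimit-of-constant-function}), the alternating sum is carried outside the ultralimit and cancels precisely against $\delta^{n-1}h^n(f)$, leaving $f$. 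Therefore every positive-degree cocycle is a coboundary, which yields the claimed vanishing of the cohomology.
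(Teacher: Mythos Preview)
Your proposal is correct and follows essentially the same approach as the paper: both build the filter base $\{B_Y\}$ from genericity, extend to an ultrafilter, and define the contracting homotopy by taking the ultralimit in a new first coordinate, then verify $\delta^{n-1}h^n + h^{n+1}\delta^n = \mathrm{id}$ using linearity of the ultralimit and that constants are preserved. Your explicit remark that the ultralimit only samples genuine values of $f$ because $B_{\{x_0,\dots,x_{n-1}\}}\in\ultrafilter$ is a welcome clarification that the paper leaves implicit.
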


	 Recall that here the differential maps are defined as follows:\\
	 \begin{align*}
 		\codiff[n-1]:\linf{X^{\generic}_{n-1}} &\to \linf{X^{\generic}_n}\\ \codiff[n-1](f)(x_0,\dots,x_n) &= \sum_{i=0}^{n}(-1)^{i+1} f(x_0,\dots,\widehat{x}_i,\dots,x_n).
	 \end{align*}
	 
	\begin{proof}[Proof of \Cref{prop:generic-rel->LES}]
		Recall that we want to prove that the cochain complex
		\[ \LES[0]{\linf{X^{\generic}_1}, \linf{X^{\generic}_2}, \linf{X^{\generic}_3}} \]
		is \bAc. To do this, we construct a rather explicit contracting homotopy $\Theta_{\bullet}$ between the identity and the $0$ map. We do this using ultrafilters and ultralimits.\\
		First of all we define a filter $\filter$ on $X$ as follows:\\
		For every finite subset $F\subseteq X$ define $B_F := \{x\in X~|~\forall y \in F:~x~\generic~y\}$. Now note that $B_F \cap B_{F'} = B_{F\cup F'}$. Hence, by \Cref{lem:family-of-subsets-give-rise-to-filter} there is a filter $\filter$ containing $B_F$ for all finite subsets $F\subseteq X$. Choose an ultrafilter $\ultrafilter$ containing $\filter$.\\
		We can now define the contracting homotopy $\Theta_{\bullet}$ degreewise via
		\begin{align*}
 			\Theta_n:\linf{X^{\generic}_n} &\longrightarrow \linf{X^{\generic}_{n-1}}\\
 			\Theta_n(f)(x_1,\dots,x_n) &:= \ulim f(x,x_1,\dots,x_n)
		\end{align*}
		
		Note that this ultralimit is well-defined, since for fixed points $x_1,\dots,x_n$ the map $x\mapsto f(x,x_1,\dots,x_n)$ is bounded, so we can apply \Cref{cor:bounded-maps-attain-ultralimits}. Moreover note, that $\Theta_n$ is not only well-defined but also bounded (we have $\norm{\Theta_n} \leq 1$) and linear, since passing to ultralimits satisfy these properties (\Cref{lem:properties-of-ultralimits}). What is left to show is that this gives indeed a contracting homotopy, i.e. $\codiff[n-1]\Theta_{n} + \Theta_{n+1}\codiff[n] = id_{n}$ for all $n\in\NN$. In the following we write $_0x_n$ for $x_0,\dots,x_n$ and $^{}_0x_n^i$ for $x_0,\dots,\widehat{x}_i,\dots,x_n$. 
		\begin{align*}
			(\codiff[n-1]\Theta_{n} + \Theta_{n+1}\codiff[n](f)(_0x_n) = \sum_{i=0}^{n}(-1)^i\Theta_{n}(f)(^{}_0x_n^i) + \ulim\codiff[n](f)(x,\-_0x_n) &=\\
			=\sum_{i=0}^{n}(-1)^i\Theta_{n}(f)(^{}_0x_n^i) + \ulim \left(f(_0x_n) + \sum_{i=1}^{n+1}(-1)^if(x,\-_0x_n^{i-1})\right)&=\\
			=\sum_{i=0}^{n}(-1)^i\ulim f(x,\-^{}_0x_n^i) + f(_0x_n) + \sum_{i=0}^{n}(-1)^{i+1}\ulim f(x,\-^{}_0x_n^i) &= id_{n}(f)(_0x_n)\\
		\end{align*}
		Here we used the fact that $\Theta_n$ is linear and that the ultralimit of a constant function is equal to the value of the function.\\
		The equation above holds for every tuple $(x_0,\dots, x_n)$, hence we get $(\codiff[n-1]\Theta_n + \Theta_{n+1}\codiff[n])(f) = f$ so $\Theta_{\bullet}$ is indeed a bounded contracting homotopy.
	\end{proof}

	\newpage
	
	\addcontentsline{toc}{section}{References}
	\printbibliography[title=References]
\end{document}